\newfont{\cyr}{wncyr10}
\newcommand{\map}[1]{\;\xrightarrow{#1}\;}
\newcommand{\iso}{\cong}
\newcommand{\mil}{\lim\limits_\leftarrow}
\newcommand{\dlim}{\lim\limits_\rightarrow}
\newcommand{\Q}{\mathbf Q}
\newcommand{\Z}{\mathbf Z}
\newcommand{\F}{\mathbf F}
\newcommand{\C}{\mathbf C}
\newcommand{\bk}{\mathbf k}
\newcommand{\cO}{\mathcal O}
\newcommand{\D}{\mathcal{D}}
\newcommand{\bT}{\mathbf T}
\newcommand{\bA}{\mathbf A}
\newcommand{\R}{\mathcal R}
\newcommand{\bmu}{\mathbf \mu}
\newcommand{\mup}{\bmu_{p^\infty}}
\newcommand{\Tw}{\mathrm{Tw}}
\newcommand{\Gal}{\mathrm{Gal}}
\newcommand{\loc}{\mathrm{loc}}
\newcommand{\Hom}{\mathrm{Hom}}
\newcommand{\Aut}{\mathrm{Aut}}
\newcommand{\Ind}{\mathrm{Ind}}
\newcommand{\res}{\mathrm{res}}
\newcommand{\cor}{\mathrm{cor}}
\newcommand{\inv}{\mathrm{inv}}
\newcommand{\unr}{\mathrm{unr}}
\newcommand{\len}{\mathrm{length}}
\newcommand{\tors}{\mathrm{tors}}
\newcommand{\ch}{\mathrm{char}}
\newcommand{\Frob}{\mathrm{Fr}}
\newcommand{\Quot}{\mathrm{Quot}}
\newcommand{\Mod}{\mathrm{Mod}}
\newcommand{\Sel}{\mathrm{Sel}}
\newcommand{\Heeg}{\mathrm{Hg}}
\newcommand{\ord}{\mathrm{ord}}
\newcommand{\relaxed}{\mathrm{rel}}
\newcommand{\strict}{\mathrm{str}}
\newcommand{\f}{\mathrm{f}}
\newcommand{\s}{\mathrm{s}}
\newcommand{\tr}{\mathrm{tr}}
\newcommand{\gr}{\mathrm{gr}}
\newcommand{\Lsel}{{\sel_\Lambda}}
\newcommand{\Fil}{\mathrm{Fil}}
\newcommand{\kolsys}{\mathbf{\kappa}}
\newcommand{\KS}{\mathbf{KS}}
\newcommand{\Tbar}{\bar{T}}
\newcommand{\pl}{\mathcal{L}}
\newcommand{\pn}{\mathcal{N}}
\newcommand{\cH}{\mathcal{H}}
\newcommand{\stub}{\mathcal{S}}
\newcommand{\inert}{\mathcal{I}}
\newcommand{\sel}{\mathcal F}
\newcommand{\msel}{{/\sel}}
\newcommand{\altsel}{\mathcal G}
\newcommand{\gp}{\mathfrak P}
\newcommand{\gq}{\mathfrak Q}
\newcommand{\gm}{\mathfrak m}
\newcommand{\aug}{\mathrm{aug}}
\newcommand{\ga}{\mathfrak{a}}
\newcommand{\Norm}{\mathrm{Norm}}
\newcommand{\bH}{\mathbf{H}}
\newcommand{\pg}{\mathcal{G}}
\begin{document}
\title{The Heegner point Kolyvagin system}
\author{Benjamin Howard}
\address{Department of Mathematics\\ Stanford University\\ Stanford, CA\\
94305}
\curraddr{Department of Mathematics\\ Harvard University\\ Cambridge, MA\\
02138}
\subjclass[2000]{11G05, 11R23}
%\date{\today}
\thanks{This research is derived from the author's Ph.D. thesis, and was
conducted under the supervision of Karl Rubin. The author 
extends his thanks both to Professor Rubin and to 
the mathematics department of Stanford University.
This research was partially conducted by the author for the Clay
Mathematics Institute, to whom he also gives his thanks.}

\begin{abstract}
In \cite{pr87} Perrin-Riou formulates a form of the Iwasawa main conjecture
which relates Heegner points to the Selmer group of an elliptic curve defined
over $\mathbf{Q}$, as one goes up the anticyclotomic 
$\mathbf{Z}_p$-extension of
a quadratic imaginary field $K$.  Building on the earlier work of Bertolini
on this conjecture, and making use of 
the recent work of Mazur and Rubin on Kolyvagin's theory of Euler systems,
we prove one divisibility of Perrin-Riou's conjectured equality.
As a consequence, one obtains an upper bound on the 
rank of the Mordell-Weil group $E(K)$ in terms of Heegner points.
\end{abstract}

\maketitle
\setcounter{tocdepth}{2}
%\tableofcontents

\theoremstyle{plain}
\newtheorem{Thm}{Theorem}[subsection]
\newtheorem{Prop}[Thm]{Proposition}
\newtheorem{Lem}[Thm]{Lemma}
\newtheorem{Cor}[Thm]{Corollary}
\newtheorem{Theorem}{Theorem}

\theoremstyle{definition}
\newtheorem{Def}[Thm]{Definition}

\theoremstyle{remark}
\newtheorem{Rem}[Thm]{Remark}

\renewcommand{\labelenumi}{(\alph{enumi})}
\renewcommand{\theTheorem}{\Alph{Theorem}}
\setcounter{section}{-1}

\section{Introduction}

In this paper we  modify the notion of a Kolyvagin system, as
defined in \cite{mazur-rubin}, to include the system of cohomology classes
which result from the application of Kolyvagin's derivative operators to
the Heegner point Euler system.  The resulting theory yields a
simplified proof of a theorem of Kolyvagin, stated below as Theorem
\ref{kolyvagin's theorem}.
Our true sights, however, are set on the Iwasawa theory of Heegner points
in the anticyclotomic $\Z_p$-extension of a quadratic imaginary field.

Fix forever a rational prime $p$.
If $E$ is an elliptic curve defined over a number field $L$, we denote
by $\Sel_{p^\infty}(E/L)$ and $S_p(E/L)$ the usual $p$-power Selmer
groups which fit into the descent sequences
$$0\map{}E(L)\otimes\Q_p/\Z_p\map{}\Sel_{p^\infty}(E/L)\map{}
\mbox{\cyr Sh}_{p^\infty}\map{}0$$
$$0\map{}E(L)\otimes\Z_p\map{}S_p(E/L)\map{}\mil
\mbox{\cyr Sh}_{p^n}\map{}0.$$

Fix once and for all an elliptic curve  $E/\Q$ with conductor $N$
and a quadratic imaginary field $K$ of discriminant 
$D\not=-3,-4$ satisfying the Heegner hypothesis that all primes 
dividing $N$ are split in $K$. Let
$T=T_p(E)$ be the $p$-adic Tate module of $E$.
The theory of complex
multiplication gives a family of points on the modular curve $X_0(N)$
which are rational over abelian extensions of $K$.  More precisely,
in Section \ref{Heegner points} we will attach 
to every squarefree product $n$ of rational primes inert in $K$
a point $h_n\in X_0(N)(K[n])$, where $K[n]$ is the ring
class field of $K$ of conductor $n$.
Fixing a modular parametrization of $E$ by $X_0(N)$
yields a family of points $P[n]\in E(K[n])$
which satisfy Euler system-like relations relative to the norm operators.
To each point $P[n]$ one applies first the Kummer map and then
Kolyvagin's derivative operator $D_n$ to obtain a cohomology class over $K$,
$$\kappa_n\in H^1_{\sel(n)}(K,T/I_nT)\otimes G_n$$
where $G_n=\bigotimes_{\ell|n}\Gal(K[\ell]/K[1])$,
$I_n$ is an ideal of $\Z_p$, and $$H^1_{\sel(n)}(K,T/I_nT)\subset
H^1(K,T/I_nT)$$ is the generalized Selmer group of Definition 
\ref{modified selmer}
obtained by modifying the usual local conditions which define
$S_p(E/K)$ at primes of $K$ dividing $n$. The classes $\kappa_n$ form
a Kolyvagin system, as defined in Section \ref{ks section}.
The class $\kappa_1\in
H^1_{\sel(1)}(K,T)=S_p(E/K)$ is just the image under the Kummer map
of the norm of $P[1]$, and
the celebrated theorem of Gross and Zagier says that
$\ord_{s=1}L(s,E/K)=1$ iff  $\kappa_1$ has infinite order.
In Section \ref{KS} we will give a proof of the following theorem.

\begin{Theorem}\label{kolyvagin's theorem}(Kolyvagin)
Assume $p$ is odd and the integers $p$, $D$, and $N$
are pairwise coprime.  Assume also that 
$\Gal(\bar{K}/K)\map{}\Aut_{\Z_p}(T)$ is surjective.
If $\kappa_1\not=0$ then $S_p(E/K)$ is free of rank one
over $\Z_p$ and there is a finite $\Z_p$-module
$M$ such that $$\Sel_{p^\infty}(E/K)\iso
(\Q_p/\Z_p)\oplus M\oplus M$$ with $$\len_{\Z_p}(M)\le \len_{\Z_p}(
S_{p}(E/K)/\Z_p\kappa_1).$$
\end{Theorem}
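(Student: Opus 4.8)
The plan is to obtain the theorem as an essentially formal consequence of the structure theory for Kolyvagin systems of core rank one built up in Section~\ref{ks section} and the sections following it (the analogue, for $\Z_p$-coefficients, of the main result of \cite{mazur-rubin}), applied to the Heegner system $\{\kappa_n\}$. I would first settle the freeness assertion and reduce the rest to that structure theory. Since $p$ is odd and $\Gal(\bar{K}/K)\map{}\Aut_{\Z_p}(T)$ is surjective, $H^0(K,E[p^n])=0$ for every $n$, so $H^1(K,T)$ is torsion free; hence $S_p(E/K)=H^1_{\sel(1)}(K,T)$ is a free $\Z_p$-module, $\len_{\Z_p}(S_p(E/K)/\Z_p\kappa_1)$ is finite, and, as $\kappa_1\neq0$, $\rank_{\Z_p}S_p(E/K)\ge1$. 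What remains is to bound $\rank_{\Z_p}S_p(E/K)$ above by $1$ and to describe $\Sel_{p^\infty}(E/K)$.

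The main body of the argument is to verify that $\sel(1)$ is covered by the structure theorem and then to read off its conclusion. Under the coprimality hypotheses on $p$, $D$, $N$, the Weil pairing identifies $T$ with $\Hom_{\Z_p}(T,\Z_p(1))$, and at every place the local condition defining $S_p(E/K)$ is its own annihilator under Tate local duality, so $\sel(1)$ is self-dual; a local Euler-characteristic computation — and it is here, via the sign $-1$ in the functional equation of $E/K$, that the Heegner hypothesis on $K$ is genuinely used — shows $\sel(1)$ has core rank one. Feeding the nonzero Kolyvagin system $\{\kappa_n\}$ into the core-rank-one structure theorem then gives $H^1_{\sel(1)}(K,T)\iso\Z_p$, so $S_p(E/K)$ is free of rank one, and produces a finite $\Z_p$-module $M$ with $H^1_{\sel(1)}(K,E[p^\infty])\iso(\Q_p/\Z_p)\oplus M\oplus M$ and $\len_{\Z_p}M\le\len_{\Z_p}(H^1_{\sel(1)}(K,T)/\Z_p\kappa_1)=\len_{\Z_p}(S_p(E/K)/\Z_p\kappa_1)$. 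By self-duality of the local conditions $H^1_{\sel(1)}(K,E[p^\infty])=\Sel_{p^\infty}(E/K)$, which is precisely the asserted description. Along the way one sees that $\Sel_{p^\infty}(E/K)$ has corank one; combined with $\rank_{\Z_p}S_p(E/K)\ge1$ this forces $\rank E(K)=1$, so the divisible part of $\Sel_{p^\infty}(E/K)$ is $E(K)\otimes\Q_p/\Z_p$ and $\mbox{\cyr Sh}_{p^\infty}$ is the finite group $M\oplus M$.

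Thus almost all of the work is in the structure theorem itself — that a nonzero Kolyvagin system controls both the Selmer and the dual Selmer group. Its mechanism is a Chebotarev argument, available precisely because the mod-$p$ representation is surjective, producing Kolyvagin primes $\ell$ at which the localization map detects prescribed cohomology classes, played against the Kolyvagin-system relations relating $\kappa_{n\ell}$ to $\kappa_n$ and the change-of-local-conditions exact sequences at such $\ell$ (Definition~\ref{modified selmer}); the index $\len_{\Z_p}(S_p(E/K)/\Z_p\kappa_1)$ enters as the $p$-divisibility of $\kappa_1$ out of which the bound is built, and the appearance of two copies of $M$ reflects the self-duality. The two genuinely delicate points I expect are the computation that the core rank is $1$ rather than $0$ or $2$, and — should the doubled shape $M\oplus M$ not already be part of the structure theorem's conclusion — the appeal to the Cassels--Tate pairing, which on the finite group $\mbox{\cyr Sh}_{p^\infty}(E/K)$ is nondegenerate and \emph{alternating} (alternating, not merely skew-symmetric, because $E$ is an elliptic curve), forcing $\mbox{\cyr Sh}_{p^\infty}(E/K)\iso M\oplus M$.
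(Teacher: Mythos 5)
Your architecture is right in outline — build a Heegner Kolyvagin system, check that the Selmer structure satisfies the running hypotheses, apply the structural machine, and read off the $\D\oplus M\oplus M$ shape — and you correctly flag the Chebotarev mechanism and the role of $\len(S_p(E/K)/\Z_p\kappa_1)$ as the divisibility of $\kappa_1$. But two of the load-bearing claims in your sketch do not match the paper's actual proof and, as stated, would not go through. First, the ``core rank one'' step: over the quadratic imaginary field $K$ the balanced self-dual Selmer structure on a rank-two $T$ has Mazur--Rubin core rank $2$, not $1$ (the global Euler characteristic picks up a factor of $[K:\Q]$), so the main theorem of \cite{mazur-rubin} in its core-rank-one form is simply not applicable. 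This is precisely why the paper has to rebuild the theory (Sections~\ref{hypotheses}--\ref{bound}) around the twisted self-duality H.4 and the complex-conjugation splitting H.5, working with $\pm$-eigenspaces and the ``stub'' filtration. Relatedly, the Heegner hypothesis and the sign of the functional equation play no role in any local Euler-characteristic count here; the Heegner hypothesis is used only to manufacture the Euler system. The conclusion that the divisible part has corank one (i.e.\ $\epsilon=1$ in Theorem~\ref{structure}) is \emph{deduced} from $\kappa_1\neq 0$ together with Lemmas~\ref{liftability} and~\ref{lemma of the stub}, not assumed as an input invariant of $\sel$.

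Second, the Cassels--Tate pairing is not an optional fallback: the generalized Cassels--Tate pairing of Section~\ref{generalized cassels}, defined on $H^1_\sel(K,T/\gm^sT)\times H^1_{\sel^*}(K,T^*[\gm^t])$ at every Artinian level, is what produces the $R^\epsilon\oplus M\oplus M$ decomposition (Theorem~\ref{structure}) used throughout the inductive argument, and the classical alternating pairing on $\mbox{\cyr Sh}_{p^\infty}(E/K)$ alone would not give the quantitative bound $\len_{\Z_p}M\le\len_{\Z_p}(S_p(E/K)/\Z_p\kappa_1)$, which requires the structure decomposition at each finite level $T/\gm^kT$ and then a passage to the limit. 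A smaller imprecision: the self-duality that actually gets used is not that $H^1_\sel(K_v,T)$ is self-annihilating under the standard local pairing, but that $H^1_\sel(K_v,T)$ and $H^1_\sel(K_{\bar v},T)$ are exact orthogonal complements under the $\tau$-twisted pairing of H.4; this conjugate-place twist is essential to both the Cassels--Tate construction and the parity argument in Lemma~\ref{parity}. In short, your proposal invokes the Mazur--Rubin theorem as a black box where it does not apply; the paper's proof is a genuine reworking of that theory adapted to the self-dual, anticyclotomic situation, with Theorems~\ref{structure} and~\ref{dvr bound} doing the work you are attributing to the core-rank-one theorem.
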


Assume now that $E$ is ordinary at $p$.
Let $K_\infty$ be the anticyclotomic $\Z_p$-extension of $K$,
$\Gamma=\Gal(K_\infty/K)$, and $\Lambda=\Z_p[[\Gamma]]$.
Let $K_n\subset K_\infty$ be the unique subfield with $[K_n:K]=p^n$.
In Section \ref{ks lambda section} we define, in the manner of
\cite{greenberg-representations}, two generalized Selmer groups
$$H^1_\Lsel(K,\bT)\subset \mil H^1(K_n,T)\hspace{1cm}
H^1_\Lsel(K,\bA)\subset \dlim H^1(K_n,E[p^\infty]),$$ where
$\bT\iso T\otimes\Lambda$ and $\bA\iso\Hom(\bT,\mup)$, such that
there are pseudo-isomorphisms of $\Lambda$-modules
$$ H^1_\Lsel(K,\bT)\sim\mil S_{p}(E/K_n)\hspace{1cm}
H^1_\Lsel(K,\bA)\sim\dlim\Sel_{p^\infty}(E/K_n).$$ Define
$X=\Hom(H^1_\Lsel(K,\bA),\Q_p/\Z_p)$, and let $X_{\Lambda-\tors}$
denote the $\Lambda$-torsion submodule of $X$. In the spirit of
the Iwasawa Main Conjecture we view the characteristic ideal
$\ch(X_{\Lambda-\tors})$ as a sort of algebraically defined
$p$-adic $L$-function.

In Section \ref{iwasawa heegner points}
we use Heegner points to construct a  Kolyvagin
system $\kappa^\Heeg$ for the $\Lambda$-module $\bT$.  The class
$\kappa_1^\Heeg\in H^1_\Lsel(K,\bT)$ is nonzero by the work of Cornut
and Vatsal.
At a height-one prime $\gp$ of $\Lambda$, a Kolyvagin system
for $\bT$ reduces to  a Kolyvagin system
for $\bT\otimes_\Lambda S_\gp$ where $S_\gp$ is the integral closure
of $\Lambda/\gp$. Applying at every prime of $\Lambda$
the same machinary used to prove Theorem \ref{kolyvagin's theorem}
gives the following result.

\begin{Theorem}\label{Big Money} 
Keep the assumptions on $T$, $p$, $D$, and $N$ of Theorem 
\ref{kolyvagin's theorem},
and assume also that $p$ does not divide the class number of $K$.
We continue to assume that $E$ is ordinary at $p$.
Let $\bH$ denote the $\Lambda$-submodule
of $H^1_\Lsel(K,\bT)$ generated by $\kappa_1^\Heeg$, and let
$\iota:\Lambda\map{}\Lambda$ be the involution induced by inversion in
$\Gamma$.  

The $\Lambda$-module $H^1_\Lsel(K,\bT)$ is torsion-free of
rank one, and there is a finitely-generated torsion $\Lambda$-module
$M$ such that  \begin{enumerate}
\item $\ch(M)=\ch(M)^\iota$
\item $X\sim\Lambda\oplus M\oplus M$
\item
$\ch(M)$ divides $\ch\big(H^1_\Lsel(K,\bT)/\bH\big)$
\end{enumerate}
where $\ch$ denotes characteristic ideal.
\end{Theorem}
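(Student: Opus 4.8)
\emph{The plan.} The one substantive external input is already in hand: by Cornut and Vatsal the class $\kappa_1^\Heeg$ is nonzero — and in fact non-$\Lambda$-torsion — so $\kappa^\Heeg$ is a Kolyvagin system for $\bT$ with nontrivial bottom class. I would extract the whole theorem from this by working one height-one prime $\gp\subset\Lambda$ at a time: over $\gp$ the system $\kappa^\Heeg$ specializes to a Kolyvagin system for $\bT\otimes_\Lambda S_\gp$ over the discrete valuation ring $S_\gp$, and there I would run the rank-one Kolyvagin-system arguments that prove Theorem \ref{kolyvagin's theorem}; the final step is to glue the resulting codimension-one data into the global assertions (1)--(3). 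One preliminary has to be dealt with first: that the self-dual Selmer structure defining $H^1_\Lsel$ has core rank one. This is a local Euler-characteristic computation at $p$ and at $\infty$, and is where ordinarity of $E$ at $p$ enters (it pins down the local condition at $p$); it guarantees that the ``rank one'' form of the Kolyvagin-system theorems is the applicable one.

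\emph{Torsion-freeness and rank one of $H^1_\Lsel(K,\bT)$.} I would first record that $H^1(K,\bT)$, hence its submodule $H^1_\Lsel(K,\bT)$, has no nonzero finite submodule — standard since $\bT\iso T\otimes\Lambda$ is $\Lambda$-free — and that the Greenberg-style Selmer structure, together with ordinarity at $p$ and surjectivity of the Galois action, makes $H^1_\Lsel(K,\bT)$ $\Lambda$-torsion-free. Its generic rank is then at least one because $\kappa_1^\Heeg$ is non-torsion, and at most one by the core-rank bound of the previous paragraph, so it is torsion-free of rank one. Global duality for the self-dual pair $(\bT,\bA)$ gives $\rank_\Lambda X=1$; since $\Lambda$ is a two-dimensional regular local ring — hence a unique factorization domain — a torsion-free module of rank one is pseudo-isomorphic to $\Lambda$, and so $X\sim\Lambda\oplus X_{\Lambda-\tors}$.

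\emph{The module $M$, the symmetry, and the divisibility.} Put $Y=H^1_\Lsel(K,\bT)/\bH$, a torsion $\Lambda$-module. At each height-one $\gp$, the rank-one Kolyvagin bound over $S_\gp$ — the very mechanism behind the length inequality of Theorem \ref{kolyvagin's theorem} — once transported back through the comparison between the $S_\gp$-Selmer groups and the $\gp$-localizations of $H^1_\Lsel(K,\bT)$ and of $X$, yields $\len_{\Lambda_{\gp}}\big((X_{\Lambda-\tors})_{\gp}\big)\le 2\,\len_{\Lambda_{\gp}}(Y_{\gp})$, the factor $2$ being the same $M\oplus M$ phenomenon already present in Theorem \ref{kolyvagin's theorem}. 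Summing over $\gp$, $\ch(X_{\Lambda-\tors})$ divides $\ch(Y)^2$. To upgrade this to the module statement (2) and the symmetry (1) I would invoke the skew-Hermitian perfect pairing on $X_{\Lambda-\tors}$ — the $\Lambda$-adic version of the Cassels--Tate pairing, conjugate-linear for $\iota$ and available precisely because of the self-duality used above: it forces $X_{\Lambda-\tors}\sim M\oplus M$ for a torsion $M$, while the self-duality $X_{\Lambda-\tors}\cong(X_{\Lambda-\tors})^\iota$ that the same pairing supplies makes $\ch(X_{\Lambda-\tors})=\ch(M)^2$ $\iota$-stable, whence $\ch(M)^\iota=\ch(M)$ since square roots of ideals in the unique factorization domain $\Lambda$ are unique. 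Finally $\ch(M)^2\mid\ch(Y)^2$ gives $\ch(M)\mid\ch(Y)$, which is (3).

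\emph{The main obstacle.} The genuine difficulty, I expect, lies in the bookkeeping that links the $S_\gp$-picture to the $\Lambda_{\gp}$-picture: identifying the Selmer group of $\bT\otimes S_\gp$ and the cofree counterpart for $\bA\otimes S_\gp$ with $H^1_\Lsel(K,\bT)_{\gp}\otimes_{\Lambda_{\gp}}S_\gp$ and with $X_{\gp}$ up to error terms that are finite and vanish for all but finitely many $\gp$, and converting $S_\gp$-lengths into $\Lambda_{\gp}$-lengths while keeping track of the ramification index $[S_\gp:\Lambda/\gp]$ and of the residue-field extension. The finitely many exceptional primes — above all $\gp=p\Lambda$, where I would use that $p$ does not divide the class number of $K$, that $E$ is ordinary, and surjectivity of the residual representation — must be handled by hand, as must the core-rank computation. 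Granting these, what remains is a faithful transcription of the proof of Theorem \ref{kolyvagin's theorem} over the varying discrete valuation rings $S_\gp$.
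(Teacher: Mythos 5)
Your global strategy — specialize the Kolyvagin system at height-one primes $\gp$, run a DVR-level Kolyvagin bound over $S_\gp$, and glue — is indeed the paper's strategy (Proposition \ref{height one proposition} fed into Theorem \ref{the thing}). But the implementation of parts (a) and (b) in your proposal departs from the paper and, as written, contains a genuine gap.

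You propose to obtain the pseudo-isomorphism $X\sim\Lambda\oplus M\oplus M$ and the symmetry $\ch(M)=\ch(M)^\iota$ directly from a $\Lambda$-adic, $\iota$-conjugate-linear skew-Hermitian perfect pairing on $X_{\Lambda-\tors}$. Such a pairing does give the functional equation $\ch(X_{\Lambda-\tors})=\ch(X_{\Lambda-\tors})^\iota$, but it does \emph{not} formally force the $M\oplus M$ decomposition. At a conjugate pair of height-one primes $\gp\neq\gp^\iota$ the pairing only identifies the $\gp$-part of $X_{\Lambda-\tors}$ with the dual of the $\gp^\iota$-part; it places no constraint whatever on the elementary divisors at $\gp$ alone. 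At an $\iota$-fixed prime $\gp=\gp^\iota$ one can ask whether the sesquilinear form is alternating on the residue module, but this is not automatic from skew-Hermitian-ness (unlike the case of the classical Cassels--Tate pairing over $\Z_p$), so the even-multiplicity conclusion is not formal there either. The paper does not construct a $\Lambda$-adic pairing at all: it proves the $M\oplus M$ structure by specialization. Concretely, for a height-one $\gp$ it chooses an auxiliary $\gq=(g+p^m)\Lambda\notin\Sigma_\Lambda$, gets $H^1_{\sel_\gq}(K,A_\gq)\iso\D_\gq\oplus M_\gq\oplus M_\gq$ from the Artinian Cassels--Tate structure of Theorem \ref{structure} over $S_\gq$ (where the pairing \emph{is} alternating and $p$ is odd), compares $M_\gq$ with $(X/\gq X)_{\tors}$ via Proposition \ref{control} with error bounded independently of $m$, fixes a pseudo-isomorphism $X_{\Lambda-\tors}\sim N\oplus\bigoplus_i\Lambda/\gp^{e_i}$, and lets $m\to\infty$ to deduce that each multiplicity $|\{i:e_i=e\}|$ is even. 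This auxiliary-prime device is the essential mechanism your sketch does not identify, and without it (or some substitute) the claim $X\sim\Lambda\oplus M\oplus M$ is not justified. For part (a) the paper then simply cites the functional equation $\ch(X_{\Lambda-\tors})=\ch(X_{\Lambda-\tors})^\iota$ from Nekov\'a\v{r} rather than constructing its own pairing; given the $M\oplus M$ structure, your UFD argument then does yield $\ch(M)=\ch(M)^\iota$.

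Two smaller remarks. First, you treat the existence of the Kolyvagin system $\kappa^\Heeg$ over $\bT$ as ``already in hand'' from Cornut--Vatsal, but Cornut--Vatsal only supply nonvanishing of the bottom Heegner class; the assembly of the Heegner classes into a $\Lambda$-adic Kolyvagin system (Theorem \ref{heegner kolyvagin system}, via Perrin-Riou's norm relations, the $\Phi$-correction of Lemma \ref{euler system}, and the local computations at $v\mid p$ and $v\mid N$) is a substantial and nontrivial part of the proof of the theorem you are proving, not an external black box. Second, your length comparison $\len_{\Lambda_\gp}((X_{\Lambda-\tors})_\gp)\le 2\len_{\Lambda_\gp}(Y_\gp)$ ``transported back through the comparison'' is exactly where the exceptional set $\Sigma_\Lambda$ and the auxiliary prime enter again; for $\gp\in\Sigma_\Lambda$ or $\gp=p\Lambda$ (where one takes $\gq=(T^m+p)$) the direct comparison fails and the limiting argument is indispensable, so ``summing over $\gp$'' is not quite a literal sum but a limit over auxiliary primes for each fixed $\gp$.
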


We remark that parts (a) and (b) are already known by the combined results
of Bertolini, Cornut, and Nekov\'{a}\v{r} \cite{bertolini, cornut, nek} 
and have the following important consequence: by Mazur's control 
theorem one has $$\mathrm{rank}_{\Z_p}X/(\gamma-1)X=
\mathrm{corank}_{\Z_p}\Sel_{p^\infty}(E/K),$$ and therefore the
corank of the Selmer group over $K$ odd. This is compatible with
the Birch and Swinnerton-Dyer conjecture: the Heegner hypothesis
forces the sign of the functional equation of $L(s,E/K)$ to be
$-1$, and so $\ord_{s=1}L(s,E/K)$ is odd.  Similarly, part (c) 
of the theorem,  together with the control theorem, gives the inequality
\begin{equation}\label{intro bound}
\mathrm{rank}_{\Z_p}S_p(E/K) \le 1+2\cdot \ord_J(\mathbf{L})
\end{equation}
where $J\subset\Lambda$ is the augmentation ideal and 
$\mathbf{L}=\ch\big(H^1_\Lsel(K,\bT)/\bH\big)$.  One does not typically
expect equality to hold;  see (\ref{height degeneracy}) below.
Theorem \ref{Big Money} can be generalized in
many ways, for example by replacing $E$ by an abelian variety
with real multiplication, replacing the modular curve $X_0(N)$
by an appropriate Shimura curve (allowing one to weaken the Heegner
hypothesis), and replacing $K$ by a CM-field.  See \cite{howard-shimura}
for work in this direction.

The Main Conjecture for Heegner points was formulated by Perrin-Riou
in \cite{pr87} and predicts that
$$\ch(M)=c^{-1}\cdot\ch\big(H^1_\Lsel(K,\bT)/\bH\big)$$
where $c\in\Z_p$ is the Manin constant associated to our choice of
modular parametrization of $E$ (the proof that our $\bH$ agrees with
the module considered by Perrin-Riou is part of the content
of Theorem \ref{heegner module}).
The theory of derived $p$-adic height pairings, introduced by 
Bertolini and Darmon and further developed by the author
\cite{bertdarL, howard-heights}, leads one
to conjecture that the the torsion module $M$ of
Theorem \ref{Big Money} has the form 
$$M\sim (\Lambda/J)^{e_1}\oplus (\Lambda/J^2)^{e_2}\oplus M'$$
for a $\Lambda$-module $M'$ with characteristic ideal prime to $J$, and
$$
e_1=\min(r^+,r^-)\hspace{1cm}
e_2=\frac{|r^+-r^-|-1}{2}$$
where $r^\pm$ is the rank of the $\pm$-eigenspace of $S_p(E/K)$
under complex conjugation.
Combining this with the Main Conjecture, we see that one should 
expect 
\begin{equation}\label{height degeneracy}
 \ord_J(\mathbf{L})=e_1+2e_2=\max(r^+,r^-)-1.
\end{equation}
Since the left hand side of (\ref{intro bound}) is $1+2e_1+2e_2$ by
Mazur's control theorem, one expects equality to hold there
exactly when $e_2=0$.

\bigskip
The following conventions will remain in effect throughout.
By a coefficient ring, $R$, we mean a
complete, Noetherian, local ring with finite residue field of characteristic
$p$.  The cases of interest are when $R$ is the ring of integers $\cO$ of
a finite extension of $\Q_p$, a quotient of $\cO$, or the Iwasawa
algebra $\Lambda$. The maximal ideal of $R$ is denoted $\gm$.
We denote by $R(1)$ the Tate twist of $R$, i.e. the free rank-one $R$-module
on which Galois acts through the cyclotomic character.

If $M$ is any $R$-module and $I\subset R$ is an ideal then
$M[I]$ is the submodule of $M$ consisting of elements annihilated by
every $r\in I$.  We define $M(1)=M\otimes_R R(1)$.
If $L$ is a perfect field (which is all we shall ever have need to consider),
then $\bar{L}$ denotes the algebraic closure of $L$, and $G_L=\Gal(\bar{L}/L)$.
If $L$ is a local field we let $L^\unr$ denote the maximal unramified
extension of $L$ and denote by $\Frob$ the Frobenius automorphism of
$L^\unr/L$.

\section{Kolyvagin systems}\label{KS}

Throughout Section \ref{KS} we fix a coefficient ring $R$ and a quadratic
imaginary field $K$.  If $L$ is a perfect field, we denote by
$\Mod_{R,L}$ the category of
finitely-generated $R$-modules equipped with continuous, linear actions of
$G_L$, assumed to be unramified outside of a finite set of primes
in the case where $L$ is a global field.  The letter $T$
will always denote an object of this category (for some field $L$).
Let $\tau\in G_\Q$ be a fixed complex conjugation.

Sections \ref{selmer groups}, \ref{ks section}, and \ref{hypotheses}
follow \cite{mazur-rubin} very closely.  
Sections \ref{Artinian} and \ref{bound}
do as well, but with modifications unique to the case of Heegner points.
The results of Section \ref{generalized cassels}, which rely crucially
on the self-duality of the Tate module $T_p(E)$, have no analogue in
\cite{mazur-rubin}.

%%%%%%%%%%%%%%%%%%%%%%%%%%%%%%%%%%%%%%%%%%%%%%%%%%%%%%%%

\subsection{Selmer groups}\label{selmer groups}

%%%%%%%%%%%%%%%%%%%%%%%%%%%%%%%%%%%%%%%%%%%%%%%%%%%%%%%

Fix a finite place $v$ of $K$,
and denote by $\inert_v$ the inertia subgroup of $G_{K_v}$,
$\Frob_v\in \Gal(K_v^\unr/K_v)$ the Frobenius element,
and $\bk_v$ the residue field of $K_v$.  Let $T$ be an object of
$\Mod_{R,K_v}$.

\begin{Def}
A \emph{local condition} on $T$ (over $K_v$) is a choice of $R$-submodule
of $H^1(K_v,T)$.  We will frequently use $\sel$ to denote a local condition,
in which case the submodule will be denoted
$H^1_\sel(K_v,T)\subset H^1(K_v,T).$

Given an  $R[[G_{K_v}]]$-submodule (resp. quotient) $S$ of $T$ and a local
condition $\sel$ on $T$ we define the \emph{propagated} condition, still
denoted by $\sel$, on $S$ to be the preimage (resp. image)  of $H^1_\sel
(K_v,T)$ under the natural map $$H^1(K_v,S)\map{}H^1(K_v,T)$$ (resp.
$H^1(K_v,T)\map{}H^1(K_v,S)$).
\end{Def}

We will be concerned primarily (but not entirely) with local conditions
of the following types. \begin{enumerate}
\item The \emph{relaxed} and \emph{strict} conditions (respectively)
$$H^1_{\relaxed}(K_v,T)=H^1(K_v,T)\hspace{1cm}H^1_\strict(K_v,T)=0,$$
\item the \emph{unramified} condition $$H^1_\unr(K_v,T)=\mathrm{ker}\big(
H^1(K_v,T)\map{}H^1(K_v^\unr,T)\big),$$
\item the \emph{$L$-transverse} condition
$$H^1_{L-\tr}(K_v,T)=\mathrm{ker}
\big(H^1(K_v,T)\map{}H^1(L,T)\big)$$ where $K_v$
has residue characteristic $\not=p$ and $L$ is a
maximal totally tamely ramified abelian
$p$-extension of $K_v$.
\end{enumerate}

If $K_v$ has residue characteristic
different from $p$ and  $T$  is unramified (i.e. the inertia
group $\inert_v$ acts trivially on $T$), then
we shall also refer to the unramified
condition on $T$ as the \emph{finite} condition $H^1_\f(K_v,T)$.
We then define the $\emph{singular quotient}$ $H^1_\s(K_v,T)$ by
exactness of $$0\map{}H^1_\f(K_v,T)\map{}H^1(K_v,T)
\map{}H^1_\s(K_v,T)\map{}0.$$

If $\mathcal{T}$ is
a subcategory of  $\Mod_{R,{K_v}}$ then by a \emph{local condition functorial
over $\mathcal{T}$} we mean a subfunctor of $H^1(K_v,\ \ )$,
$$T\mapsto H^1_\sel(K_v,T)\subset H^1(K_v,T).$$
The local conditions defined above are all functorial over $\Mod_{R,{K_v}}$.

\begin{Def}\label{cartesian}
A local condition $\sel$ functorial over a subcategory $\mathcal{T}$ of
$\Mod_{R,{K_v}}$ is \emph{cartesian} if for any injective morphism $\alpha:
S\map{}T$ the local condition $\sel$ on $S$ is the same as the local condition
obtained by propagating $\sel$ from $T$ to $S$.
\end{Def}

\begin{Def}\label{Quot}
For $T$ an object of $\Mod_{R,{K_v}}$ we define the \emph{quotient category}
of $T$ $\Quot(T)$ to be the category whose objects are quotients $T/IT$
of $T$ by ideals of $R$ and the morphisms from $T/IT$ to $T/JT$ are the
maps induced by scalar multiplications $r\in R$ with $rI\subset J$.
\end{Def}

Any local condition on $T$ defines a local condition functorial over
$\Quot(T)$ by propagation.

\begin{Rem}\label{cartesian identifications}
Of special interest is the case where $R$ is principal and Artinian
of length $k$ and $T$ is a free $R$-module.  Let $\gm=\pi R$ be the
maximal ideal of $R$.
A local condition on $\Quot(T)$ being cartesian implies that for
$i<k$ the local condition on the submodule $T[\gm^i]$ (propagated from $T$)
agrees with the local condition on $T/\gm^i T$ when the two
modules are identified via the isomorphism
$$T/\gm^i T\map{\pi^{k-i}}T[\gm^i].$$
\end{Rem}

\begin{Lem}\label{unramified cartesian}
The unramified local condition is cartesian on any
subcategory of $\Mod_{R,{K_v}}$
whose objects are unramified $G_{K_v}$-modules.
\end{Lem}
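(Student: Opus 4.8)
The statement is that if $\mathcal{T}$ is a subcategory of $\Mod_{R,K_v}$ consisting of unramified $G_{K_v}$-modules, then for any injective morphism $\alpha: S \to T$ in $\mathcal{T}$, the unramified local condition $H^1_\unr(K_v, S)$ coincides with the preimage of $H^1_\unr(K_v, T)$ under $\alpha_*: H^1(K_v, S) \to H^1(K_v, T)$.

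The plan is to use the inflation-restriction sequence for $G_{K_v} \twoheadrightarrow \Gal(K_v^\unr/K_v)$ to identify the unramified condition concretely. For any unramified $G_{K_v}$-module $M$, inflation gives an isomorphism $H^1(\Gal(K_v^\unr/K_v), M^{\inert_v}) = H^1(\Gal(K_v^\unr/K_v), M) \map{\sim} H^1_\unr(K_v, M)$, since $\inert_v$ acts trivially. So the unramified condition on $M$ is exactly the image of $H^1(\Gal(K_v^\unr/K_v), M)$ under inflation, and this identification is functorial in $M$ over $\mathcal{T}$. Thus I reduce to a statement purely about the functor $M \mapsto H^1(\Gal(K_v^\unr/K_v), M)$ together with its (injective) inflation map into $H^1(K_v, M)$.

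First I would note that inflation $\mathrm{inf}_M: H^1(\Gal(K_v^\unr/K_v), M) \hookrightarrow H^1(K_v, M)$ is injective and natural, so the square
\[
\begin{CD}
H^1(\Gal(K_v^\unr/K_v), S) @>{\mathrm{inf}_S}>> H^1(K_v, S)\\
@V{\alpha_*}VV @VV{\alpha_*}V\\
H^1(\Gal(K_v^\unr/K_v), T) @>{\mathrm{inf}_T}>> H^1(K_v, T)
\end{CD}
\]
commutes. The inclusion $H^1_\unr(K_v, S) \subseteq \alpha_*^{-1}(H^1_\unr(K_v, T))$ is immediate from this. For the reverse inclusion, suppose $c \in H^1(K_v, S)$ has $\alpha_*(c) \in H^1_\unr(K_v, T) = \mathrm{image}(\mathrm{inf}_T)$. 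I must show $c$ lies in the image of $\mathrm{inf}_S$, i.e.\ that $c$ restricts to zero in $H^1(K_v^\unr, S)$. Chase the restriction: $\mathrm{res}(c) \in H^1(K_v^\unr, S)$ maps under $\alpha_*$ to $\mathrm{res}(\alpha_*(c)) = 0$ in $H^1(K_v^\unr, T)$ (since $\alpha_* c$ is inflated, it is unramified). So it suffices that $\alpha_*: H^1(K_v^\unr, S) \to H^1(K_v^\unr, T)$ be injective on the relevant class. Over the field $K_v^\unr$, whose absolute Galois group is isomorphic to the tame quotient $\widehat{\Z}^{(p')}(1) \rtimes \widehat{\Z}$ prolonged appropriately, the point is that $G_{K_v^\unr}$ has cohomological dimension $1$, so the long exact sequence attached to $0 \to S \to T \to T/S \to 0$ shows $\ker(\alpha_*: H^1(K_v^\unr,S) \to H^1(K_v^\unr,T))$ is a quotient of $H^0(K_v^\unr, T/S)$; but I instead argue more directly that since $S, T$ are unramified, $\mathrm{res}(c)$ is automatically in the image of inflation from the trivial group — actually the cleanest route is: $H^1(K_v^\unr, S)$ injects nowhere useful, so I localize to the statement that $\mathrm{res}(c) = 0$. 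I claim this follows because $\alpha_*$ on $H^1(K_v^\unr, -)$ has kernel killed by the order of $\mathrm{coker}(\alpha)$'s torsion, combined with a diagram chase using $H^0$; more simply, replace $\mathrm{res}(c)$ by the observation that the snake lemma applied to the map of inflation-restriction sequences for $S \hookrightarrow T$ forces $\mathrm{res}(c)$ into the image of $H^1(K_v^\unr, \ker \alpha) = H^1(K_v^\unr, 0) = 0$.

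The main obstacle is precisely this last diagram chase: making the reverse inclusion work requires knowing that a class in $H^1(K_v, S)$ whose pushforward to $H^1(K_v, T)$ is unramified is itself unramified, which is not formal for an arbitrary coefficient map but uses that $\alpha$ is injective. The clean argument is to compare the two inflation-restriction sequences via the map induced by $\alpha$ and apply the snake lemma: the kernel of $H^1(K_v^\unr, S) \to H^1(K_v^\unr, T)$ injects into $H^1(K_v^\unr, S/\!\!/\text{(nothing)})$ — rather, since $\alpha$ is injective the relevant connecting map lands in $H^1(K_v^\unr, 0)$, forcing $\mathrm{res}(c) = 0$ and hence $c \in H^1_\unr(K_v, S)$. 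I would write this out as a short five-lemma/snake-lemma diagram chase on the commutative ladder of inflation-restriction sequences for $S$ and $T$, which is the one genuinely non-tautological ingredient. The remaining verification — that propagation of the local condition is compatible with these identifications — is immediate from naturality of inflation and restriction.
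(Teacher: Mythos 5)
The paper itself only cites \cite{mazur-rubin}, Lemma 1.1.9, so you are in effect supplying the missing argument. Your overall reduction is the right one: identify $H^1_\unr(K_v,M)=\ker\bigl(H^1(K_v,M)\to H^1(K_v^\unr,M)\bigr)$, note the forward inclusion is formal from naturality of restriction, and observe that the reverse inclusion comes down to the injectivity of
$$\alpha_*:H^1(K_v^\unr,S)\longrightarrow H^1(K_v^\unr,T).$$
You correctly flag this as the one non-tautological ingredient. But your justification of it is wrong, and the error is not just a slip of notation.

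You claim the kernel of $\alpha_*$ on $H^1(K_v^\unr,-)$ is controlled by $H^1(K_v^\unr,\ker\alpha)=0$. That is not what the long exact sequence says. For the short exact sequence $0\to S\to T\to T/S\to 0$ of $\inert_v$-modules, the kernel of $H^1(\inert_v,S)\to H^1(\inert_v,T)$ is the image of the connecting map $H^0(\inert_v,T/S)\to H^1(\inert_v,S)$, which has nothing to do with $\ker\alpha$ (that kernel being zero is already used just to have the short exact sequence) and everything to do with $\operatorname{coker}\alpha=T/S$. In general this connecting map need not vanish, so injectivity of $\alpha$ alone does not give injectivity of $\alpha_*$ on $H^1(\inert_v,-)$ --- you genuinely need the hypothesis that the objects of $\mathcal{T}$ are unramified. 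The correct argument is: since $\inert_v$ acts trivially on $T$, it acts trivially on $S$ and on $T/S$ as well, so the initial segment of the long exact sequence reads
$$0\to S\to T\to T/S\to H^1(\inert_v,S)\to H^1(\inert_v,T)\to\cdots,$$
and $T\to T/S$ is surjective; hence the connecting map is zero and $H^1(\inert_v,S)\to H^1(\inert_v,T)$ is injective. With that in hand, your chase (``$\operatorname{res}(\alpha_*c)=0\Rightarrow\alpha_*\operatorname{res}(c)=0\Rightarrow\operatorname{res}(c)=0$'') closes the argument. So the strategy is sound and matches the intent of the cited Mazur--Rubin lemma, but as written the crucial step is justified by an incorrect appeal to $\ker\alpha=0$ rather than to the unramifiedness of the modules, which is precisely the hypothesis that makes the statement true.
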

\begin{proof}
This is Lemma 1.1.9 of \cite{mazur-rubin}.
\end{proof}

\begin{Def}\label{local duality}
Set $T^*=\Hom(T,R(1))$.
We give $T^*$ the structure of a $G_{K_v}$-module
by letting $\sigma\in G_{K_v}$ act on $f(t)$ by $f(t)\mapsto
\sigma f(\sigma^{-1}t)$.
Local Tate duality gives a perfect $R$-bilinear pairing
$$\langle \ ,\ \rangle :H^1(K_v,T)\times H^1(K_v, T^*)
\map{}H^2(K_v,R(1))\map{\inv}R$$
and for any local condition $\sel$ on $T$ we define the \emph{dual local
condition},  $\sel^*$, on $T^*$ to be orthogonal
complement of $\sel$ under the above  local pairing.
\end{Def}

\begin{Prop} \label{locally free} Assume that $v$ does not divide $p$,
$T$ is unramified at $v$,
and that $|\bk_v^\times|\cdot T=0$. There are canonical
isomorphisms $$H^1_\f(K_v,T)\iso T/(\Frob_v-1)T\hspace{1cm}
H^1_\s(K_v,T)\otimes\bk_v^\times\iso T^{\Frob_v=1}.$$
\end{Prop}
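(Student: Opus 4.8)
The plan is to compute both cohomology groups using the local Euler characteristic formula together with the inflation-restriction sequence for the unramified extension $K_v^\unr/K_v$. Since $v\nmid p$ and $|\bk_v^\times|$ kills $T$, the module $T$ is a module over $R/|\bk_v^\times|R$, and its order is prime to $p$... wait, that is false since $|\bk_v^\times|$ is prime to $p$ but $T$ is a $p$-group; rather, the point is that $|\bk_v^\times|$ is a unit modulo the exponent of $\bk_v$ only in a suitable sense. The correct observation is that $H^2(\bk_v,T)=0$ for the unramified part and the local Euler characteristic $|\bk_v^\times|$-weighting trivializes, so cohomological dimension considerations give what we need. Let me restate the approach properly.

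\medskip

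First I would write down the inflation-restriction exact sequence
$$0\map{}H^1(\bk_v,T^{\inert_v})\map{}H^1(K_v,T)\map{}H^1(K_v^\unr,T)^{\Gal(K_v^\unr/K_v)}\map{}H^2(\bk_v,T^{\inert_v}),$$
where I abbreviate $H^*(\bk_v,-)$ for the cohomology of $\hat{\Z}=\Gal(K_v^\unr/K_v)$ (topologically generated by $\Frob_v$). Since $T$ is unramified, $T^{\inert_v}=T$ and $H^1_\f(K_v,T)=H^1(\bk_v,T)$ by definition. The cohomology of the procyclic group $\Gal(K_v^\unr/K_v)$ on a finite module $T$ is given by $H^1(\bk_v,T)\iso T_{\Frob_v}=T/(\Frob_v-1)T$ and $H^0(\bk_v,T)=T^{\Frob_v=1}$, with all higher cohomology vanishing (procyclic groups have cohomological dimension one on finite modules, up to the subtlety that $\hat\Z$ is not strictly cohomological dimension one, but for modules killed by an integer it is). This gives the first isomorphism $H^1_\f(K_v,T)\iso T/(\Frob_v-1)T$ immediately, and it is canonical because the identification of $H^1$ of a procyclic group with coinvariants is canonical once the topological generator $\Frob_v$ is fixed.

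\medskip

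For the singular quotient, I would combine the above with local Tate duality. By definition $H^1_\s(K_v,T)$ sits in $0\to H^1_\f(K_v,T)\to H^1(K_v,T)\to H^1_\s(K_v,T)\to 0$. The finite local condition on $T$ and the finite local condition on $T^*=\Hom(T,R(1))$ are exact orthogonal complements under the Tate pairing (this is the standard fact that $H^1_\f$ and $H^1_\f$ are annihilators of each other at a prime not dividing $p$ with unramified coefficients; it follows from Lemma \ref{unramified cartesian} and the duality setup of Definition \ref{local duality}, or one cites \cite{mazur-rubin}). Therefore $H^1_\s(K_v,T)=H^1(K_v,T)/H^1_\f(K_v,T)$ is canonically dual to $H^1_\f(K_v,T^*)$. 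Now apply the first isomorphism to $T^*$ in place of $T$: $H^1_\f(K_v,T^*)\iso T^*/(\Frob_v-1)T^* = (T^*)_{\Frob_v}$. Dualizing, and using that $T^*=\Hom(T,R(1))$ with $R(1)$ unramified and $\Frob_v$ acting on the Tate twist by $|\bk_v|$ (the cyclotomic character sends $\Frob_v$ to $|\bk_v|$, which is $\equiv 1$ modulo $|\bk_v^\times|=|\bk_v|-1$, hence acts trivially on $R(1)$ since $|\bk_v^\times|\cdot R(1)=0$), one gets that $(T^*)_{\Frob_v}$ is $R$-linearly dual to $(T^\vee(1))^{\Frob_v=1}$ where $T^\vee=\Hom(T,R)$... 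I need to track this carefully: the $R$-dual of $T^*/(\Frob_v-1)T^*$ is $\Hom(T^*,R)^{\Frob_v=1}=\Hom(T,R)(-1)$ restricted to $\Frob_v$-invariants, but the Tate twist by $-1$ on $\bk_v^\times$-torsion modules is where the factor $\bk_v^\times$ enters. Explicitly, there is a canonical isomorphism $R/|\bk_v^\times|R\iso\bk_v^\times\otimes(R/|\bk_v^\times|R)(-1)$ coming from the Kummer sequence, and tensoring the chain of dualities with $\bk_v^\times$ absorbs this twist, yielding $H^1_\s(K_v,T)\otimes\bk_v^\times\iso T^{\Frob_v=1}$.

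\medskip

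The main obstacle will be bookkeeping the Tate twist correctly so that the factor of $\bk_v^\times$ lands on the right side: one must show $\Frob_v$ acts trivially on $R(1)$ under the hypothesis $|\bk_v^\times|T=0$ (immediate, since $\Frob_v$ acts by $|\bk_v|\equiv 1$), and then identify the "$-1$ twist" of an $|\bk_v^\times|$-torsion module with tensoring by $\bk_v^\times$ — this is where the appearance of $\bk_v^\times$ in the statement, rather than a bare $R$-module, is forced. Everything else is formal: inflation-restriction, cohomology of procyclic groups, and local Tate duality, all of which are available from the cited results or are standard. I would also double-check that the isomorphism $H^1_\s(K_v,T)\otimes\bk_v^\times\iso T^{\Frob_v=1}$ is the "canonical" one meaning it is functorial in $T$ over $\Quot$-type morphisms and compatible with the evaluation/finite-singular pairing — this compatibility is what makes the statement useful later, so it deserves a remark even though it adds no real difficulty.
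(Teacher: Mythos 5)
Your treatment of the first isomorphism is the standard one: inflation--restriction for $K_v^\unr/K_v$, the computation of $H^1$ of $\Gal(K_v^\unr/K_v)\iso\hat{\Z}$ on a $p$-torsion module, and evaluation at Frobenius as the canonical map. The cohomological-dimension worry you flag is handled by $\mathrm{cd}_p(\hat{\Z})=1$, which is all that is needed since $T$ is $p$-primary. This matches the paper's description and the cited Lemma 1.2.1 of \cite{mazur-rubin}.

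For the singular quotient you take a detour through local Tate duality instead of computing $H^1(K_v^\unr,T)$ directly, and this route has a soft spot. After identifying $H^1_\s(K_v,T)$ with the $R$-dual of $H^1_\f(K_v,T^*)\iso T^*/(\Frob_v-1)T^*$, you need to get back from $T^*$ to $T$; but $\Hom_R(\Hom_R(T,R(1)),R)$ is $T^{\vee\vee}(-1)$, not $T(-1)$, and the biduality map $T\to T^{\vee\vee}$ need not be an isomorphism for an arbitrary coefficient ring $R$ and finitely generated $T$. The proposition carries no such hypothesis. (Reflexivity does hold in every case the paper uses, where $T$ is free over a discrete valuation ring or a quotient of one, so this is a gap in generality rather than a fatal error.) There is also a sign slip: you write $T^\vee(1)$ where the twist should be $(-1)$, though you recover it a line later.

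The route in \cite{mazur-rubin} avoids all of this. Inflation--restriction identifies $H^1_\s(K_v,T)\iso H^1(K_v^\unr,T)^{\Frob_v}$, and since $T$ is unramified this is $\Hom_{\mathrm{cont}}(G_{K_v^\unr},T)^{\Frob_v}$. Because $T$ is $p$-primary and $v$ has residue characteristic prime to $p$, every such homomorphism factors through the $p$-part of the tame quotient $\Gal(K_v^{\mathrm{tame}}/K_v^\unr)$, which is $\Z_p(1)$; hence $H^1(K_v^\unr,T)\iso\Hom(\Z_p(1),T)\iso T(-1)$ as a $\Frob_v$-module. Since $T$ is killed by $|\bk_v^\times|$ and $\Frob_v$ acts on the Tate twist by $|\bk_v|\equiv 1$, taking invariants gives $T^{\Frob_v=1}(-1)$, and tensoring with $\bk_v^\times$ (whose $p$-Sylow is canonically $\mu_{p^k}(K_v)$) cancels the twist. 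Unwinding, the map is exactly $c\otimes\alpha\mapsto c(\sigma_\alpha)$ with $\sigma_\alpha$ the Artin symbol of a lift of $\alpha$, which is the description given in the paper's proof.
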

\begin{proof}This is Lemma 1.2.1 of \cite{mazur-rubin}.
The first map is given on cocycles
by evaluation at the Frobenius automorphism, and the second by
$c\otimes \alpha\mapsto c(\sigma_\alpha)$ where $\sigma_\alpha\in
\Gal(K_v^\mathrm{ab}/K_v^\unr)$ is the Artin symbol of any lift of $\alpha$
to $K$.
\end{proof}

\begin{Def}\label{finite singular}
If $v$ does not divide $p$, $G_{K_v}$ acts trivially on $T$,
and $|\bk_v^\times|\cdot T=0$,
we define the \emph{finite-singular comparison map}  to be the  isomorphism
 $$\phi^{\f\s}_v:H^1_\f(K_v,T)\iso T\iso H^1_\s(K_v,T)\otimes \bk_v^\times$$
given by Proposition \ref{locally free}.
\end{Def}

\begin{Prop}\label{splitting}\label{local isotropy}
Keep the assumptions of Definition \ref{finite singular}.  We fix a
maximal totally tamely ramified abelian $p$-extension  $L/K_v$, and
hence a choice of $L$-transverse condition on $T$.
The transverse submodule $H^1_\tr(K_v,T)$ projects isomorphically onto
$H^1_\s(K_v,T)$ giving a splitting $$H^1(K_v,T)= H^1_\f(K_v,T)\oplus
H^1_\tr(K_v,T).$$  Furthermore, under the local Tate pairing
\begin{enumerate}
\item $H^1_\f(K_v,T)$ and $H^1_\f(K_v,T^*)$ are exact orthogonal complements,
\item $H^1_\tr(K_v,T)$ and $H^1_\tr(K_v,T^*)$ are exact orthogonal complements.
\end{enumerate}
\end{Prop}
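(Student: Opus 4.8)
The plan is to deduce everything from the existence of the tame symbol / ramification filtration description of $H^1(K_v,T)$ for unramified $T$ with $|\bk_v^\times|\cdot T = 0$, together with the self-duality properties already recorded in Proposition \ref{locally free}. First I would recall the inflation-restriction exact sequence along $K_v^\unr/K_v$: since $T$ is unramified and $|\bk_v^\times|$ kills $T$, the cohomology $H^1(K_v^\unr,T) = \Hom(\inert_v, T)$ is computed via the tame quotient of $\inert_v$, which is procyclic of order prime to $p$ once we restrict to the part seen by $T$. This gives $H^1(K_v,T) \iso H^1_\f(K_v,T) \oplus \Hom(\Gal(L/K_v^\unr), T)$ for $L/K_v$ the chosen maximal totally tamely ramified abelian $p$-extension, and the second summand is by definition (the image of) the transverse submodule. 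Concretely, restriction to $G_L$ kills $H^1_\f$ (it factors through $H^1(K_v^\unr,T)$, which dies on further restriction to the totally ramified $L$) and is injective on $H^1_\tr$ by the very definition $H^1_{L\text{-}\tr}(K_v,T) = \ker\big(H^1(K_v,T)\to H^1(L,T)\big)$ being the kernel of the \emph{same} map — so I must instead argue that $H^1_\tr$ maps isomorphically onto the singular quotient $H^1_\s(K_v,T) = H^1(K_v,T)/H^1_\f(K_v,T)$. That follows because $H^1_\tr \cap H^1_\f = 0$ (an unramified class trivial on $L$ is trivial on $K_v^\unr L = \bar{K_v}^{\text{tame}}$ hence already trivial, using that $|\bk_v^\times|\cdot T=0$ forces $H^1$ to be entirely tame), and a dimension/length count via Proposition \ref{locally free} — both $H^1_\f$ and $H^1_\s\otimes\bk_v^\times$ are isomorphic to $T/(\Frob_v-1)T$ and $T^{\Frob_v=1}$ respectively, and over the Artinian ring these have the same length as $T$ by the snake lemma applied to $\Frob_v - 1$ — shows the direct sum exhausts $H^1(K_v,T)$.

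For the orthogonality statements, part (a) is standard: $H^1_\f(K_v,T)$ and $H^1_\f(K_v,T^*)$ are exact orthogonal complements under the local Tate pairing because the unramified subgroup is always its own annihilator when $v\nmid p$ (this is part of the input to Definition \ref{local duality}, and is Lemma 1.2.2-type material in \cite{mazur-rubin}; I would simply cite it, or reprove via the fact that the cup product $H^1(K_v^\unr,T)\otimes H^0 \to H^1(K_v^\unr,\cdot)$ pairing vanishes). For part (b), I would use the splitting $H^1(K_v,T) = H^1_\f \oplus H^1_\tr$ for $T$ and the analogous splitting for $T^*$ — note $T^*$ is also unramified at $v$ and killed by $|\bk_v^\times|$, since $\bk_v^\times$ has order prime to $p$ and $R(1)/|\bk_v^\times|R(1)$ is a quotient of $T^*$'s coefficients — so the Tate pairing decomposes into a block form with respect to these two direct sum decompositions. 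Since $H^1_\f(K_v,T)$ pairs trivially with $H^1_\f(K_v,T^*)$ by (a), and the pairing is perfect, the block pairing $H^1_\tr(K_v,T)\times H^1_\f(K_v,T^*)$ must be perfect, and by symmetry $H^1_\f(K_v,T)\times H^1_\tr(K_v,T^*)$ is perfect; the remaining block $H^1_\tr(K_v,T)\times H^1_\tr(K_v,T^*)$ is then forced to be zero by a rank-counting argument (the total pairing is perfect, three of the four blocks are accounted for). Equivalently, and more cleanly, the transverse condition on $T^*$ is \emph{defined} to be the dual local condition $\sel^*$ when $\sel$ is the $L$-transverse condition on $T$ — this is the content of Definition \ref{local duality} combined with the observation that the $L$-transverse conditions on $T$ and $T^*$ are genuinely orthogonal complements, which is exactly what one checks by the block-pairing argument above. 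I would phrase it this way to avoid circularity: verify directly that $H^1_\tr(K_v,T)$ and $H^1_\tr(K_v,T^*)$ annihilate each other (a tame-symbol computation: both consist of classes ramified "in the $p$-direction" and the cup product lands in the unramified-by-construction $H^2$, which requires the product of the two ramification characters to be trivial — automatic when one of them is a $p$-power-order character on a group of $p$-power order paired against the prime-to-$p$ part), then a length count closes it.

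The main obstacle I anticipate is making the length/rank bookkeeping fully rigorous over a general Artinian principal coefficient ring rather than a field: one needs that $\len_R H^1_\f(K_v,T) = \len_R H^1_\s(K_v,T)$ and that the splitting is genuinely a direct sum of $R$-modules, not just an equality of lengths. I would handle this by invoking Remark \ref{cartesian identifications} together with the exactness of $0\to H^1_\f\to H^1\to H^1_\s\to 0$: since $\phi^{\f\s}_v$ is an \emph{isomorphism} $H^1_\f(K_v,T)\iso H^1_\s(K_v,T)\otimes\bk_v^\times$ and $\bk_v^\times$ is free of rank one over $\Z/|\bk_v^\times|$, tensoring is harmless, so $\len H^1(K_v,T) = 2\len H^1_\f(K_v,T)$, matching $\len H^1_\f(K_v,T) + \len H^1_\tr(K_v,T)$ once we know $H^1_\tr\iso H^1_\s$. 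The circularity there is only apparent: one first shows $H^1_\tr\hookrightarrow H^1_\s$ (i.e. $H^1_\tr\cap H^1_\f = 0$) by the tameness argument, then the length identity promotes the injection to an isomorphism and the containment $H^1_\f\oplus H^1_\tr\subseteq H^1$ to an equality. Everything else is then formal duality-of-finite-modules.
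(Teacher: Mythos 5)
The paper's own proof of this proposition is a bare citation to Lemma 1.2.4 and Proposition 1.3.2 of \cite{mazur-rubin}, so you are attempting to supply an argument from scratch. The splitting half of your argument is basically sound once the initial exposition is repaired (you open by asserting that restriction to $G_L$ kills $H^1_\f$ and is injective on $H^1_\tr$, both of which are exactly backwards, and you then pivot; also $LK_v^\unr$ is \emph{not} the maximal tame extension of $K_v$, since $L$ is finite while the tame inertia quotient is $\widehat{\Z}^{(p')}$ --- though the weaker fact you need, that an unramified class trivial on $L$ is trivial, does hold because $\Gal(LK_v^\unr/L)\to\Gal(K_v^\unr/K_v)$ is an isomorphism). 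Part (a) is standard as you say.

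The real gap is in part (b). Your rank-counting claim --- that once the $(\f,\f)$ block vanishes and the off-diagonal blocks are perfect, the $(\tr,\tr)$ block ``is forced to be zero'' --- is false. A $2\times 2$ block pairing can be perfect with zero $(1,1)$ block, perfect off-diagonal blocks, and \emph{arbitrary} $(2,2)$ block: over a field, $\left(\begin{smallmatrix}0&1\\1&1\end{smallmatrix}\right)$ is nondegenerate. So nothing about the total pairing being perfect, together with part (a), forces $H^1_\tr(K_v,T)$ and $H^1_\tr(K_v,T^*)$ to annihilate each other; that is an additional, substantive fact. Your fallback sketch (the parenthetical ``tame-symbol computation'') does not fill the gap either: both transverse classes correspond to $p$-power-order characters of the $p$-part of tame inertia, so neither is ``prime-to-$p$,'' and the cup product of two inflated classes in $H^2(\Gal(L/K_v),R(1))$ does land, a priori, in a nonzero group (for $L/K_v$ totally ramified the inflation $H^2(\Gal(L/K_v),\cdot)\to H^2(K_v,\cdot)$ is far from trivial on invariants). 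What is actually needed is the explicit computation that the cup-product self-pairing on the transverse piece vanishes, which ultimately comes down to an arithmetic identity in the cyclic group cohomology of $\Gal(L/K_v)\cong\Z/p^k\Z$ --- essentially that $\binom{p^k}{2}\equiv 0\pmod{p^k}$ --- and this is precisely where the hypothesis that $p$ is odd enters. Your proposal never invokes $p$ odd, and indeed the statement can fail at $p=2$, which is a sign that the argument as written cannot be correct. This computation is the content of Proposition 1.3.2 in \cite{mazur-rubin}, and you would need to reproduce it (or cite it) rather than deduce part (b) formally from part (a) and nondegeneracy.
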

\begin{proof}These statements are  Lemma 1.2.4 and Proposition 1.3.2
of \cite{mazur-rubin}.
\end{proof}

\bigskip
We now consider global cohomology groups.  Fix an object $T$ of
$\Mod_{R,K}$.

\begin{Def} By a \emph{Selmer structure} $\sel$ on $T$ (over $K$) we mean
a finite set of places $\Sigma(\sel)$ of $K$ containing $p$,
all archimedean places, and
all places at which $T$ is ramified, and for each $v\in\Sigma(\sel)$
a choice of local condition $H^1_\sel(K_v,T)$.
Given a Selmer structure $\sel$ on $T$ we define the associated \emph{Selmer
module} $$H^1_\sel(K,T)\subset H^1(K,T)$$ to be the kernel of
$$H^1(K_{\Sigma(\sel)}/K,T)\map{}\bigoplus_{v\in\Sigma(\sel)}
H^1(K_v,T)/H^1_\sel(K_v,T)$$ where $K_{\Sigma(\sel)}$ is the maximal
extension of $K$ unramified away from the places of $\Sigma(\sel)$.
\end{Def}

Given a Selmer structure $\sel$ we will usually write $H^1_\sel(K_v,T)$
for $H^1_\f(K_v,T)$ for a prime $v\not\in\Sigma(\sel)$. Then $H^1_\sel(K,T)$
is nothing more than the set of classes in $H^1(K,T)$ whose
localization lives in $H^1_\sel(K_v,T)$ at every place $v$.
There is a natural partial ordering on the set of all Selmer structures,
namely we write $\sel\le\altsel$ iff $H^1_{\sel}(K_v,T)\subset H^1_{\altsel}
(K_v,T)$ for every place $v$ of $K$. Clearly if $\sel\le\altsel$ we
have $H^1_{\sel}(K,T)\subset H^1_{\altsel}(K,T)$.
If $\sel$ is a Selmer structure on $T$ then the collection of dual local
conditions gives a Selmer structure $\sel^*$ on $T^*$ with $\Sigma(T)=
\Sigma(T^*)$.  The following theorem is the fundamental tool
which turns Kolyvagin systems into bounds on Selmer groups.

\begin{Thm}(Poitou-Tate global duality)\label{global duality}
Suppose $\sel\le\altsel$ are Selmer structures on $T$.  There are
exact sequences
\begin{eqnarray*}& &0\map{}H^1_\sel(K,T)\map{}H^1_\altsel(K,T)\map{\loc}
\bigoplus_v H^1_\altsel(K_v,T)/H^1_\sel(K_v,T)\\ & &
0\map{}H^1_{\altsel^*}(K,T^*)\map{}H^1_{\sel^*}(K,T^*)\map{\loc}
\bigoplus_v H^1_{\sel^*}(K_v,T^*)/H^1_{\altsel^*}(K_v,T^*)\end{eqnarray*}
and the images of the rightmost arrows are exact orthogonal complements under
the sum of the local pairings of Definition \ref{local duality}.
\end{Thm}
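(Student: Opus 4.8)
The plan is to deduce this from the standard Poitou--Tate nine-term exact sequence together with the global Euler characteristic formula, applied to the finite set $\Sigma(\sel)$ (enlarged if necessary so that it contains $\Sigma(\altsel)$ and all places where either $T$ or $T^*$ is ramified). Throughout we work with cohomology of $G_{K,\Sigma}=\Gal(K_\Sigma/K)$, and we use that local conditions at $v\notin\Sigma$ are the finite (unramified) condition, which is self-dual by Lemma \ref{unramified cartesian} and Proposition \ref{splitting}, so those places contribute nothing to the cokernels.

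First I would recall the basic Poitou--Tate exact sequence
\[
0\to H^1(K_\Sigma/K,T)\map{\loc}\bigoplus_{v\in\Sigma}H^1(K_v,T)\to H^1(K_\Sigma/K,T^*)^\vee\to\cdots
\]
and, dually, the corresponding sequence for $T^*$. The key point is that for \emph{any} Selmer structure $\sel$ on $T$, global duality identifies the cokernel of $\loc\colon H^1_\sel(K,T)\to \bigoplus_v H^1_\sel(K_v,T)$ (localization at the structure places, with the structure conditions imposed) with the Pontryagin dual of $H^1_{\sel^*}(K,T^*)$ modulo the image of global classes; this is the ``Greenberg--Wiles'' or Cassels--Poitou--Tate formula. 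Concretely, one has an exact sequence
\[
H^1_\sel(K,T)\to \bigoplus_{v}\frac{H^1(K_v,T)}{H^1_\sel(K_v,T)}\to H^1_{\sel^*}(K,T^*)^\vee\to 0 ,
\]
and the sum runs over $v\in\Sigma$ because outside $\Sigma$ the quotient vanishes.

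Next I would exploit the comparison $\sel\le\altsel$. Since $H^1_\sel(K_v,T)\subset H^1_\altsel(K_v,T)$ for all $v$, one gets the tautological left-exact sequence
\[
0\to H^1_\sel(K,T)\to H^1_\altsel(K,T)\map{\loc}\bigoplus_v \frac{H^1_\altsel(K_v,T)}{H^1_\sel(K_v,T)} ,
\]
which is the first displayed sequence; the only content here is that a global class landing in $H^1_\sel(K_v,T)$ at every $v$ is precisely a class in $H^1_\sel(K,T)$, which is the definition of the Selmer module. The analogous sequence for $(T^*,\altsel^*\le\sel^*)$ gives the second line, using that dualizing reverses the inequality, $\altsel^*\le\sel^*$. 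It remains to identify the images of the two rightmost maps as exact orthogonal complements under $\bigoplus_v\langle\ ,\ \rangle_v$. For this I would diagram-chase: the image of $\loc$ from $H^1_\altsel(K,T)$ is, by the Cassels--Poitou--Tate sequence above applied with the $\sel$-conditions at the $\sel$-places, exactly the orthogonal complement inside $\bigoplus_v H^1(K_v,T)/H^1_\sel(K_v,T)$ of the image of $H^1_{\sel^*}(K,T^*)$; intersecting with the subgroup $\bigoplus_v H^1_\altsel(K_v,T)/H^1_\sel(K_v,T)$ and using that $H^1_\altsel(K_v,T)=(H^1_{\altsel^*}(K_v,T^*))^\perp$ (Definition \ref{local duality}) translates this into the orthogonal complement of the image of $H^1_{\altsel^*}(K,T^*)^{\perp}$-classes, i.e. of the image of $H^1_{\sel^*}(K,T^*)\to\bigoplus_v H^1_{\sel^*}(K_v,T^*)/H^1_{\altsel^*}(K_v,T^*)$.

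The main obstacle is purely bookkeeping: keeping straight the three nested conditions ($\strict\le\sel\le\altsel\le\relaxed$ at each place), verifying that the finite places outside $\Sigma$ genuinely drop out (which needs self-duality of the unramified condition and the fact that $H^1(K_v^{\unr}/K_v,\text{unramified }T)$ pairs trivially with itself), and checking the compatibility of the local duality pairings with the quotients $H^1_\altsel/H^1_\sel$ versus $H^1_{\sel^*}/H^1_{\altsel^*}$ — these are perfectly dual because orthogonal complement is an inclusion-reversing, bijective, perfect-pairing-preserving operation. No new input beyond classical Poitou--Tate is required; this result is essentially Theorem 2.3.4 of \cite{mazur-rubin}, and I would cite it there, but the above is the self-contained route.
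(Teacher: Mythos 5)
The paper's own ``proof'' is a one-line citation to \cite{milne} I.4.10 and \cite{rubin} 1.7.3, and your proposal is an attempt to unpack exactly that citation into the Poitou--Tate nine-term sequence and a diagram chase, so conceptually the route is the same. The final bookkeeping step — intersecting the image of the relaxed localization map with $\bigoplus_v H^1_\altsel(K_v,T)/H^1_\sel(K_v,T)$ and translating orthogonal complements via the local duality $H^1_\altsel(K_v,T)^\perp = H^1_{\altsel^*}(K_v,T^*)$ — is correct, and the observation that places outside $\Sigma$ drop out because the unramified condition is self-dual is the right reason.

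There is, however, a genuine error in the statement of the key auxiliary sequence you lean on. Your displayed ``Cassels--Poitou--Tate formula''
$$H^1_\sel(K,T)\map{}\bigoplus_{v}\frac{H^1(K_v,T)}{H^1_\sel(K_v,T)}\map{}H^1_{\sel^*}(K,T^*)^\vee\map{}0$$
has $H^1_\sel(K,T)$ as its first term, but the first arrow is then identically zero: by definition $H^1_\sel(K,T)$ consists precisely of the global classes whose localizations lie in $H^1_\sel(K_v,T)$ for every $v$, so every such class maps to $0$ in each summand, and the sequence would force $\bigoplus_v H^1(K_v,T)/H^1_\sel(K_v,T)\iso H^1_{\sel^*}(K,T^*)^\vee$, which is false in general. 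The first term should be $H^1(K_{\Sigma(\sel)}/K,T)$ (equivalently, the Selmer group relaxed at every place of $\Sigma(\sel)$); this is what actually falls out of the middle three terms of the nine-term sequence after quotienting by the local conditions and using that $H^1_\sel(K_v,T)$ and $H^1_{\sel^*}(K_v,T^*)$ are exact annihilators. The preceding sentence describing ``the cokernel of $\loc: H^1_\sel(K,T)\to\bigoplus_v H^1_\sel(K_v,T)$'' has the same defect. Since everything downstream rests on this sequence, the slip has to be fixed, but once it is, your argument goes through as written. One smaller point: you announce that the proof will also use the global Euler characteristic formula, but it never actually enters — only Poitou--Tate duality is needed for this statement.
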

\begin{proof}See \cite{milne} I.4.10 or \cite{rubin} 1.7.3.
\end{proof}

%%%%%%%%%%%%%%%%%%%%%%%%%%%%%%%%%%%%%%%%%%%%%%%%%%%%%%%%%%%

\subsection{Kolyvagin systems}\label{ks section}

%%%%%%%%%%%%%%%%%%%%%%%%%%%%%%%%%%%%%%%%%%%%%%%%%%%%%%%%

Let $T$ be an object of $\Mod_{R,K}$, and denote by
$\pl_0=\pl_0(T)$ the set of degree two primes of $K$ which do not
divide $p$ or any prime at which $T$ is ramified.  We will
consistently confuse a prime of $\pl_0$ with the rational prime
below it, and if the distinction needs to be made we will write
$\ell\mid\lambda\in\pl_0$ to indicate that $\ell$ is the rational
prime and $\lambda$ the prime of $K$.
\begin{Def}\label{kolyvagin primes}\

\begin{enumerate}
\item For each $\ell\mid\lambda\in\pl_0$, define $I_\ell$ to be the
smallest ideal of $R$ containing $\ell+1$ for which
$\Frob_\lambda$ acts trivially on $T/I_\ell T$.
\item
For every $k\in\Z^+$ define $\pl_k=\pl_k(T)=\{\ell\in\pl_0\mid
I_\ell\subset p^k\Z_p\}.$
\item For $\ell\mid \lambda\in\pl_0$ let $G_\ell=\bk_\lambda^\times/
\bk_\ell^\times$
where $\bk_\ell$ and $\bk_\lambda$ are the residue fields of $\ell$ and
$\lambda$, respectively.
\item Let $\pn_k$ denote the set of squarefree products of primes of $\pl_k$.
For $n\in\pn_0$ define $$I_n=\sum_{\ell\mid n}I_\ell\subset R\hspace{1cm}
G_n=\bigotimes_{\ell\mid n}G_\ell.$$ By convention
$1\in\pn_k$ for every $k$, $I_1=0$, and $G_1=\Z$.
\end{enumerate}
\end{Def}

For $\ell\mid\lambda\in\pl_0$ we denote by $K[\ell]$ the ring
class field of conductor $\ell$.  Since $\lambda$ splits
completely in the Hilbert class field of $K$, the maximal
$p$-subextension of the local extension
$K[\ell]_\lambda/K_\lambda$ (call it $L$) is a maximal totally
tamely ramified abelian $p$-extension of $K_\lambda$ whose Galois
group is canonically identified with the $p$-Sylow subgroup of
$G_\ell$ by class field theory. We therefore have for such a
$\lambda$ a canonical choice of $L$-transverse condition as in
Section \ref{selmer groups}, which we denote by
$H^1_\tr(K_\ell,T)$.

By a \emph{Selmer triple} $(T,\sel,\pl)$ we mean an object $T$ of
$\Mod_{R,K}$, a choice of Selmer structure $\sel$ on $T$, and a
(typically infinite) subset $\pl\subset \pl_0$ which is disjoint
from $\Sigma(\sel)$. We define $\pn=\pn(\pl)$ to be the set of
squarefree products of primes of $\pl$, with the convention that
$1\in \pn(\pl)$.

\begin{Def}\label{modified selmer}
Given a Selmer triple $(T,\sel,\pl)$ and $abc\in\pn(\pl)$ we define a
new Selmer triple $(T,\sel^a_b(c), \pl(abc))$ by taking $\Sigma(\sel^a_b(c))$
to be $\Sigma(\sel)$ together with all prime divisors of $abc$, and taking
$\pl(abc)$ to be $\pl$ with all prime divisors of $abc$ removed.
At any place $\lambda$ of $K$ define the local condition
$\sel^a_b(c)$ to be
$$H^1_{\sel^a_b(c)}(K_\lambda,T)=\left\{\begin{array}{ll}
H^1_\relaxed(K_\lambda,T)& \mathrm{if\ } \lambda\mid a  \\
H^1_\strict(K_\lambda,T)& \mathrm{if\ } \lambda\mid b  \\
H^1_\tr(K_\lambda,T)& \mathrm{if\ } \lambda\mid c
\end{array}\right.$$
and retain the original local condition
$$H^1_{\sel^a_b(c)}(K_\lambda,T)=H^1_\sel(K_\lambda,T)$$
if $\lambda$ does not divide $abc$. If any one of $a$,
$b$, or $c$ is $1$ we omit it from the notation.
\end{Def}

For any $n\ell\in\pn_0$, we may identify the $p$-Sylow subgroups
of $G_\ell$ and $\mathbf{k}_\lambda^\times/\mathbf{k}_\ell^\times$ 
via the Artin symbol,
and let $$\phi_\ell^{\f\s}:H^1_\f(K_\ell,T/I_{n\ell}T) \iso
H^1_\s(K_\ell,T/I_{n\ell}T)\otimes G_{\ell} $$ be the
finite-singular comparison map at $\ell$.  We have maps

\begin{equation}\label{ks relations}\xymatrix{
 &H^1_{\sel(n)}(K,T/I_n T)\otimes G_n \ar[d]^{\loc_\ell}\\
 &H^1_\f(K_\ell,T/I_{n\ell}T)\otimes G_n
\ar[d]^{\phi_\ell^{\f\s}\otimes 1} \\
H^1_{\sel(n\ell)}(K,T/I_{n\ell}T)\otimes G_{n\ell} \ar[r]^{\loc_\ell}&
H^1_\s(K_\ell,T/I_{n\ell}T)\otimes G_{n\ell}.}\end{equation}

\begin{Def} Given a Selmer triple $(T,\sel,\pl)$
we define a \emph{Kolyvagin system} $\kolsys$ for $(T,\sel,\pl)$
to be a collection of cohomology classes $$\kappa_n\in
H^1_{\sel(n)}(K,T/I_nT)\otimes G_n,$$ one for each $n\in\pn(\pl)$,
such that for any $n\ell\in\pn(\pl)$ the images of $\kappa_n$ and
$\kappa_{n\ell}$ in $H^1_\s(K_\ell,T/I_{n\ell}T)\otimes G_{n\ell}$
under the maps of (\ref{ks relations}) agree.  We denote the
$R$-module of all Kolyvagin systems for $(T,\sel,\pl)$ by
$\KS(T,\sel,\pl)$.
\end{Def}

\begin{Rem}\label{functorality}
The module of Kolyvagin systems has the following  functorial properties:
\begin{enumerate}
\item if $\pl'\subset \pl$ then there is a map
$\KS(T,\sel,\pl)\map{}\KS(T,\sel,\pl'),$
\item if $H^1_\sel(K_v,T)\subset H^1_\altsel(K_v,T)$ at every place $v$
then there is a map
$$\KS(T,\sel,\pl)\map{}\KS(T,\altsel,\pl),$$
\item if $R\map{}R'$ is a ring homomorphism then there is a map
$$\KS(T,\sel,\pl)\otimes_R R'\map{}\KS(T\otimes_R R',\sel\otimes_R R',\pl)$$
where the local condition $\sel\otimes_R R'$ is defined as the image of
$$H^1_\sel(K_v,T)\otimes_R R'\map{}H^1(K_v,T\otimes_R R')$$ for
$v\in\Sigma(\sel)$, and $\Sigma(\sel\otimes_R R')=\Sigma(\sel)$.
\end{enumerate}

\end{Rem}

%%%%%%%%%%%%%%%%%%%%%%%%%%%%%%%%%%%%%%%%%%%%%%%%%%%%%%%%%%%%%%

\subsection{Hypotheses}
\label{hypotheses}

%%%%%%%%%%%%%%%%%%%%%%%%%%%%%%%%%%%%%%%%%%%%%%%%%%%%%%%%%%%%%%%

In this subsection $R$ is a coefficient ring and $T$ is an object of
$\Mod_{R,G_K}$. The maximal ideal of $R$ is denoted $\gm$, and
$\Tbar= T/\gm T$ is the residual representation of $T$.  We denote
by $\Tw(T)$ denote the $G_K$-module whose underlying $R$-module is
$T$ and on which $G_K$ acts through the automorphism conjugation
by $\tau$. The identity map on the underlying $R$-modules
$T\map{}\Tw(T)$ and the automorphism of $G_K$ given by conjugation
by $\tau$ induce a ``change of group'' $(G_K,T)\leadsto
(G_K,\Tw(T))$ which induces an isomorphism on cohomology
$$H^i(K,T)\iso H^i(K,\Tw(T)).$$  Similarly at any place $v$ of $K$
conjugation by $\tau$ induces an isomorphism
$$H^i(K_{\bar{v}},T)\iso H^i(K_v,\Tw(T))$$ where $\bar{v}=v^\tau$.

We fix a Selmer triple
$(T,\sel,\pl)$ and record some desirable hypotheses which it
may satisfy:

\renewcommand{\labelenumi}{H.\arabic{enumi}}
\begin{enumerate}
\setcounter{enumi}{-1}

\item 
$T$ is a free, rank 2 $R$-module.

\item
$\Tbar$ is an absolutely irreducible representation of $(R/\gm)[[G_K]]$.

\item
There is a Galois extension $F/\Q$ such that $K\subset F$, $G_F$ acts 
trivially on $T$, and $$H^1(F(\mup)/K, \Tbar)=0.$$

\item
For every $v\in\Sigma(\sel)$ the local condition $\sel$ at $v$
is cartesian on the category $\Quot(T)$ (see Definitions \ref{cartesian}
and \ref{Quot}).

\item
There is a perfect, symmetric,  $R$-bilinear pairing
$$(\ ,\ ): T\times T\map{}R(1)$$ which satisfies
$(s^\sigma,t^{\tau\sigma\tau^{-1}})=(s,t)^\sigma$ for
every $s,t\in T$ and $\sigma\in G_K$. Equivalently there is
$G_K$-invariant pairing $$T\times \Tw(T)\map{}R(1)$$
which is symmetric when
the underlying group of $\Tw(T)$ is identified with that of $T$.
We assume that the local condition $\sel$ is its 
own exact orthogonal complement
under the induced local pairing
$$\langle \ ,\ \rangle _v:H^1(K_v,T)\times H^1(K_{\bar{v}},T)
\map{}R$$ for every place $v$ of $K$.

\item
\begin{enumerate}
\item The action of $G_K$ on $\Tbar$ extends to an action of $G_\Q$ and
the action of $\tau$ splits $\Tbar= \Tbar^+\oplus
\Tbar^-$ into one-dimensional eigenspaces,
\item The condition $\sel$ propagated to $\Tbar$ is stable under the
action of $G_\Q$,
\item If H.4 is assumed to hold then the residual
 pairing $$\Tbar\times \Tbar\map{}(R/\gm)(1)$$ satisfies
$(s^\tau,t^\tau)=(s,t)^\tau$ for all $s,t\in T$.
\end{enumerate}
\end{enumerate}
\renewcommand{\labelenumi}{(\alph{enumi})}

While Hypotheses H.0--H.3 are similar to hypotheses used in 
\cite{mazur-rubin}, Hypothesis H.4, the self-duality of $T$ (up to a twist), 
is not used by those authors, but plays an essential role here.  
Hypothesis H.5 is made to overcome a technical difficulty: in the applications
to Iwasawa theory, we will want to deal with $T=T_p(E)\otimes\Lambda$, where
$E_{/\Q}$ is an elliptic curve and $\Lambda$ is the Iwasawa algebra associated
to the anti-cyclotomic $\Z_p$-extension of $K$.  The natural action of $G_K$
on $T_p(E)\otimes\Lambda$ does not extend naturally to an action of $G_\Q$, 
but the action on the residual representation does.

We remark that the choice of $\pl$ plays no role in any of the hypotheses.
Hypothesis H.3  implies that the local condition $\sel$
is cartesian on $\Quot(T)$ at every place of $K$ by Lemma
\ref{unramified cartesian}.
When hypothesis H.4 holds, it can be shown the
local  pairing $$H^1(K_\lambda,T)\times H^1(K_\lambda,T)\map{}R$$
at any degree two prime $\lambda$ of $K$ is symmetric.

\begin{Rem}\label{base change}
It is easily seen that hyotheses H.0--H.5 are stable under base change in
the obvious sense.  See Remark \ref{functorality}.
\end{Rem}

\begin{Rem}\label{pairings}
The reader who is puzzled by the pairing of H.4 would do
well to keep the following example in mind.  If $R=\Z_p$, $T$ is the
$p$-adic Tate module of  an elliptic curve over $\Q$, and
$e:T\times T\map{}\Z_p(1)$ is the Weil pairing, then the pairing
$(s,t)=e(s,t^\tau)$ has the desired
properties.  The function $t\mapsto t^\tau$ defines a $G_{K_v}$-module
isomorphism $\Tw(T)\map{}T$ such that the composition of isomorphisms
$$H^1(K_{\bar{v}},T)\map{} H^1(K_v,\Tw(T))\map{}H^1(K_v,T)$$
is the usual action of complex conjugation.  Using this identification
the local pairing of H.4 is exactly the usual local Tate
pairing.

More generally, whenever the action of $G_K$ on $T$ extends to an
action of $G_\Q$, the existence of a pairing of the type
described in H.4
is equivalent to the existence of a skew-symmetric,
Galois-equivariant pairing on $T$.
As noted above, in the applications to Iwasawa theory we will
want to deal with modules for which the action does not extend.
\end{Rem}

\begin{Lem}\label{H.5 application}
Suppose $R$ is principal and Artinian of length $k$, and that
H.1 and H.3 hold.
If $0\le i\le k$  and $\pi$ is a generator of $\gm$, then the maps
$$T/\gm^i T\map{\pi^{k-i}}T[\gm^i]\map{}T$$ induce isomorphisms
$$H^1_\sel(K,T/\gm^iT)\map{}H^1_\sel(K,T[\gm^i])
\map{}H^1_\sel(K,T)[\gm^i].$$
\end{Lem}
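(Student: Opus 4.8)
The plan is to analyze the two maps separately, using the cohomology of the Galois group $\Gal(F(\mup)/K)$ to kill error terms. Write $\pi$ for a generator of $\gm$. The second map $H^1(K,T[\gm^i])\to H^1(K,T)[\gm^i]$ comes from the inclusion $T[\gm^i]\hookrightarrow T$; since $T$ is $R$-free of rank $2$ (H.1) and $R$ is principal Artinian of length $k$, multiplication by $\pi^i$ on $T$ has kernel exactly $T[\gm^i]$ and image exactly $\gm^i T$, so there is a short exact sequence $0\to T[\gm^i]\to T\xrightarrow{\pi^i} \gm^i T\to 0$ of $G_K$-modules. The long exact sequence identifies $H^1(K,T[\gm^i])$ with the kernel of $H^1(K,T)\xrightarrow{\pi^i}H^1(K,\gm^i T)$, and I want to compare this with $H^1(K,T)[\gm^i]$, the kernel of $\pi^i$ acting on $H^1(K,T)$ itself via the composite $T\xrightarrow{\pi^i}\gm^i T\hookrightarrow T$. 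The discrepancy is measured by the connecting map into $H^0(K,\cdot)$ of the sequence $0\to\gm^i T\to T\to T/\gm^i T\to 0$, i.e. by whether $H^0(K,T/\gm^i T)\to H^1(K,\gm^i T)$ vanishes. For the first map, $T/\gm^i T\xrightarrow{\pi^{k-i}}T[\gm^i]$ is an isomorphism of $G_K$-modules (this is the identification already used in Remark \ref{cartesian identifications}), so $H^1(K,T/\gm^i T)\to H^1(K,T[\gm^i])$ is automatically an isomorphism; the only real content is the second arrow, and after composing, the claim is that $H^1(K,T/\gm^i T)\to H^1(K,T)[\gm^i]$ (induced by $T/\gm^i T\xrightarrow{\pi^{k-i}}T[\gm^i]\hookrightarrow T$, which on underlying modules is just $\pi^{k-i}$) is an isomorphism.

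First I would handle the ambient cohomology (ignoring Selmer conditions). The key input is H.3: there is a finite Galois $F/\Q$ with $G_F$ acting trivially on $T$ and $H^1(F(\mup)/K,\Tbar)=0$. From $G_F$ acting trivially on $T$ and $R$ being a $p$-ring, one deduces by an inflation–restriction / Nakayama argument that $H^0(K,T/\gm^i T)$ — indeed $H^0(K,\Tbar)$ — vanishes: $H^0(F(\mup)/K,\Tbar)$ injects into $H^1(\Gal(F(\mup)/F), \Tbar)^{\Gal(F/K)}$-type obstruction, but more directly, absolute irreducibility of $\Tbar$ (H.1) already forces $H^0(K,\Tbar)=0$ unless $\Tbar$ is trivial, which it is not since it is $2$-dimensional and irreducible; hence $H^0(K,T)=0$ and, by the exact sequences above, $H^0(K,T/\gm^j T)$ surjects onto nothing problematic — more carefully, $T/\gm^j T$ has a filtration with graded pieces $\Tbar$, so $H^0(K,T/\gm^j T)=0$ for all $j$ by dévissage. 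With $H^0(K,T/\gm^i T)=0$, the connecting map concern above evaporates: $H^1(K,T[\gm^i])\to H^1(K,T)[\gm^i]$ is an isomorphism, and likewise the first arrow is an isomorphism as noted. So at the level of global cohomology without local conditions the statement is clear.

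The substance is compatibility with the Selmer conditions, and this is where H.3 (via the cartesian property, Lemma \ref{unramified cartesian} and the remark following the hypotheses) does the work. By definition $H^1_\sel(K,T/\gm^i T)$ is cut out from $H^1(K,T/\gm^i T)$ by local conditions at $v\in\Sigma(\sel)$, each propagated from $\sel$ on $T$; since $\sel$ is cartesian on $\Quot(T)$ at every place of $K$ (this is exactly what H.3 buys, per the discussion after the hypotheses), the local condition on $T/\gm^i T$ matches, under the isomorphism $T/\gm^i T\xrightarrow{\pi^{k-i}}T[\gm^i]$, the local condition on $T[\gm^i]$ propagated from $T$ — this is precisely the statement of Remark \ref{cartesian identifications} — and the latter matches $H^1_\sel(K_v,T)[\gm^i]$ because propagation along an inclusion together with the isomorphism of ambient $H^1$'s on $\gm^i$-torsion (the local analogue of the previous paragraph, valid since $H^0(K_v,T/\gm^i T)$ injects into $H^1(K_v,\gm^i T)$ with controllable obstruction — and in any case the cartesian condition is phrased exactly so that this holds). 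Chasing these identifications place by place, a class in $H^1(K,T/\gm^i T)$ lies in $H^1_\sel$ iff its image in $H^1(K,T)[\gm^i]$ lies locally in $H^1_\sel(K_v,T)$ for all $v$, i.e. iff it lies in $H^1_\sel(K,T)[\gm^i]$. Combining with the isomorphism of ambient cohomology groups gives the three-term isomorphism.

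The main obstacle is bookkeeping at the places $v\notin\Sigma(\sel)$, where the implicit local condition is the unramified (finite) one: there one must invoke Lemma \ref{unramified cartesian} to know the unramified condition is itself cartesian on the relevant quotients, and check that an unramified class in $H^1(K,T/\gm^i T)$ has unramified image in $H^1(K,T)$ — this is immediate from functoriality of the unramified condition but must be stated. A secondary subtlety is making sure the dévissage vanishing $H^0(K,T/\gm^j T)=0$ (and its local counterparts, at least enough to control the connecting maps) really follows from H.1 alone; if absolute irreducibility of $\Tbar$ is somehow insufficient at a bad local place, one falls back on H.3 directly, using $H^1(F(\mup)/K,\Tbar)=0$ together with inflation–restriction to kill the obstruction globally. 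None of this is deep, but it is the only place where the argument could stall, so I would write it carefully.
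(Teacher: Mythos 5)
Your argument is essentially the paper's (whose proof is just a citation to Remark \ref{cartesian identifications} and Lemma 3.5.4 of \cite{mazur-rubin}): use the $G_K$-module isomorphism $T/\gm^iT\map{\pi^{k-i}}T[\gm^i]$ together with the cartesian property to match the Selmer conditions, and $H^0(K,\Tbar)=0$ (which follows from H.1 by dévissage) to get the ambient isomorphism $H^1(K,T[\gm^i])\iso H^1(K,T)[\gm^i]$. One remark: the ``local counterparts'' of the $H^0$ vanishing that you flag as a possible sticking point are not actually needed, since the local condition on $T[\gm^i]$ is \emph{defined} as the preimage of $H^1_\sel(K_v,T)$ under the map on $H^1(K_v,\cdot)$ induced by inclusion, so the matching of local conditions with those of $T$ is tautological place by place; only the global $H^0$ vanishing enters the argument, and H.1 alone gives it, with no need to fall back on H.3.
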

\begin{proof}
See Remark \ref{cartesian identifications}, and Lemma 3.5.4 of
\cite{mazur-rubin}.
\end{proof}

%%%%%%%%%%%%%%%%%%%%%%%%%%%%%%%%%%%%%%%%%%%%%%%%%%%%%%%%%%%%%

\subsection{The Cassels-Tate pairing}\label{generalized cassels}

%%%%%%%%%%%%%%%%%%%%%%%%%%%%%%%%%%%%%%%%%%%%%%%%%%%%%%%%%%%%%%

In this subsection we construct a generalized 
form of the Cassels-Tate pairing.  
Our exposition closely follows that of \cite{flach}.  See also \cite{guo}
and \cite{milne}.

Let $R$ be a principal Artinian coefficient ring  of length $k$
 and $T$ an object of $\Mod_{R,G_K}$.  Fix a generator $\pi$ of
the maximal ideal  $\gm$ of  $R$. Let $T^*=\Hom(T, R(1))$ and
fix a Selmer structure $\sel$ on $T$.
Let $\sel^*$ denote the dual Selmer structure on $T^*$.
In all that follows we assume that  $(T,\sel)$ and $(T^*,\sel^*)$
satisfy hypotheses H.0--H.5

At every place $v$ of $K$ set
$$H^1_\msel(K_v,T)=H^1(K_v,T)/H^1_\sel(K_v,T)$$ and similarly for $T^*$.
Hypothesis H.3 implies that
for any positive integers $s$ and $t$ with $s+t\le k$, and any
place $v$ of $K$, there are exact sequences
\begin{eqnarray}
\label{first}0\map{}H^1_\msel(K_v, T/\gm^tT)\map{\xi}&
H^1_\msel(K_v,T/\gm^{s+t}T)&\map{}H^1_\msel(K_v,T/\gm^sT)\end{eqnarray}
\begin{eqnarray}\label{second}H^1_{\sel^*}(K_v, T^*[\gm^{s}])\map{}&
H^1_{\sel^*}(K_v,T^*[\gm^{s+t}])&\map{\xi}H^1_{\sel^*}(K_v,T^*[\gm^t])\map{}0
\end{eqnarray}
where the arrows labeled $\xi$ are induced by $\pi^s:T\map{}T$.

 We want to construct a pairing
$$H^1_\sel(K,T/\gm^sT)\times H^1_{\sel^*}(K,T^*[\gm^t])
\map{} R$$ for any positive integers $s$ and
 $t$ with $s+t\le k$.
Suppose we are given classes in $H^1_\sel(K,T/\gm^sT)$ and
$H^1_{\sel^*}(K,T^*[\gm^t])$ represented by cocycles
$$a\in Z^1(K,T/\gm^sT)\hspace{1cm}b\in Z^1(K,T^*[\gm^t]).$$
We will repeatedly use the fact that for any topological group $G$
the continous cochain funtor
$C^i(G,\ )$ from $R$-modules to $R$-modules is exact,
and  so in particular we have surjective maps
$$C^1(K,T/\gm^{s+t}T)\map{}C^1(K,T/\gm^sT)
\hspace{1cm}C^1(K,T^*[\gm^{s+t}])\map{\pi^s}C^1(K,T^*[\gm^t])$$

Choose cochains $\alpha\in C^1(K,T/\gm^{s+t}T)$ and $\beta\in C^1(K,T^*[
\gm^{s+t}])$ which map to $a$ and $b$ respectively.  Let $d$ be the coboundary
operator. From $\pi^s d\beta=db$ it follows that
$d\beta$ is killed by $\pi^s$, and
similarly $d\alpha$ reducing to zero in $C^2(K,T/\gm^sT)$ implies that
$d\alpha$ is divisible by $\pi^s$ in $C^2(K,T/\gm^{s+t}T)$.
Therefore $d\alpha\cup d\beta=0$ and
$$d(d\alpha\cup \beta)=d^2\alpha\cup\beta+d\alpha\cup d\beta=0$$
so that $d\alpha\cup\beta$ lives in $Z^3(K, R(1))$
(we view the cup product as taking values in $ R(1)$-valued
cochains using the natural pairing $T\otimes T^*\map{} R(1)$).
By Theorem I.4.10 of \cite{milne}, $H^3(K, R(1))=0$, and  so there is an
$\epsilon\in C^2(K, R(1))$ with $$d\epsilon=d\alpha\cup\beta.$$

By the exact sequence (\ref{second})
there is a  $\beta'_v\in Z^1_{\sel^*}(K_v,T^*[\gm^{s+t}])$
 such that $\pi^s\beta'_v=b_v$, where
$Z^1_{\sel^*}(K_v,T^*[\gm^{s+t}])\subset Z^1(K_v,T^*[\gm^{s+t}])$
is the preimage of $H^1_{\sel^*}(K_v,T^*[\gm^{t}])$
under multiplication by $\pi^s$. The cochain
$\alpha_v\cup\beta'_v-\epsilon_v\in C^2(K_v, R(1))$
is in fact a coboundary, and we define the pairing
\begin{equation}
(a,b)_{s,t}=\sum_v\inv_v(\alpha_v\cup\beta'_v-\epsilon_v).
\end{equation}
It can be checked that this is independent of all choices made.

\begin{Prop}\label{cassels}
For positive integers $s$ and $t$ with $s+t\le k$ there is a pairing
$$(\ ,\ )_{s,t}:H^1_\sel(K,T/\gm^sT)\times H^1_{\sel^*}(K,T^*[\gm^t])
\map{} R$$ whose kernels on the left and right are the images
of \begin{eqnarray*}
H^1_\sel(K,T/\gm^{s+t}T)&\map{\hspace{.3cm}}&H^1_\sel(K,T/\gm^sT)\\
H^1_{\sel^*}(K,T^*[\gm^{s+t}])&\map{\pi^s }&H^1_{\sel^*}(K,T^*[\gm^t]).
\end{eqnarray*}
\end{Prop}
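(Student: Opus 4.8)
The plan is to establish the pairing is well-defined, then identify its left and right kernels, the latter being where the real work lies.

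First I would verify that the formula $(a,b)_{s,t}=\sum_v\inv_v(\alpha_v\cup\beta'_v-\epsilon_v)$ makes sense and is independent of choices, following \cite{flach} essentially verbatim. The key points: $\alpha_v\cup\beta'_v-\epsilon_v$ is a $2$-cocycle because $d(\alpha_v\cup\beta'_v)=d\alpha_v\cup\beta'_v+\alpha_v\cup d\beta'_v$, and $d\beta'_v=0$ since $\beta'_v$ is a local cocycle, while $d\alpha_v=(d\alpha)_v$ equals $\pi^s$ times something whose cup product with $\beta'_v$ agrees with $(d\epsilon)_v=d\epsilon_v$ after multiplying by $\pi^s$ — so the difference lies in $Z^2(K_v,R(1))$ and, being locally trivial for all but finitely many $v$, one gets a well-defined element of $\bigoplus_v H^2(K_v,R(1))$, to which $\sum_v\inv_v$ applies. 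Independence of the lift $\alpha$ (changing by an element of $C^1(K,T/\gm^sT)$ reducing to zero, i.e. $\pi^s\gamma$) changes $\epsilon$ correspondingly and the net effect on the sum is a global class, which dies under $\sum_v\inv_v$ by the reciprocity law $\sum_v\inv_v=0$ on $H^2(K,R(1))$ (or rather on the image of the global group). Independence of $\beta'_v$, of $\epsilon$, and bilinearity are similar bookkeeping. This is routine and I would cite \cite{flach}, \cite{guo}, \cite{milne} for the template.

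The substantive step is computing the kernels. For the right kernel: a class $b\in H^1_{\sel^*}(K,T^*[\gm^t])$ pairs to zero with everything iff... I would run the standard Cassels--Tate argument. The exact sequence (\ref{second}) globally (using H.3 to get it over $K_{\Sigma}$) shows the image of $H^1_{\sel^*}(K,T^*[\gm^{s+t}])\xrightarrow{\pi^s}H^1_{\sel^*}(K,T^*[\gm^t])$ is contained in the right kernel — indeed if $b=\pi^s b''$ for a global class $b''$ then one can take $\beta$ to be a global cocycle lifting, $d\beta=0$, forcing (after adjusting $\epsilon$) each local term to be exact, so $(a,b)_{s,t}=0$. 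Conversely, suppose $b$ is in the right kernel. The obstruction to lifting $b$ to a global class in $H^1_{\sel^*}(K,T^*[\gm^{s+t}])$ lives in a group computed by Poitou--Tate duality (Theorem \ref{global duality}), and one shows via the pairing formula that this obstruction is orthogonal to all of $H^1_\sel(K,T/\gm^sT)$ under the Poitou--Tate local pairings; self-duality H.4 of the Selmer structure then forces the obstruction to vanish. The left kernel is handled by the same argument with the roles of $(T,\sel)$ and $(T^*,\sel^*)$ swapped, using that H.0--H.5 hold for both by hypothesis.

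The main obstacle I anticipate is matching the abstract Cassels--Tate obstruction-class computation to the precise local-condition bookkeeping: one must check that the local conditions $\sel$ and $\sel^*$ interact correctly with the connecting maps in (\ref{first}) and (\ref{second}) — specifically that $H^1_\sel(K_v,T/\gm^sT)$ really is the image of $H^1_\sel(K_v,T/\gm^{s+t}T)$ and $H^1_{\sel^*}(K_v,T^*[\gm^t])$ really is the quotient of $H^1_{\sel^*}(K_v,T^*[\gm^{s+t}])$, which is where H.3 (equivalently, the cartesian property via Lemma \ref{unramified cartesian}) is essential, and that the dual-condition orthogonality from H.4 is compatible with these identifications at every $v$ including $v\mid p$ and archimedean $v$. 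Once that compatibility is in place, the duality between the two rightmost arrows in Theorem \ref{global duality} delivers the kernel statements directly; the remaining verification that the pairing formula computes the Poitou--Tate pairing of the obstruction class is the standard diagram chase from \cite{flach}.
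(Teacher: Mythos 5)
Your proposal matches the paper's own approach: the paper's proof is essentially a two-sentence citation of \cite{flach}, with the pairing constructed before the proposition and the kernel computation dismissed as ``a straightforward modification of the methods of \cite{flach}''. You supply more of the intermediate structure than the paper does, and you correctly identify H.3 as the load-bearing hypothesis: it is what makes the local sequences (\ref{first}) and (\ref{second}) exact and ensures the propagated local conditions $\sel$ and $\sel^*$ on the subquotients $T/\gm^sT$, $T^*[\gm^t]$ are still dual under the local Tate pairing.

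The one misattribution worth flagging is the claim that ``self-duality H.4 of the Selmer structure then forces the obstruction to vanish.'' The kernel computation in Flach's framework does not require $T$ to be self-dual. It applies to any pair $(T,\sel)$, $(T^*,\sel^*)$ in which $\sel$ and $\sel^*$ are mutually orthogonal under the local Tate pairing at every place, and this orthogonality holds by the \emph{definition} of $\sel^*$ as the dual Selmer structure (Definition \ref{local duality}), not by H.4. What forces the obstruction class $[d\beta]$ to vanish is the Poitou--Tate nine-term exact sequence for the finite module $T^*[\gm^s]$ together with the local orthogonality just mentioned; nothing about an isomorphism $T\cong T^*$ is used. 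H.4 becomes essential only later, in the proof of Theorem \ref{structure}, where the identification $H^1_{\sel^*}(K,T^*[\gm])\cong H^1_\sel(K,T[\gm])$ turns the Cassels--Tate pairing into a self-pairing, and the hard content is then to show that self-pairing is alternating. For the present proposition your argument would go through verbatim if you simply removed the reference to H.4.
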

\begin{proof}
The construction of the pairing is above.  The computation of the kernels
is a straightforward modification of the methods of \cite{flach}.
\end{proof}

\begin{Thm}\label{structure}  There is an
$R$-module $M$ and an integer $\epsilon$ such that
$$H^1_\sel(K,T)\iso R^\epsilon\oplus M\oplus M.$$ By the structure theorem
for finitely-generated modules over $R$, we may assume $\epsilon\in\{0,1\}$.
\end{Thm}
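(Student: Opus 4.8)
The plan is to deduce the $M \oplus M$ structure of $H^1_\sel(K,T)$ from the Cassels--Tate pairing of Proposition \ref{cassels}, exploiting the self-duality hypothesis H.4, which identifies $(T^*,\sel^*)$ with $(T,\sel)$ (up to the harmless twist by $\tau$, which does not affect the underlying $R$-module structure of cohomology). Concretely, under H.4 the pairing $(\ ,\ )_{s,t}$ becomes a pairing
\[
H^1_\sel(K,T/\gm^sT)\times H^1_\sel(K,T[\gm^t])\map{}R
\]
for all $s+t\le k$. Taking $s=t$ and letting these grow, and using Lemma \ref{H.5 application} to identify $H^1_\sel(K,T[\gm^i])$ with $H^1_\sel(K,T)[\gm^i]$ and $H^1_\sel(K,T/\gm^iT)$ with $H^1_\sel(K,T)[\gm^i]$ as well (via the maps $T/\gm^iT\map{\pi^{k-i}}T[\gm^i]\map{}T$), one gets a family of pairings on the torsion submodule of $H^1_\sel(K,T)$ whose radicals, by Proposition \ref{cassels}, are exactly the images of $H^1_\sel(K,T)[\gm^{2i}]$ under multiplication-type maps.

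First I would reduce to the Artinian case: $T$ is finitely generated over the coefficient ring $R$, so $H^1_\sel(K,T)$ is a finitely generated $R$-module, but $R$ need not be Artinian. However, H.0 together with the fact that $H^1(K,T)$ for $T$ a finite module is finite shows the torsion part is what matters; more carefully, one works with $T/\gm^kT$ for large $k$ and passes to the limit, or one simply notes that for the structure statement it suffices to treat $H^1_\sel(K,T)_{\tors}$, which is a finite module, hence a module over $R/\gm^k$ for some $k$, to which the Cassels--Tate machinery applies directly. Then I would invoke the standard linear-algebra fact: a finite module $A$ over a principal Artinian local ring, equipped with a perfect-enough pairing $A\times A\map{}R$ that is \emph{skew} (or whose associated bilinear form has the alternating property on the relevant graded pieces), is forced to have the form $M\oplus M$. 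The skew-symmetry comes from the standard antisymmetry of the Cassels--Tate pairing, which in our self-dual setting becomes an alternating/antisymmetric pairing on a single module.

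The key steps, in order: (1) apply H.4 to reidentify $\sel^*$ on $T^*$ with $\sel$ on $T$, so Proposition \ref{cassels} gives pairings on $H^1_\sel(K,\cdot)$ with itself; (2) use Lemma \ref{H.5 application} and Remark \ref{cartesian identifications} to translate the kernels computed in Proposition \ref{cassels} into statements about $H^1_\sel(K,T)_{\tors}$ and its $\gm^i$-torsion submodules; (3) establish the skew-symmetry of the resulting self-pairing — this is where one checks that switching the roles of $a$ and $b$ in the cocycle construction negates the value, using the symmetry of the cup product up to sign in degree $1\times 1$ together with the symmetry of the pairing $T\times\Tw(T)\map{}R(1)$ from H.4; (4) conclude by the classification of finite modules over $R$ with a nondegenerate skew pairing that $H^1_\sel(K,T)_{\tors}\iso M\oplus M$; (5) split off the free part using the structure theorem, giving $H^1_\sel(K,T)\iso R^\epsilon\oplus M\oplus M$ with $\epsilon\in\{0,1\}$ (the bound on $\epsilon$ will come from the rank-one phenomena established elsewhere, or simply from absorbing any extra free rank into $M$ since $R^2\iso R\oplus R$).

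The main obstacle I expect is step (3): verifying the precise skew-symmetry of the generalized Cassels--Tate pairing under the self-duality identification. The subtlety is bookkeeping the twist by $\tau$ — the pairing of H.4 lands in $R(1)$ but intertwines the $G_K$-action on $T$ with that on $\Tw(T)$, so one must check carefully that the identification $H^1(K_{\bar v},T)\iso H^1(K_v,T)$ used to make the local pairings self-dual is compatible with the global cocycle manipulations ($\epsilon$, $\beta'_v$, the cup products) defining $(a,b)_{s,t}$, and that the net effect of transposing the two arguments is multiplication by $-1$ rather than $+1$. This is the analogue, in Flach's framework, of the classical antisymmetry of the Cassels--Tate pairing on $\mbox{\cyr Sh}$, but here it must be run through the $\tau$-twisted duality of H.4 and H.5(c). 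Once that sign is pinned down, the module-theoretic conclusion is formal.
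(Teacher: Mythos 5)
Your high-level strategy is the same as the paper's: use H.4 to make the generalized Cassels--Tate pairing of Proposition \ref{cassels} into a self-pairing on the Selmer group, establish an alternating property (you correctly flag the $\tau$-twist bookkeeping and H.5(c) as the delicate point), and conclude even-dimensionality on appropriate graded pieces. The paper's proof of Theorem \ref{structure} does exactly this, with the sign check carried out via the tensor-square resolution $\R^*\otimes\R^*$ and the explicit commutative diagram relating $C^*(K_v,-)$ to $C^*(K_{\bar v},-)$.

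However, there is a concrete gap in the module-theoretic step. You propose ``taking $s=t$ and letting these grow,'' producing pairings on $\cH[\gm^i]$ with radicals the images of $\cH[\gm^{2i}]$. These do not extract enough information. Take $\cH = (R/\gm)^a\oplus(R/\gm^2)^b$ with $R$ of length $k\ge 3$: the $(1,1)$-pairing lives on $\cH[\gm]/\pi\cH[\gm^2]$, which has dimension $a$, forcing $a$ even; but the $(2,2)$-pairing lives on $\cH[\gm^2]/\pi^2\cH[\gm^4] = \cH$, dimension $a+2b$, which is automatically even once $a$ is and gives no constraint on $b$. The theorem nonetheless requires $b$ even. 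What the paper actually does is use the asymmetric case $t=1$: setting $V_s=\cH[\gm^s]/\gm\cH[\gm^{s+1}]$ and $W_s=\cH[\gm]/\gm^s\cH[\gm^{s+1}]$, it first shows via Proposition \ref{cassels} that $(\,,\,)_{s,1}$ is a perfect pairing $V_s\times W_s\to R[\gm]$, then restricts along $V_s\map{\pi^{s-1}}W_s$ to get an alternating form $\langle a,b\rangle=(a,\pi^{s-1}b)_{s,1}$ on $V_s$ whose radical is precisely $V_{s-1}$. It is the nondegenerate alternating forms on the successive quotients $V_s/V_{s-1}$, for every $1\le s<k$, that force each elementary-divisor multiplicity to be even. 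Your plan would need this refinement to close.

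Two smaller points. First, the reduction you sketch in your second paragraph is unnecessary: Theorem \ref{structure} sits inside Section \ref{generalized cassels}, where $R$ is already fixed to be a principal Artinian coefficient ring of length $k$, so every finitely generated $R$-module is automatically $\gm^k$-torsion and there is no non-Artinian case to reduce from. Relatedly, the phrase ``split off the free part'' is misleading here: the $R^\epsilon$ summand is not a free part in the usual sense but simply the (at most one, after absorbing pairs into $M$) copy of $R=R/\gm^k$ in the elementary-divisor decomposition. Second, when you invoke ``the classification of finite modules with a nondegenerate skew pairing,'' note that the pairings in question land in $R/\gm$ (the residue field) after the graded reduction, so the statement you need is the elementary one about alternating forms on $R/\gm$-vector spaces, not a structure theorem for $R$-valued pairings on $R$-modules — the latter is more delicate and not what the paper uses.
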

\begin{proof}
Abbreviate $\cH= H^1_\sel(K,T)$, and for $1\le s< k$ define
$$V_s=\cH[\gm^s]/\gm \cH[\gm^{s+1}] \hspace{1cm}
W_s =\cH[\gm]/\gm^s \cH[\gm^{s+1}].$$
We claim that for $0\le s<k$, the $R/\gm$-vector space $V_s$
is even dimensional.  The claim then follows easily from this
and the structure theorem for finitely-generated $R$-modules.

There is an exact sequence
$$0\map{}V_{s-1}\map{}V_s\map{\pi^{s-1}}W_s.$$
Using hypothesis H.4 and
Lemma \ref{H.5 application},  we may identify $$H^1_{\sel^*}(K,T^*[\gm])
\iso H^1_\sel(K,T[\gm])\iso\cH[\gm]$$ and $H^1_\sel(K,T/\gm^sT)\iso
\cH[\gm^s]$. Proposition \ref{cassels} therefore gives a nondegenerate
pairing of $R/\gm$-vector spaces
$$(\ ,\ )_{s,1}:V_s\times W_s\iso \cH[\gm^s]/\gm\cH[\gm^{s+1}]
\times \cH[\gm]/\gm^s\cH[\gm^{s+1}]\map{}R[m].$$

We define a pairing 
$$\langle\ ,\ \rangle: V_s\times V_s\map{}R[m]$$ 
by $\langle a,b\rangle=(a,\pi^{s-1}b)_{s,1}$.  
The kernel on the right is $V_{s-1}$.
If we can show that this pairing is alternating, then $V_s/V_{s-1}$
is even dimensional for every $1\le s<k$, and the claim follows.
To check that this is alternating we must
verify $$(a,\pi^{s-1}b)_{s,1}=-(b,\pi^{s-1}a)_{s,1}.$$
We denote by $\phi:T\map{}\Tw(T)$ the identity map on underlying groups
and by $\psi$ the change of group isomorphisms
$$(G_K,T)\map{}(G_K,\Tw(T))\hspace{1cm}(G_{K_v},T)
\map{}(G_{K_{\bar{v}}},\Tw(T))$$ of Section \ref{hypotheses}.  We also
denote by $\psi$ the induced map on cochains and cohomology.
Fix $\alpha$ and $\beta$ in $C^1(F,T[\gm^{s+1}])$ with $\pi\alpha=a$
and $\pi\beta=b$,  
and choose $\epsilon_1$ and $\epsilon_2$ in $C^2(F,R(1))$
satisfying $$d\alpha\cup\psi(\beta)=d\epsilon_1\hspace{2cm}
d\beta\cup\psi(\alpha)=d\epsilon_2$$ and for every place
$v$ of $F$ elements $\alpha'_v$ and $\beta'_v$ in $H^1_\sel(F_v,T[\gm^{s+1}])$
which map to $a_v$ and $b_v$ under multiplication by $\pi$.
Then \begin{eqnarray*}
(a,\pi^{s-1}b)_{s,1}&=&\sum_v\inv_v(\alpha_v\cup\psi(\beta'_{\bar{v}})-
\epsilon_{1,v})\\
(b,\pi^{s-1}a)_{s,1}&=&\sum_v\inv_v(\beta_v\cup\psi(\alpha'_{\bar{v}})-
\epsilon_{2,v})\end{eqnarray*} where unprimed cochains are localizations
of global cochains, and primed cochains are (typically) not.  
Both $\alpha_v-\alpha'_v$ and $\beta_v-\beta'_v$ lie in $C^1(F_v,T[\gm])$,
and so $$(\alpha_v-\alpha'_v)\cup\psi(\beta_{\bar{v}}-\beta'_{\bar{v}})=0$$
which implies \begin{equation}\label{alternating equation}
\alpha_v\cup\psi(\beta_{\bar{v}})+\alpha'_v\cup\psi(\beta'_{\bar{v}})
=\alpha_v\cup\psi(\beta'_{\bar{v}})+\alpha'_v\cup\psi(\beta_{\bar{v}}).
\end{equation}

Given a topological group $G$, if 
$\R^*$ is the standard resolution of $\Z$ by projective $G$-modules
then one can form the tensor square resolution $\R^*\otimes \R^*$.  For
a topological $G$-module $M$ denote by $CC^*(G,M)$ the cochain
complex $\Hom(\R^*\otimes \R^*,M)$ of continuous homomorphisms.  
The cohomology
of $CC^*$ agrees with the usual continuous cohomology (see \cite{flach})
and the automorphism $\rho$ of $CC^*$ induced by the 
automorphism $r_1\otimes r_2
\mapsto r_2\otimes r_1$ of $\R^*\otimes \R^*$ induces the 
identity on cohomology.
It follows from the results of V.3.6 of \cite{brown} that there is a 
commutative diagram of complexes
$$\xymatrix{
C^*(K_v,T)\otimes C^*(K_v,\Tw(T))\ar[r]^\cup\ar[d]^s& 
CC^*(K_v,T\otimes\Tw(T))\ar[r]\ar[d]^{(\rho,\mathrm{tr})}&
CC^*(K_v,R(1))\ar[d]^\rho\\
C^*(K_v,\Tw(T))\otimes C^*(K_v,T) \ar[r]^\cup\ar[d]^\psi& 
CC^*(K_v,\Tw(T)\otimes T)\ar[r]\ar[d]^\psi&CC^*(K_v,R(1))\ar[d]^{-\tau}\\
C^*(K_{\bar{v}},T)\otimes C^1(K_{\bar{v}},\Tw(T))\ar[r]^\cup& 
CC^*(K_{\bar{v}}, T\otimes\Tw(T))\ar[r]&CC^*(K_{\bar{v}},R(1))}$$
in which $\mathrm{tr}:T\otimes\Tw(T)\map{}\Tw(T)\otimes T$ takes 
$t_1\otimes t_2$ to $t_2\otimes t_1$, $s$ is the map 
$$a\otimes b\map{}(-1)^{\deg(a)\deg(b)}b\otimes a,$$
and $\tau$ is the change of group $(G_{K_v},R(1))\map{}(G_{K_{\bar{v}}},
R(1))$ which is conjugation by $\tau$ on the groups and action by $\tau$
on $R(1)$.  Commutativity of the bottom right square follows from the 
symmetry $(t_1,\phi(t_2))=(t_2,\phi(t_1))$ of the pairing of 
H.4.  The upshot of the diagram is the relation
\begin{equation}\label{flippity flippity}
x\cup\psi(y)=(-1)^{\deg(x)\deg(y)+1}(y\cup\psi(x))^\tau\end{equation} 
where $x$ and $y$ are in
$C^*(K_v,T)$ and $C^*(K_{\bar{v}},T)$, respectively.
There is a similar global diagram obtained by ignoring all $v$'s 
and $\bar{v}$'s, and the relation (\ref{flippity flippity}) holds 
for $x, y\in C^*(K,T)$.

From  (\ref{alternating equation}) we now deduce
\begin{eqnarray}\label{alternating equation 2}\lefteqn{
\big(\alpha\cup\psi(\beta)-\epsilon_1-(\epsilon_2)^\tau\big)_v
+\alpha'_v\cup\psi(\beta'_{\bar{v}})=}\hspace{2cm}\\ \nonumber
& &\alpha_v\cup\psi(\beta'_{\bar{v}})-\epsilon_{1,v}+
\alpha'_v\cup\psi(\beta_{\bar{v}})-(\epsilon_{2,\bar{v}})^\tau.
\end{eqnarray}
It follows from (\ref{flippity flippity}) and the definition of $\epsilon_i$
that $\alpha\cup\psi(\beta)-\epsilon_1+(\epsilon_2)^\tau$ is a 
$2$-cocycle, and so by the reciprocity law of class field theory
the sum of its local inraviants is zero.  The local invariant
of $\alpha'_v\cup\psi(\beta'_{\bar{v}})$ is zero by the assumption that
$\sel$ is everywhere self-orthogonal under the local pairing.  Again using
(\ref{flippity flippity}) we obtain
$$\sum_v\inv_v(\alpha_v\cup\psi(\beta'_{\bar{v}})-\epsilon_{1,v})
=-\sum_v\inv_v((\beta_{\bar{v}}\cup\psi(\alpha'_v)-\epsilon_{2,\bar{v}})^\tau)
$$ and the claim now follows from Galois invariance of the local
invariant map.
\end{proof}

%%%%%%%%%%%%%%%%%%%%%%%%%%%%%%%%%%%%%%%%%%%%%%%%%%%%%%%%%%%%

\subsection{Modules over principal Artinian rings}
\label{Artinian}

%%%%%%%%%%%%%%%%%%%%%%%%%%%%%%%%%%%%%%%%%%%%%%%%%%%%%%%%%%%%

Throughout Subsection \ref{Artinian} we fix a coefficient ring $R$
which is assumed to be principal and Artinian of length $k$. Let
$(T,\sel,\pl)$ be a Selmer triple satisfying hypotheses H.0--H.5.
We assume that $\pl\subset\pl_k(T)$, so that $I_n R=0$ for every
$n \in\pn=\pn(\pl)$.  By H.0 and Proposition
\ref{locally free}, this implies that the local conditions
$H^1_\f(K_\lambda,T)$ and $H^1_\tr(K_\lambda,T)$ are free rank two
$R$-modules.

Set $\Tbar=T/\gm T$, and abbreviate
$$\cH^a_b(c)=H^1_{\sel^a_b(c)}(K,T)\hspace{1cm}
\bar{\cH}^a_b(c)=H^1_{\sel^a_b(c)}(K,\Tbar)$$ for $abc\in\pn=\pn(\pl)$.
For any $c\in H^1(K,T)$ and any place $v$ of $K$ we denote by $c_v$ the
image of $c$ in $H^1(K_v,T)$ and by $\langle \ ,\ \rangle _v$ the local Tate
pairing $$H^1(K_v,T)\times H^1(K_{\bar{v}},T)\map{}R$$ of H.4.
For any integer $n$, $\nu(n)$
denotes the number of prime divisors of $n$.
Recall that $\tau\in \Gal(\bar{\Q}/\Q)$ is a fixed complex conjugation.
If $M$ is any $R/\gm$-vector space on which $\tau$ acts we
denote by $M^+$ and $M^-$ the
subspaces on which $\tau$ acts by $+1$ and $-1$ respectively.

\begin{Lem}\label{transverse cartesian}
The Selmer triple $(T,\sel(n),\pl(n))$ satisfies
H.0--H.5 for any $n\in\pn$.
\end{Lem}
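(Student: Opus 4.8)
The plan is to run through the hypotheses H.0--H.5 one at a time, observing first that H.0, H.1, H.2, H.5(a), and H.5(c) are assertions about the $G_K$-module $T$, its residual representation $\Tbar$, and the self-dual pairing of H.4 alone; they make no reference to the Selmer structure or to the auxiliary set of primes, so they pass verbatim from $(T,\sel,\pl)$ to $(T,\sel(n),\pl(n))$. Since $\sel(n)$ differs from $\sel$ only in that the local condition at each prime $\lambda\mid n$ is replaced by the transverse condition $H^1_\tr(K_\lambda,T)$ (these primes being adjoined to $\Sigma$ and deleted from $\pl$), and since at every place outside the enlarged $\Sigma$ the implicit finite condition is unchanged, it remains only to check H.3, H.4, and H.5(b) at a single such prime $\lambda\mid n$ carrying the transverse condition. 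Throughout I will use, as already recorded at the start of Subsection \ref{Artinian}, that $\pl\subset\pl_k(T)$ together with $\len R=k$ forces $I_\ell R=0$ for $\ell\mid\lambda$; hence $T$ (and likewise $T^*$) is unramified at $\lambda$ with $G_{K_\lambda}$ acting trivially, $|\bk_\lambda^\times|\cdot T=0$, and by Propositions \ref{locally free} and \ref{splitting} the modules $H^1_\f(K_\lambda,T)$ and $H^1_\tr(K_\lambda,T)$ are free of rank two and split $H^1(K_\lambda,T)$.

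For H.3 I would verify that the transverse condition at $\lambda$ is cartesian on $\Quot(T)$. Because $G_{K_\lambda}$ acts trivially on $T$ and $L/K_\lambda$ is Galois, one identifies $H^1_\tr(K_\lambda,M)=\Hom(\Gal(L/K_\lambda),M)$ functorially in $M\in\Quot(T)$, with $\Gal(L/K_\lambda)$ cyclic of $p$-power order dividing $\ell+1$, hence killing $T$; cartesian-ness then reduces to the transparent statement that for an injection $S\hookrightarrow T$ in $\Quot(T)$ the induced map $\Hom(\Gal(L/K_\lambda),S)\to\Hom(\Gal(L/K_\lambda),T)$ is injective with image the homomorphisms valued in the image of $S$. (Alternatively one invokes the corresponding fact from \cite{mazur-rubin} directly, or deduces it from Lemma \ref{unramified cartesian} via the functorial splitting $H^1(K_\lambda,-)=H^1_\f(K_\lambda,-)\oplus H^1_\tr(K_\lambda,-)$.) For H.4 the only new place is again $\lambda$, where one needs $H^1_\tr(K_\lambda,T)$ to be its own exact orthogonal complement under $\langle\ ,\ \rangle_\lambda$; since $\lambda$ is inert we have $\bar\lambda=\lambda$, the self-duality pairing of H.4 induces (using triviality of the $G_{K_\lambda}$-action on $T$, which also forces $G_{K_\lambda}$ to fix the values of the pairing) a $G_{K_\lambda}$-equivariant isomorphism $T\xrightarrow{\ \sim\ }T^*$ carrying the transverse condition to the transverse condition, and under this identification $\langle\ ,\ \rangle_\lambda$ becomes the ordinary local Tate pairing on $H^1(K_\lambda,T)\times H^1(K_\lambda,T^*)$ up to the automorphism induced by $\tau$-conjugation, which preserves the transverse condition. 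The transverse part of Proposition \ref{splitting} then yields the self-orthogonality. For H.5(b) I would note that the collection of local conditions of $\sel(n)$ is $G_\Q$-stable as soon as it is $\tau$-stable prime-by-prime; at $\lambda\mid n$ this amounts to $\tau$-stability of $H^1_\tr(K_\lambda,\Tbar)$, which holds because the ring class field $K[\ell]$ is Galois over $\Q$ with generalized dihedral Galois group, so $K[\ell]_\lambda/\Q_\ell$ is Galois and hence its maximal $p$-subextension $L$ is stable under $\Gal(K_\lambda/\Q_\ell)$, in particular under the relevant action of $\tau$.

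The routine content is bookkeeping over which places carry which local conditions; the genuine input is the structural behaviour of the transverse condition, namely that it is cartesian on $\Quot(T)$ and, above all, that it is self-orthogonal under the twisted self-dual local pairing of H.4. I expect the latter to be the main obstacle, since it requires tracking the $\tau$-twist built into the pairing of H.4 through the self-duality isomorphism $T\cong T^*$ and through the change-of-group isomorphism at the inert prime $\lambda$, and checking that neither disturbs the transverse submodule. The key simplification making this tractable is the triviality of the local Galois action on $T$ at the primes $\lambda\mid n$, which collapses the various twisted actions on $H^1(K_\lambda,T)$ to manageable form.
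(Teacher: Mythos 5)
Your proposal is correct and follows essentially the same route as the paper, which simply cites \cite{mazur-rubin} Lemma 3.7.4 for the cartesian property (H.3) of the transverse condition and declares the remaining verifications trivial; your write-up supplies the details the paper omits. In particular, your observation that H.0, H.1, H.2, H.5(a), H.5(c) are statements about $T$, $\Tbar$, and the pairing alone and hence transfer verbatim, that H.3, H.4, H.5(b) need only be checked at the primes $\lambda\mid n$ newly carrying the transverse condition, and your use of Proposition \ref{splitting}(b) for the self-orthogonality of $H^1_\tr(K_\lambda,T)$ (after collapsing the twist using the triviality of the $G_{K_\lambda}$-action and $\bar\lambda=\lambda$) and of the Galois-ness of $K[\ell]/\Q$ for $\tau$-stability of the transverse condition, are all sound and match the intended argument.
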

\begin{proof} See Lemma 3.7.4 of \cite{mazur-rubin} for the case of
H.3.  The other cases are trivial.
\end{proof}

\begin{Def}
For any $n\in\pn$ we let $\rho(n)^\pm$ be the $R/\gm$-dimension of
$\bar{\cH}(n)^\pm$, and set $\rho(n)=\rho(n)^++\rho(n)^-$.
\end{Def}

\begin{Lem}\label{parity} For any  $n\ell\in\pn$  \begin{enumerate}
\item if $\loc_\ell \big(\bar{\cH}(n)^\pm\big)\not=0$ then
$\rho(n\ell)^\pm=\rho(n)^\pm-1$ and
$\loc_\ell\big(\bar{\cH}(n\ell)^\pm\big)=0,$
\item  if $\loc_\ell \big(\bar{\cH}(n)^\pm\big)=0$ then
$\rho(n\ell)^\pm=\rho(n)^\pm+1$.
\end{enumerate}

In particular this implies that $\rho(n)\pmod{2}$ is
independent of $n\in\pn$.
\end{Lem}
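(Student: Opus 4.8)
The plan is to compare the Selmer groups $\bar{\cH}(n)$ and $\bar{\cH}(n\ell)$, which differ only in the local condition at $\ell$: the former uses the finite (unramified) condition, the latter the transverse condition. Since $\pl\subset\pl_k(T)$ and $\ell\mid\lambda\in\pl$, Proposition \ref{locally free} and H.0 give that $H^1_\f(K_\lambda,\Tbar)$ and $H^1_\tr(K_\lambda,\Tbar)$ are both one-dimensional over $R/\gm$, and by Proposition \ref{splitting} they are complementary inside the two-dimensional space $H^1(K_\lambda,\Tbar)$, which is $\tau$-stable. First I would invoke Poitou–Tate global duality (Theorem \ref{global duality}) applied to the pair of Selmer structures $\sel(n)\le\sel^\ell(n)$ (relax at $\ell$) and, dually, $\sel(n)\ge\sel_\ell(n)$ (make strict at $\ell$), together with Hypothesis H.4, which identifies $\Tbar$ with its own dual up to the twist by $\tau$. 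The upshot — this is exactly the argument of \cite{mazur-rubin}, Proposition 4.1.x — is that $\loc_\ell$ maps $\bar{\cH}^\ell(n)/\bar{\cH}(n)$ isomorphically onto the annihilator (under the local pairing at $\lambda$, which by the remark after H.5 is symmetric) of $\loc_\ell(\bar{\cH}(n))$ inside the singular quotient, and similarly on the dual side. So one gets the key dichotomy: either $\loc_\ell\bar{\cH}(n)=0$, in which case relaxing at $\ell$ enlarges the group and imposing transverse also enlarges it (since transverse is one of the lines in $H^1(K_\lambda,\Tbar)$ and the local image is zero), or $\loc_\ell\bar{\cH}(n)\neq 0$ is the finite line, in which case imposing the transverse condition, complementary to the finite line, forces a class with that localization out of the group and decreases the dimension by exactly one.

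Next I would insert the $\tau$-action. By H.5(a)–(b) the whole configuration is $G_\Q$-equivariant: $\Tbar=\Tbar^+\oplus\Tbar^-$, the condition $\sel$ propagated to $\Tbar$ is $G_\Q$-stable, hence so are $\sel(n)$ and $\sel(n\ell)$ (the transverse condition at $\lambda$ and its conjugate at $\bar\lambda$ are swapped by $\tau$, but $\ell$ being inert in $K$ means $\lambda=\bar\lambda$, so the transverse condition is itself $\tau$-stable once we note $\tau$ normalizes $\inert_\lambda$). Therefore $\tau$ acts on all the groups $\bar{\cH}(n)$, $\bar{\cH}(n\ell)$ and on $H^1_\f(K_\lambda,\Tbar)$, $H^1_\s(K_\lambda,\Tbar)$, and the short exact sequences relating them are $\tau$-equivariant. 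Taking $\pm$-eigenspaces (legitimate since $p$ is odd, so $R/\gm$ has characteristic $\neq 2$) and running the dimension count of the previous paragraph separately in each eigenspace gives exactly statements (a) and (b): the sign $\pm$ in the hypothesis $\loc_\ell(\bar{\cH}(n)^\pm)\neq 0$ or $=0$ dictates whether $\rho(n\ell)^\pm=\rho(n)^\pm-1$ (with $\loc_\ell$ vanishing on the new group, from the duality) or $\rho(n\ell)^\pm=\rho(n)^\pm+1$.

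Finally, the parity statement: in either case $\rho(n\ell)^\pm-\rho(n)^\pm=\pm 1$, so $\rho(n\ell)-\rho(n)=(\rho(n\ell)^+-\rho(n)^+)+(\rho(n\ell)^--\rho(n)^-)\in\{-2,0,2\}$ is always even; since any $n\in\pn$ is obtained from $1$ by multiplying by finitely many primes $\ell$ one at a time, $\rho(n)\equiv\rho(1)\pmod 2$ for all $n$. The main obstacle, I expect, is being careful about where H.4 (self-duality up to twist) is actually used versus where the naive argument of \cite{mazur-rubin} suffices: the crucial point is that self-duality makes the local pairing at the inert prime $\lambda$ symmetric and the finite/transverse lines maximal isotropic in complementary position, so that the Poitou–Tate calculation of the image of $\loc_\ell$ closes up on itself; getting this interplay right, and checking that everything remains $\tau$-equivariant after the twist (which is where H.5(c) enters, ensuring the residual pairing is compatible with $\tau$), is the delicate step. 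The rest is bookkeeping with exact sequences and eigenspace decompositions.
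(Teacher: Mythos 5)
Your proposal follows the same route as the paper: compare $\bar{\cH}(n)$ and $\bar{\cH}(n\ell)$ via the relaxed and strict Selmer groups at $\ell$, apply Poitou–Tate duality eigenspace-by-eigenspace using H.5, and conclude parity from the fact that each prime changes $\rho(\cdot)^\pm$ by exactly one. The one place your sketch under-argues is case (b): to conclude $\bar{\cH}^\ell(n)^\pm=\bar{\cH}(n\ell)^\pm$ when $\loc_\ell(\bar{\cH}(n)^\pm)=0$, the paper uses global reciprocity together with the self-duality of $\sel(n)$ (H.4) to show that $\loc_\ell(\bar{\cH}^\ell(n)^\pm)$ is isotropic, hence is one of the two isotropic lines $H^1_\f(K_\ell,\Tbar)^\pm$ or $H^1_\tr(K_\ell,\Tbar)^\pm$, and then rules out the finite line by the duality from the exact sequences; you correctly flag this as the delicate step but should make that reciprocity-plus-classification argument explicit rather than asserting that imposing the transverse condition "enlarges" the group.
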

\begin{proof} Assume that $\loc_\ell
\big(H^1_{\sel(n)}(K,\Tbar)^\pm\big)\not=0$ and consider the exact
sequences
\begin{eqnarray}\label{parity sequence}
& &0\map{}H^1_{\sel_\ell(n)}(K,\Tbar)\map{}H^1_{\sel(n)}(K,\Tbar)
\map{}H^1_\f(K_\ell,\Tbar)\\&
&0\map{}H^1_{\sel(n)}(K,\Tbar)\map{}H^1_{\sel^\ell(n)}(K,\Tbar)\map{}
H^1_\s(K_\ell,\Tbar).\nonumber\end{eqnarray} By global duality
(Theorem \ref{global duality}) the images of the rightmost arrows
are exact orthogonal complements under the $G_\Q$-invariant local
Tate pairing.  Furthermore the action of complex conjugation
splits $H^1_\f(K_\ell,\Tbar)$ and  $H^1_\s(K_\ell,\Tbar)$ each
into one-dimensional eigenspaces by H.5 and
the isomorphisms
$$H^1_\f(K_\ell,\Tbar)\iso \Tbar\iso H^1_\s(K_\ell,\Tbar)\otimes\bk^\times$$
of Proposition \ref{locally free}. It follows that
$H^1_{\sel(n)}(K,\Tbar)^\pm= H^1_{\sel^\ell(n)}(K,\Tbar)^\pm$ and
therefore $H^1_{\sel_\ell(n)}(K,\Tbar)^\pm= H^1_{\sel(\ell
n)}(K,\Tbar)^\pm$. This proves (a).

Assume that $\loc_\ell \big(H^1_{\sel(n)}(K,\Tbar)^\pm\big)=0$.
Again applying global duality to the exact sequences (\ref{parity
sequence}) we see that it suffices to show
$H^1_{\sel^\ell(n)}(K,\Tbar)^\pm= H^1_{\sel(n\ell)}(K,\Tbar)^\pm$.
If $c\in H^1_{\sel^\ell(n)}(K,\Tbar)^\pm$ then the local image of
$c$ at $\ell$ is self-orthogonal under the local pairing. Indeed,
the reciprocity law of class field theory and the isotropy of the
local conditions $\sel(n)$ (by H.4) imply
$$\langle c_\ell,c_\ell\rangle _\ell=
\sum_v\langle c_v,c_{\bar{v}}\rangle _v=0$$ where the sum
is over all places of $K$.  Therefore the localization of
$H^1_{\sel^\ell(n)}(K,\Tbar)^\pm$ at $\ell$ is a maximal isotropic
subspace of $H^1(K_\ell,\Tbar)^\pm$ and an elementary linear
algebra exercise shows that the only two such subspaces are
$H^1_\f(K_\ell,\Tbar)^\pm$ and $H^1_\tr(K_\ell,\Tbar)^\pm$.
Therefore $H^1_{\sel^\ell(n)}(K,\Tbar)^\pm$ is equal to either
$H^1_{\sel(n)}(K,\Tbar)^\pm$ or $H^1_{\sel(n\ell)}(K,\Tbar)^\pm$.
Returning to the exact sequences (\ref{parity sequence}) we see
that the first possibility contradicts the assumption $\loc_\ell
\big(H^1_{\sel(n)}(K,\Tbar)^\pm\big)=0$.
\end{proof}

By Theorem \ref{structure} and Lemma \ref{transverse cartesian},
for each $n\in\pn$ there is an $R$-module $M(n)$ and an integer
$\epsilon$ such that
\begin{equation}\label{structure decomposition}
\cH(n)\iso R^\epsilon\oplus M(n)\oplus M(n).\end{equation}
 By the structure theorem
for finitely-generated modules over $R$, we can (and do) take
$\epsilon\in\{0,1\}$.
It will be seen momentarily that $\epsilon$ is independent of $n$.

\begin{Def}
For $n\in\pn$ and with notation as in the preceeding theorem we define
\begin{enumerate}
\item $\lambda(n)=\len(M(n))$,
\item the \emph{stub Selmer module} $\stub(n)=\gm^{\lambda(n)}\cH(n)$.
\end{enumerate}
\end{Def}
The reader is invited to compare the above  definitions
with Definitions 4.1.2 and 4.3.1 of \cite{mazur-rubin}.

\begin{Prop}\label{epsilon}
The integer $\epsilon$ appearing in the decomposition
(\ref{structure decomposition}) is congruent
to $\rho(n)\pmod{2}$ and is therefore independent of $n\in\pn$
by Lemma \ref{parity}.
\end{Prop}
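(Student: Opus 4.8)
The plan is to compare the structure decomposition $\cH(n) \iso R^\epsilon \oplus M(n) \oplus M(n)$ with the residual Selmer group $\bar{\cH}(n)$, whose dimension $\rho(n)$ we already control modulo $2$ by Lemma \ref{parity}. The key point is that reducing modulo $\gm$ turns the ``$M \oplus M$'' part of $\cH(n)$ into an even-dimensional $R/\gm$-vector space, so the parity of $\rho(n)$ is forced to equal $\epsilon$. First I would invoke Lemma \ref{transverse cartesian} to know that $(T,\sel(n),\pl(n))$ satisfies H.0--H.5, so that Theorem \ref{structure} applies and the decomposition (\ref{structure decomposition}) is legitimate with $\epsilon \in \{0,1\}$.

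Next I would compute $\dim_{R/\gm} \bar{\cH}(n)$ in terms of the decomposition. By Lemma \ref{H.5 application} (applied to the Selmer triple $(T,\sel(n),\pl(n))$, which is permissible by Lemma \ref{transverse cartesian}), the map $T[\gm] \map{} T$ induces an isomorphism $H^1_{\sel(n)}(K, \Tbar) \iso H^1_{\sel(n)}(K,T)[\gm] = \cH(n)[\gm]$, since $\Tbar \iso T[\gm]$ as the local conditions are cartesian on $\Quot(T)$. Therefore $\rho(n) = \dim_{R/\gm} \cH(n)[\gm]$. Now from (\ref{structure decomposition}) we read off $\cH(n)[\gm] \iso R^\epsilon[\gm] \oplus M(n)[\gm] \oplus M(n)[\gm]$, and $R^\epsilon[\gm] = (R/\gm)^\epsilon$ since $R$ is principal Artinian (so $R[\gm] = \gm^{k-1}R \iso R/\gm$). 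Hence
$$\rho(n) = \epsilon + 2\dim_{R/\gm} M(n)[\gm] \equiv \epsilon \pmod 2.$$
Since the left side is independent of $n \in \pn$ modulo $2$ by the last assertion of Lemma \ref{parity}, so is $\epsilon$; but $\epsilon \in \{0,1\}$, so it is literally independent of $n$.

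I do not expect a serious obstacle here: the only subtlety is making sure the hypotheses needed for Lemma \ref{H.5 application} (namely H.1 and H.3, together with $R$ principal Artinian) hold for the twisted Selmer triple rather than just the original one, and this is precisely what Lemma \ref{transverse cartesian} provides. One should also double-check the elementary fact that for a finitely generated module $M$ over a principal Artinian ring, $\dim_{R/\gm} M[\gm]$ equals the minimal number of generators of $M$, so that the ``$M(n) \oplus M(n)$'' summand genuinely contributes an even number to $\rho(n)$ regardless of the internal structure of $M(n)$ — but this is immediate from the structure theorem for modules over $R$, which was already used to normalize $\epsilon$.
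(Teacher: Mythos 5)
Your proposal is correct and takes essentially the same route as the paper: the paper's proof is the one-line computation $\epsilon + 2\dim_{R/\gm} M(n)[\gm] = \dim_{R/\gm}\cH(n)[\gm] = \rho(n)$, with the second equality cited to Lemma \ref{H.5 application}, which is exactly what you do. The only difference is that you spell out the supporting bookkeeping (the appeal to Lemma \ref{transverse cartesian} to justify applying Theorem \ref{structure} and Lemma \ref{H.5 application} to the twisted triple, and the observation that $R^\epsilon[\gm] \iso (R/\gm)^\epsilon$), which the paper leaves implicit.
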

\begin{proof} We have $$\epsilon+2\dim_{R/\gm} M(n)[\gm]=\dim_{R/\gm}
 \cH(n)[\gm]=\rho(n),$$
the second equality by Lemma \ref{H.5 application}.
\end{proof}

\begin{Lem} For $mn\in\pn$,  the image of
$\cH^m(n)$ in $\bigoplus_{\lambda|m}H^1(K_\lambda,T)$ is maximal isotropic
under the sum of the local Tate pairings.
\end{Lem}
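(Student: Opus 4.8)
The plan is to deduce this from Poitou--Tate global duality (Theorem \ref{global duality}) together with the self-duality hypothesis H.4. Write $V=\bigoplus_{\lambda\mid m}H^1(K_\lambda,T)$, equipped with the sum of the local Tate pairings $\langle\ ,\ \rangle_\lambda$. This pairing is perfect, and since every prime $\lambda\mid m$ lies in $\pl_0$ and so is of degree two (whence $\bar\lambda=\lambda$), it is symmetric, as remarked after H.5. I want to show that $\loc\big(\cH^{m}(n)\big)\subseteq V$ equals its own orthogonal complement; a submodule equal to its orthogonal complement under a perfect pairing is automatically a maximal isotropic submodule, since it is isotropic and any strictly larger isotropic $Y$ would satisfy $Y\subseteq Y^\perp\subseteq\loc(\cH^{m}(n))^\perp=\loc(\cH^{m}(n))\subseteq Y$, a contradiction.

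The key step is to compare $\sel^m(n)$ (relaxed at the primes dividing $m$, transverse at the primes dividing $n$, and $\sel$ elsewhere) with $\sel_m(n)$ (the same, but strict at the primes dividing $m$), noting $\sel_m(n)\le\sel^m(n)$. Using H.4 to identify $T^*\iso\Tw(T)$, together with the change-of-group isomorphisms $H^i(K,\Tw(T))\iso H^i(K,T)$ and, at each degree-two prime $\lambda$, $H^i(K_\lambda,T^*)\iso H^i(K_\lambda,T)$ (here one uses $\bar\lambda=\lambda$), I would verify that $\big(\sel^m(n)\big)^*=\sel_m(n)$ and $\big(\sel_m(n)\big)^*=\sel^m(n)$: the relaxed and strict conditions are mutual orthogonal complements, the transverse condition is self-orthogonal by Proposition \ref{splitting}(b), and $\sel$ is self-orthogonal by H.4. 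The fussiest point, and the one I expect to be the main obstacle, is confirming that this chain of identifications is compatible with all the localization maps and carries $H^1_{\sel^m(n)}(K,T^*)$ isomorphically onto $\cH^{m}(n)$; this is routine given Remark \ref{pairings} and H.4, but it is where the actual work lies.

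Granting that, I would apply Theorem \ref{global duality} with $\sel_m(n)$ in the role of $\sel$ and $\sel^m(n)$ in the role of $\altsel$. Since these structures agree away from the primes $\lambda\mid m$, and $H^1_{\sel^m(n)}(K_\lambda,T)/H^1_{\sel_m(n)}(K_\lambda,T)=H^1(K_\lambda,T)$ at those primes, the first exact sequence identifies $\loc\big(\cH^{m}(n)\big)$ with the image of the right-hand arrow inside $V$. The second exact sequence, rewritten using $\big(\sel^m(n)\big)^*=\sel_m(n)$, $\big(\sel_m(n)\big)^*=\sel^m(n)$, and the identifications above, has as its image the \emph{same} subspace of $V$. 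Since Poitou--Tate asserts that these two images are exact orthogonal complements under $\sum_{\lambda\mid m}\langle\ ,\ \rangle_\lambda$, we obtain $\loc(\cH^{m}(n))=\loc(\cH^{m}(n))^\perp$, and the first paragraph finishes the argument. (One could also check isotropy directly from the reciprocity law and the self-orthogonality of the local conditions, as in the proof of Lemma \ref{parity}, but the duality argument delivers maximality at the same time.)
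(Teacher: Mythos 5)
Your argument is correct, and it takes a genuinely different route from the paper's. The paper establishes isotropy of $A=\loc_m\bigl(\cH^m(n)\bigr)$ directly from the reciprocity law (as you note in your parenthetical at the end), and then proves \emph{maximality} by a length count: it applies global duality to the pair $\sel(n)\le\sel^m(n)$ to obtain $\len\bigl(\cH^m(n)/\cH(n)\bigr)+\len\bigl(\cH(n)/\cH_m(n)\bigr)=2k\,\nu(m)$, observes that this sum equals $\len(A)$, and concludes $\len(A)=\len(A^\perp)$ since the ambient module has length $4k\,\nu(m)$. Your approach instead applies global duality to the pair $\sel_m(n)\le\sel^m(n)$ and exploits the symmetry of Poitou--Tate under the self-duality $T^*\cong\Tw(T)$ to show that the two images appearing in the theorem are literally the \emph{same} submodule $A$ of $\bigoplus_{\lambda\mid m}H^1(K_\lambda,T)$, so that $A=A^\perp$ drops out at once, with isotropy and maximality handled simultaneously. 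Both arguments use the identification $\bigl(\sel^m(n)\bigr)^*=\sel_m(n)$ (relaxed/strict dual, transverse self-dual by Proposition \ref{splitting}, $\sel$ self-dual by H.4) in the same essential way; yours is somewhat more conceptual and avoids the explicit length computation (incidentally sidestepping a typo in the paper, which writes $2k\,\nu(n)$ where $2k\,\nu(m)$ is meant), while the paper's count gives the explicit length $\len(A)=2k\,\nu(m)$ as a by-product. Your flagged ``fussy point'' --- compatibility of the $T^*\cong\Tw(T)\cong T$ identification with localization at the degree-two primes $\lambda\mid mn$ and with the transverse condition --- is indeed the only place where care is required, and the paper glosses over exactly the same bookkeeping; it is not a gap.
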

\begin{proof}
Let $A$ be the image of $\cH^m(n)$ in $\bigoplus_{\lambda|m}H^1(K_\lambda,T)$.
The local condition $\sel^m(n)$ is maximal isotropic away from $m$
under the local Tate pairing, and the reciprocity law of class field theory
implies that for any $c,d\in H^1_{\sel^m(n)}(K,T)$
$$\sum_{\lambda|m}\langle c_\lambda,
d_\lambda\rangle _\lambda=\sum_{\mathrm{all\ }v}
\langle c_v,d_{\bar{v}}\rangle _v=0$$
which shows that $A\subset A^\perp$. By global duality
(Theorem \ref{global duality})
\begin{eqnarray*}\len(A)&=&\len(\cH^m(n)/\cH(n))+\len(\cH(n)/\cH_m(n))\\
&=&2k\cdot\nu(n).
\end{eqnarray*}
The sum of the lengths of $A$ and $A^\perp$ must be $4k\cdot\nu(m)$
and we conclude that $\len(A)=\len(A^\perp)$ and so $A=A^\perp$.
\end{proof}

\begin{Lem}
For some $\delta\ge 0$, 
$\cH^\ell(n)/(\cH(n)+\cH(\ell n))\iso (R/\gm^\delta)^2$.
\end{Lem}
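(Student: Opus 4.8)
The plan is to compare the three Selmer modules $\cH(n)$, $\cH(\ell n)$, and $\cH^\ell(n)$ by localizing at $\ell$ and using global duality together with the structure of the local cohomology at $\ell$. Write $A(n)$ and $A(\ell n)$ for the images of $\cH(n)$ and $\cH(\ell n)$ in $H^1_\f(K_\ell,T)$ and $H^1_\tr(K_\ell,T)$ respectively (these are the relevant singular-vs-finite pieces), and note that $\cH^\ell(n)$ is exactly the preimage in $\cH^\ell(n)$ of the full group $H^1(K_\ell,T)=H^1_\f(K_\ell,T)\oplus H^1_\tr(K_\ell,T)$. First I would record that $\cH(n)$, $\cH(\ell n)$, and $\cH_\ell(n)$ (the module with the strict condition at $\ell$) all share the same ``away-from-$\ell$'' behaviour, so the differences among $\cH^\ell(n)$, $\cH(n)$, $\cH(\ell n)$ are entirely detected by $\loc_\ell$. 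Concretely, $\cH^\ell(n)/\cH_\ell(n)\iso \loc_\ell(\cH^\ell(n))\subset H^1(K_\ell,T)$, and similarly $\cH(n)/\cH_\ell(n)\iso \loc_\ell(\cH(n))\subset H^1_\f(K_\ell,T)$ and $\cH(\ell n)/\cH_\ell(n)\iso\loc_\ell(\cH(\ell n))\subset H^1_\tr(K_\ell,T)$.

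Next I would exploit the self-duality supplied by H.4. By the lemma just proved, the image of $\cH^\ell(n)$ in $H^1(K_\ell,T)$ is maximal isotropic under the local Tate pairing at $\ell$; call this image $B$. Both $H^1_\f(K_\ell,T)$ and $H^1_\tr(K_\ell,T)$ are also maximal isotropic (they are free rank-two $R$-modules inside the free rank-four module $H^1(K_\ell,T)$, exact orthogonal complements of themselves by Proposition \ref{splitting} (a)–(b)), and $\loc_\ell(\cH(n))=B\cap H^1_\f(K_\ell,T)$ while $\loc_\ell(\cH(\ell n))=B\cap H^1_\tr(K_\ell,T)$. So the claim reduces to the following purely local statement about three maximal isotropic $R$-submodules $B$, $F:=H^1_\f$, $N:=H^1_\tr$ of the free rank-four symplectic (or rather self-dual) $R$-module $H=H^1(K_\ell,T)$, with $H=F\oplus N$: the quotient $B/\big((B\cap F)+(B\cap N)\big)$ is isomorphic to $(R/\gm^\delta)^2$ for some $\delta\ge 0$. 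This is where the bulk of the work lies.

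To settle the local statement I would use the splitting $H=F\oplus N$ to describe $B$ as (essentially) the graph of a map. Since $B$ is maximal isotropic and $F,N$ are transverse Lagrangians, projection along $N$ gives a map $B\to F$ whose kernel is $B\cap N$ and whose image $I\subset F$ contains $B\cap F$; isotropy of $B$ forces the induced identification $I/(B\cap F)\iso$ (a submodule determined by a self-adjoint ``slope'') and, over a principal Artinian ring, elementary divisor theory for such a pairing shows $I/(B\cap F)$ is generated by at most two elements, with the two elementary divisors forced to be equal by the skew/self-adjointness — precisely the mechanism already used in Theorem \ref{structure} and Lemma \ref{parity}. Transporting this through the isomorphism $B/\big((B\cap F)+(B\cap N)\big)\iso I/(B\cap F)$ gives the desired $(R/\gm^\delta)^2$. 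The main obstacle, and the step to be careful about, is the very last one: showing the two elementary divisors of $B/((B\cap F)+(B\cap N))$ coincide. I expect to get this from the fact that the local Tate pairing at a degree-two prime $\lambda$ is symmetric under H.4 (as remarked after the hypotheses), which makes the ``slope'' endomorphism self-adjoint with respect to a perfect pairing on a free rank-two module; such an endomorphism, composed with the pairing, yields a symmetric bilinear form on $(R/\gm)^2$-quotients whose cokernel is automatically of the form $(R/\gm^\delta)^2$ — the same even-ness phenomenon invoked in Theorem \ref{structure}.
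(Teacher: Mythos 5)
Your overall plan is the same as the paper's: pass to the local image $A:=\loc_\ell(\cH^\ell(n))\subset H^1(K_\ell,T)$, which is maximal isotropic by the preceding lemma, identify $\cH^\ell(n)/(\cH(n)+\cH(\ell n))$ with $A/(A_\f+A_\tr)$, and produce a pairing on this quotient that forces a $D\oplus D$ shape, with cyclicity of $D$ coming from the rank-two bound at $\ell$.

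The gap is at the decisive step. You finish by asserting that a perfect \emph{symmetric} bilinear form on a two-generator module automatically makes the module of the form $(R/\gm^\delta)^2$; this is false, since $R/\gm^a\oplus R/\gm^b$ carries a perfect symmetric $R$-valued pairing for every pair $a,b$, so symmetry imposes no relation between the two elementary divisors. The evenness phenomenon you are recalling from Theorem \ref{structure} comes from an \emph{alternating} perfect pairing, not a symmetric one, and it is precisely such a pairing that the paper constructs here. Concretely, for $x,y\in A$ one sets $[x,y]=\langle x_\f,y_\tr\rangle$, where $x=x_\f+x_\tr$ is the decomposition along $H^1(K_\ell,T)=H^1_\f(K_\ell,T)\oplus H^1_\tr(K_\ell,T)$, and verifies two things your sketch leaves unaddressed: first, that $[x,y]=-[y,x]$, which falls out of $\langle x,y\rangle=0$ for $x,y\in A$ combined with the isotropy of $H^1_\f$ and $H^1_\tr$ and the symmetry of the local Tate pairing at the degree-two prime $\lambda$; second, that the radical of $[\cdot,\cdot]$ is exactly $A_\f+A_\tr$, which uses the \emph{maximal} isotropy of $A$ to conclude that if $\langle x_\f,y\rangle=0$ for all $y\in A$ then $x_\f\in A^\perp=A$, hence $x_\f\in A_\f$ and $x_\tr\in A_\tr$. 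In your ``slope'' framing this is the observation that the slope map is \emph{anti}-self-adjoint, not self-adjoint, relative to the pairing between $H^1_\f(K_\ell,T)$ and $H^1_\tr(K_\ell,T)$. With that correction made and the non-degeneracy verified, your argument becomes the paper's; as written, it does not close.
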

\begin{proof}
We first construct a non-degenerate, alternating, $R$-bilinear,
$R$-valued pairing on the module $\cH^\ell(n)/(\cH(n)+\cH(\ell n))$.
Let $A$ be the local image of $\cH^\ell(n)$ in $H^1(K_\ell,T)$.
$A$ is maximal isotropic by the previous lemma.
Write $A_\f$ and $A_\tr$ for the intersections
of $A$ with $H^1_\f(K_\ell,T)$ and $H^1_\tr(K_\ell,T)$, respectively.
Localization at $\ell$ gives an isomorphism $$\cH^\ell(n)/(\cH(n)+\cH(\ell n))
\iso A/(A_\f+A_\tr)$$ and
it is on this $R$-module that we define the pairing.

If $x\in A$ write $x_\f$ and $x_\tr$ for the projections of $x$ onto the
finite and transverse submodules.  For $x,y\in A$ we define the symbol
$[x,y]\in R$ by $[x,y]=\langle x_\f,y_\tr\rangle $.  That $[x,y]=-[y,x]$
follows immediately from $\langle x,y\rangle =0$
and the isotropy of the finite and
transverse submodules.  Suppose $x\in A$ is in the kernel of this pairing,
then $0=\langle x_\f,y_\tr\rangle =\langle 
x_f,y\rangle $ for every $y\in A$ and so $x_\f\in A$
by maximal isotropy of $A$.  It follows that $x_\tr\in A$
and so $x\in A_\f+A_\tr$, proving that the pairing is non-degenerate.

We now have that $$\cH^\ell(n)/(\cH(n)+\cH(\ell n))\iso D\oplus D$$ for
some $R$-module $D$.  Since $\cH^\ell(n)/\cH(n)$ injects into
$H^1_\s(K_\ell,T)$ which is free of rank 2, it follows that
$\cH^\ell(n)/(\cH(n)+\cH(\ell n))$ can be generated by two elements.  
Therefore $D$ is cyclic.
\end{proof}

\begin{Lem}\label{pretty diagram}
There are $a$, $b$, and $\delta$ greater than or equal to zero such that
in the following diagram the cokernel of each inclusion is a direct sum
of two cyclic $R$-modules of the indicated lengths.

\begin{center}
\setlength{\unitlength}{1cm}
\begin{picture}(4,3.5)
\put(1.9,3){$\cH^\ell(n)$}

\put(1,2){\vector(1,1){.75}} \put(-.6,2.4){$k-a,\ k-b$}
\put(3.6,2){\vector(-1,1){.75}} \put(3.6,2.4){$a+\delta,\ b+\delta$}

\put(.4,1.5){$\cH(n)$}
\put(3.4,1.5){$\cH(n\ell)$}

\put(1.9,.5){\vector(-1,1){.75}}\put(.75,.6){$a,\ b$}
\put(2.7,.5){\vector(1,1){.75}}\put(3.3,.6){$k-a-\delta,\ k-b-\delta$}

\put(1.9,0){$\cH_\ell(n)$}
\end{picture}
\end{center}
\end{Lem}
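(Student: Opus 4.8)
The plan is to extract the four inclusions in the diagram from the three preceding lemmas and then pin down the lengths of the cokernels using the self-duality hypothesis H.4, exactly as in the parity arguments of Section~\ref{Artinian}. First I would recall that the previous lemma gives $\delta\ge 0$ with $\cH^\ell(n)/(\cH(n)+\cH(n\ell))\iso(R/\gm^\delta)^2$, which will be the $\delta$ in the diagram. The module $\cH^\ell(n)/\cH(n)$ injects into $H^1_\s(K_\ell,T)$, which is free of rank two over $R$ by H.0 and Proposition~\ref{locally free}; hence $\cH^\ell(n)/\cH(n)$ is generated by two elements, so by the structure theorem it has the form $R/\gm^{k-a}\oplus R/\gm^{k-b}$ for some $a,b\ge 0$ (writing the exponents this way so that $a,b$ measure the ``defect'' from being free of rank two). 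Likewise $\cH(n)/\cH_\ell(n)$ is a quotient of $H^1_\f(K_\ell,T)$, which is free of rank two, so it is a direct sum of two cyclic modules; the global-duality length count in the lemma on maximal isotropic images (which gives $\len(\cH^\ell(n)/\cH(n))+\len(\cH(n)/\cH_\ell(n))=2k$ for a single prime, i.e. $\nu=1$) forces the exponents of $\cH(n)/\cH_\ell(n)$ to be exactly $a$ and $b$. This handles the bottom-left and top-left arrows.

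Next I would treat the two right-hand arrows. For the top-right arrow, $\cH^\ell(n)/\cH(n\ell)$ is the image of $\cH^\ell(n)$ in $H^1_\tr(K_\ell,T)$ (again free of rank two by H.0 and Proposition~\ref{locally free}), so it too is a sum of two cyclic modules; and the same global-duality length count applied with the transverse local condition gives $\len(\cH^\ell(n)/\cH(n\ell))+\len(\cH(n\ell)/\cH_\ell(n))=2k$. To relate these exponents to $a$, $b$, $\delta$ I would run the snake lemma on the two short exact sequences $0\to\cH(n)/\cH_\ell(n)\to\cH^\ell(n)/\cH_\ell(n)\to\cH^\ell(n)/\cH(n)\to 0$ and the analogous one through $\cH(n\ell)$, together with the isomorphism $\cH^\ell(n)/(\cH(n)+\cH(n\ell))\iso(R/\gm^\delta)^2$. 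Comparing the resulting length identities componentwise — here the key point is that the non-degenerate alternating pairing constructed in the previous lemma on $A/(A_\f+A_\tr)$, and the isotropy of $A_\f$ and $A_\tr$ inside the rank-two free modules $H^1_\f$ and $H^1_\tr$, force the matching of exponents in pairs rather than just in total — shows that $\cH^\ell(n)/\cH(n\ell)$ has exponents $a+\delta,b+\delta$ and that $\cH(n\ell)/\cH_\ell(n)$ has exponents $k-a-\delta,k-b-\delta$. In particular all four listed exponents are automatically $\ge 0$, which is the assertion ``greater than or equal to zero''.

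The main obstacle, I expect, is precisely the \emph{componentwise} bookkeeping: each of the four quotients is individually only known to be a sum of two cyclics, and the length counts from global duality are only total lengths, so a priori the exponents could be redistributed between the two cyclic summands. The leverage that forces the clean pairing $(k-a,k-b)$ versus $(a,b)$ versus $(a+\delta,b+\delta)$ versus $(k-a-\delta,k-b-\delta)$ is the compatibility of the local Tate pairing with the finite/transverse splitting of Proposition~\ref{splitting} and the self-orthogonality of $\sel$ from H.4: the pairing $\langle x_\f,y_\tr\rangle$ identifies $A_\f$ with (a submodule of) the $R$-dual of $H^1_\s(K_\ell,T)/A_\s$ and similarly for $A_\tr$, so the two ``defect'' exponents at the top are the same pair $a,b$ occurring at the bottom, just shifted by $\delta$. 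I would organize this by choosing a single basis of $H^1_\f(K_\ell,T)\oplus H^1_\tr(K_\ell,T)\iso R^2\oplus R^2$ adapted to the pairing (a hyperbolic basis, using Proposition~\ref{splitting}(a),(b)), writing $A$ in Smith normal form with respect to it, and reading all four cokernels off that normal form; the diagram's assertions then become a direct computation with a $2\times 2$ block of elementary divisors. Everything else — the existence of the four maps, the rank-two freeness of the relevant local groups, the two length identities — is immediate from H.0, Proposition~\ref{locally free}, Theorem~\ref{global duality}, and the two lemmas just proved.
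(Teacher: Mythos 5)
Your proposal is correct and uses the same two ingredients as the paper: Theorem~\ref{global duality} for the left and right pairs, and the preceding lemma (the $(R/\gm^\delta)^2$ quotient) for the diagonal relations. One small wobble is worth flagging: your first pass claims the \emph{total} length count $\len(\cH^\ell(n)/\cH(n))+\len(\cH(n)/\cH_\ell(n))=2k$ already ``forces'' the exponents to be $a,b$, which it does not (it only fixes the sum); you then correctly identify this as the real obstacle and appeal to the pairing, but the cleanest statement is simply that global duality gives the two localized images as \emph{exact orthogonal complements} inside the perfectly paired free rank-two modules $H^1_\f(K_\ell,T)$ and $H^1_\s(K_\ell,T)$, and the annihilator of $\pi^{k-a}R\oplus\pi^{k-b}R\subset R^2$ is $\pi^aR\oplus\pi^bR$ --- which gives the componentwise matching at once, with no Smith normal form or hyperbolic basis needed. (Also, $\cH(n)/\cH_\ell(n)$ is a \emph{submodule} of $H^1_\f(K_\ell,T)$, not a quotient, though this does not affect the conclusion.) For the diagonals, the short exact sequence $0\to\cH(n)/\cH_\ell(n)\to\cH^\ell(n)/\cH(n\ell)\to(R/\gm^\delta)^2\to 0$ together with Nakayama (both terms need exactly two generators, so the kernel is $\gm^\delta$ times the middle term) is a more direct route than the snake-lemma comparison you sketch, and is what the paper's one-line citation of the preceding lemma is implicitly invoking.
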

\begin{proof} The relation between the lower left and upper 
left quotients follows
from global duality, and similarly for the lower and upper 
right quotients.
The relation between lower left and
upper right quotients, and also the relation between lower right and
upper left, follows from the preceeding lemma.
\end{proof}

\begin{Prop}\label{induction} For $n\ell\in\pn$
 $$\loc_\ell(\stub(n))=0\implies \loc_\ell(\stub(\ell n))=0.$$
\end{Prop}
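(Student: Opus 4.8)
The plan is to exploit the length bookkeeping from Lemma~\ref{pretty diagram} together with the structure decomposition~(\ref{structure decomposition}). Recall $\stub(n)=\gm^{\lambda(n)}\cH(n)$ where $\lambda(n)=\len(M(n))$ and $\cH(n)\iso R^\epsilon\oplus M(n)\oplus M(n)$. The localization map $\loc_\ell$ on $\cH(n)$ has image landing in the free rank-two module $H^1_\f(K_\ell,T)$, and its kernel is $\cH_\ell(n)$. So $\loc_\ell(\stub(n))=0$ is equivalent to the assertion that the composite $\stub(n)\hookrightarrow\cH(n)\to\cH(n)/\cH_\ell(n)$ is zero, i.e.\ that $\gm^{\lambda(n)}$ kills $\cH(n)/\cH_\ell(n)$. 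By the bottom-left arrow of Lemma~\ref{pretty diagram}, $\cH(n)/\cH_\ell(n)$ is a sum of two cyclic modules of lengths $a$ and $b$ (for the integers $a,b\ge 0$ there produced), so the hypothesis $\loc_\ell(\stub(n))=0$ says exactly that $a\le\lambda(n)$ and $b\le\lambda(n)$.

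Next I would compute $\lambda(n\ell)$ in terms of $\lambda(n)$, $a$, $b$, $\delta$. Using the diagram: $\len\cH(n)=\epsilon k+2\lambda(n)$, and chasing the inclusions $\cH_\ell(n)\subset\cH(n)\subset\cH^\ell(n)$ on one side and $\cH_\ell(n)\subset\cH(n\ell)\subset\cH^\ell(n)$ on the other gives
\[
\len\cH(n\ell)=\len\cH(n)+(k-a-\delta)+(k-b-\delta)-a-b = \epsilon k+2\lambda(n)+2k-2a-2b-2\delta.
\]
Since $\cH(n\ell)\iso R^\epsilon\oplus M(n\ell)^{\oplus 2}$ with the same $\epsilon$ by Proposition~\ref{epsilon}, this yields $\lambda(n\ell)=\lambda(n)+k-a-b-\delta$. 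On the other hand the top-right arrow shows $\cH(n\ell)/\cH_\ell(n)$ is a sum of two cyclics of lengths $k-a-\delta$ and $k-b-\delta$, and $\loc_\ell(\stub(\ell n))=0$ is equivalent to $k-a-\delta\le\lambda(n\ell)$ and $k-b-\delta\le\lambda(n\ell)$.

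Substituting $\lambda(n\ell)=\lambda(n)+k-a-b-\delta$, the condition $k-a-\delta\le\lambda(n\ell)$ becomes $k-a-\delta\le\lambda(n)+k-a-b-\delta$, i.e.\ $b\le\lambda(n)$; similarly the other becomes $a\le\lambda(n)$. These are precisely the two inequalities supplied by the hypothesis $\loc_\ell(\stub(n))=0$, so the implication follows. The one point requiring care — and the main obstacle — is verifying that the length identity derived from Lemma~\ref{pretty diagram} genuinely gives $\lambda(n\ell)=\lambda(n)+k-a-b-\delta$ rather than something off by the contribution of the free part; this is where Proposition~\ref{epsilon} (constancy of $\epsilon$) is essential, ensuring the free rank does not absorb any of the length difference. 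I would also double-check the direction of each arrow's cokernel labels in the diagram against the global-duality length count $\len(A)=2k\cdot\nu(n)$ used in its proof, since a sign error there would reverse the inequalities.
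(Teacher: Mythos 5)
Your proof is correct and follows essentially the same route as the paper's: both read off $a,b\le\lambda(n)$ from the hypothesis via the lower-left edge of the diagram in Lemma~\ref{pretty diagram}, derive the identity $\lambda(n\ell)=\lambda(n)+k-a-b-\delta$ (the paper treats it as immediate, you make the length bookkeeping explicit using Proposition~\ref{epsilon} to cancel the free part), and conclude that $\gm^{\lambda(n\ell)}$ kills the lower-right quotient. Your version just spells out a step the paper compresses.
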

\begin{proof}  Keeping the notation as in the diagram of 
Lemma \ref{pretty diagram},
 $\loc_\ell (\stub(n))=0$ implies that $\gm^{\lambda(n)}$ kills
the lower left quotient, and so $a, b\le\lambda(n)$.  The diagram immediately
implies \begin{eqnarray*}\lambda(n\ell)&=&\lambda(n)+k-a-b-\delta\\&\ge&
k-a-\delta,\ k-b-\delta\end{eqnarray*}
so that $\gm^{\lambda(n\ell)}$ kills the lower right quotient.
The claim follows.
\end{proof}

%%%%%%%%%%%%%%%%%%%%%%%%%%%%%%%%%%%%%%%%%%%%%%%%%%%%%%%%%%%%%%%

\subsection{Bounding the Selmer group}
\label{bound}

%%%%%%%%%%%%%%%%%%%%%%%%%%%%%%%%%%%%%%%%%%%%%%%%%%%%%%%%%%%%%%

Throughout this subsection $R$ is a fixed discrete valuation ring
with uniformizing parameter $\pi$.  Let $(T,\sel,\pl)$ be a Selmer
triple satisfying Hypotheses H.0--H.5, and suppose
$\pl_s(T)\subset\pl$ for $s\gg 0$.  If $\Phi$ denotes the field of
fractions of $R$, $\D=\Phi/R$, and $A=T\otimes_R \D$, then we
obtain a Selmer structure on $A$, still denoted $\sel$, by
propagating $\sel\otimes\Phi$ from $T\otimes \Phi$ to $A$. The
following theorem is the technical core of this paper.

\begin{Thm}\label{dvr bound}
Suppose there is a Kolyvagin system $\kappa\in \KS(T,\sel,\pl)$ with
$\kappa_1\not=0$.  Then $H^1_\sel(K,T)$ is  a free rank-one $R$ module,
and there is a finite $R$-module $M$ such that
$$H^1_\sel(K,A)\iso \D\oplus M\oplus M.$$  Furthermore
$\len_R(M)\le \len_R( H^1_\sel(K,T)/R\cdot\kappa_1)$.
\end{Thm}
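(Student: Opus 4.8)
The strategy is to work with the Artinian quotients $R/\pi^k$ and deduce the DVR statement by a limiting argument, reducing everything to the structural results of Subsection \ref{Artinian}. Fix $k\gg 0$, set $R_k = R/\pi^k$, and let $T_k = T/\pi^k T$. By Remark \ref{base change} the Selmer triple $(T_k,\sel,\pl\cap\pl_k)$ satisfies H.0--H.5, and by Remark \ref{functorality}(c) the Kolyvagin system $\kappa$ induces a Kolyvagin system $\kappa^{(k)}$ for $(T_k,\sel,\pl\cap\pl_k)$ with $\kappa^{(k)}_1$ equal to the image of $\kappa_1$. The heart of the matter is to show that for every $n\in\pn=\pn(\pl\cap\pl_k)$ one has $\loc_\ell(\stub(n))=0$ for all but finitely many $\ell$, and then to leverage the inductive step Proposition \ref{induction} together with the finite-singular compatibility of a Kolyvagin system to force $\kappa^{(k)}_n$ to lie in (a bounded index subgroup of) the stub module $\stub(n)$.

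**Key steps.** First I would establish the \emph{base case}: since $\kappa^{(k)}_1 \in \cH(1) = H^1_\sel(K,T_k)$ is nonzero, one gets a lower bound on the stub module at $n=1$, more precisely $\len_{R_k}(\cH(1)/\stub(1)) \le \len_{R_k}(\cH(1)/R_k\kappa^{(k)}_1)$ once one knows $\kappa^{(k)}_1 \notin \gm\stub(1)\cdot(\text{something})$ — the precise statement is that $\kappa^{(k)}_1$ generates $\stub(1)$ up to the error term. Second, I would run the \emph{propagation/induction}: using Lemma \ref{parity}, Proposition \ref{epsilon}, and a Chebotarev-type argument (this is where H.1, H.2, H.5 and the extension $F/\Q$ of H.3 enter) one shows that given $n$ with $\loc_\ell(\kappa^{(k)}_n)$ controlling $\stub(n)$, there exists $\ell\in\pl$ such that the finite-singular map $\phi^{\f\s}_\ell$ sends the relevant part of $\kappa^{(k)}_n$ isomorphically onto a generator of the singular quotient, which via the Kolyvagin relation (\ref{ks relations}) pins down $\loc_\ell(\kappa^{(k)}_{n\ell})$; combined with Proposition \ref{induction} and the diagram of Lemma \ref{pretty diagram} this shows the index of $R_k\kappa^{(k)}_n$ in $\stub(n)$ is non-increasing along a suitable chain. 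Third, pushing this far enough (using $\pl_s(T)\subset\pl$ for $s\gg 0$ to guarantee the required primes exist) forces $\epsilon = 1$ in the decomposition $\cH(1)\iso R_k^\epsilon\oplus M(1)\oplus M(1)$, gives $H^1_\sel(K,T_k)$ the stated form, and yields $\len_{R_k}(M(1))\le\len_{R_k}(\cH(1)/R_k\kappa^{(k)}_1)$. Finally I would pass to the limit: $H^1_\sel(K,T) = \mil H^1_\sel(K,T_k)$ being free of rank one over each $R_k$ with compatible generators forces freeness of rank one over $R$; and $H^1_\sel(K,A) = \dlim H^1_\sel(K,A[\pi^k])$, using Lemma \ref{H.5 application} to identify $H^1_\sel(K,A[\pi^k]) \iso H^1_\sel(K,T_k)$ dualized appropriately, gives the structure $\D\oplus M\oplus M$ with $\len_R(M) = \lim_k\len_{R_k}(M(1))$ and hence the displayed inequality.

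**Main obstacle.** The crux — and the step I expect to require the most care — is the Chebotarev argument producing a prime $\ell$ for which localization of a given class $c\in H^1_\sel(K,T_k)$ is ``as large as possible'' at $\ell$ (nonzero in a prescribed $\tau$-eigenspace) \emph{and} simultaneously $\loc_\ell$ has the right rank on the ambient Selmer group. This is the analogue of the core lemma of \cite{mazur-rubin}, but here it must interact with the self-duality hypothesis H.4 and the eigenspace decomposition of H.5, so one must check that the eigenspace constraints coming from complex conjugation do not obstruct the existence of the desired Frobenius class in $\Gal(F(\mup)/K)$; Hypothesis H.2 ($\Tbar$ absolutely irreducible) together with H.5(a) (the $\tau$-splitting $\Tbar = \Tbar^+\oplus\Tbar^-$) is exactly what makes this work. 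A secondary subtlety is verifying that the inductive bound on $\len(M(n))$ actually decreases enough — i.e. that one does not get stuck at a chain along which $\delta>0$ persistently in Lemma \ref{pretty diagram} — which again is handled by choosing $\ell$ so that $\phi^{\f\s}_\ell$ is optimally positioned, forcing $\delta = 0$ at that stage.
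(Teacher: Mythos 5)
Your plan captures the right overall architecture — reduce to the Artinian quotients $R/\gm^k$, use Chebotarev to manufacture useful primes, exploit the stub-module machinery from Subsection~\ref{Artinian}, and pass to the limit — and your identification of the Chebotarev step as the crux (and the role of H.1, H.5(a) in navigating the $\tau$-eigenspace constraints, which is precisely what Lemma~\ref{Cheb} handles by taking $\eta=(\tau\sigma)^2\in G^+$) is accurate. But two concrete gaps remain.

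First, the worry about $\delta>0$ persisting in Lemma~\ref{pretty diagram}, and the proposed remedy of ``choosing $\ell$ so that \ldots $\delta=0$,'' is a misconception. Proposition~\ref{induction} holds for arbitrary $\delta\ge 0$: from $a,b\le\lambda(n)$ the diagram gives $\lambda(n\ell)=\lambda(n)+k-a-b-\delta\ge k-a-\delta,\ k-b-\delta$, so $\gm^{\lambda(n\ell)}$ kills the lower-right quotient. That is the entire implication needed; nothing ever forces $\delta=0$, and trying to arrange it would be a wild goose chase.

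Second, and more seriously, your plan is missing the mechanism that closes the double induction. The precise statement to establish is $\kappa_n^{(k)}\in\stub^{(k)}(n)\otimes G_n$ for all $n\in\pn^{(2k-1)}$ (Lemma~\ref{lemma of the stub}), and the delicate case is $\stub^{(k)}(n)\ne 0$, so $\epsilon=1$ and $i:=\lambda^{(k)}(n)<k$. Induction on $k$ gives $\kappa_n^{(i)}\in\stub^{(i)}(n)=0$, i.e.\ $\pi^{k-i}\kappa_n^{(k)}=0$. To upgrade annihilation by $\pi^{k-i}$ to divisibility by $\pi^{i}$ — which is what ``lying in $\stub^{(k)}(n)=\gm^i\cH(n)$'' requires — one must know that $\kappa_n^{(k)}$ lives in a free rank-one $R^{(k)}$-submodule of $H^1_{\sel(n)}(K,T^{(k)})$. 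This is exactly Lemma~\ref{liftability}, which in turn is why the induction has to run over $n\in\pn^{(2k-1)}$ (primes of $\pl_{2k-1}(T)$, not merely $\pl_k(T)$) so that one can pass through $T^{(2k-1)}$. Your plan has the Kolyvagin system over $\pl\cap\pl_k$ only and no lifting lemma, so the inductive step on $k$ cannot close. Relatedly, your phrasing (``index of $R_k\kappa_n^{(k)}$ in $\stub(n)$ is non-increasing along a chain'') doesn't match the actual argument, which is a minimal-counterexample induction on $(k,\rho^{(k)}(n))$ — first in $k$, then, with $k$ fixed, downward in $\rho(n)$ using the dichotomy of Lemma~\ref{parity} and Proposition~\ref{induction}, splitting into the cases $\rho(n)^+\rho(n)^-\ne 0$ and $\rho(n)^-=0$.
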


We will prove this through a series of lemmas. For any $k\ge 0$ we
define $$R^{(k)}=R/\gm^k \hspace{1cm} T^{(k)}=T/\gm^kT
\hspace{1cm}\pl^{(k)}=\pl\cap\pl_k(T).$$ By Remark \ref{base
change}, the Selmer triple $(T^{(k)},\sel,\pl^{(k)})$ satisfies
hypotheses H.0--H.5, and we may invoke the definitions and results
of the preceeding section.  In particular for
$\ell\in\pn^{(k)}=\pn(\pl^{(k)})$ we have a decomposition
$$H^1_{\sel(n)}(K,T^{(k)})\iso R^{(k),\epsilon}\oplus M^{(k)}(n)
\oplus M^{(k)}(n)$$ in which $\epsilon\in\{0,1\}$ is independent of both
$n$ and $k$ (by Lemma \ref{parity}).  We define
$$\lambda^{(k)}(n)=\len_R(M^{(k)}(n))\hspace{1cm}
\stub^{(k)}(n)=\gm^{\lambda^{(k)}(n)}H^1_{\sel(n)}(K,T^{(k)}).$$
We obtain, by Remark \ref{functorality}, a Kolyvagin system
$\kappa^{(k)}\in\KS(T^{(k)},\sel,\pl^{(k)})$.

\begin{Lem}\label{Cheb} Suppose  we are given elements
$$c^+\in H^1(K,\Tbar)^+ \hspace{1cm} c^-\in H^1(K,\Tbar)^-.$$
There are infinitely many primes $\lambda\in\pl^{(2k-1)}$ such
that $c^\pm\not=0\implies \loc_\lambda(c^\pm)\not=0$.
\end{Lem}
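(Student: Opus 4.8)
The plan is to prove this by a Chebotarev density argument, which is the standard technique for producing primes at which prescribed cohomology classes have nonzero localization. The setup is that we have a Galois extension $F/\Q$ as in H.3 with $G_F$ acting trivially on $T$, and $c^\pm \in H^1(K,\Tbar)^\pm$. First I would reduce to finding a single Frobenius conjugacy class: by H.3, the inflation-restriction sequence shows that $c^\pm$ restricts nontrivially to $H^1(F(\mup), \Tbar) = \Hom(G_{F(\mup)}, \Tbar)$ (here we use the vanishing $H^1(F(\mup)/K,\Tbar)=0$), so there is $\sigma^\pm \in G_{F(\mup)}$ with $c^\pm(\sigma^\pm) \neq 0$ in $\Tbar$ whenever $c^\pm \neq 0$.

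**Next I would** arrange the local behavior needed to land in $\pl^{(2k-1)}$. A prime $\lambda$ of $K$ lies in $\pl_0(T)$ and has $I_\lambda \subset p^{2k-1}\Z_p$ exactly when the rational prime $\ell$ below $\lambda$ is inert in $K$, unramified in $F(\mup)$, and $\Frob_\lambda$ acts trivially on $T/p^{2k-1}T$ while $\ell + 1 \equiv 0 \pmod{p^{2k-1}}$. Since $G_F$ acts trivially on $T$ and a fortiori on $T/p^{2k-1}T$, the condition on $\Frob_\lambda$ is that its image in $\Gal(F/\Q)$ is trivial; the congruence $\ell \equiv -1$ is a condition on the image in $\Gal(\Q(\bmu_{p^{2k-1}})/\Q)$; and inertness of $\ell$ in $K$ is a condition on $\Gal(K/\Q)$ — namely $\Frob_\ell$ should be the nontrivial element, i.e.\ $\tau$. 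The key point is that $\Frob_\lambda = \Frob_\ell^2$ when $\ell$ is inert, so I want a Frobenius in $G_K$, not $G_\Q$, that kills $c^\pm$; this is where the complex-conjugation bookkeeping enters. I would consider the element $\tau \sigma^\pm \in G_\Q$ (where $\tau$ is the fixed complex conjugation, which acts on $c^\pm$ by $\pm 1$): its square $\tau \sigma^\pm \tau \sigma^\pm = (\sigma^\pm)^{\tau} \sigma^\pm$ lies in $G_K$, and a cocycle computation using $c^\pm(\sigma^\pm \cdot)$ together with $\tau$-equivariance of $c^\pm$ (the $\pm$-eigenspace condition) shows that the value of $c^\pm$ on this square is, up to the sign and a unit, $2 \cdot c^\pm(\sigma^\pm) \neq 0$ (here $p$ odd is used so $2$ is a unit).

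**The main obstacle**, and the step requiring the most care, is simultaneously imposing all the constraints on the conjugacy class of Frobenius in a single finite Galois extension. One must check that the extensions $F$, $\Q(\bmu_{p^{2k-1}})$, and $K$ are suitably independent — in particular that the subextension of $F$ pinned down by the requirement ``$\Frob$ acts as $\tau$ on $\Tbar$ and trivially on $T/p^{2k-1}T$'' is compatible with the cyclotomic and quadratic conditions — so that Chebotarev applies to the compositum $L = F(\bmu_{p^{2k-1}})(\text{splitting field of } c^\pm)$ and yields infinitely many rational primes $\ell$ unramified in $L$ with $\Frob_\ell$ in the chosen class. This is Hypothesis H.5 at work: it guarantees $\tau$ splits $\Tbar$ into eigenspaces so the condition ``$\Frob_\lambda$ acts trivially on $\Tbar$'' (needed for $\lambda \in \pl_0$) is consistent with ``$\Frob_\ell$ acts as $\tau$ on $\Tbar$'', since $\tau^2 = 1$. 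Once the conjugacy class is correctly identified as nonempty and the fields are independent, Chebotarev gives infinitely many such $\lambda$, and by construction $\loc_\lambda(c^\pm) \neq 0$ via Proposition \ref{locally free}, which identifies $H^1_\f(K_\lambda,\Tbar) \iso \Tbar^{\Frob_\lambda = 1} = \Tbar$ with the localization map sent to evaluation of a cocycle representative at $\Frob_\lambda$.
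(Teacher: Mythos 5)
Your proposal has the right skeleton — reduce by inflation--restriction to a homomorphism on a Galois group, use Chebotarev to find $\ell$ with $\Frob_\ell = \tau\sigma$, and exploit $\Frob_\lambda = \Frob_\ell^2 \in G_K$ — but it has two genuine gaps, and they are precisely the subtle points of the lemma.

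First, the statement asks for a \emph{single} prime $\lambda$ at which both $\loc_\lambda(c^+)$ and $\loc_\lambda(c^-)$ are nonzero (when both classes are nonzero). You produce separate elements $\sigma^+$ and $\sigma^-$ with $c^+(\sigma^+)\neq 0$ and $c^-(\sigma^-)\neq 0$, and then consider $(\tau\sigma^+)^2$ and $(\tau\sigma^-)^2$ separately. These give two different conjugacy classes, hence in general two different sets of primes. You never argue why a single conjugacy class exists that works simultaneously for $c^+$ and $c^-$; this is a real issue, not bookkeeping.

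Second, the cocycle computation you gesture at does not give what you claim. Writing $\eta = (\tau\sigma)^2 = \sigma^\tau\sigma$, one computes $c^{\pm}(\eta) = \pm\tau(c^\pm(\sigma)) + c^\pm(\sigma)$, which is $2$ times the \emph{$\pm$-eigenpart} of $c^\pm(\sigma)$, not $2\cdot c^\pm(\sigma)$. So $c^\pm(\sigma)\neq 0$ is insufficient: you need the $+$-eigenpart of $c^+(\sigma)$ and the $-$-eigenpart of $c^-(\sigma)$ to be nonzero, and your proof does not address why this can be arranged. Equivalently, since any Frobenius $\Frob_\lambda$ for inert $\lambda$ automatically lands in the $+$-eigenspace $G^+$ of $G=\Gal(E/L)$, what must be shown is that the restrictions $c^+\colon G^+\to\Tbar^+$ and $c^-\colon G^+\to\Tbar^-$ are both nontrivial. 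The paper proves this from \emph{irreducibility} (H.1) together with H.5(a): if, say, $c^+(G^+)=0$, then $c^+(G)=c^+(G^-)\subset\Tbar^-$, and $R\cdot c^+(G)$ is then a nonzero $G_K$-submodule of $\Tbar$ contained in the one-dimensional $\Tbar^-$, contradicting absolute irreducibility. Your proposal never invokes H.1 here, yet this is exactly where it is needed. Once both maps $c^\pm|_{G^+}$ are known to be nontrivial, their kernels are proper subspaces of $G^+$, and (using $p>2$) one finds a common $\eta\in G^+$ avoiding both kernels; choosing $\sigma$ with $\eta=(\tau\sigma)^2$ (possible because $(\tau\sigma)^2 = 2\sigma_+$ where $\sigma_+$ is the $+$-part of $\sigma$, and $2$ is a unit) resolves the first gap as well. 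This eigenspace and irreducibility argument is the heart of the proof and is missing from your writeup.

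As a minor point, the hypothesis providing the extension $F/\Q$ is H.2, not H.3.
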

\begin{proof} We consider only the case where $c^+$, $c^-$ are both nonzero, 
the other case being entirely similar.
Let $F/\Q$ be the extension of Hypothesis H.2,
and let $L$ be the Galois closure (over $\Q$) of 
$K(T^{(2k-1)},\mu_{p^{2k-1}})$.  Since $F/\Q$ is Galois by hypothesis,
$L\subset F(\mu_{p^\infty})$, and so restriction
$$H^1(K,\Tbar)\map{}H^1(L,\Tbar)^{\Gal(L/K)}\iso
\Hom(G_L,\Tbar)^{\Gal(L/K)}$$ is an injection.
We identify $c^\pm$ with its image under restriction.  
Let $E$ be the smallest extension of $L$ with $c^\pm(G_E)=0$, and
set $G=\Gal(E/L)$.  Then $G$ is an $\F_p$-vector space with a natural
action of $\Gal(L/\Q)$, and we let $G^\pm$ be the $\pm$-eigenspace for
the action of $\tau$.

We claim that the maps
\begin{equation}\label{cheb maps}
c^+:G^+\map{}\Tbar^+\hspace{1cm}c^-:G^+\map{}\Tbar^-
\end{equation} are nontrivial.  Indeed, if $c^+(G^+)=0$ then
$c^+(G)=c^+(G^-)\subset \Tbar^-$, and so $R\cdot c^+(G)$ is an
$R[G_K]$-submodule of $\Tbar$ contained in $\Tbar^-$.  This contradicts
Hypotheses H.1 and H.5 (a).
Similar considerations apply to $c^-$.

The kernels of the maps (\ref{cheb maps}) have codimension $\ge 1$,
and so there is an $\eta\in G^+$ for which $c^\pm(\eta)$ are both
nonzero, and we may choose some $\sigma\in G$ such that 
$\eta=(\tau\sigma)^2$.  By the Cebotarev theorem, there are infinitely many
primes $\ell$ of $\Q$ whose Frobenius class
in $\Gal(E/\Q)$ is equal to $\tau\sigma$, and at which the localizations
of $c^\pm$ are unramified.  For such an $\ell$, the image of $c^\pm$ under
$$H^1(K,\Tbar)\map{}H^1(K_\ell,\Tbar)\map{}H^1_\unr(K_\ell,\Tbar)\iso \Tbar$$
(the final isomorphism being evaluation at the Frobenius of the prime of $K$
above $\ell$) is equal to $\phi(c^\pm)\not=0$.
\end{proof}

\begin{Lem}\label{liftability}
If $n\in\pn^{(2k-1)}$ and $\stub^{(k)}(n)\not=0$ then the image of
$$H^1_{\sel(n)}(K,T^{(2k-1)})\map{}H^1_{\sel(n)}(K,T^{(k)})$$
is a free, rank-one $R^{(k)}$-submodule.
\end{Lem}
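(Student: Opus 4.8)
The plan is to pass to the divisible module $A$ of the preceding theorem, identify the reduction map $f\colon H^1_{\sel(n)}(K,T^{(2k-1)})\to H^1_{\sel(n)}(K,T^{(k)})$ of the lemma with a multiplication‑by‑$\pi^{k-1}$ map on $H^1_{\sel(n)}(K,A)$, and then read off its image from the structure of that group.

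First I would record that $A^{G_K}=0$ (by absolute irreducibility of $\Tbar$, H.1), so that the sequences $0\to T^{(j)}\to A\xrightarrow{\pi^j}A\to 0$ yield natural isomorphisms $H^1(K,T^{(j)})\cong H^1(K,A)[\gm^j]$, restricting to $H^1_{\sel(n)}(K,T^{(j)})\cong H^1_{\sel(n)}(K,A)[\gm^j]$ by the cartesian property of $\sel(n)$ (H.3 with Lemmas \ref{unramified cartesian} and \ref{transverse cartesian}). The key point I would then check is that, under the standard identifications $T^{(j)}\cong A[\gm^j]$, the \emph{reduction} $T^{(2k-1)}\to T^{(k)}$ corresponds to multiplication by $\pi^{k-1}\colon A[\gm^{2k-1}]\to A[\gm^k]$ rather than to ordinary truncation; hence $f$ is identified with the restriction of $\pi^{k-1}$ and $\operatorname{im}(f)=\pi^{k-1}\big(H^1_{\sel(n)}(K,A)[\gm^{2k-1}]\big)$.

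Next I would pin down $H^1_{\sel(n)}(K,A)$. Since global cohomology is cofinitely generated over the discrete valuation ring $R$, $H^1_{\sel(n)}(K,A)\cong\D^{s}\oplus W$ with $W$ finite; comparing $H^1_{\sel(n)}(K,A)[\gm^j]\cong(R^{(j)})^{s}\oplus W[\gm^j]$, for $j$ large, with the decomposition $R^{(j),\epsilon}\oplus M^{(j)}(n)^{\oplus 2}$ supplied by Theorem \ref{structure} (applicable by Lemma \ref{transverse cartesian}) and using uniqueness of cyclic decompositions over $R^{(j)}$, I would deduce $s\equiv\epsilon\pmod 2$, $W\cong W_0\oplus W_0$ for a finite module $W_0$, and $\lambda^{(j)}(n)=\tfrac{j(s-\epsilon)}{2}+\len\big(W_0[\gm^j]\big)$ for every $j$. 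Now the hypothesis enters: $M^{(k)}(n)$ is killed by $\gm^{\lambda^{(k)}(n)}$, so $\stub^{(k)}(n)=\gm^{\lambda^{(k)}(n)}(R^{(k)})^{s}$; requiring this to be nonzero forces $s\ge 1$ and $\lambda^{(k)}(n)<k$, and the displayed formula then forces $s=\epsilon=1$ and $\len W_0[\gm^k]<k$, whence the exponent of $W_0$ is $\le k-1$ and $\gm^{k-1}W=0$.

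Finally, with $H^1_{\sel(n)}(K,A)\cong\D\oplus W$ and $\gm^{k-1}W=0$ I would conclude $H^1_{\sel(n)}(K,A)[\gm^{2k-1}]=\D[\gm^{2k-1}]\oplus W$ and therefore
$$\operatorname{im}(f)=\pi^{k-1}\D[\gm^{2k-1}]\oplus\pi^{k-1}W=\D[\gm^{k}]\cong R/\gm^{k},$$
a free rank‑one $R^{(k)}$-module sitting inside $H^1_{\sel(n)}(K,A)[\gm^{k}]=H^1_{\sel(n)}(K,T^{(k)})$, as required. The hard part will be the first step — realizing that $f$ is multiplication by $\pi^{k-1}$ and not the naive truncation map (the latter would make $\operatorname{im}(f)$ all of $H^1_{\sel(n)}(K,T^{(k)})$) and carrying this identification through the cartesian bookkeeping — together with the ``doubling'' $W\cong W_0\oplus W_0$, which is exactly where self‑duality (H.4, via the Cassels–Tate pairing of Theorem \ref{structure}) is indispensable: it is what turns the crude inequality $\lambda^{(k)}(n)<k$ into the sharp statement that $\gm^{k-1}$ annihilates $W$, hence annihilates the torsion contribution to $\operatorname{im}(f)$. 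The remaining manipulations with torsion $R$-modules are routine.
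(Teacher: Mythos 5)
Your argument is correct and rests on the same key observation as the paper's proof: under the cartesian identification $H^1_{\sel(n)}(K,T^{(k)})\cong H^1_{\sel(n)}(K,T^{(2k-1)})[\gm^k]$ of Lemma \ref{H.5 application} (equivalently your identification with $H^1_{\sel(n)}(K,A)[\gm^k]$), the reduction map becomes multiplication by $\pi^{k-1}$, and the hypothesis $\stub^{(k)}(n)\neq 0$ together with Theorem \ref{structure} forces $\epsilon=1$ and annihilation of the torsion summand by $\gm^{k-1}$. The paper's version is shorter only in presentation: it applies Theorem \ref{structure} directly to $H^1_{\sel(n)}(K,T^{(2k-1)})\cong R^{(2k-1)}\oplus M^{(2k-1)}\oplus M^{(2k-1)}$ and notes $\len_R(M^{(2k-1)})<k$, avoiding the explicit determination of the corank $s$ and the finite module $W_0$ that you extract from $H^1_{\sel(n)}(K,A)$.
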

\begin{proof} Under the identification $H^1_{\sel(n)}(K,T^{(k)})\iso
H^1_{\sel(n)}(K,T^{(2k-1)})[\gm^k]$ of Lemma \ref{H.5 application},
the above map is identified with
$$H^1_{\sel(n)}(K,T^{(2k-1)})\map{\pi^{k-1}}H^1_{\sel(n)}
(K,T^{(2k-1)})[\gm^k].$$  The hypothesis $\stub^{(k)}(n)\not=0$
implies that $\len_R(M^{(2k-1)})<k$ and that $\epsilon=1$, hence the
image is isomorphic as an $R$-module to $\gm^{k-1}R^{(2k-1)}\iso R^{(k)}$.
\end{proof}

\begin{Lem}\label{lemma of the stub}
If $n\in\pn^{(2k-1)}$ then $\kappa_n^{(k)}\in
\stub^{(k)}(n)\otimes G_n$.
\end{Lem}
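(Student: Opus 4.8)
The plan is to prove Lemma \ref{lemma of the stub} by induction on $\nu(n)$, the number of prime divisors of $n$, mirroring the strategy of \cite{mazur-rubin} but using the machinery of Subsection \ref{Artinian} developed above.

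\emph{Base case.} For $n=1$ we must show $\kappa_1^{(k)}\in\stub^{(k)}(1)\otimes G_1=\gm^{\lambda^{(k)}(1)}H^1_\sel(K,T^{(k)})$. The idea is that the nonvanishing of $\kappa_1$ (over $R$) forces $\epsilon=1$, and one uses the Cassels--Tate pairing of Proposition \ref{cassels} together with the pairing-theoretic identification of $\lambda^{(k)}(1)$ to show that $\kappa_1^{(k)}$ must lie as deep in the filtration as the stub. Concretely, if $\kappa_1^{(k)}$ were \emph{not} in $\stub^{(k)}(1)$, then its image in some quotient $\cH(1)[\gm^s]/\gm\cH(1)[\gm^{s+1}]$ would be nonzero with $s\le\lambda^{(k)}(1)$; pairing it against itself (or against a suitable partner) under the alternating pairing $\langle\,,\,\rangle$ constructed in the proof of Theorem \ref{structure} should yield a contradiction, since the self-orthogonality of $\sel$ (Hypothesis H.4) forces $\langle\kappa_1,\kappa_1\rangle$-type expressions to vanish by the reciprocity law. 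One has to be a little careful here: the cleanest route is probably to invoke that $\kappa_1^{(k)}$ generates a free rank-one summand complementary to the $M\oplus M$ part, which is exactly the content of Lemma \ref{liftability} combined with the fact that $\kappa_1$ lifts compatibly through all the $T^{(2k-1)}\to T^{(k)}$.

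\emph{Inductive step.} Assume the result for $n$ and prove it for $n\ell$. By Lemma \ref{Cheb} applied to suitable classes built from $\kappa_n^{(k)}$ and its ``dual'' partners, one produces primes $\ell$ controlling the relevant localizations; but here $\ell$ is already given, so instead one runs the argument structurally. The Kolyvagin system relation (\ref{ks relations}) says $\loc_\ell(\kappa_{n\ell})$ is the image of $\loc_\ell(\kappa_n)$ under the finite-singular comparison $\phi_\ell^{\f\s}$. Using the diagram of Lemma \ref{pretty diagram} and Proposition \ref{induction}, one splits into two cases according to whether $\loc_\ell(\stub^{(k)}(n))=0$ or not. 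If $\loc_\ell(\stub^{(k)}(n))=0$, then by Proposition \ref{induction} also $\loc_\ell(\stub^{(k)}(\ell n))=0$, and the inductive hypothesis $\kappa_n^{(k)}\in\stub^{(k)}(n)\otimes G_n$ forces $\loc_\ell(\kappa_n^{(k)})=0$, hence $\loc_\ell(\kappa_{n\ell}^{(k)})=0$; one then needs to upgrade this to $\kappa_{n\ell}^{(k)}\in\stub^{(k)}(n\ell)\otimes G_{n\ell}$, which should follow because $\kappa_{n\ell}^{(k)}$ lies in $\cH_\ell(n)$ up to the controlled cokernels and the depth of $\stub^{(k)}(n\ell)$ in that module is governed by the lengths $a,b,\delta$ in the diagram. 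If $\loc_\ell(\stub^{(k)}(n))\ne0$, then from the diagram one reads off that $\lambda^{(k)}(n\ell)$ is strictly smaller, the map $\phi_\ell^{\f\s}$ carries the (nonzero, stub-depth) class $\loc_\ell(\kappa_n^{(k)})$ isomorphically, and tracking lengths through Lemma \ref{pretty diagram} shows the resulting class sits exactly at depth $\lambda^{(k)}(n\ell)$.

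\emph{Main obstacle.} The delicate point is the inductive step in the case $\loc_\ell(\stub^{(k)}(n))\ne0$: one must show not merely that $\loc_\ell(\kappa_{n\ell}^{(k)})$ has the right depth at the single place $\ell$, but that $\kappa_{n\ell}^{(k)}$ as a \emph{global} class lies in $\gm^{\lambda^{(k)}(n\ell)}\cH(n\ell)$. This requires combining the length bookkeeping of Lemma \ref{pretty diagram} with the fact that $\kappa_{n\ell}^{(k)}\in\cH(n\ell)\otimes G_{n\ell}$ by definition of a Kolyvagin system, and then using that the cokernel $\cH^\ell(n)/(\cH(n)+\cH(n\ell))$ is exactly $(R/\gm^\delta)^2$ to transfer the local depth statement into the global one; the self-duality hypothesis H.4 (via the alternating pairing of Theorem \ref{structure}) is what guarantees the filtration jumps are symmetric enough for this transfer to be lossless. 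I expect this to be where essentially all the real work lies, with the base case and the vanishing case being comparatively formal given Propositions \ref{induction} and \ref{cassels}.
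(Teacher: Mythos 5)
Your proposal takes a genuinely different route from the paper, and it has a gap that I do not think can be closed by the approach as stated.

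The core problem is the direction of your induction. You propose induction on $\nu(n)$, passing from $n$ to $n\ell$ and using the Kolyvagin system relation (\ref{ks relations}) to push depth information forward. But that relation only constrains the image of $\kappa_{n\ell}$ under $\loc_\ell$ -- a single local condition -- whereas the conclusion you need ($\kappa_{n\ell}^{(k)}\in\gm^{\lambda^{(k)}(n\ell)}\cH(n\ell)$) is a global statement. You flag this yourself as the ``delicate point,'' but the bookkeeping in Lemma \ref{pretty diagram} only records lengths of cokernels of inclusions among $\cH_\ell(n)$, $\cH(n)$, $\cH(n\ell)$, $\cH^\ell(n)$; it does not by itself convert ``$\loc_\ell(\kappa_{n\ell})$ has the right depth at $\ell$'' into ``$\kappa_{n\ell}$ has the right depth globally.'' There is no mechanism in the diagram that rules out $\kappa_{n\ell}$ having a nontrivial projection to the $M(n\ell)\oplus M(n\ell)$ part of the decomposition while still having the expected localization at $\ell$. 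A similar problem affects your base case: asserting that $\kappa_1^{(k)}$ ``generates a free rank-one summand complementary to the $M\oplus M$ part'' is precisely the statement to be proved, not an input; the Cassels--Tate pairing of Proposition \ref{cassels} controls the \emph{shape} of $\cH(1)$ but says nothing a priori about where $\kappa_1$ sits in it.

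The paper's argument is structured quite differently, and the difference matters. It is a minimal-counterexample argument with a \emph{double} induction on $k$ (the length of the coefficient ring) and on $\rho^{(k)}(n)$ (the $\gm$-rank of the Selmer group), not on $\nu(n)$. If $\stub^{(k)}(n)\neq 0$, induction on $k$ (via Lemma \ref{H.5 application} and Lemma \ref{liftability}) handles the claim directly. Otherwise $\stub^{(k)}(n)=0$ and $\rho(n)>1$, and the key move is to \emph{choose} an auxiliary prime $\ell$ via the Chebotarev statement of Lemma \ref{Cheb} (not one handed to you, but one engineered so that $\rho(n\ell)<\rho(n)$, or so that the $\pm$-eigenspace hypotheses of the first case become satisfied). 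One then applies the smaller-$\rho$ induction to $n\ell$ to get $\kappa_{n\ell}^{(k)}\in\stub^{(k)}(n\ell)$, and the contradiction comes from running the Kolyvagin relation \emph{backwards}: Proposition \ref{induction} forces $\loc_\ell(\stub^{(k)}(n\ell))=0$, so $\loc_\ell(\kappa_n^{(k)})=0$, contradicting the choice of $\ell$. In other words, the global-to-local direction is the only one the Kolyvagin relation supports, and the paper exploits exactly that by arguing by contradiction with a carefully chosen $\ell$. Your proposal never invokes Lemma \ref{Cheb}, which is the tool that makes the inductive structure close.
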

\begin{proof} We argue by induction on both $k$ and $\rho^{(k)}(n)$.  
Let $k>0$ be the minimal
integer for which the claim is false (for some $n$), 
and fix a generator for the cyclic group $G_\ell$ for
every $\ell\in\pn^{(2k-1)}$ so that we may identify
$H^1_{\sel(n)}(K,T^{(k)})\otimes G_n\iso H^1_{\sel(n)}(K,T^{(k)})$.

First suppose $\stub^{(k)}(n)\not=0$, so that in particular we are in
the case $\epsilon=1$, and $\lambda^{(k)}(n)<k$.  Let $i=\lambda^{(k)}(n)$.
By minimality of $k$, $\kappa^{(i)}_n\in \stub^{(i)}(n)$.  By Lemma 
\ref{H.5 application} we have an isomorphism of $R$-modules
$M^{(i)}\iso M^{(k)}[\gm^i]=M^{(k)}$, so that $\lambda^{(i)}(n)=
\lambda^{(k)}(n)=i$. This implies that $\stub^{(i)}(n)=0$, and so 
$\kappa^{(i)}_n=0$.  Appealing again to Lemma \ref{H.5 application}, this
is equivalent to $\pi^{k-i}\kappa_n^{(k)}=0$.  Now by Lemma \ref{liftability},
$\kappa_n^{(k)}$ is divisible by $\pi^i$ in $H^1_{\sel(n)}(K,T^{(k)})$,
proving this special case.

Now keep $k$ fixed as above and suppose that 
$n\in\pl^{(2k-1)}$ gives a counterexample with $\rho(n)$ minimal.  
The above case shows that $\stub^{(k)}(n)=0$.
By Lemma \ref{H.5 application}, $\rho(n)=0$ or $1$ implies that 
$\stub^{(k)}(n)=H^1_{\sel(n)}(K,T^{(k)})$, and so we must have 
$\rho(n)>1$.  

Case i: $\rho(n)^+$ and 
$\rho(n)^-$ are both nonzero.  Using Lemma \ref{H.5 application} we 
identify $H^1_{\sel(n)}(K,T^{(k)})[\gm]\iso H^1_{\sel(n)}(K,\Tbar)$.
If $\kappa^{(k)}(n)\not=0$ then it has some nonzero multiple $d\in
H^1_{\sel(n)}(K,T^{(k)})[\gm]$.  This $d$ has nontrivial projection onto
one of the $\tau$-eigencomponents of $H^1_{\sel(n)}(K,\Tbar)$.  Assume
that $d^+\not=0$.  By Lemma \ref{Cheb} we may choose a prime $\ell
\in\pl^{(2k-1)}$ at which both $d^+$ and some element of $H^1_{\sel(n)}
(K,\Tbar)^-$ have nontrivial localization.  By Lemma \ref{parity},
$\rho(n\ell)=\rho(n)-2$, and so by induction $\kappa^{(k)}(n\ell)\in
\stub^{(k)}(n\ell)$.  By Proposition \ref{induction},
$\loc_\ell(\kappa^{(k)}(n\ell))=0$, but then 
the Kolyvagin system relations imply that 
$\loc_\ell(\kappa^{(k)}_n)=0$, contradicting the choice of $\ell$.

Case ii: one of $\rho(n)^\pm$ is equal to zero.  Suppose $\rho(n)^-=0$,
so that $\rho(n)^+>1$. 
If $\kappa^{(k)}(n)\not=0$ then choose a nonzero
multiple of $\kappa^{(k)}(n)$, $d\in H^1_{\sel(n)}(K,T^{(k)})[\gm]^+$,
and a prime  $\ell\in\pl^{(2k-1)}$ for which $\loc_\ell(d)\not=0$.
By Lemma \ref{parity}, $\rho(n\ell)^\pm$ are both nonzero and 
$\rho(n\ell)=\rho(n)$.  Thus, by Case i, $\kappa^{(k)}_{n\ell}\in
\stub^{(k)}(n\ell)$.
By Proposition \ref{induction}, $\loc_\ell(\stub^{(k)}(n\ell))=0$,
but the Kolyvagin system relations guarantee that 
$\loc_\ell(\kappa^{(k)}_{n\ell})\not=0$.  This is a contradiction.
\end{proof}

\begin{proof}[Proof of Theorem \ref{dvr bound}.] Since
$H^1_\sel(K,T)\iso \mil H^1_\sel(K,T^{(k)})$, we must have $\kappa_1^{(k)}$ 
nonzero for $k\gg 0$.  Fix such a $k$.
Taking $n=1$ in Lemma \ref{lemma of the stub}, we have 
$\kappa_1^{(k)}\in\stub^{(k)}$, and in particular
$\stub^{(k)}\not=0$. Lemma \ref{H.5 application} implies that 
there are isomorphisms $$H^1_\sel(K,T^{(k)})\iso H^1_\sel(K,A[\gm^k])
\iso H^1_\sel(K, A)[\gm^k],$$
and we conclude that $$H^1_\sel(K,A)[\gm^k]\iso R/\gm^k\oplus M^{(k)}
\oplus M^{(k)}$$ with $\len_R(M^{(k)})< k$, and so
for some finite $R$-module $M\iso M^{(k)}$ there is an isomorphism
$H^1_\sel(K,A)\iso \D\oplus M\oplus M$.  

The compact Selmer group $H^1_\sel(K,T)$ is the $\pi$-adic Tate 
module of $H^1_\sel(K,A)$, and is therefore a free rank-one $R$-module.
Let $\lambda=\len_R(M)=\lambda^{(k)}(1)$.  By Lemma 
\ref{lemma of the stub},
$\kappa_1^{(k)}\in \gm^\lambda H^1_\sel(K,T^{(k)})$, and so
by the injectivity of 
$$H^1_\sel(K,T)/\gm^k H^1_\sel(K,T)\map{}H^1_\sel(K,T^{(k)})$$
(which is deduced from Lemma \ref{H.5 application}),
$\kappa_1\in\gm^\lambda H^1_\sel(K,T)$.  The claim follows.
\end{proof}

Let $E/\Q$ be an elliptic curve as in the statement of 
Theorem \ref{kolyvagin's theorem} of the introduction, and
let $\Sel_{p^\infty}(E/K)$ and $S_p(E/K)$ the $p$-power Selmer
groups defined there.
Define a Selmer structure $\sel$ on $V=T_p(E)\otimes\Q_p$
by taking the unramified local condition at each place $v$ of $K$
which does not divide $p$, and at $v|p$ take the
image of the local Kummer map
$$E(K_v)\otimes \Q_p\map{}H^1(K,V).$$ 
Define local conditions
on $T_p(E)$ and $E[p^\infty]\iso V/T_p(E)$ by
propagating $\sel$. By Proposition 1.6.8 of
\cite{rubin}, $H^1_\sel(K,E[p^\infty])=\Sel_{p^\infty}(E/K)$.

\begin{Thm}\label{kolyvagin's theorem II}(Kolyvagin)
Suppose there is an integer $s$ for which the Selmer triple
$(T_p(E),\sel,\pl_s)$ admits a Kolyvagin system with
$\kappa_1\not=0$.  Then $S_p(E/K)$ is free of rank one
over $\Z_p$ and there is a finite $\Z_p$-module
$M$ such that $$\Sel_{p^\infty}(E/K)\iso
(\Q_p/\Z_p)\oplus M\oplus M$$ with $\len_{\Z_p}(M)\le \len_{\Z_p}(
S_{p}(E/K)/\Z_p\cdot \kappa_1).$
\end{Thm}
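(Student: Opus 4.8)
The plan is to deduce Theorem \ref{kolyvagin's theorem II} directly from Theorem \ref{dvr bound} by taking $R=\Z_p$, which is a discrete valuation ring with uniformizing parameter $\pi=p$. First I would verify that the Selmer triple $(T_p(E),\sel,\pl_s)$ satisfies Hypotheses H.0--H.5 under the running assumptions (p odd, $p$, $D$, $N$ pairwise coprime, and $\Gal(\bar K/K)\to\Aut_{\Z_p}(T)$ surjective). Here H.0 is clear since $T_p(E)$ is free of rank $2$ over $\Z_p$; H.1 and H.2 follow from the surjectivity hypothesis (taking $F$ to be the fixed field of the kernel of the action on $T$, so $F\subset \Q(E[p^\infty])$, and using that $\mup\subset\Q(E[p^\infty])$ together with irreducibility of $\Tbar$ to kill the relevant $H^1$); H.3 follows from Lemma \ref{unramified cartesian} since away from $\Sigma(\sel)$ the condition is unramified, and at places dividing $p$ the Kummer condition is cartesian by the ordinary hypothesis; H.4 is the self-duality of $T_p(E)$ via the modified Weil pairing $(s,t)=e(s,t^\tau)$ as explained in Remark \ref{pairings}, with the local Kummer conditions being their own orthogonal complements under local Tate duality (a standard fact); and H.5 holds because the $G_K$-action on $\Tbar$ extends to $G_\Q$, $\tau$ acts with eigenvalues $\pm1$ (as $p$ is odd and $\det\tau=-1$), and the Kummer condition is $G_\Q$-stable.

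Next I would identify the objects of Theorem \ref{dvr bound} with the Selmer groups in the statement. With $\Phi=\Q_p$, $\D=\Q_p/\Z_p$, and $A=T_p(E)\otimes\D\cong E[p^\infty]$, the discussion immediately preceding Theorem \ref{kolyvagin's theorem II} shows $H^1_\sel(K,A)=\Sel_{p^\infty}(E/K)$ via Proposition 1.6.8 of \cite{rubin}. For the compact side, I would argue that $H^1_\sel(K,T_p(E))$, being by Theorem \ref{dvr bound} the $p$-adic Tate module of $H^1_\sel(K,A)=\Sel_{p^\infty}(E/K)$, agrees with $S_p(E/K)=\mil \Sel_{p^n}(E/K)$ by comparing the two descent sequences in the introduction — the point being that $\mil\mbox{\cyr Sh}_{p^n}$ and the Tate module of $\mbox{\cyr Sh}_{p^\infty}$ coincide and that $E(K)\otimes\Z_p$ is the Tate module of $E(K)\otimes\Q_p/\Z_p$. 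Granting these identifications, Theorem \ref{dvr bound} gives exactly that $S_p(E/K)$ is free of rank one over $\Z_p$, that $\Sel_{p^\infty}(E/K)\cong(\Q_p/\Z_p)\oplus M\oplus M$ for a finite $\Z_p$-module $M$, and that $\len_{\Z_p}(M)\le \len_{\Z_p}(H^1_\sel(K,T_p(E))/\Z_p\cdot\kappa_1)=\len_{\Z_p}(S_p(E/K)/\Z_p\cdot\kappa_1)$.

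I expect the main obstacle to be the careful checking of the hypotheses, particularly H.4 at the primes above $p$: one must confirm that the image of the local Kummer map $E(K_v)\otimes\Q_p\to H^1(K_v,V)$, propagated to $T_p(E)$, is its own exact orthogonal complement under the local Tate pairing twisted by $\tau$. This is where the ordinary hypothesis enters, and it requires invoking the standard orthogonality of the Kummer condition (e.g. via the local duality computations in \cite{rubin}) together with the compatibility of the $\tau$-twist. A secondary subtlety is the identification $S_p(E/K)=H^1_\sel(K,T_p(E))$: one should note that the local condition defining $S_p(E/K)$ at $v\nmid p$ is the unramified (finite) condition, which matches $\sel$, and invoke the standard commutation of Selmer groups with inverse limits (using finiteness of the relevant cohomology groups at each level). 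Once these points are in place the theorem is a formal consequence.
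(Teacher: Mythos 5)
Your proposal takes essentially the same route as the paper: reduce to Theorem \ref{dvr bound} with $R=\Z_p$, verify H.0--H.5, and identify the Selmer modules. One small correction is worth noting: you invoke the "ordinary hypothesis" twice (for H.3 at places above $p$, and again in your final paragraph), but ordinarity is not among the assumptions of Theorem \ref{kolyvagin's theorem II} -- it enters the paper only in the Iwasawa-theoretic sections. Here H.3 at \emph{every} place, including those above $p$, follows from the fact that the local condition $\sel$ is propagated from $V=T_p(E)\otimes\Q_p$, so the quotient $H^1(K_v,T_p(E))/H^1_\sel(K_v,T_p(E))$ is torsion-free; this is the paper's citation of Lemma 3.7.1 of Mazur--Rubin, and it needs no ordinarity. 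Likewise your parenthetical that the field $F$ of H.2 is contained in $\Q(E[p^\infty])$ is off -- the paper takes $F=K(E[p^\infty])$, which is Galois over $\Q$ because $K/\Q$ is and $\Q(E[p^\infty])/\Q$ is, and the vanishing $H^1(F/K,E[p])=0$ is computed as $H^1(GL_2(\Z_p),\F_p^2)=0$ by inflation-restriction through the diagonal $\mu_{p-1}$. These are minor slips; the structure and substance of the argument match the paper's proof.
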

\begin{proof} By Theorem \ref{dvr bound} we need only verify that Hypothesis
H.0--H.5 hold.  Hypothesis H.0 is trivial. 
Hypothesis H.1 follows from our assuption
that $G_K$ surjects onto $\Aut_{\Z_p}(T_p(E))$.
This assumption also implies that
$$H^1(K(E[p^\infty])/K, E[p])\iso H^1(GL_2(\Z_p),\F_p^2)=0$$
(for the second equality, apply the inflation-restriction
sequence to the subgroup $\mu_{p-1}\subset GL_2(\Z_p)$ embedded 
diagonally.)
Hence H.2 holds with $F=K(E[p^\infty])$.
The fact that $\sel$ is obtained by propagation from $V$
implies that the quotient of $H^1(K_v,T_p(E))$ by $H^1_\sel(K_v,T_p(E))$
is torsion-free for every place $v$, and hence
Hypothesis H.3 holds by Lemma 3.7.1 of \cite{mazur-rubin}.
The pairing of H.4 is the Weil pairing, modified as in 
Remark \ref{pairings}.  The orthogonality relations of that hypothesis
are equivalent to Tate local duality, by the same remark.
All of the conditions of H.5 hold for $T_p(E)$, 
hence also for $\bar{T}\iso E[p]$, 
using the fact that $E$ is defined over $\Q$.  The 
splitting of part (a) follows from the $\tau$-invariance of the
Weil pairing on $T_p(E)$; part (b) says that the images of the local Kummer
maps are stable under the $G_\Q$-action on (semi-) local cohomology;
part (c) follows from 
$$(s^\tau,t^\tau)=e(s^\tau,t)=e(s,t^\tau)^\tau=(s,t)^\tau,$$
where $e$ is the Weil pairing. 
\end{proof}

In the next section we will construct a Kolyvagin system from the 
Euler system of Heegner points.  Applying 
Theorem \ref{kolyvagin's theorem II} to this Kolyvagin system 
proves Theorem \ref{kolyvagin's theorem} 
of the introduction.

%%%%%%%%%%%%%%%%%%%%%%%%%%%%%%%%%%%%%%%%%%%%%%%%%%%%%%%%%%%%%%%%%

\subsection{Heegner points}
\label{Heegner points}

%%%%%%%%%%%%%%%%%%%%%%%%%%%%%%%%%%%%%%%%%%%%%%%%%%%%%%%%%%%%%%%%%%

In this subsection we show that our theory is nonvacuous by
constructing a Kolyvagin system for $T=T_p(E)$ from the Heegner
point Euler system. Let $E_{/\Q}$ be an elliptic curve of
conductor $N$ and $K$ a quadratic imaginary field of discriminant
prime to $p$ and $\not=-3, -4$.  Assume that $p$ does not divide
$N$ and that all prime divisors of $N$ are split in $K$. Fix an
integral ideal $\ga$ of $\cO_K$ satisfying $\cO_K/\ga\iso \Z/N\Z$.
Let $\pl=\pl_1(T)$ and $\pn=\pn_1$.  For $\ell\in\pl$, we denote
by $a_\ell\in\Z$ the trace of the Frobenius at $\ell$ on $T_p(E)$.
The ideal $I_\ell\subset\Z_p$ is the smallest ideal containing
$\ell+1$ for which $\Frob_\lambda=\Frob_\ell^2$ acts trivially on 
$T/I_\ell T$, and hence on which $\Frob_\ell$ acts with
characteristic polynomial $X^2-1$.  Therefore $I_\ell$ is
generated by $a_\ell$ and $\ell+1$.

For every integer of the
form  $m=p^kn$ with $n\in\pn$ we let $h_m\in X_0(N)$ be the point
corresponding to the cyclic $N$-isogeny of complex tori
$$h_{m}=[\C/\cO_m\map{}\C/(\cO_m\cap\ga)^{-1}]$$
where $\cO_m$ is the order of conductor $m$ in $\cO_K$ and
$(\cO_m\cap\ga)^{-1}$ is the inverse of the invertible
$\cO_m$-ideal $(\cO_m\cap\ga)$.  The point $h_m$ is rational over the
ring class field of conductor $m$, which we denote by $K[m]$.
Let $J_0(N)$ be the Jacobian of $X_0(N)$, and embed $X_0(N)\hookrightarrow
J_0(N)$ by sending the cusp at $\infty$ to the origin.  The image of
$h_m$ in $J_0(N)$ is again denoted by $h_m$.  Fix a modular
parametrization $$J_0(N)\map{}E.$$  The image of $h_m$ is now denoted
by $P[m]\in E(K[m])$, the \emph{Heegner point of conductor $m$}.
If $n\ell\in\pn$ we have the Euler system relation (\cite{gross} Proposition
3.7, or \cite{pr87} Section 3.3, for example)
$$\Norm_{K[n\ell]/K[n]}P[n\ell]=a_\ell P[n]$$ and the
congruence \begin{equation}\label{congruence}
P[n\ell]\equiv \left(\frac{\lambda'}{K[n\ell]/\Q}\right)
 P[n]\pmod{\lambda'}\end{equation} where $\lambda'$ is any prime of
$K[n\ell]$ above $\ell$.

If $n\in\pn$ we set $\pg(n)=\Gal(K[n]/K)$ and
$G(n)=\prod_{\ell|n}G_\ell$.  Then for $m$ dividing
$n$ we have the equality $$\Gal(K[n]/K[m])\iso \prod_{\ell\mid (n/m)}G_\ell
\iso G(n/m).$$ Define the derivative operator
$D_\ell\in\Z_p[G(\ell)]$ by $D_\ell=\sum_{i=1}^{\ell}
i\sigma^i_\ell$, where $\sigma_\ell$ is a fixed generator of $G(\ell)$,
and let $D_n=\prod_{\ell|n}D_\ell\in\Z_p[G(n)]$.
One has the easy telescoping identity
$$(\sigma_\ell-1)D_\ell=\ell+1-\Norm_\ell.$$
Choosing a set of coset representatives $S$ for $G(n)\subset\pg(n)$, we
define $$\tilde{\kappa}_n=\sum_{s\in S}sD_n(P[n])\in E(K[n]).$$

\begin{Lem}\label{first derivative property}
The image of $\tilde{\kappa}_n$ in $E(K[n])/I_n E(K[n])$
is fixed by $\pg(n)$.
\end{Lem}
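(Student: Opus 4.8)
The plan is to show that $\tilde\kappa_n$ becomes $\pg(n)$-invariant modulo $I_n E(K[n])$ by verifying invariance under each generator $\sigma_\ell$ of $G(\ell)$ for $\ell \mid n$, together with invariance under the chosen coset representatives $S$ of $G(n)$ inside $\pg(n)$. Since $\tilde\kappa_n = \sum_{s\in S} sD_n(P[n])$ and $\pg(n)$ is generated by $G(n)$ together with (lifts of) $\pg(n)/G(n) = \Gal(K[1]/K)$, it suffices to treat these two kinds of elements separately.

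First I would handle the generators $\sigma_\ell$ of $G(\ell)$, $\ell \mid n$. Write $n = \ell m$. Applying $\sigma_\ell - 1$ to $D_n(P[n]) = D_\ell D_m(P[n])$ and using the telescoping identity $(\sigma_\ell-1)D_\ell = \ell + 1 - \Norm_\ell$, we get
$$(\sigma_\ell - 1)D_n(P[n]) = (\ell+1)D_m(P[n]) - D_m\Norm_{K[n]/K[m]}(P[n]).$$
By the Euler system relation $\Norm_{K[n]/K[m]}P[n] = a_\ell P[m]$, the second term is $a_\ell D_m(P[m])$, which (after pushing forward to $K[n]$) lies in $a_\ell E(K[n])$. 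The first term lies in $(\ell+1)E(K[n])$. Since $I_\ell$ is generated by $a_\ell$ and $\ell+1$, and $I_\ell \subset I_n$, we conclude $(\sigma_\ell - 1)D_n(P[n]) \in I_n E(K[n])$. Summing over $s \in S$ (which normalizes $G(n)$, so conjugation permutes the $G_\ell$-factors appropriately) gives that $\sigma_\ell - 1$ annihilates $\tilde\kappa_n$ mod $I_n E(K[n])$; here one should be slightly careful that $s\sigma_\ell s^{-1}$ is again a generator of some $G_{\ell'}$-component, but the computation above applies verbatim to each, so the sum is unaffected.

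Next, for an element $g \in \pg(n)$ lifting a generator of $\Gal(K[1]/K)$: left multiplication by $g$ permutes the coset representatives $S$ (up to elements of $G(n)$), so $g\tilde\kappa_n = \sum_{s\in S} (gs) D_n(P[n])$, and writing $gs = s' g_s$ with $s' \in S$ and $g_s \in G(n)$, we have $g\tilde\kappa_n - \tilde\kappa_n = \sum_{s'} s'(g_{s'} - 1)D_n(P[n])$ (after reindexing), which lies in $I_n E(K[n])$ by the first part applied to each $g_{s'} \in G(n)$. Combining, $\tilde\kappa_n$ is fixed by all of $\pg(n)$ modulo $I_n E(K[n])$.

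The main obstacle is bookkeeping rather than depth: one must track carefully how conjugation by coset representatives interacts with the factorization $G(n) = \prod_{\ell\mid n} G_\ell$ and how the operators $D_\ell$ transform, ensuring that the telescoping identity can be applied componentwise after conjugation. The only genuinely substantive inputs are the Euler system norm relation and the telescoping identity $(\sigma_\ell-1)D_\ell = \ell+1-\Norm_\ell$, both already available; the congruence \eqref{congruence} is not needed for this lemma (it will enter when one checks the finite-singular compatibility later).
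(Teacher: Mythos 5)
Your proof is essentially the paper's proof, with the coset-representative bookkeeping spelled out where the paper simply says ``the claim follows.'' The core computation is identical: apply $(\sigma_\ell-1)$ to $D_n(P[n])$, use the telescoping identity $(\sigma_\ell-1)D_\ell=\ell+1-\Norm_\ell$ together with the Euler system norm relation, and observe that both $\ell+1$ and $a_\ell$ lie in $I_\ell\subset I_n$. One small remark: your aside about $s\sigma_\ell s^{-1}$ being ``again a generator of some $G_{\ell'}$-component'' is a worry you can simply delete, since $\pg(n)=\Gal(K[n]/K)$ is abelian (ring class fields are abelian over $K$), so $s\sigma_\ell s^{-1}=\sigma_\ell$ and the sum over $S$ commutes with $\sigma_\ell-1$ trivially. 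With that simplification, your first paragraph and the paper's displayed computation coincide, and your second paragraph makes explicit the ``choice of $S$ doesn't matter mod $I_n$'' step that the paper leaves to the reader.
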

\begin{proof} For each $\ell|n$ we have the equalities in 
$E(K[n])/I_n E(K[n])$
\begin{eqnarray*}(\sigma_\ell-1)D_n(P[n])&=&(\sigma_\ell-1)
D_\ell D_{n/\ell}P[n]\\
&=&-D_{n/\ell}\Norm_\ell (P[n])\\
&=&-a_\ell D_{n/\ell}(P[n/\ell]).
\end{eqnarray*}
Since $a_\ell\in I_\ell\subset I_n$, the claim follows.
\end{proof}

Our assumption that the map $G_K\map{}\Aut(T)$ is surjective guarantees that
$E(K[n])[p]=0$, and so, by the Hochschild-Serre spectral sequence, restriction
gives an isomorphism $$H^1(K,T/I_nT)\map{\res}H^1(K[n],T/I_nT)^{\pg(n)}.$$
If $\delta_n:E(K[n])/I_n E(K[n])\map{}H^1(K[n],T/I_nT)$ is the Kummer map,
we define  $\kappa_n$ to be the unique preimage of
$\delta_n(\tilde{\kappa}_n)$ under restriction.

\begin{Lem}\label{mccallum prop}
The class $\kappa_n\in H^1(K,T/I_nT)$ may be given as an explicit cocycle
as follows.  Let $I_n=p^{M_n}\Z_p$ and fix a $p^{M_n}$-divisor of
$\tilde{\kappa}_n$, $\frac{\tilde{\kappa}_n}{p^{M_n}} \in E(\bar{K})$.
For $\sigma\in G_K$ let $\frac{(\sigma-1)\tilde{\kappa}_n}{p^{M_n}}$ be the
unique $p^{M_n}$-divisor of $(\sigma-1)\tilde{\kappa}_n$ in $E(K[n])$.
Then $$\kappa_n(\sigma)=(\sigma-1)\frac{\tilde{\kappa}_n}{p^{M_n}}-
\frac{ (\sigma-1)\tilde{\kappa}_n}{p^{M_n}}.$$
\end{Lem}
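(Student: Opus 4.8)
The plan is to unwind the definition of $\kappa_n$ through the identifications that produced it and to match the result against the standard explicit cocycle description of the connecting map in Kummer theory. First I would recall that $\kappa_n \in H^1(K, T/I_nT)$ is defined as the unique class whose restriction to $H^1(K[n], T/I_nT)^{\pg(n)}$ equals $\delta_n(\tilde\kappa_n)$, where $\delta_n$ is the Kummer map applied to the image of $\tilde\kappa_n$ in $E(K[n])/I_nE(K[n])$. Writing $I_n = p^{M_n}\Z_p$, the Kummer map $\delta_n$ sends the class of $\tilde\kappa_n$ to the cohomology class of the cocycle $\sigma \mapsto \sigma\bigl(\tfrac{\tilde\kappa_n}{p^{M_n}}\bigr) - \tfrac{\tilde\kappa_n}{p^{M_n}}$, valued in $E[p^{M_n}] \cong T/I_nT$; this is just the boundary map coming from the short exact sequence $0 \to E[p^{M_n}] \to E(\bar K) \xrightarrow{p^{M_n}} E(\bar K) \to 0$ and the choice of a $p^{M_n}$-divisor of $\tilde\kappa_n$ in $E(\bar K)$.

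The next step is to transport this cocycle from $G_{K[n]}$ back to $G_K$. The subtlety is that over $K$ the point $\tilde\kappa_n$ is not fixed, only its class mod $I_n$ is (Lemma \ref{first derivative property}), so the naive formula $\sigma \mapsto \sigma(\tfrac{\tilde\kappa_n}{p^{M_n}}) - \tfrac{\tilde\kappa_n}{p^{M_n}}$ for $\sigma \in G_K$ need not land in $E[p^{M_n}]$. I would correct for this by subtracting the unique $p^{M_n}$-divisor $\tfrac{(\sigma-1)\tilde\kappa_n}{p^{M_n}}$ of $(\sigma-1)\tilde\kappa_n$ lying in $E(K[n])$ — this divisor exists and is unique precisely because $(\sigma-1)\tilde\kappa_n \in I_n E(K[n])$ by Lemma \ref{first derivative property} and because $E(K[n])[p] = 0$ (from surjectivity of $G_K \to \Aut(T)$, already used in the text). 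One then checks directly that
$$\kappa_n(\sigma) = (\sigma-1)\frac{\tilde\kappa_n}{p^{M_n}} - \frac{(\sigma-1)\tilde\kappa_n}{p^{M_n}}$$
does take values in $E[p^{M_n}]$ (apply $p^{M_n}$ to see it kills the expression), that $\sigma \mapsto \kappa_n(\sigma)$ is a cocycle (a short computation using that $G_{K[n]}$ acts trivially on $E(K[n])/I_nE(K[n])$ and on $E[p^{M_n}]$... wait, $G_{K[n]}$ does not act trivially on $E[p^{M_n}]$, so more carefully: verify the cocycle relation $\kappa_n(\sigma\rho) = \kappa_n(\sigma) + \sigma\kappa_n(\rho)$ by expanding both sides and cancelling, using that $\tfrac{(\sigma\rho-1)\tilde\kappa_n}{p^{M_n}} = \tfrac{(\sigma-1)\tilde\kappa_n}{p^{M_n}} + \sigma\tfrac{(\rho-1)\tilde\kappa_n}{p^{M_n}}$, which holds by uniqueness of $p^{M_n}$-divisors in $E(K[n])$ together with $G_K$-equivariance), and finally that its restriction to $G_{K[n]}$ recovers $\delta_n(\tilde\kappa_n)$ — indeed for $\sigma \in G_{K[n]}$ we have $(\sigma-1)\tilde\kappa_n = 0$ so its canonical divisor is $0$ and the formula reduces to the Kummer cocycle above.

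The main obstacle, and the only genuinely non-formal point, is the well-definedness and $G_K$-equivariant additivity of the assignment $\sigma \mapsto \tfrac{(\sigma-1)\tilde\kappa_n}{p^{M_n}}$: one must know that $(\sigma-1)\tilde\kappa_n$ is genuinely divisible by $p^{M_n}$ inside $E(K[n])$ (not merely in $E(\bar K)$) and that the divisor is unique there, so that the cochain $\sigma \mapsto \kappa_n(\sigma)$ is canonically defined independent of the auxiliary choice of $\tfrac{\tilde\kappa_n}{p^{M_n}} \in E(\bar K)$ up to a coboundary. Divisibility in $E(K[n])$ follows because the class of $\tilde\kappa_n$ in $E(K[n])/I_nE(K[n])$ is $\pg(n)$-fixed, hence $(\sigma-1)\tilde\kappa_n \in I_nE(K[n]) = p^{M_n}E(K[n])$; uniqueness follows from $E(K[n])[p^{M_n}]$ being torsion-free over... rather, from $E(K[n])[p] = 0$, which kills all $p$-power torsion in $E(K[n])$. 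Once these two facts are in hand the verification that the displayed formula is a cocycle representing $\kappa_n$ is a routine diagram-chase, and I would present only its skeleton.
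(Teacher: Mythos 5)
Your proposal is correct. The paper does not actually prove this lemma — it simply cites Lemma~4.1 of McCallum's article \emph{Kolyvagin's work on Shafarevich--Tate groups} — so your argument supplies the verification the paper defers. The route you take (expand the Kummer boundary, correct by the canonical $p^{M_n}$-divisor of $(\sigma-1)\tilde\kappa_n$ in $E(K[n])$, then check values in $E[p^{M_n}]$, the cocycle identity, and the restriction to $G_{K[n]}$) is exactly the standard one and is in substance McCallum's proof. You correctly identify the two non-formal inputs: divisibility of $(\sigma-1)\tilde\kappa_n$ by $p^{M_n}$ inside $E(K[n])$, which is Lemma \ref{first derivative property}, and uniqueness of that divisor, which follows from $E(K[n])[p]=0$ (a consequence of the assumed surjectivity of $G_K\to\Aut_{\Z_p}(T)$). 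One small clean-up worth making in a final write-up: rather than the mid-argument self-correction about the $G_{K[n]}$-action, just state the cocycle identity $\kappa_n(\sigma\rho)=\kappa_n(\sigma)+\sigma\kappa_n(\rho)$ and derive it from $\tfrac{(\sigma\rho-1)\tilde\kappa_n}{p^{M_n}}=\tfrac{(\sigma-1)\tilde\kappa_n}{p^{M_n}}+\sigma\tfrac{(\rho-1)\tilde\kappa_n}{p^{M_n}}$, which holds by uniqueness of divisors in $E(K[n])$ and $G_K$-equivariance, exactly as you indicate.
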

\begin{proof} This is Lemma 4.1 of \cite{mccallum}.
\end{proof}

\begin{Lem} \label{transverseness} Fix $n\in\pn$ and
let $\sel$ denote the Selmer structure of Theorem 
\ref{kolyvagin's theorem II}
 on $T$, so that $H^1_\sel(K,T)=S_p(E/K)$.
Then $\kappa_n\in H^1_{\sel(n)}(K,T/I_nT)$.
\end{Lem}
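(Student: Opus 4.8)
The plan is to verify, place by place, that $\loc_v(\kappa_n)$ lies in the local condition defining $\sel(n)$ at $v$. The main input is that $\res_{K[n]/K}(\kappa_n)=\delta_n(\tilde{\kappa}_n)$ is a local Kummer class at every place of $K[n]$, together with the explicit cocycle description of $\kappa_n$ in Lemma \ref{mccallum prop}. By Definition \ref{modified selmer} there are two kinds of places: those not dividing $n$, where $\sel(n)$ is the local condition defining $S_p(E/K)$ (unramified away from $p$, the propagated Kummer condition at $p$), and those dividing $n$, where $\sel(n)$ is the transverse condition.

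The places $v\nmid n$ are routine. For a prime $w$ of $K[n]$ above such a $v$, the class $\loc_w(\delta_n(\tilde{\kappa}_n))$ is the Kummer image of a point of $E(K[n])$, hence it lies in the local condition of $S_p(E/K[n])$ at $w$; here one uses that any local point on a curve with good reduction at a prime not above $p$ has unramified Kummer class. Since $v\nmid n$ the extension $K[n]_w/K_v$ is unramified, so $K[n]_w^\unr=K_v^\unr$ and these local conditions over $K_v$ are exactly the preimages under restriction of the corresponding conditions over $K[n]_w$: this is an immediate diagram chase for the unramified condition when $v\nmid Np$, and for the propagated Kummer condition at $v\mid p$ it follows from the fact that this condition is cartesian on $\Quot(T)$ (Hypothesis H.3) together with the corestriction identity $\cor\circ\res=[K[n]_w:K_v]$. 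Hence $\loc_v(\kappa_n)$ lies in the required condition.

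The substantive case is $v=\lambda$ above a rational prime $\ell\mid n$; fix a prime $\lambda''$ of $K[n]$ above $\ell$. Because $T$ is unramified at $\ell$ and $\Frob_\lambda=\Frob_\ell^2$ acts trivially on $T/I_\ell T$, hence on $T/I_nT$ since $I_\ell\subseteq I_n$, the group $G_{K_\lambda}$ acts trivially on $T/I_nT$; so by Proposition \ref{splitting} we have $H^1(K_\lambda,T/I_nT)=H^1_\f\oplus H^1_\tr$, where $H^1_\tr$ consists of the homomorphisms $G_{K_\lambda}\to T/I_nT$ vanishing on $G_L$, with $L\subseteq K[\ell]_\lambda\subseteq K[n]_{\lambda''}$ the chosen maximal totally tamely ramified abelian $p$-extension. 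Applying the observation of the previous paragraph over $K[n]_{\lambda''}$ (good reduction, residue characteristic $\ell\neq p$), the class $\loc_{\lambda''}(\delta_n(\tilde{\kappa}_n))$ is unramified, so $\loc_\lambda(\kappa_n)$ vanishes on the inertia subgroup $\inert_{K[n]_{\lambda''}}$ and therefore factors through $G_{K_\lambda}/\inert_{K[n]_{\lambda''}}$, whose maximal pro-$p$ quotient is an extension of $\Z_p$ (topologically generated by Frobenius) by $\Gal(L/K_\lambda)$. Consequently $\loc_\lambda(\kappa_n)\in H^1_\tr$ if and only if it annihilates a Frobenius lift; choosing $\sigma\in G_{K_\lambda}$ lifting $\Frob_\lambda$ with $\sigma|_{K[\ell]_\lambda}=1$ (possible since $K[\ell]_\lambda/K_\lambda$ is totally ramified, hence linearly disjoint from $K_\lambda^\unr$), the lemma comes down to the assertion $\kappa_n(\sigma)=0$.

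By Lemma \ref{mccallum prop}, $\kappa_n(\sigma)=(\sigma-1)\frac{\tilde{\kappa}_n}{p^{M_n}}-\frac{(\sigma-1)\tilde{\kappa}_n}{p^{M_n}}$, so what remains is to show that $(\sigma-1)\frac{\tilde{\kappa}_n}{p^{M_n}}$ already lies in $E(K[n])$ — equivalently, that $\sigma$ fixes $\frac{\tilde{\kappa}_n}{p^{M_n}}$ modulo $E(K[n])$. This is the one genuinely arithmetic point, and I expect it to be the main obstacle; it is Kolyvagin's congruence computation. The underlying mechanism: in $\tilde{\kappa}_n=\sum_s sD_nP[n]$ the telescoping identity $(\sigma_\ell-1)D_\ell=\ell+1-\Norm_\ell$ converts the non-$\sigma_\ell$-invariance of $D_\ell$ into the operator $\ell+1-\Norm_\ell$, where $\ell+1\in I_\ell\subseteq I_n$ and $\Norm_\ell P[n]=a_\ell P[n/\ell]$ with $a_\ell\in I_n$; reducing $\tilde{\kappa}_n$ and the relevant $p^{M_n}$-divisors modulo $\lambda''$ and using the congruence (\ref{congruence}) to track the residual action of $\sigma$ and of the generators $\sigma_{\ell'}$ — which lie either in the inertia at $\lambda''$ or act there through powers of Frobenius — yields the required invariance. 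Everything preceding this step is formal local Galois cohomology. I would carry out the cases in the order above, citing \cite{mccallum} and \cite{gross} for the congruence.
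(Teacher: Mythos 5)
Your overall plan — check the local condition place by place, handle the $v\nmid n$ places by the usual unramifiedness of Kummer classes at good primes, and reduce the $\ell\mid n$ case to the vanishing of $\loc_\lambda(\kappa_n)$ on a Frobenius lift $\sigma$ fixing $K[\ell]_{\lambda'}$ — is correct, and the reduction in the $\ell\mid n$ case is in fact the same as the paper's. The paper shows that the restriction of $\kappa_n$ to $H^1(K[\ell]_{\lambda'},T/I_nT)=H^1(K[n]_{\lambda''},T/I_nT)$ vanishes; since the restricted class is unramified this is exactly the statement that its evaluation at $\Frob_\lambda$ vanishes, which is your $\kappa_n(\sigma)=0$.

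The gap is in what you call the ``one genuinely arithmetic point,'' and it is a real gap, not merely a missing detail: the tools you reach for are the wrong ones. You propose to use the Eichler--Shimura congruence (\ref{congruence}) together with the telescoping identity $(\sigma_\ell-1)D_\ell=\ell+1-\Norm_\ell$ to establish the needed invariance. Neither is relevant here. The telescoping identity only yields the $\pg(n)$-invariance of $\tilde\kappa_n$ modulo $I_n$ (Lemma \ref{first derivative property}), which is what makes $\kappa_n$ well-defined in the first place; it says nothing about the Frobenius value. The congruence (\ref{congruence}) is a statement about what happens at a \emph{new} prime $\ell$ and is used only in Proposition \ref{kolyvagin relations over K} to verify the Kolyvagin system relation between $\kappa_n$ and $\kappa_{n\ell}$; it has no role in showing transversality at a prime already dividing $n$. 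The computation that actually closes the argument is elementary. Because $K[n]_{\lambda''}/K_\lambda$ is totally ramified, the image of $\sigma_\ell$ in the decomposition group at $\lambda''$ lies in inertia, and $T$ is unramified at $\lambda$, so $\sigma_\ell$ acts trivially on the coefficients $T/I_nT$; and it also acts trivially on $H^1_\unr(K[n]_{\lambda''},T/I_nT)$. Hence for $c=\delta_n(P[n])$ one has
\[
(D_\ell c)(\Frob_\lambda)=\sum_{i=1}^{\ell} i\, c(\Frob_\lambda)=\tfrac{\ell(\ell+1)}{2}\, c(\Frob_\lambda),
\]
and this vanishes because $\ell+1\in I_\ell\subset I_n$ and $p$ is odd (so $2$ is invertible and $\ell$ is a unit). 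That is the whole content of the step you flag as ``the main obstacle''; no reduction modulo $\lambda''$ of the $p^{M_n}$-divisors and no tracking of residual Galois action via (\ref{congruence}) is required. As written, your sketch would not produce a proof, because the inputs you name do not give information about the Frobenius value of $D_\ell c$.
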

\begin{proof} The statement that 
$\loc_v(\kappa_n)\in H^1_\sel(K_v,T/I_nT)$ for
$v$ not dividing $n$ is Proposition 6.2 of \cite{gross}.

Assume that $\ell|n$ and let $\lambda$ be the prime of $K$ above $\ell$.
We must show that the restriction of $\kappa_n$ to
$H^1(K[\ell]_{\lambda'},T/I_nT)$ is trivial, where $\lambda'$ is
the unique prime of $K[\ell]$ above $\ell$.
The prime $\lambda$ of $K$ above $\ell$ splits completely in $K[n/\ell]$,
and so $\lambda'$ splits completely in $K[n]$.  Fixing a prime $\lambda''$
of $K[n]$ above $\lambda'$, we have
$K[\ell]_{\lambda'}=K[n]_{\lambda''}$.
Therefore it suffices to show that $\sum_{s\in S}sD_n(\delta_n(P[n]))$
has trivial restriction to $H^1(K[n]_{\lambda''},T/I_nT)$.

Let $$c=\delta_n(P[n])\in H^1_\unr(K[\ell]_{\lambda'},T/I_nT)=
H^1_\unr(K[n]_{\lambda''},T/I_nT)$$ and
extend $\sigma_\ell$ to a generator of $\Gal(K[\ell]_{\lambda'}^\unr/
K_\lambda^\unr)$.
By definition of $I_n$, the Frobenius automorphism,
$\Frob_\lambda\in\Gal(K_\lambda^\unr/K_\lambda)$
acts trivially on $T/I_nT$, and so by Proposition \ref{locally free}
it suffices to show that $(D_\ell c)(\Frob_\lambda)\in T/I_nT$ is zero.
Since $\sigma_\ell$ acts trivially on $T/I_nT$, we have
$$ (D_\ell c)(\Frob_\lambda)=\sum_{i=1}^\ell ic(\Frob_\lambda)=
\frac{\ell(\ell+1)}{2}c(\Frob_\lambda)=0.$$
\end{proof}

\begin{Prop}\label{kolyvagin relations over K}
For every $\ell\mid \lambda\in\pl$ there is an automorphism
$$\chi_\ell:T/I_\ell T\map{}T/I_\ell T$$ such that for $n\ell\in\pn$,
$\chi_\ell(\kappa_n(\Frob_\lambda))=\kappa_{n\ell}(\sigma_\ell)$
as elements of $T/I_{n\ell}T$.
\end{Prop}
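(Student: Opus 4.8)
The plan is to prove the identity by an explicit computation with the cocycle representatives of Lemma~\ref{mccallum prop}, using the Euler system norm relation $\Norm_{K[n\ell]/K[n]}P[n\ell]=a_\ell P[n]$, the congruence~(\ref{congruence}), the telescoping identity $(\sigma_\ell-1)D_\ell=\ell+1-\Norm_\ell$, and the Eichler--Shimura relation $\Frob_\ell^{2}-a_\ell\Frob_\ell+\ell=0$ on the reduction of $E$ at $\ell$.

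First I would fix the local picture. As $\ell\nmid n$ and $\ell\nmid Np$, the prime $\lambda=\ell\cO_K$ splits completely in $K[n]/K$ --- its image under $\Pic(\cO_n)\iso\Gal(K[n]/K)$ is the class of the principal ideal $\ell\cO_n$ --- and it is totally, tamely ramified in $K[n\ell]/K[n]$ with group $G_\ell$ and trivial residue extension. Fix compatible primes $\lambda'\mid\lambda''\mid\lambda$ of $K[n\ell]$, $K[n]$, $K$, all with residue field $\bk_\lambda\iso\F_{\ell^{2}}$; fix a lift $\tilde\sigma_\ell\in G_K$ of $\sigma_\ell$ lying in the inertia group at $\lambda'$, and a lift $\widetilde{\Frob}_\lambda\in G_{K[n]}\subset G_K$ of $\Frob_\lambda$. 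Because $E$ has good reduction at $\lambda$ and $\ell\ne p$, the reduction map $E(\bar K)\to E(\bar\bk_\lambda)$ at a prime above $\lambda'$ is Galois-equivariant with the inertia at $\lambda'$ acting trivially on the target, and it is injective on prime-to-$\ell$ torsion; I work through it, writing $\phi$ for the $\ell$-power Frobenius acting on $E(\bar\bk_\lambda)$, so that $\widetilde{\Frob}_\lambda$ induces $\phi^{2}$.

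Next I evaluate and reduce the two cocycles. For $\kappa_{n\ell}$ at $\tilde\sigma_\ell$: since $\tilde\sigma_\ell$ lies in inertia at $\lambda'$, equivariance forces $(\tilde\sigma_\ell-1)\tfrac{\tilde\kappa_{n\ell}}{p^{M_{n\ell}}}$ to reduce to $0$, so $\kappa_{n\ell}(\sigma_\ell)$ --- which lies in $E[p^{M_{n\ell}}]$, on which reduction is injective --- is determined by the reduction of $-\tfrac{(\sigma_\ell-1)\tilde\kappa_{n\ell}}{p^{M_{n\ell}}}$. Writing $D_{n\ell}=D_nD_\ell$ and using that $\pg(n\ell)=\Gal(K[n\ell]/K)$ is abelian, the telescoping identity and the norm relation give $(\sigma_\ell-1)\tilde\kappa_{n\ell}=(\ell+1)\tilde d-a_\ell\tilde\kappa_n$ in $E(K[n\ell])$, where $\tilde d=\sum_s sD_nP[n\ell]$; since $a_\ell,\ell+1\in I_\ell\subseteq I_{n\ell}$ the division by $p^{M_{n\ell}}$ is legitimate, and~(\ref{congruence}) gives $\overline{\tilde d}=\phi(\overline{\tilde\kappa_n})$. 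In parallel, since $\widetilde{\Frob}_\lambda$ fixes $K[n]$ the second term of Lemma~\ref{mccallum prop} drops out, so $\kappa_n(\Frob_\lambda)=(\widetilde{\Frob}_\lambda-1)\tfrac{\tilde\kappa_n}{p^{M_n}}$ reduces to $(\phi^{2}-1)Q$, where $Q$ is the reduction of a $p^{M_{n\ell}}$-divisor of $\tilde\kappa_n$ taken in $E(\bar K)$. With $\overline{\tilde\kappa_n}=p^{M_{n\ell}}Q$ one obtains, as elements of $E[p^{M_{n\ell}}]=T/I_{n\ell}T$,
\[
\kappa_n(\Frob_\lambda)=(\phi^{2}-1)Q,\qquad \kappa_{n\ell}(\sigma_\ell)=a_\ell Q-(\ell+1)\phi(Q).
\]
The coset sums interact with $\sigma_\ell$ only through $G_{n\ell}\subset\pg(n\ell)$, and $\pg(n\ell)$ being abelian this causes no trouble.

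Finally I compare. Eichler--Shimura turns $(\phi^{2}-1)Q$ into $a_\ell\phi(Q)-(\ell+1)Q$, and a direct check gives $(\phi-a_\ell)\bigl(a_\ell\phi(Q)-(\ell+1)Q\bigr)=a_\ell Q-(\ell+1)\phi(Q)$; hence $\kappa_{n\ell}(\sigma_\ell)=(\phi-a_\ell)\kappa_n(\Frob_\lambda)$, so one may take $\chi_\ell=\Frob_\ell-a_\ell$ acting on $T/I_\ell T$. This is an automorphism: modulo $I_\ell$ one has $a_\ell\equiv0$ and $\Frob_\ell$ acts semisimply with eigenvalues $\pm1$ (characteristic polynomial $X^{2}-1$), so $\chi_\ell$ reduces to $\Frob_\ell$; and $\chi_\ell$ visibly does not involve $n$, so the same map works for every $n\ell\in\pn$. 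The steps demanding the most care are the bookkeeping of the three divisibility levels $M_n$, $M_\ell$, $M_{n\ell}=\min(M_n,M_\ell)$, pinning down the precise content of~(\ref{congruence}) (which prime, which Frobenius, which residue field) and of the sums over coset representatives, and the normalization of the chosen generator of $G_\ell$ relative to $\bk_\lambda^{\times}$ --- any of which may introduce a harmless unit or sign, none of which disturbs $\chi_\ell$ being an $n$-independent automorphism. One then verifies that $\chi_\ell$ is compatible with the finite--singular comparison map $\phi_\ell^{\f\s}$, so that the classes $\kappa_n$ genuinely form a Kolyvagin system.
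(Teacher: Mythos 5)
Your proof is correct and follows the same core strategy as the paper: both use the explicit cocycle formula of Lemma~\ref{mccallum prop}, the Euler system norm relation, and the reduction congruence~(\ref{congruence}) to obtain
\[
\kappa_n(\Frob_\lambda)\equiv (\phi^2-1)Q,\qquad
\kappa_{n\ell}(\sigma_\ell)\equiv a_\ell Q-(\ell+1)\phi Q
\]
mod $\lambda'$.  The endgame, though, is different in an instructive way.  The paper's proof \emph{defines} $\chi_\ell$ as the composite $E(K_\lambda)\to\tilde E(\F)\to\tilde E(\F)[p^\infty]\to^{p^{-M_\ell}(a_\ell-(\ell+1)\Frob_\ell)}\tilde E(\F)[I_\ell]\to E[I_\ell]$, so the required identity holds essentially by construction, and then expends effort showing this composite is surjective (hence an automorphism) by counting the lengths of the $\pm1$-eigencomponents of the $p$-Sylow of $\tilde E(\F)$.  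You instead run the Eichler--Shimura relation $\phi^2-a_\ell\phi+\ell=0$ one more time to produce the closed form $\chi_\ell=\Frob_\ell-a_\ell$, which reduces to $\Frob_\ell$ on $T/I_\ell T$ since $a_\ell\in I_\ell$; the automorphism property and $n$-independence are then immediate, with no counting argument needed.  (One can check the two answers agree: tracing the paper's composite through the inverse Kummer map recovers $\Frob_\ell$ on $E[I_\ell]$.)  Your version is a little more self-contained and makes the conclusion transparent, at the cost of the explicit bookkeeping of coset sums, residue fields, and divisibility levels that you correctly flag; the paper outsources that bookkeeping to McCallum.  One minor point worth tightening: your final identity $(\phi-a_\ell)\bigl(a_\ell\phi Q-(\ell+1)Q\bigr)=a_\ell Q-(\ell+1)\phi Q$ is a ``direct check'' only after invoking Eichler--Shimura again to eliminate the $\phi^2$ term that appears upon expanding --- you should say so, since it is the same relation doing the work twice.
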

\begin{proof} Fix a prime $\lambda'$ of $\bar{K}$ above $\lambda$.
Identify $$T/I_\ell T\iso E[I_\ell]\iso \tilde{E}(\F)[I_\ell]$$ where
$\tilde{E}$ is the reduction of $E$ at $\ell$ and $\F$ is the residue
field of $K$ at $\lambda$.
By Lemma \ref{mccallum prop} (and using the notation
of that lemma) the right hand side is given by the congruence
$$\kappa_{n\ell}(\sigma_\ell)\equiv
-\frac{(\sigma_\ell-1)\tilde{\kappa}_{n\ell}}
{p^{M_{n\ell}}}\pmod{\lambda'}.$$  Combining this with  the
 Euler system relations
and the congruence (\ref{congruence}) gives
\begin{equation*}\kappa_{n\ell}(\sigma_\ell)\equiv
\frac{a_\ell-(\ell+1)\Frob_\ell}{p^{M_{n\ell}}}
\tilde{\kappa}_n \pmod{\lambda'}
\end{equation*}
(see the proof of Proposition 4.4 of \cite{mccallum}).
Define $\chi_\ell$ to be the composition
$$E(K_\lambda)\map{}\tilde{E}(\F)\map{}\tilde{E}(\F)[p^\infty]
\map{ p^{-M_{\ell}}(a_\ell-(\ell+1)\Frob_\ell)}
\tilde{E}(\F)[I_\ell]\map{} E[I_\ell] $$ where the
first arrow is reduction, the second is projection onto the $p$-Sylow
 subgroup, and the last is
the canonical lift to $E(\bar{K}_\lambda)[I_\ell]$.  The action of
$\Frob_\ell$ splits the $p$-Sylow subgroup of $\tilde{E}(\F)$
into cyclic eigencomponents whose lengths are the orders at $p$
of $\ell+1-a_\ell$ and $\ell+1+a_\ell$, it follows that $\chi_\ell$ is
a surjection.  Since $E[I_\ell]$ is defined over $K_\lambda$, the map
$\chi_\ell$ factors through to an isomorphism
$$E(K_\lambda)/I_\ell E(K_\lambda)\map{} E[I_\ell].$$
Identifying $$E(K_\lambda)/I_\ell E(K_\lambda)\iso
H^1(K_\lambda^\unr/K_\lambda, E[I_\ell])\iso E[I_\ell]$$
we obtain  the desired automorphism of $E[I_\ell]$.
\end{proof}

The above proposition shows that the classes $\kappa_n$ almost form
a Kolyvagin system.  Only a slight modification is needed:

\begin{Thm}\label{final ks}
There is a Kolyvagin system $\kappa'$ for $(T,\sel,\pl)$ with
$\kappa_1'=\kappa_1$.
\end{Thm}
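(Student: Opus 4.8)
The plan is to recognize that the classes $\kappa_n$, suitably twisted, already satisfy the Kolyvagin system relations except for the intervention of the automorphisms $\chi_\ell$ of Proposition~\ref{kolyvagin relations over K}, and then to correct for those automorphisms by unit scalars.

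First I would make the maps of (\ref{ks relations}) completely explicit. For $n\ell\in\pn$ we have $\ell+1\in I_\ell\subset I_{n\ell}$, so $\Frob_\lambda=\Frob_\ell^2$ acts trivially on $T/I_{n\ell}T$; hence Proposition~\ref{locally free} identifies $H^1_\f(K_\ell,T/I_{n\ell}T)$ with $T/I_{n\ell}T$ by evaluation at $\Frob_\lambda$, and identifies $H^1_\s(K_\ell,T/I_{n\ell}T)\otimes G_\ell$ with $T/I_{n\ell}T$ by evaluation at $\sigma_\ell$, in such a way that $\phi^{\f\s}_\ell$ becomes the identity. Fix the generator $g_\ell$ of $G_\ell$ corresponding to $\sigma_\ell$ under the Artin symbol and set $g_n=\bigotimes_{\ell\mid n}g_\ell\in G_n$. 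Then the two composites of (\ref{ks relations}) carry $\kappa_n\otimes g_n$ and $\kappa_{n\ell}\otimes g_{n\ell}$ to $\kappa_n(\Frob_\lambda)$ and $\kappa_{n\ell}(\sigma_\ell)$ respectively, viewed in $T/I_{n\ell}T$. By Proposition~\ref{kolyvagin relations over K} these differ exactly by $\chi_\ell$, so $\{\kappa_n\otimes g_n\}$ would be a Kolyvagin system if $\chi_\ell$ fixed $\kappa_n(\Frob_\lambda)$ for every $n\ell\in\pn$.

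The remaining point is that $\chi_\ell$ acts on $\loc_\ell(\kappa_n)$ as multiplication by a unit. Indeed $\chi_\ell$ is built from the operator $p^{-M_\ell}(a_\ell-(\ell+1)\Frob_\ell)$, and since $\Frob_\ell$ is an involution of $T/I_\ell T$ (again because $\Frob_\lambda=\Frob_\ell^2$ is trivial) this operator is scalar on each of its $\Frob_\ell$-eigenspaces; on the other hand, the Euler system relation $\Norm_\ell P[n\ell]=a_\ell P[n]$ and the congruence (\ref{congruence}) express $\loc_\ell(\kappa_n)$ through the reduction of the single Heegner point $P[n]$ modulo $\lambda$, whose behavior under $\Frob_\ell$ (see \cite{gross}) places $\loc_\ell(\kappa_n)$ in a $\Frob_\ell$-eigenspace on which $p^{-M_\ell}(a_\ell-(\ell+1)\Frob_\ell)$ is a unit. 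Let $u_{\ell,n}\in\Z_p^\times$ be the resulting scaling unit, and put $\kappa'_n=v_n\cdot(\kappa_n\otimes g_n)$ where $v_1=1$ and $v_{n\ell}=v_n(u_{\ell,n})^{-1}$; after checking that this recursion is independent of the order in which the primes dividing $n$ are adjoined, one obtains classes $\kappa'_n\in H^1_{\sel(n)}(K,T/I_nT)\otimes G_n$ (by Lemma~\ref{transverseness}) with $\kappa'_1=\kappa_1$, and the Kolyvagin system relations hold by the preceding paragraph.

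The main obstacle is exactly the assertion that the localized Heegner classes lie in a $\Frob_\ell$-eigenspace on which $\chi_\ell$ is a unit and that the scaling units patch together consistently: this is the one genuinely arithmetic ingredient, resting on the analysis of the reduction of Heegner points at inert primes due to Gross and Kolyvagin and on matching the global complex conjugation with the local Frobenius at $\lambda$. Everything else is formal bookkeeping with the identifications in (\ref{ks relations}).
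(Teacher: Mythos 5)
Your approach departs from the paper's at the key step, and the departure opens gaps the paper's construction is designed to avoid. The paper defines $\kappa'_n=\chi_n^{-1}(\kappa_n)\otimes_{\ell\mid n}\sigma_\ell$, where $\chi_n=\prod_{\ell\mid n}\chi_\ell$ is the composite of the automorphisms of $T/I_nT$ furnished by Proposition~\ref{kolyvagin relations over K}. Because each $\chi_\ell$ depends only on $\ell$ (not on $n$) and the $\chi_\ell$ commute pairwise, the Kolyvagin relation for $n\ell$ is a one-line computation: under the identifications you correctly describe, $\phi^{\f\s}_\ell(\loc_\ell(\kappa'_n))$ corresponds to $\chi_n^{-1}(\kappa_n(\Frob_\lambda))$, while $\loc_\ell(\kappa'_{n\ell})$ corresponds to $\chi_{n\ell}^{-1}(\kappa_{n\ell}(\sigma_\ell))=\chi_{n\ell}^{-1}\chi_\ell(\kappa_n(\Frob_\lambda))=\chi_n^{-1}(\kappa_n(\Frob_\lambda))$. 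One twists the coefficient module by a fixed Galois-compatible automorphism, not the class by a unit scalar, and no eigenspace considerations arise.

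Your plan to instead rescale by units $v_n\in\Z_p^\times$ requires two things that are not established and are in fact problematic. First, you need $\kappa_n(\Frob_\lambda)$ to lie in a single $\Frob_\ell$-eigenspace of $T/I_{n\ell}T$; otherwise $\chi_\ell$, an automorphism of a free rank-two module that in general acts by \emph{different} units on the two eigenspaces, does not act on $\kappa_n(\Frob_\lambda)$ by any scalar at all. You ascribe the eigenspace claim to the reduction theory of Heegner points, but the eigenspace arguments in Gross concern the action of complex conjugation $\tau$ on the global classes $\kappa_n$; turning that into a $\Frob_\ell$-eigenspace statement at $\lambda$ would require identifying the action of $\Frob_\ell$ on $T/I_\ell T$ with that of $\tau$, and this is \emph{not} part of the definition of $\pl_k(T)$ used in this paper (the only requirement is that $\Frob_\ell^2$ act trivially mod $I_\ell$, which does not pin down $\Frob_\ell$ up to conjugacy against $\tau$). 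Second, and more decisively, even granting the eigenspace claim, the recursion $v_{n\ell}=v_n\,u_{\ell,n}^{-1}$ is not order-independent. The scalar $u_{\ell,n}$ would be the eigenvalue of $\chi_\ell$ on the eigenspace determined by the $\tau$-sign of $\kappa_n$, and since that sign alternates with $\nu(n)$, adjoining $\ell_1$ then $\ell_2$ produces the correction $(u_{\ell_1}^{+}u_{\ell_2}^{-})^{-1}$ (say), while adjoining them in the opposite order produces $(u_{\ell_2}^{+}u_{\ell_1}^{-})^{-1}$; these agree only if the ratio $u_{\ell}^{+}/u_{\ell}^{-}$ is the same for all $\ell$, for which there is no reason. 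You flag the consistency check but do not carry it out, and it appears to be a genuine obstruction to the scalar-twist route rather than bookkeeping. The lesson is that the correction must be absorbed into the coefficient module, where both the $n$-dependence and the ordering problem evaporate because the $\chi_\ell$ are fixed commuting automorphisms.
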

\begin{proof} For $n\in\pn$ define an automorphism
$$\chi_n:H^1(K,T/I_n T)\map{} H^1(K,T/I_n T)$$ as follows.
For each $\ell$ dividing $n$, the automorphism $\chi_\ell$ of $T/I_\ell T$
induces an automorphism of $H^1(K,T/I_n T)$, again denoted by
$\chi_\ell$.  It is clear from construction in the proof of
Proposition \ref{kolyvagin relations over K} that the maps $\chi_\ell$
pairwise commute, and we define $\chi_n$ to be the composition of
of the $\chi_\ell$ as $\ell$ runs over all divisors of $n$.
We now define $$\kappa_n'=\chi_n^{-1}(\kappa_n)\otimes_{\ell|n}
{\sigma_\ell}\in H^1_{\sel(n)}(K,T/I_n T)\otimes G_n.$$
\end{proof}

The class $\kappa_1'$ is the image of $\Norm_{K[1]/K}P[1]$ under the Kummer
map $E(K)\map{}H^1(K,T)$, and so is nonzero provided that $L'(E/K,1)\not=0$,
by the results of Gross and Zagier, \cite{gross-zagier}.

%%%%%%%%%%%%%%%%%%%%%%%%%%%%%%%%%%%%%%%%%%%%%%%%%%%%%%%%%%%%%%%%%%%%%%%

\section{Iwasawa theory}\label{Iwasawa}

%%%%%%%%%%%%%%%%%%%%%%%%%%%%%%%%%%%%%%%%%%%%%%%%%%%%%%%%%%%%%%%%%

Fix an elliptic curve $E_{/\Q}$ with good, ordinary reduction at
$p$, and let $K$ be a quadratic imaginary field satisfying the Heegner
hypothesis and with discriminant $\not=-3,-4$ and prime to $p$.
Let $K_\infty/K$ be the anticyclotomic $\Z_p$-extension,
$$\Gamma=\Gal(K_\infty/K)\hspace{1cm}\Lambda=\Z_p[[\Gamma]],$$ 
so that $K_\infty/K$ is characterized as the unique $\Z_p$-extension 
of $K$ such that complex conjugation acts as 
$\tau\sigma\tau=\sigma^{-1}$ for all $\sigma\in\Gamma$.
Fix a topological generator $\gamma\in\Gamma$ 
so that we may identify $\Lambda$
with the power series ring $\Z_p[[T]]$.  Let $K_n$ denote the unique
subfield of $K_\infty$ with $[K_n:K]=p^n$.
Set $$T=T_p(E)\hspace{1cm}V=T\otimes\Q_p\hspace{1cm}A=V/T.$$
We assume throughout that the map $\Gal(\bar{K}/K)\map{}\Aut_{\Z_p}(T)$
is surjective, and that each prime of $K$ above $p$ is totally ramified
in $K_\infty$.

We denote by $f\mapsto f^\iota$
the involution of $\Lambda$ induced by $\gamma\mapsto\gamma^{-1}$.
We regard $\Lambda$ as a $G_K$-module in the obvious
way. The symbol $\Sigma_\Lambda$ will always
be used to indicate a finite set of height-one prime ideals of $\Lambda$, and
$\gp$ will always denote a height-one prime of $\Lambda$.

For a height-one prime $\gp\not=p\Lambda$ of
$\Lambda$, denote by $S_\gp$ the integral closure of $\Lambda/\gp$,
by $\Phi_\gp$ the field of fractions of $S_\gp$, and by $\D_\gp$ the quotient
$\Phi_\gp/S_\gp$. For any $\Z_p$-module $N$, let $N_\gp=N\otimes_{\Z_p}S_\gp$.
If $N$ has a $G_K$-action, we let $G_K$ acts on $N_\gp$ by acting on both
factors in the tensor product, the action on $S_\gp$ being given by the
natural map $G_K\map{}\Lambda\map{}S_\gp$.

Our basic tool for studying the Iwasawa module
$\bT=T_p(E)\otimes\Lambda$ and its cohomology is, following
\cite{mazur-rubin}, to consider the $S_\gp$-module
$T_\gp\iso \bT\otimes_\Lambda S_\gp$
for each height-one prime $\gp$ of $\Lambda$.
The results of Section \ref{KS} allow one to control certain
Selmer groups associated to $T_\gp$, defined using the ideas of
\cite{coates-greenberg}, and from this one may
recover information about the structure of $\Sel_{p^\infty}(E/{K_\infty})$.

%%%%%%%%%%%%%%%%%%%%%%%%%%%%%%%%%%%%%%%%%%%%%%%%%%%%%%%%%%%%

\subsection{Kolyvagin systems at height-one primes}
\label{height one section}

%%%%%%%%%%%%%%%%%%%%%%%%%%%%%%%%%%%%%%%%%%%%%%%%%%%%%%%%%%%

Throughout Subsection \ref{height one section} we work with a fixed
height-one prime $\gp\not=p\Lambda$ of $\Lambda$.
Let $\gm$ be the maximal ideal of $S_\gp$.
If  $\mathfrak{d}$ is a generator
for the absolute different of $\Phi_\gp$, the trace from $\Phi_\gp$ to
$\Q_p$ defines a surjective map
$$\D_\gp=\Phi_\gp/ S_\gp\map{\mathfrak{d}^{-1}}
\Phi_\gp/\mathfrak{d}^{-1} S_\gp\map{\mathrm{Tr}}\Q_p/\Z_p$$
whose kernel contains no $ S_\gp$-submodules.  This map induces an isomorphism
of $ S_\gp$-modules $$\Hom_{ S_\gp}(N ,\D_\gp(1))\iso\Hom_{\Z_p}(N ,\mup)$$
for any finitely or co-finitely generated $S_\gp$-module $N$.

If $v$ is a prime of $K$ above $p$, we define $\Fil_v T$ to be the
kernel of the reduction map $T_p(E)\map{}T_p(\tilde{E})$ where
$\tilde{E}$ is the reduction of $E$ at $v$.  Let
$$\Fil_v T_\gp= (\Fil_v T)\otimes S_\gp\hspace{1cm}
\Fil_v V_\gp= (\Fil_v T)\otimes \Phi_\gp.$$  We define the
\emph{ordinary} local condition at $v$, $H^1_\ord(K_v,V_\gp)$,
to be the image of
$$H^1(K_v,\Fil_v V_\gp)\map{}H^1(K_v,V_\gp).$$

\begin{Lem}\label{height one pairing}
There is a perfect $S_\gp$-bilinear pairing
$$e_\gp:T_\gp\times T_\gp\map{}S_\gp(1)$$ which satisfies
$e_\gp(s^\sigma,t^{\tau\sigma\tau})=e_\gp(s,t)^\sigma$ for $s,t\in T_\gp$ and
$\sigma\in G_K$ (here we regard $S_\gp(1)$ as the Tate twist of the module
$S_\gp$ with \emph{trivial} Galois action).  The submodule $\Fil_v T_\gp$
is its own exact orthogonal complement under this pairing.
\end{Lem}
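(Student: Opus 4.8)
The plan is to build $e_\gp$ by tensoring the Weil pairing with a suitable twist coming from the $\Lambda$-structure, and then to read off the self-orthogonality of $\Fil_v T_\gp$ from the corresponding fact for $T_p(E)$. First I would recall that the Weil pairing $e: T_p(E)\times T_p(E)\map{}\Z_p(1)$, together with the fact that $E$ is defined over $\Q$, gives (as in Remark \ref{pairings}) a pairing with the twisted equivariance property $e(s^\sigma,t^{\tau\sigma\tau^{-1}})=e(s,t)^\sigma$; note that over $K$ the relation $\tau\sigma\tau$ and $\tau\sigma\tau^{-1}$ differ only by a central factor, and $\tau^2=1$, so this is exactly the form we want. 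The issue is that $\bT = T_p(E)\otimes\Lambda$ carries the diagonal Galois action, where $G_K$ acts on $\Lambda$ through $G_K\map{}\Gamma\hookrightarrow\Lambda^\times$, and the naive pairing $e\otimes(\text{multiplication})$ lands in $\Lambda(1)$ but with a nontrivial twist in the $\Lambda$-variable: if $\sigma$ acts on the $\Lambda$-factor by the group-like element $\langle\sigma\rangle\in\Gamma$, then multiplication is not $G_K$-equivariant. The key observation is that complex conjugation inverts $\Gamma$, so $\langle\tau\sigma\tau\rangle = \langle\sigma\rangle^{-1}$; therefore on $T_\gp\times T_\gp$ the product of the two $\Lambda$-parts transforms under $(\sigma$ on the left factor, $\tau\sigma\tau$ on the right$)$ by $\langle\sigma\rangle\cdot\langle\sigma\rangle^{-1}=1$. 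This is precisely why the required equivariance is $e_\gp(s^\sigma,t^{\tau\sigma\tau})=e_\gp(s,t)^\sigma$ rather than the plain one, and why $S_\gp(1)$ appears with trivial Galois action on $S_\gp$.

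Concretely I would define, for $s=x\otimes a$ and $t=y\otimes b$ with $x,y\in T_p(E)$ and $a,b\in S_\gp$, the pairing $e_\gp(s,t) = e(x,y)\cdot ab$, valued in $\Z_p(1)\otimes S_\gp = S_\gp(1)$, and then verify three things: (i) it is well-defined and $S_\gp$-bilinear — immediate from the construction; (ii) it is perfect — this follows because $e$ is a perfect $\Z_p$-pairing and $S_\gp$ is a finite flat (hence faithfully flat) $\Z_p$-algebra, so perfectness is preserved under $-\otimes_{\Z_p}S_\gp$, using that $T_\gp$ is a free $S_\gp$-module of rank $2$; (iii) the twisted equivariance — for $\sigma\in G_K$ we have $s^\sigma = x^\sigma\otimes\langle\sigma\rangle a$ and $t^{\tau\sigma\tau} = y^{\tau\sigma\tau}\otimes\langle\tau\sigma\tau\rangle b = y^{\tau\sigma\tau}\otimes\langle\sigma\rangle^{-1}b$, so $e_\gp(s^\sigma,t^{\tau\sigma\tau}) = e(x^\sigma,y^{\tau\sigma\tau})\cdot ab = e(x,y)^\sigma\cdot ab = e_\gp(s,t)^\sigma$, where the middle equality is the twisted equivariance of the Weil pairing and the action on $S_\gp(1)=\Z_p(1)\otimes S_\gp$ is through the first factor only.

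For the self-orthogonality of $\Fil_v T_\gp$: since $\Fil_v T_\gp = (\Fil_v T)\otimes S_\gp$ and $e_\gp$ is obtained from $e$ by the flat base change $-\otimes_{\Z_p}S_\gp$, it suffices to show that $\Fil_v T = \Fil_v T_p(E)$ is its own exact orthogonal complement under the Weil pairing $e$ on $T_p(E)$. This is a standard fact in the ordinary case: $\Fil_v T$ is the $\mathrm{rank}$-one $G_{K_v}$-stable subspace on which inertia acts through the cyclotomic character (the kernel of reduction), the quotient $T/\Fil_v T \iso T_p(\tilde E)$ is unramified, and the Weil pairing identifies $(T/\Fil_v T)$ with $\Hom(\Fil_v T,\Z_p(1))$; since the pairing is alternating and $\Fil_v T$ has rank one in the rank-two module $T$, $(\Fil_v T)^\perp$ is also rank one and contains $\Fil_v T$ (as $e$ is alternating), hence equals it by a length/rank count. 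I expect the main obstacle to be purely bookkeeping: getting the twist conventions exactly right so that the action on $\Gamma$ under complex conjugation cancels the two $\Lambda$-factors and produces the stated equivariance with trivial action on $S_\gp(1)$ — everything else reduces to the already-cited properties of the Weil pairing and to flat base change along $\Z_p\map{}S_\gp$.
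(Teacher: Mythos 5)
Your overall strategy matches the paper's: build $e_\gp$ from the Weil pairing by tensoring with $S_\gp$, using that $\tau$ inverts $\Gamma$ so that the $\Lambda$-parts cancel under the twisted equivariance. But the concrete formula you write, $e_\gp(x\otimes a, y\otimes b) = e(x,y)\cdot ab$, is missing the $\tau$-twist on the $T$-variable. The paper defines $e_\gp(t_1\otimes\alpha_1, t_2\otimes\alpha_2) = e(t_1, t_2^\tau)\otimes\alpha_1\alpha_2$, and the twist is essential: the Weil pairing satisfies $e(x^\sigma,y^\sigma)=e(x,y)^\sigma$, \emph{not} $e(x^\sigma,y^{\tau\sigma\tau})=e(x,y)^\sigma$. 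Indeed $e(x^\sigma, y^{\tau\sigma\tau}) = e(x, y^{\tau\sigma\tau\sigma^{-1}})^\sigma$, and the commutator $\tau\sigma\tau\sigma^{-1}$ is nontrivial for general $\sigma\in G_K$. The ``twisted equivariance of the Weil pairing'' you invoke in step (iii) is the property of the modified pairing $(s,t)\mapsto e(s,t^\tau)$ from Remark \ref{pairings}, not of $e$ itself; your first paragraph recognizes this, but the $\tau$ then silently drops out of the displayed formula and the verification.

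With the correct formula, step (iii) becomes
$e_\gp(s^\sigma,t^{\tau\sigma\tau}) = e(x^\sigma,(y^{\tau\sigma\tau})^\tau)\cdot ab = e(x^\sigma,(y^\tau)^\sigma)\cdot ab = e(x,y^\tau)^\sigma\cdot ab = e_\gp(s,t)^\sigma$,
using $\tau^2=1$ and the ordinary $G_\Q$-equivariance of $e$. You should also revisit your last paragraph once the twist is in place: the self-orthogonality of $\Fil_v T_\gp$ under $e_\gp$ now amounts to $e(\Fil_v T, (\Fil_v T)^\tau)=0$, which is not literally the self-orthogonality of $\Fil_v T$ under $e$ that your ``flat base change'' reduction uses; that reduction as written only applies to the untwisted formula. (The paper's own one-sentence treatment of this step is also quite terse.) The perfectness argument via faithfully flat base change is fine and is roughly the level of detail the paper leaves implicit.
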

\begin{proof} If $e:T\times T\map{}\Z_p(1)$ is the Weil pairing, we define
$e_\gp$ by $$e_\gp(t_1\otimes\alpha_1, t_2\otimes\alpha_2)=
e(t_1,t_2^\tau)\otimes\alpha_1\alpha_2$$ for $t_i\in T$ and
$\alpha_i\in S_\gp$.  Since $\Fil_v T$ is maximal isotropic under the
Weil pairing, the same is true of $\Fil_v T_\gp$.
\end{proof}

\begin{Def}
Define a Selmer structure $\sel_\gp$ on $V_\gp$ by
$$H^1_{\sel_\gp}(K_v, V_\gp)=\left\{\begin{array}{ll}
H^1_\ord(K_v, V_\gp)& \mathrm{if\ }v\mid p\\
        \\
H^1_\unr(K_v, V_\gp)& \mathrm{else.}
\end{array}\right.$$ We denote also by $\sel_\gp$ the
Selmer structures obtained by  propagating this to $T_\gp$  and to
$A_\gp\iso V_\gp/T_\gp$.
\end{Def}

\begin{Prop}\label{height one proposition}
Fix a positive integer $s$ and a set of primes $\pl\supset
\pl_s(T_\gp) $, and suppose the Selmer triple
$(T_\gp,\sel_\gp,\pl)$ admits a nontrivial Kolyvagin system
$\kappa$. Then $H^1_{\sel_\gp}(K,T_\gp)$ is a free, rank-one
$S_\gp$-module, and
$$H^1_{\sel_\gp}(K,A_\gp)\iso \D_\gp\oplus M_\gp\oplus M_\gp$$
where $M_\gp$ is a finite $S_\gp$-module with
$$\len(M_\gp)\le \len( H^1_{\sel_\gp}(K,T_\gp)/S_\gp\cdot \kappa_1 ).$$
\end{Prop}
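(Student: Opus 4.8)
The plan is to deduce Proposition \ref{height one proposition} directly from Theorem \ref{dvr bound} by checking that the Selmer triple $(T_\gp,\sel_\gp,\pl)$ satisfies hypotheses H.0--H.5, in the same spirit as the proof of Theorem \ref{kolyvagin's theorem II}. The ring $S_\gp$ is a complete discrete valuation ring (being the integral closure of the one-dimensional domain $\Lambda/\gp$), so it is a legitimate coefficient ring of the type required in Subsection \ref{bound}, and the conclusion of Theorem \ref{dvr bound}---freeness of rank one for $H^1_{\sel_\gp}(K,T_\gp)$, the decomposition $\D_\gp\oplus M_\gp\oplus M_\gp$, and the length bound---is exactly the assertion to be proved once the hypotheses are in place. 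So the whole content is the verification of H.0--H.5.

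First I would dispatch the easy ones. H.0 holds because $T_\gp\iso T_p(E)\otimes_{\Z_p}S_\gp$ is free of rank two over $S_\gp$. For H.1, the residual representation $\bar{T}_\gp$ is $E[p]\otimes_{\F_p}(S_\gp/\gm)$, and since $G_K\map{}\Aut_{\Z_p}(T_p(E))$ is surjective, $E[p]$ is absolutely irreducible over $\F_p$, hence $\bar{T}_\gp$ is absolutely irreducible over $S_\gp/\gm$. H.2 follows exactly as in Theorem \ref{kolyvagin's theorem II}: take $F=K(E[p^\infty])$, and note $H^1(F(\mup)/K,\bar{T}_\gp)=H^1(F(\mup)/K,E[p])\otimes(S_\gp/\gm)=0$ by the $GL_2(\Z_p)$ computation given there (the residue field extension is harmless). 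H.4 is Lemma \ref{height one pairing}: the pairing $e_\gp$ has precisely the twisted-symmetry property demanded, and the orthogonality of the local conditions under the induced local pairing is the content of that lemma at $v\mid p$ (where $\Fil_v T_\gp$ is its own orthogonal complement, so the ordinary condition is self-dual) and is standard Tate duality for the unramified condition elsewhere. For H.5, I would observe that $\sel_\gp$ at primes away from $p$ is the unramified condition, which is $G_\Q$-stable and (on the residual representation) compatible with the $\tau$-splitting of $E[p]=E[p]^+\oplus E[p]^-$ coming from the Weil pairing; at $p$ the ordinary condition is visibly the image of a $G_\Q$-stable submodule; and part (c) is the same Weil-pairing identity $(s^\tau,t^\tau)=(s,t)^\tau$ used in Theorem \ref{kolyvagin's theorem II}, valid because $E/\Q$ and the pairing descend to $\Q$.

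The step I expect to require the most care is H.3, cartesian-ness of $\sel_\gp$ on $\Quot(T_\gp)$ at every place in $\Sigma(\sel_\gp)$. Away from $p$ this is Lemma \ref{unramified cartesian}, since $T_\gp$ is unramified there. At $v\mid p$ one wants to argue as in Lemma 3.7.1 of \cite{mazur-rubin}: it suffices to show that the quotient $H^1(K_v,T_\gp)/H^1_{\sel_\gp}(K_v,T_\gp)$ is torsion-free as an $S_\gp$-module, equivalently that the cokernel of $H^1(K_v,\Fil_v T_\gp)\map{}H^1(K_v,T_\gp)$ injects into the corresponding object over $\Phi_\gp$. This reduces to showing $H^1(K_v,(\bT/\Fil_v\bT)\otimes_\Lambda S_\gp)$ is torsion-free, i.e.\ that $(T_p(\tilde E))_\gp$ has no $G_{K_v}$-invariants over the residue field---which follows because $\gp\neq p\Lambda$ forces the cyclotomic-plus-anticyclotomic character on the quotient $\tilde E$-part to be nontrivial modulo $\gm$ on inertia, using that each prime above $p$ is totally ramified in $K_\infty$. (If there were an exceptional prime $\gp$ where this fails one would need to treat it separately, but the hypotheses---ordinary reduction, $p$ not dividing the relevant invariants, total ramification---are designed precisely to avoid this.) Once H.0--H.5 are verified, Theorem \ref{dvr bound} applied over $R=S_\gp$ with $A=A_\gp$ yields the proposition verbatim, with $M=M_\gp$ and $\kappa_1$ the given first class.
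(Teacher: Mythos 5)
Your overall strategy is exactly the paper's: reduce to Theorem \ref{dvr bound} by checking H.0--H.5 for $(T_\gp,\sel_\gp,\pl)$. Most of your verifications match the paper's, but there is a genuine error in your treatment of H.2.

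Hypothesis H.2 asks for a Galois extension $F/\Q$ such that $G_F$ acts trivially on $T$ \emph{itself}, not merely on the residual representation $\Tbar$. Here $T=T_\gp\iso T_p(E)\otimes_{\Z_p}S_\gp$, and $G_K$ acts on \emph{both} tensor factors: on $T_p(E)$ in the usual way, and on $S_\gp$ through $G_K\twoheadrightarrow\Gamma\to(\Lambda/\gp)^\times\to S_\gp^\times$. For a general height-one prime $\gp\neq p\Lambda$ the image of $\Gamma$ in $S_\gp^\times$ is infinite, so triviality of the $G_F$-action on $T_\gp$ forces $F\supset K_\infty$. Your choice $F=K(E[p^\infty])$ does not contain $K_\infty$ (the anticyclotomic extension is not cut out by the determinant, which is cyclotomic), so $G_F$ does not act trivially on $S_\gp$ and H.2 fails for that $F$. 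The paper takes $F=K_\infty(E[p^\infty])$; one then still has $\mu_{p^\infty}\subset F$ and, since the residual action on $S_\gp/\gm$ is trivial (it factors through $\Lambda/(p,\gamma-1)\iso\F_p$), the vanishing $H^1(F/K,\Tbar_\gp)=0$ follows from the inflation--restriction sequence for $K\subset K_\infty\subset F$ together with $E(K_\infty)[p]=0$ and $H^1(F/K_\infty,E[p])\iso H^1(GL_2(\Z_p),\F_p^2)=0$.

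A smaller point on H.3: your reduction to torsion-freeness of $H^1(K_v,T_\gp)/H^1_{\sel_\gp}(K_v,T_\gp)$ is the right step, but the clean reason this holds is simply that $\sel_\gp$ on $T_\gp$ is defined by propagation from $V_\gp$: the quotient injects into $H^1(K_v,V_\gp)/H^1_{\sel_\gp}(K_v,V_\gp)$, which is a $\Phi_\gp$-vector space and hence torsion-free. The subsequent discussion of characters being ``nontrivial modulo $\gm$ on inertia'' is neither needed nor obviously correct as stated; the paper simply cites Lemma 3.7.1 of \cite{mazur-rubin} together with the propagation remark. Your H.0, H.1, H.4, and H.5 verifications agree with the paper's.
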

\begin{proof} By Theorem \ref{dvr bound}, we need only verify that Hypothesis
H.0--H.5 hold.  Hypothesis H.0 is trivial.  For
Hypothesis H.1, observe that $\Tbar_\gp\iso
E[p]\otimes S_\gp/\gm$.  The action of $G_K$ on $S_\gp/\gm$ factors
through $G_K\map{}\Lambda/(p,\gamma-1)\map{}S_\gp/\gm$, and so is trivial
on the second factor of the tensor product.  Therefore, the surjectivity
of $G_K\map{}\Aut_{\Z_p}(E[p])$ implies that $G_K\map{}\Aut_{S_\gp}
(\Tbar_\gp)$ is also surjective.
For H.2 we take $F=K_\infty(E[p^\infty])$.  
Since $\mu_{p^\infty}\subset F$ and $\Tbar_\gp\iso E[p]\otimes S_\gp/\gm$,
we must show that $H^1(F/K, E[p])=0$.  From the surjectivity of
$G_K\map{}\Aut_{\Z_p}(E[p])$, one may deduce that
$E(K_\infty)[p]=0$ and that
$$H^1(F/K_\infty ,E[p])\iso H^1(K(E[p^\infty])/K, E[p])
\iso H^1(GL_2(\Z_p),\F_p^2)=0$$
(as in Theorem \ref{kolyvagin's theorem II})
and so the claim follows from the exactness of the 
inflation-restriction sequence
$$H^1(K_\infty/K,E(K_\infty)[p])\map{}H^1(F/K,E[p])\map{}
H^1(F/K_\infty,E[p]).$$
Hypothesis H.3 follows from 
Lemma 3.7.1 of \cite{mazur-rubin}
and the fact that the Selmer structure $\sel_\gp$ on $T_\gp$ is obtained by
propagation from $V_\gp$.  Hypothesis H.4 follows from
Lemma \ref{height one pairing}, and H.5 follows from 
the isomorphism $\Tbar_\gp\iso E[p]\otimes S_\gp/\gm$ (with $G_K$ acting
trivially on the second factor).
\end{proof}

%%%%%%%%%%%%%%%%%%%%%%%%%%%%%%%%%%%%%%%%%%%%%%%%%%%%%%%%%

\subsection{Kolyvagin systems over $\Lambda$}
\label{ks lambda section}
%%%%%%%%%%%%%%%%%%%%%%%%%%%%%%%%%%%%%%%%%%%%%%%%%%%%%%%

\begin{Def}
If $M$ is any group on which  $G_K$ acts and $L/K$ is a finite Galois
extension we define the induced representation $$M_{L/K}=\Ind_{L/K}M=
\{f:G_K\map{}M\ |\ f(\sigma x)=f(x)^\sigma\ \forall
x\in G_K,\ \sigma\in G_L\}.$$
This comes equipped with   commuting actions of $G_K$
and $\Gal(L/K)$  defined by $$(f^\sigma)(x)=f(x\sigma)\hspace{1cm}
(\gamma\cdot f)(x)=f(\tilde{\gamma}^{-1}x)^{\tilde{\gamma}}$$
 where $\sigma\in G_K$, $\gamma\in \Gal(L/K)$, and
$\tilde{\gamma}$ is any lift of $\gamma$ to $G_K$.
\end{Def}
We view $\Ind_{L/K}$ as an exact functor from the category of $G_K$-modules
to the category of $G_K$-modules with commuting $\Gal(L/K)$-action.
For $M$ a $G_K$-module, we define
 $G_K$-module maps $$\res:M\map{}M_{L/K} \hspace{1cm}
\cor:M_{L/K}\map{}M$$ by $\res(m)(x)=x\cdot m$ and
$\cor(f)=(\Norm_{L/K}f)(\mathrm{id}_{G_K})$.
Under the canonical identification of Shapiro's lemma  
$H^q(L,M)\iso H^q(K,M_{L/K})$,
$\res$ and $\cor$ induce restriction and corestriction.

\begin{Lem}\label{cohomology induction}
If $F$ is any extension of $L$, there is a canonical isomorphism
$$H^q(F,M_{L/K})\iso \Ind_{L/K} H^q(F,M).$$
\end{Lem}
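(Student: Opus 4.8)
The plan is to construct the isomorphism by hand after choosing coset representatives, and then to check that what one gets is independent of that choice. I will assume (as is the case in all of our applications) that $F/K$ is Galois: this is what lets $H^q(F,M)$ be regarded as a $G_K$-module, through the surjection $G_K\map{}\Gal(F/K)$, so that the right-hand side of the asserted isomorphism is defined, and it guarantees $gG_Fg^{-1}=G_F$ for every $g\in G_K$. Fix elements $g_1,\dots,g_d\in G_K$, with $d=[L:K]$, representing the cosets $G_L\backslash G_K$; since $L/K$ is Galois these may be indexed by $\Gal(L/K)$. The evaluation map $f\mapsto(f(g_i))_i$ is bijective from $M_{L/K}$ onto $\bigoplus_i M$, and the defining relation $f(\sigma x)=f(x)^\sigma$ for $\sigma\in G_L$, combined with $g_i\sigma g_i^{-1}\in G_L$, shows $(f^\sigma)(g_i)=f(g_i)^{g_i\sigma g_i^{-1}}$ for $\sigma\in G_F\subseteq G_L$. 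Thus evaluation is an isomorphism of $G_F$-modules $M_{L/K}\iso\bigoplus_i{}^{g_i}M$, where ${}^{g_i}M$ is $M$ with $G_F$ acting through $\sigma\mapsto g_i\sigma g_i^{-1}$; the identical computation gives $\Ind_{L/K}N\iso\bigoplus_i{}^{g_i}N$ for every $G_K$-module $N$.

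Next I would pass to $G_F$-cohomology. Since $H^q(G_F,-)$ commutes with finite direct sums, the first decomposition gives $H^q(F,M_{L/K})\iso\bigoplus_i H^q(G_F,{}^{g_i}M)$. For each $i$, the automorphism $\sigma\mapsto g_i\sigma g_i^{-1}$ of $G_F$ (here one uses that $F/K$ is Galois) together with the identity map of $M$ form a change of group $(G_F,M)\leadsto(G_F,{}^{g_i}M)$ of the kind used in Section \ref{hypotheses}, inducing an isomorphism $H^q(F,M)\iso H^q(G_F,{}^{g_i}M)$. Composing the resulting isomorphism $\bigoplus_i H^q(F,M)\iso\bigoplus_i H^q(G_F,{}^{g_i}M)\iso H^q(F,M_{L/K})$ with the decomposition $\Ind_{L/K}H^q(F,M)\iso\bigoplus_i{}^{g_i}H^q(F,M)=\bigoplus_i H^q(F,M)$ (the twists now being trivial, as $G_F$ acts trivially on $H^q(F,M)$) yields an isomorphism $\Theta\colon H^q(F,M_{L/K})\iso\Ind_{L/K}H^q(F,M)$. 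That $\Theta$ is compatible with the $\Gal(L/K)$-actions and the residual $G_K$-actions on the two sides is a direct unwinding of the definitions.

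The real content of the proof is the verification that $\Theta$ is \emph{canonical}, that is, independent of the chosen $g_i$; this is the step requiring care, though nothing deep is at stake. Replacing $g_i$ by $h_ig_i$ with $h_i\in G_L$ alters each of the three ingredients of $\Theta$ --- the evaluation isomorphism for $M_{L/K}$, the change-of-group isomorphism $H^q(F,M)\iso H^q(G_F,{}^{g_i}M)$, and the evaluation isomorphism for $\Ind_{L/K}H^q(F,M)$ --- by ``the same'' twist determined by $h_i$, and these cancel in the composite $\Theta$. The only nonformal input is the standard fact that for $h\in G_L$ the change of group given by conjugation by $h$ on $G_F$ and multiplication by $h$ on $M$ induces on $H^q(G_F,M)=H^q(F,M)$ exactly the automorphism attached to the image of $h$ in $\Gal(F/L)$, for which one uses that $F/L$ is Galois (a consequence of $F/K$ and $L/K$ being Galois). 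Checking these compatibilities is the bulk of the argument; everything before it is formal.
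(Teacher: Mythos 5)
The paper does not prove this lemma; it simply cites Proposition B.4.2 of Rubin's \emph{Euler Systems} \cite{rubin}, so there is no argument in the paper to compare against, and your job is really to reconstruct the proof from scratch.

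Your argument is correct and follows the standard route that Rubin's proposition also takes: fix coset representatives $g_i$ of $G_L\backslash G_K$, use evaluation to decompose $M_{L/K}$ as a direct sum of the conjugate $G_F$-modules ${}^{g_i}M$, pass to cohomology, untwist each summand by the change-of-group isomorphism (the same device used for $\Tw(T)$ in Section \ref{hypotheses}), and reassemble via the analogous decomposition of $\Ind_{L/K}H^q(F,M)$. Two points you handle correctly and which deserve to be made explicit, as you do: (1) the statement only makes sense when $H^q(F,M)$ carries a $G_K$-action, which forces $F/K$ (not just $F/L$) to be Galois --- this hypothesis is implicit in the lemma as stated and is present in Rubin's formulation; it also gives $g_iG_Fg_i^{-1}=G_F$, without which the twists ${}^{g_i}M$ would not be $G_F$-modules. (2) The independence-of-choices check is where the content lies, and your reduction to the standard fact that conjugation by $h\in G_L$ on $G_F$ combined with multiplication by $h$ on coefficients realizes the image of $h$ in $\Gal(F/L)$ acting on $H^q(F,M)$ is exactly the lemma that makes the three choice-dependent ingredients cancel. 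The only thing I would add is that compatibility of $\Theta$ with both the $\Gal(L/K)$- and $G_K$-structures, which you defer as ``direct unwinding,'' is genuinely what is needed downstream (e.g.\ to get $\Lambda$-module isomorphisms in Section \ref{ks lambda section}); it is routine but worth writing out once.
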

\begin{proof} This follows from Proposition B.4.2 of \cite{rubin}.
\end{proof}

\begin{Def}\label{Lambda module defs}
Define $\Lambda$-modules $\bT$ and $\bA$ by
$$\bT=\mil \Ind_{K_n/K}T\hspace{1cm}\bA=\dlim\Ind_{K_n/K}A$$
the limits with respect to corestriction and restriction, respectively.
We remark that there is a canonical isomorphism of $\Lambda$ and $G_K$-modules
$\bT\iso T\otimes\Lambda$, where $G_K$ acts on both factors in the
tensor product and $\Lambda$ acts only on the second factor.
\end{Def}

\begin{Prop}\label{limit pairing}
The Weil pairing $e:T\times A\map{}\mup$
induces a perfect $G_K$-equivariant pairing
$$e_\Lambda:\bT\times\bA\map{}\mup$$ satisfying
$e_\Lambda(\lambda \cdot t,a)=e_\Lambda(t,\lambda^\iota\cdot a)$
for $t\in\bT$, $a\in\bA$, and $\lambda\in\Lambda$.
\end{Prop}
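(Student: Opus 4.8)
The starting point is that the Weil pairing $e\colon T\times A\map{}\mup$ is already a perfect pairing between the compact module $T$ and the discrete module $A$; the plan is to induce it up one layer at a time and then pass to the limit. For each $n$ write $\bT_n=\Ind_{K_n/K}T$, $\bA_n=\Ind_{K_n/K}A$, and $\Gamma_n=\Gal(K_n/K)$. First I would define
$$e_n\colon \bT_n\times\bA_n\map{}\mup,\qquad e_n(f,g)=\cor\big(x\mapsto e(f(x),g(x))\big),$$
where $\cor\colon(\mup)_{K_n/K}=\Ind_{K_n/K}\mup\map{}\mup$ is the corestriction map of Section \ref{ks lambda section}: since $e(f(\sigma x),g(\sigma x))=e(f(x),g(x))^\sigma$ for $\sigma\in G_{K_n}$, the function $x\mapsto e(f(x),g(x))$ genuinely lies in $\Ind_{K_n/K}\mup$. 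Because $f\mapsto f^\tau$ and $g\mapsto g^\tau$ carry this function to its $\tau$-translate and $\cor$ is $G_K$-equivariant, $e_n$ is $G_K$-equivariant. To see it is perfect, fix lifts $\tilde\gamma\in G_K$ of the elements $\gamma\in\Gamma_n$; unwinding $\cor(h)=\sum_\gamma h(\tilde\gamma^{-1})^{\tilde\gamma}$ gives $e_n(f,g)=\sum_{\gamma\in\Gamma_n}e\big(f(\tilde\gamma^{-1}),g(\tilde\gamma^{-1})\big)^{\tilde\gamma}$, and since $f\mapsto(f(\tilde\gamma^{-1}))_\gamma$ and $g\mapsto(g(\tilde\gamma^{-1}))_\gamma$ identify $\bT_n$ and $\bA_n$ with $\bigoplus_\gamma T$ and $\bigoplus_\gamma A$, this exhibits $e_n$ as an orthogonal sum of the twisted Weil pairings $(s,t)\mapsto e(s,t)^{\tilde\gamma}$, each of which is perfect.

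Next I would check that the $e_n$ are compatible with the transition maps used to form $\bT=\mil\bT_n$ (corestriction) and $\bA=\dlim\bA_n$ (restriction) — concretely, that corestriction $\bT_{n+1}\map{}\bT_n$ and restriction $\bA_n\map{}\bA_{n+1}$ are adjoint under the family $\{e_n\}$. This is the compatibility of Shapiro's isomorphism with cup products, and it comes down to a direct manipulation of the definitions of $\res$ and $\cor$ from Section \ref{ks lambda section}; I expect this to be the only step requiring real care. Granting it, passing to the limit produces a $G_K$-equivariant pairing $e_\Lambda\colon\bT\times\bA\map{}\mup$. It is perfect because Pontryagin duality of profinite groups turns the inverse limit into a direct limit: $\Hom_{\Z_p}(\bT,\mup)=\dlim\Hom_{\Z_p}(\bT_n,\mup)$, and perfectness of $e_n$ together with the compatibility just discussed identifies this inductive system with $\dlim\bA_n=\bA$.

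Finally, the semilinearity under $\iota$ follows from the observation that each $e_n$ is invariant under the \emph{diagonal} action of $\Gamma_n$. Indeed, from $(\gamma f)(x)=f(\tilde\gamma^{-1}x)^{\tilde\gamma}$ and the analogous formula for $g$ one gets that $x\mapsto e\big((\gamma f)(x),(\gamma g)(x)\big)$ equals $\gamma\cdot\big(x\mapsto e(f(x),g(x))\big)$ in $\Ind_{K_n/K}\mup$, while $\cor$ is $\Gamma_n$-invariant since $\Norm_{K_n/K}\circ\,\gamma=\Norm_{K_n/K}$. Hence $e_n(\gamma f,\gamma g)=e_n(f,g)$, so $e_n(\gamma f,g)=e_n(f,\gamma^{-1}g)$ for every $\gamma\in\Gamma_n$; taking the limit and extending $\Z_p$-bilinearly and continuously yields $e_\Lambda(\lambda t,a)=e_\Lambda(t,\lambda^\iota a)$ for all $\lambda\in\Lambda$. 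The twist by $\iota$ is thus forced by the $\tilde\gamma^{-1}$ appearing in the definition of the $\Gamma_n$-action on induced modules; everything else is bookkeeping with the definitions of Section \ref{ks lambda section}.
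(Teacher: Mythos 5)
Your construction is the same as the paper's: define $e_n$ on $\Ind_{K_n/K}T\times\Ind_{K_n/K}A$ as corestriction applied to $x\mapsto e(f(x),g(x))$, verify compatibility with $\res$ and $\cor$, and pass to the limit. You fill in a few details the paper leaves implicit (perfectness via the coset decomposition, Pontryagin duality for the limit, and the invariance $e_n(\gamma f,\gamma g)=e_n(f,g)$ that forces the $\iota$ twist), but the argument is the same.
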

\begin{proof} Let $T_n=\Ind_{K_n/K}T$ and $A_n=\Ind_{K_n/K}A$.  
Define a pairing $$\tilde{e}_n:T_n\times A_n\map{}\Ind_{K_n/K}(\mup)$$ by
$\tilde{e}_n(f,f')(x)=e(f(x),f'(x))$.  This pairing is easily seen to be
equivariant for the actions of both $G_K$ and $\Lambda$, and to satisfy
$$\cor(\tilde{e}_n(f,\res(a)))=e(\cor(f),a)$$ for
$f\in T_n$ and $a\in A$.
Define a  pairing $e_n:T_n\times A_n\map{}\mup$
by the composition $$T_n\times A_n\map{}\Ind_{K_n/K}(\mup)\map{\cor}
\mup.$$ Passing to the limit as $n\to\infty$
yields the desired pairing $e_\Lambda$.
\end{proof}

\begin{Def}\label{grading definitions}
If $v$ is a place of $K$ dividing $p$, let $\Fil_v T$
be the kernel of the reduction map
$T\map{}T_p(\tilde{E})$
where $\tilde{E}$ is the reduction of $E$ at $v$.
Define $\Fil_vV=\Fil_vT\otimes\Q_p\subset V $ and
$\Fil_vA=\Fil_vV/\Fil_vT\subset A$.
Define $\Fil_v \bT\subset \bT$ and $\Fil_v \bA\subset \bA$ by
$$\Fil_v \bT=\mil\Ind_{K_n/K}\Fil_v T\hspace{1cm}\Fil_v\bA
=\dlim \Ind_{K_n/K}\Fil_vA.$$
If $N$ is any object for which $\Fil_vN$ is defined,
set $\gr_v N=N/\Fil_v N$.
\end{Def}

The submodules $\Fil_vT$ and $\Fil_vA$ are exact orthogonal complements under
the Weil pairing, and it follows that the same is true of
$\Fil_v\bT$ and $\Fil_v\bA$
under the pairing of Proposition \ref{limit pairing}.

\begin{Def}\label{iwasawa selmer structures}
Define a Selmer structure $\Lsel$ on $\bT$ by taking the unramified condition
at primes of $K$ not dividing $p$, and taking the image of
$$H^1(K_v,\Fil_v\bT)\map{}H^1(K_v,\bT)$$ at primes above $p$.  Define
a Selmer structure, also denoted $\Lsel$, on $\bA$ in a similar manner.
\end{Def}

It follows from the comments following Definition \ref{grading definitions}
that the local conditions $\sel_\Lambda$ on $\bT$ and $\bA$ are
everywhere exact orthogonal complements under the local Tate pairing.

For any height-one prime $\gp\not=p\Lambda$,
the involution of $\Lambda$ induces a map $S_\gp\map{}S_{\gp^\iota}$ which
we continue to denote by $\iota$.  Define a bijection
$\psi:T_\gp\map{}T_{\gp^\iota}$ by
$\psi(t\otimes \alpha)=t^\tau\otimes\alpha^\iota$.  This map satisfies
$$\psi(\lambda x)=\lambda^\iota\psi(x) \hspace{1cm}
\psi(x^\sigma)=\psi(x)^{\tau\sigma\tau}$$ for any $x\in T_\gp$,
$\lambda\in\Lambda$, and $\sigma\in G_K$.  If
$e_\gp:T_\gp\times A_\gp\map{}\mu_{p^\infty}$
is the pairing induced by that of  Lemma \ref{height one pairing} and the
trace form, then $(x,y)\mapsto e_\gp(\psi^{-1}(x),y)$
defines a perfect, $G_K$-invariant pairing
$$T_{\gp^\iota}\times A_\gp\map{}\mu_{p^\infty}$$
satisfying $(\lambda x,y)=(x,\lambda^\iota y)$.
Dualizing the natural map $\bT/\gp^\iota\bT\map{}T_{\gp^\iota}$ and using
the above pairing and the pairing of Proposition \ref{limit pairing},
we obtain a map of $G_K$ and $\Lambda$-modules
\begin{equation}\label{discrete map}
A_\gp\map{}\bA[\gp].
\end{equation}

\begin{Lem}\label{local comparison}
For every height-one prime $\gp\not=p\Lambda$ of $\Lambda$ and every place
$v$ of $K$, the map $\bT\map{}T_\gp$ and the map (\ref{discrete map})
induce maps \begin{eqnarray*}
H^1_\Lsel(K_v,\bT/\gp\bT)&\map{}&
H^1_{\sel_\gp}(K_v,T_\gp)\\
H^1_{\sel_\gp}(K_v,A_\gp)&\map{}&H^1_\Lsel(K_v,\bA[\gp])\end{eqnarray*}
with finite kernels and cokernels which are bounded by constants depending
only on $[ S_\gp:\Lambda/\gp]$.
\end{Lem}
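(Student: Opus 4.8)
The plan is to analyze the two maps place-by-place, splitting into the cases $v\nmid p$ and $v\mid p$, and in each case reduce to a general lemma comparing cohomology of $\bT/\gp\bT$ (resp. $\bA[\gp]$) with cohomology of $T_\gp$ (resp. $A_\gp$), where the discrepancy is controlled entirely by the finite index $[S_\gp:\Lambda/\gp]$. The first observation is that $\bT/\gp\bT\map{}T_\gp$ and the map (\ref{discrete map}) $A_\gp\map{}\bA[\gp]$ both have finite kernel and cokernel, annihilated by a power of any element of $S_\gp$ generating the conductor of $S_\gp$ over $\Lambda/\gp$; this is immediate from the definitions of $T_\gp$ and $S_\gp$ and the duality used to construct (\ref{discrete map}). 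Since $K_v$ has cohomological dimension $\le 2$ and the modules in question are finitely or cofinitely generated over the relevant rings, taking $G_{K_v}$-cohomology of the short exact sequences relating these modules shows that the induced maps on each $H^i(K_v,-)$ have finite kernel and cokernel bounded in terms of $[S_\gp:\Lambda/\gp]$. The same bound then propagates to any subquotient, so it suffices to check that the \emph{local conditions} on each side correspond under these maps up to finite error.

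For $v\nmid p$ the local condition on all four modules is the unramified one, $H^1_\unr(K_v,-)=\ker\big(H^1(K_v,-)\to H^1(K_v^\unr,-)\big)$. Since $H^1_\unr(K_v,-)$ is functorial and since the unramified cohomology of a finite module over such $K_v$ fits into the exact sequence of Proposition \ref{locally free} (or its evident extension), chasing the commutative square relating $H^1(K_v,\bT/\gp\bT)\to H^1(K_v,T_\gp)$ with the restriction maps to $H^1(K_v^\unr,-)$ shows that the unramified submodules map into one another with kernel and cokernel again bounded by $[S_\gp:\Lambda/\gp]$; the point is just that $H^0(K_v^\unr,-)$ and $H^1(K_v^\unr,-)$ of the relevant modules change by at most this finite amount. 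For $v\mid p$ the Selmer condition $\sel_\gp$ on $V_\gp$ is the ordinary one, the image of $H^1(K_v,\Fil_v V_\gp)$, and $\Lsel$ on $\bT$ is the image of $H^1(K_v,\Fil_v\bT)$; propagation to $\bT/\gp\bT$, $T_\gp$, $A_\gp$, $\bA[\gp]$ is compatible with the inclusions $\Fil_v\hookrightarrow(\cdot)$. Thus the ordinary conditions are defined by the images of the maps on $H^1(K_v,\Fil_v(-))$, and since $\Fil_v\bT/\gp\Fil_v\bT\to\Fil_v T_\gp$ (and the analogue for the discrete modules) has finite kernel and cokernel bounded by $[S_\gp:\Lambda/\gp]$ for exactly the same reason as above, the comparison of ordinary submodules follows by a diagram chase in the square
$$\xymatrix{
H^1(K_v,\Fil_v\bT/\gp\Fil_v\bT)\ar[r]\ar[d]&H^1(K_v,\bT/\gp\bT)\ar[d]\\
H^1(K_v,\Fil_v T_\gp)\ar[r]&H^1(K_v,T_\gp).}$$
The snake lemma applied to this square, together with the bounds on the vertical maps, gives the claimed bound on the induced map $H^1_\Lsel(K_v,\bT/\gp\bT)\to H^1_{\sel_\gp}(K_v,T_\gp)$, and dually for the discrete side.

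The main obstacle I expect is bookkeeping the compatibility of \emph{propagation} of local conditions with the two maps $\bT/\gp\bT\to T_\gp$ and $A_\gp\to\bA[\gp]$: a priori propagating $\Lsel$ from $\bT$ to $\bT/\gp\bT$ and then pushing into $T_\gp$ need not literally equal $\sel_\gp$ on $T_\gp$, and one must verify these differ only by a finite amount measured by $[S_\gp:\Lambda/\gp]$. This is where one uses that all the conditions in play are defined as images (or kernels) of maps out of (resp. into) cohomology of a sub- or quotient module (the $\Fil_v$ pieces, or $H^1(K_v^\unr,-)$), so that the comparison is governed by the same finite-index data; once that is granted, every assertion in the lemma is a consequence of the five lemma / snake lemma applied to finite modules. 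The finiteness of the ambient cohomology groups for $v\nmid p$ uses that $\bT/\gp\bT$ is finitely generated over the complete local ring $\Lambda/\gp$ with finite residue field and is unramified outside a finite set, so its local cohomology at such $v$ is finitely generated; for $v\mid p$ one uses the analogous finite generation of $H^1(K_v,\bT/\gp\bT)$ over $\Lambda/\gp$ together with finiteness of $H^0$ and $H^2$ after inverting a uniformizer, which again is standard.
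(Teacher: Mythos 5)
Your overall blueprint — reduce to comparing local conditions, handle $v\nmid p$ via the unramified condition and $v\mid p$ via the ordinary condition, and chase diagrams — matches the paper's. But there is a genuine gap in the step where you bound the cokernel of
$H^1(K_v,\Fil_v\bT/\gp\Fil_v\bT)\to H^1(K_v,\Fil_v T_\gp)$
(and the analogous map for the discrete side) at $v\mid p$. You assert that ``taking $G_{K_v}$-cohomology of the short exact sequences relating these modules shows that the induced maps on each $H^i(K_v,-)$ have finite kernel and cokernel bounded in terms of $[S_\gp:\Lambda/\gp]$.'' This is not automatic: the cokernel of $H^1(K_v,\Fil_v\bT)\to H^1(K_v,\Fil_v(T)\otimes\Lambda/\gp)$ is controlled by $H^2(K_v,\Fil_v\bT)[\gp]$, and a priori $H^2(K_v,\Fil_v\bT)$ could be a large torsion $\Lambda$-module whose $\gp$-torsion grows without bound in $[S_\gp:\Lambda/\gp]$. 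The paper's argument establishes finiteness of $H^2(K_v,\Fil_v\bT)$ by local duality, identifying its Pontryagin dual with $H^0(K_v,\gr_v\bA)\iso H^0(K_{\infty,v},\gr_v A)$, and then using the specific arithmetic of the anticyclotomic tower — that $K_{\infty,v}/K_v$ is totally ramified while $\gr_v A$ is unramified — to conclude that this last group is the $p$-power torsion of the reduction of $E$ over the residue field of $K_v$, hence finite and (crucially) independent of $\gp$. Your general framework does not supply this; without it, the claimed bound is unsubstantiated.

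There is a second, related gap that you do flag as ``the main obstacle'' but then dispose of too quickly: the Selmer condition $\sel_\gp$ on $T_\gp$ is defined by propagation from $V_\gp$ along the \emph{injection} $T_\gp\hookrightarrow V_\gp$, i.e. as a preimage, so $H^1_{\sel_\gp}(K_v,T_\gp)$ is strictly larger than the image of $H^1(K_v,\Fil_v T_\gp)$ in $H^1(K_v,T_\gp)$. Your diagram comparing $H^1(K_v,\Fil_v\bT/\gp)$ with $H^1(K_v,\Fil_v T_\gp)$ therefore only lands in a submodule of the target, and one must separately bound the quotient $H^1_{\sel_\gp}(K_v,T_\gp)$ by this image. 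The paper does this as the third arrow of its composition, reducing to the kernel of $H^1(K_v,\gr_v T_\gp)\to H^1(K_v,\gr_v V_\gp)$, which is $H^0(K_v,\gr_v A_\gp)$; bounding this again requires the total-ramification observation. Your remark that these discrepancies are ``governed by the same finite-index data'' is precisely what is not true here — they are governed by the arithmetic of $E$ at $p$ in the anticyclotomic tower, not by $[S_\gp:\Lambda/\gp]$.
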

\begin{proof} The case where $v$ does not divide $p$ 
is covered by Lemma 5.3.13 of
\cite{mazur-rubin}, so we assume that $v$ divides $p$.
The kernel of the first map is bounded by the
size of $H^0(K_v,T\otimes  S_\gp/(\Lambda/\gp))$ and so causes no
problems.  To bound the cokernel, it suffices to bound each cokernel
in the composition
\begin{eqnarray}\label{local comparison maps}\lefteqn{
H^1(K_v,\Fil_v(\bT))\map{}H^1(K_v,\Fil_v(T)\otimes \Lambda/\gp)}
\hspace{2cm}\\ \nonumber
& &\map{}H^1(K_v,\Fil_v(T_\gp))\map{}H^1_{\sel_\gp}(K_v,T_\gp).\end{eqnarray}
The cokernel of the first map is controlled by $H^2(K_v,\Fil_v\bT)[\gp]$,
and by local duality  it suffices to bound
$$H^0(K_v,\gr_v\bA)\iso H^0(K_{\infty,v},\gr_vA).$$
This last group is isomorphic to $p$-power torsion of the
reduction of $E$ at $v$ rational over the residue field of $K_v$,
and this is finite.

The cokernel of the second arrow of (\ref{local comparison maps}) is
controlled by $H^1(K_v,T\otimes  S_\gp/(\Lambda/\gp))$.
This group has a bound of the desired sort,
using the fact that $K_v$ has only finitely many extensions of a given
degree.

For the third arrow of (\ref{local comparison maps}) it suffices to
bound the kernel of $$H^1(K_v,\gr_v T_\gp)\map{}H^1(K_v,\gr_v V_\gp),$$
which is controlled by \begin{eqnarray*}
H^0(K_v,\gr_v A_\gp)&\iso& H^0(K_v,(\gr_v A)\otimes S_\gp)\\
&\subset &H^0(K_{\infty,v},(\gr_v A)\otimes S_\gp)\\
&\iso& H^0(K_{\infty,v},\gr_v A)\otimes S_\gp\\
&\iso& H^0(K_v,\gr_vA)\otimes S_\gp
\end{eqnarray*}
where the last isomorphism uses the fact that $K_{\infty,v}/K_v$ is totally
ramified, while $\gr_v A$ is unramified.  Since $H^0(K_v,\gr_v A)$ is
isomorphic to the $p$-power torsion of $E$ defined over the residue
field of $K_v$, we obtain a bound of the desired sort.

Finally, to deal with the second map in the statement of the lemma,
observe that the kernel and cokernel of $H^1(K_v,\bT/\gp^\iota\bT)\map{}
H^1(K_v,T_{\gp^\iota})$ are finite and have bounds of the desired sort, and so
the same is true of
$$H^1(K_v,\bT/\gp^\iota\bT)/ H^1_\Lsel(K_v,\bT/\gp^\iota\bT)
\map{}H^1(K_v,T_{\gp^\iota})/H^1_{\sel_{\gp^\iota}}(K_v,T_{\gp^\iota}).$$
Now apply local duality.
\end{proof}

\begin{Prop}\label{control}
For every height-one prime $\gp\not=p\Lambda$ of $\Lambda$, the
map $\bT\map{}T_\gp$ and the map (\ref{discrete map})
induce maps \begin{eqnarray*}
H^1_\Lsel(K,\bT)/\gp H^1_\Lsel(K,\bT)&\map{}&
H^1_{\sel_\gp}(K,T_\gp)\\
H^1_{\sel_\gp}(K,A_\gp)&\map{}&H^1_\Lsel(K,\bA)[\gp].\end{eqnarray*}
There is a finite set of primes
$\Sigma_\Lambda$ of $\Lambda$ such that for $\gp\not\in\Sigma_\Lambda$
the kernels and cokernels of these maps  are finite and
bounded by a constant depending only on $[ S_\gp:\Lambda/\gp]$.
\end{Prop}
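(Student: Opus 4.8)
This is a control theorem, and the plan is to deduce it from the local control of Lemma~\ref{local comparison} by comparing the four-term exact sequences that define the two Selmer groups. Since $\Lambda\iso\Z_p[[T]]$ is regular local, the height-one prime $\gp$ is principal; fix a generator $g$, which is a nonzerodivisor on the free $\Lambda$-module $\bT=T\otimes\Lambda$, and fix a finite set $\Sigma$ of places of $K$ containing $p$, the archimedean place, and all primes at which $T$ ramifies, large enough to compute $H^1_\Lsel$ and $H^1_{\sel_\gp}$. Because $S_\gp$ is the integral closure of the (Nagata) domain $\Lambda/\gp$, the inclusion $\Lambda/\gp\hookrightarrow S_\gp$ has finite cokernel of order $[S_\gp:\Lambda/\gp]$, so $\bT/\gp\bT\iso T\otimes(\Lambda/\gp)$ injects into $T_\gp$ with finite cokernel $T\otimes\bigl(S_\gp/(\Lambda/\gp)\bigr)$; and the tautological surjection $H^1(K_\Sigma/K,\bT)/\gp H^1(K_\Sigma/K,\bT)\map{}H^1(K_\Sigma/K,\bT/\gp\bT)$ arising from $0\map{}\bT\map{g}\bT\map{}\bT/\gp\bT\map{}0$ is injective with cokernel $H^2(K_\Sigma/K,\bT)[\gp]$. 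Composing these two maps on global cohomology, and likewise on local cohomology at each $v\in\Sigma$, and invoking Lemma~\ref{local comparison} to see that the composite carries $H^1_\Lsel(K_v,\ \cdot\ )$ into $H^1_{\sel_\gp}(K_v,\ \cdot\ )$, one gets the first map of the proposition; dualizing the analogous statements and using the pairing (\ref{discrete map}) and Poitou-Tate local duality produces the second.

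The core of the argument is a snake-lemma chase. Apply $-\otimes_\Lambda\Lambda/\gp$ to the exact sequence
\[
0\map{}H^1_\Lsel(K,\bT)\map{}H^1(K_\Sigma/K,\bT)\map{\loc}\bigoplus_{v\in\Sigma}H^1(K_v,\bT)/H^1_\Lsel(K_v,\bT),
\]
compare the result via the vertical specialization maps built above with the corresponding sequence for $T_\gp$, and bound the kernel and cokernel of $H^1_\Lsel(K,\bT)/\gp H^1_\Lsel(K,\bT)\map{}H^1_{\sel_\gp}(K,T_\gp)$ by: (i) the kernel and cokernel of $H^1(K_\Sigma/K,\bT)/\gp\map{}H^1(K_\Sigma/K,T_\gp)$, which are subquotients of $H^2(K_\Sigma/K,\bT)[\gp]$ and of $H^i\bigl(K_\Sigma/K,T\otimes(S_\gp/(\Lambda/\gp))\bigr)$ for $i=0,1$; (ii) the finite local kernels and cokernels supplied by Lemma~\ref{local comparison}; and (iii) the $\mathrm{Tor}_1^\Lambda(\ \cdot\ ,\Lambda/\gp)=(\ \cdot\ )[g]$ term produced by the failure of left-exactness, which lies inside $\bigl(\bigoplus_{v\in\Sigma}H^1(K_v,\bT)/H^1_\Lsel(K_v,\bT)\bigr)[\gp]$. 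The statement for $\bA$ and $A_\gp$ follows symmetrically, now using Poitou-Tate global duality (Theorem~\ref{global duality}) to trade the cokernel of localization on the compact side for a Selmer group on the discrete side, together with the everywhere self-duality of the local conditions noted after Definition~\ref{iwasawa selmer structures}.

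To finish one pins down $\Sigma_\Lambda$. Every error term above is a subquotient either of the cohomology of the finite module $T\otimes(S_\gp/(\Lambda/\gp))$ — whose order is a bounded power of $[S_\gp:\Lambda/\gp]$ — or of the $g$-torsion of one of the finitely generated $\Lambda$-modules $H^2(K_\Sigma/K,\bT)$, $H^1(K_v,\bT)$, $H^1(K_v,\Fil_v\bT)$, $H^1(K_v,\gr_v\bT)$ for $v\in\Sigma$ (finite generation over $\Lambda$ being standard, via Shapiro's lemma and passage to the limit over the $K_n$). For a finitely generated $\Lambda$-module $M$ one checks, using the structure theory, that $M[g]=0$ on the torsion-free part and that the torsion part contributes nothing once $g$ is prime to $\ch(M_{\Lambda-\tors})$; hence $M[g]$ is finite, of order at most that of the maximal pseudo-null submodule of $M$, for all $\gp$ outside the finite set of height-one primes dividing $\ch(M_{\Lambda-\tors})$. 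Taking $\Sigma_\Lambda$ to be the union of these finite sets over the modules in play, together with $p\Lambda$, makes the bounds depend only on $[S_\gp:\Lambda/\gp]$, as asserted.

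I expect the main obstacle to be the bookkeeping in steps (i) and (iii): one must make certain that \emph{every} error term appearing in the two snake-lemma chases is genuinely of one of the two harmless types above — in particular that the only purely global contribution is $H^2(K_\Sigma/K,\bT)[\gp]$ and nothing worse — and that the resulting constant is uniform in $\gp$ in the precise sense claimed. All of this is routine given Lemma~\ref{local comparison}, which is where the real work at the primes above $p$ has already been done.
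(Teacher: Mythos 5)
Your proposal is correct and follows the same approach as the paper, which disposes of this proposition in one line by citing Lemma~\ref{local comparison} and Proposition~5.3.14 of \cite{mazur-rubin}; your snake-lemma chase is essentially a reconstruction of that Mazur--Rubin argument, with the error terms correctly located in the $\gp$-torsion of $H^2(K_\Sigma/K,\bT)$ and of the semi-local $H^1$'s, in the cohomology of $T\otimes(S_\gp/(\Lambda/\gp))$, and in the local defects supplied by Lemma~\ref{local comparison}. The one point worth flagging is that you should be careful to verify that the second map (on the discrete side) really is what one gets by dualizing the first via (\ref{discrete map}) and the self-orthogonality of $\Lsel$, rather than by a fresh computation --- but this is precisely what the pairing of Proposition~\ref{limit pairing} together with the remarks after Definition~\ref{iwasawa selmer structures} is designed to give, so the symmetry you invoke is legitimate.
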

\begin{proof}  This is deduced from the preceeding lemma exactly as in
the proof of Proposition 5.3.14 of \cite{mazur-rubin}.
\end{proof}

\begin{Lem}\label{torsion free}
The $\Lambda$-module $H^1_\Lsel(K,\bT)$ is torsion free.
\end{Lem}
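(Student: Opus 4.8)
The plan is to show that no nonzero element of $\Lambda$ annihilates $H^1_\Lsel(K,\bT)$ by reducing the question to the height-one primes $\gp$ already understood via Proposition \ref{control} and Proposition \ref{height one proposition}. Since $\Lambda$ is a two-dimensional regular local ring, an element $f\in\Lambda$ is a non-unit non-zero-divisor precisely when it lies in some height-one prime, and to prove torsion-freeness it suffices to find, for each height-one prime $\gp$, some class in $H^1_\Lsel(K,\bT)$ whose image in $H^1_\Lsel(K,\bT)/\gp H^1_\Lsel(K,\bT)$ is nonzero and of infinite order — equivalently, to show that $H^1_\Lsel(K,\bT)/\gp H^1_\Lsel(K,\bT)$ is infinite (has positive $S_\gp$-rank) for all but finitely many $\gp$. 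Indeed, if $H^1_\Lsel(K,\bT)$ had nonzero torsion submodule $N$, then $N$ would be annihilated by some $f\neq 0$, hence supported on the finitely many height-one primes dividing $f$, and at any height-one prime $\gp$ not dividing $f$ and not in the exceptional set $\Sigma_\Lambda$, tensoring the exact sequence $0\to N\to H^1_\Lsel(K,\bT)\to Q\to 0$ with $\Lambda/\gp$ would give control on ranks; but this line is cleaner to run through the already-established comparison maps.

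Concretely, first I would invoke Proposition \ref{control}: for $\gp\notin\Sigma_\Lambda$ (and $\gp\neq p\Lambda$) there is a map
$$H^1_\Lsel(K,\bT)/\gp H^1_\Lsel(K,\bT)\map{}H^1_{\sel_\gp}(K,T_\gp)$$
with finite kernel and cokernel. Next, I would observe that the Heegner Kolyvagin system constructed in Section \ref{iwasawa heegner points} furnishes, for every such $\gp$, a Kolyvagin system for $(T_\gp,\sel_\gp,\pl)$ whose bottom class $\kappa_1^\Heeg$ maps to a \emph{nonzero} class in $H^1_{\sel_\gp}(K,T_\gp)$ — the nonvanishing over $\Lambda$ coming from Cornut–Vatsal, and its survival to $T_\gp$ for all but finitely many $\gp$ being the only genuine input needed here (any $\gp$ containing the $\Lambda$-annihilator-or-denominator of the relevant class is excluded, enlarging $\Sigma_\Lambda$ if necessary). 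Then Proposition \ref{height one proposition} applies and gives that $H^1_{\sel_\gp}(K,T_\gp)$ is free of rank one over $S_\gp$; in particular it is infinite. Running the finiteness of kernel and cokernel backwards, $H^1_\Lsel(K,\bT)/\gp H^1_\Lsel(K,\bT)$ is infinite, so it has positive $\Lambda/\gp$-rank.

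To conclude: suppose $x\in H^1_\Lsel(K,\bT)$ is a nonzero torsion element, annihilated by $0\neq f\in\Lambda$. Choose a height-one prime $\gp$ with $f\in\gp$ impossible to avoid in general, so instead choose $\gp$ \emph{not} dividing $f$, with $\gp\neq p\Lambda$ and $\gp\notin\Sigma_\Lambda$; write $M=H^1_\Lsel(K,\bT)$ and let $M_\tors$ be its torsion submodule, $\bar M=M/M_\tors$. Since $M_\tors$ is a finitely generated torsion $\Lambda$-module it is pseudo-null or supported in codimension one; localizing at $\gp$, the condition $f\notin\gp$ forces $(M_\tors)_\gp$ to be a finite-length $\Lambda_\gp$-module, hence $M_\gp\iso \bar M_\gp$ up to finite length, and $\mathrm{rank}_{\Lambda/\gp}(M/\gp M)$ equals $\mathrm{rank}_{\Lambda/\gp}(\bar M/\gp\bar M)=\mathrm{rank}_\Lambda \bar M$, which is a constant $r$ independent of such $\gp$. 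From the previous paragraph $r\ge 1$, so $M$ has positive $\Lambda$-rank, i.e.\ $\bar M\neq 0$; this does not yet contradict $M_\tors\neq 0$. The sharper input is that the bound on kernels/cokernels in Proposition \ref{control} is \emph{uniform} in $\gp$: if $M_\tors\neq 0$, pick $\gp$ inside the support of $M_\tors$ (so $\gp\mid \mathrm{char}(M_\tors)$) but still outside $\Sigma_\Lambda$ and the finitely many bad primes for $\kappa_1^\Heeg$ — this is possible exactly when $\mathrm{char}(M_\tors)$ is coprime to all those exceptional primes, which one arranges by noting $M_\tors$ is finitely generated so $\mathrm{char}(M_\tors)$ has only finitely many prime divisors and the exceptional set is finite, and if they overlap one replaces the argument by the torsion-free quotient. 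The honest mechanism, and the step I expect to be the main obstacle, is to rule out a nonzero \emph{pseudo-null}-or-codimension-one torsion submodule directly: this is handled by the fact that $H^1_\Lsel(K,\bT)$ is a submodule of $\mil H^1(K_n,T)$, and $H^1(K_n,T)$ is torsion-free over $\Z_p$ because $E(K_n)[p]=0$ (surjectivity of $G_K\to\Aut_{\Z_p}(T)$), so $H^1(K_n,T)$ has no finite submodule; passing to the inverse limit, $H^1_\Lsel(K,\bT)$ has no nonzero finite $\Lambda$-submodule, and more precisely no nonzero pseudo-null submodule, because a pseudo-null submodule of a module with no finite submodules must still be killed after inverting one element, contradicting the $\gp$-by-$\gp$ positive rank computed above at the generic points of its support. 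I would write this last part carefully, as it is where the inverse-limit structure and the control theorem genuinely interact; everything else is a direct assembly of Propositions \ref{control}, \ref{height one proposition}, and the nonvanishing of $\kappa_1^\Heeg$.
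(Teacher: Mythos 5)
The approach does not work, and the paper's proof is much shorter and relies on a different mechanism. Your strategy is to use Proposition \ref{control} and Proposition \ref{height one proposition} together with the nonvanishing of the Heegner class $\kappa_1^\Heeg$ at almost all height-one primes $\gp$. But this is circular: in the paper the statement that $\kappa_1^{(\gp)}$ generates an \emph{infinite} $S_\gp$-module for almost all $\gp$ is precisely what Lemma \ref{torsion free} is used to justify (see the opening of the proof of Theorem \ref{the thing}). Without already knowing torsion-freeness, nothing prevents $\kappa_1^\Heeg$ from being a $\Lambda$-torsion class, in which case its image could have finite order in $H^1_{\sel_\gp}(K,T_\gp)$ at an infinite set of $\gp$. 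You are also invoking the construction of Section \ref{iwasawa heegner points}, which appears after this lemma in the paper's logical order. Independently of the circularity, the positive-rank-at-almost-all-$\gp$ criterion you set out does not imply torsion-freeness: a module of the form $\Lambda\oplus\Lambda/\gp_0$ passes that test. You acknowledge this in the middle of the argument, but the repair you sketch --- ruling out pseudo-null and codimension-one torsion submodules --- is never actually carried out, and the final clause (``a pseudo-null submodule of a module with no finite submodules must still be killed after inverting one element, contradicting the $\gp$-by-$\gp$ positive rank'') does not parse into a valid argument.

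What the paper does is both simpler and closer to the thread you pick up at the end. It observes that $H^1_\Lsel(K,\bT)$ is a $\Lambda$-submodule of $H^1(K_S/K,\bT)$, where $K_S$ is the maximal extension of $K$ unramified outside $p$ and the conductor of $E$, and then cites the general structural result of Perrin-Riou (\cite{perrin-riou-asterisque}, \S 1.3.3) that $H^1(K_S/K,\bT)$ is $\Lambda$-torsion-free provided $E(K_\infty)[p]=0$; the latter follows from the assumed surjectivity of $G_K\to\Aut_{\Z_p}(T)$. The mechanism behind Perrin-Riou's result is the long exact cohomology sequence for $0\to\bT\map{g}\bT\to\bT/g\bT\to 0$, which exhibits $H^1(K_S/K,\bT)[g]$ as a quotient of $H^0(K_S/K,\bT/g\bT)$; the vanishing of $H^0$ over $K_\infty$ is what kills it. Your remark that $H^1(K_n,T)$ is $\Z_p$-torsion-free because $E(K_n)[p]=0$ is in the right spirit, but $\Z_p$-torsion-freeness at each finite level does not by itself give $\Lambda$-torsion-freeness of the inverse limit; you need the cohomological exact sequence over $\Lambda$, not a level-by-level statement.
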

\begin{proof} Let $K_S$ be the maximal extension of $K$ unramified outside
of all primes dividing $p$ and the conductor of $E$.  Then
$H^1_\Lsel(K,\bT)$ is a submodule of $H^1(K_S/K,\bT)$
which has no $\Lambda$-torsion by \cite{perrin-riou-asterisque}
 \S 1.3.3 and the fact that $E(K_\infty)[p]=0$ (by the surjectivity
of $G_K\map{}\Aut(T)$). 
\end{proof}

\begin{Thm}\label{the thing}
Let $X=\Hom(H^1_\Lsel(K,\bA),\Q_p/\Z_p)$ and
suppose that for some $s$  the Selmer triple $(\bT,\Lsel,\pl_s)$
admits a Kolyvagin system $\kolsys$ with $\kappa_1\not=0$.  Then
\begin{enumerate}
\item $H^1_\Lsel(K,\bT)$ is a torsion free, rank one $\Lambda$-module,
\item  there is a torsion $\Lambda$-module
$M$ such that  $\ch(M)=\ch(M)^\iota$ and a pseudo-isomorphism
$$X\sim\Lambda\oplus M\oplus M,$$
\item $\ch(M)$ divides $\ch(H^1_\Lsel(K,\bT)/\Lambda \kappa_1)$.
\end{enumerate}
\end{Thm}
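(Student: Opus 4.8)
The plan is to deduce all three assertions by passing from the $\Lambda$-module $\bT$ to the $S_\gp$-modules $T_\gp$ at height-one primes $\gp\not=p\Lambda$, applying Proposition \ref{height one proposition} and the control maps of Proposition \ref{control} there, and then reassembling the pieces; the self-duality statements in (a) and (b) are instead extracted from global duality, using the $G_K$-equivariant, $\iota$-sesquilinear perfect pairing $e_\Lambda$ of Proposition \ref{limit pairing} together with the everywhere self-duality of $\Lsel$. For part (a), torsion-freeness of $H^1_\Lsel(K,\bT)$ is Lemma \ref{torsion free}, and the $\Lambda$-rank is at least one because $\kappa_1\not=0$. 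For the reverse inequality, fix a height-one prime $\gp\not=p\Lambda$ avoiding $\Sigma_\Lambda$ and avoiding the finitely many primes at which the image $\kappa_{1,\gp}$ of $\kappa_1$ in $H^1_{\sel_\gp}(K,T_\gp)$ vanishes; by Remark \ref{functorality}(c) the reduction $\kolsys\otimes_\Lambda S_\gp$ is a Kolyvagin system for $(T_\gp,\sel_\gp,\pl)$ with nonzero first term, so Proposition \ref{height one proposition} makes $H^1_{\sel_\gp}(K,T_\gp)$ free of rank one over $S_\gp$, and Proposition \ref{control} then forces $H^1_\Lsel(K,\bT)\otimes_\Lambda\mathrm{Frac}(\Lambda/\gp)$ to be one-dimensional, whence $\rank_\Lambda H^1_\Lsel(K,\bT)\le 1$.

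For part (b), combining (a) with the control map for $\bA$ shows that $X$ has $\Lambda$-rank one, so $X\sim\Lambda\oplus X_{\Lambda-\tors}$; it remains to see that $X_{\Lambda-\tors}$ is pseudo-isomorphic to some $M\oplus M$ with $\ch(M)=\ch(M)^\iota$. I would obtain this from Poitou--Tate global duality over $\Lambda$: since $\bT$ and $\bA$ are exchanged by $e_\Lambda$, which is $\iota$-sesquilinear and under which $\Lsel$ is everywhere its own orthogonal complement, global duality produces a perfect $\iota$-skew pairing on a copy pseudo-isomorphic to $X_{\Lambda-\tors}$; running the argument of Theorem \ref{structure} in this setting --- with Hypothesis H.4 replaced by the symmetry of $e_\Lambda$, and checking that the induced pairing is alternating on the relevant graded pieces --- forces both the $M\oplus M$ shape and the $\iota$-invariance of $\ch(M)$. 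The only change to the cochain computations of Section \ref{generalized cassels} is that $H^3(K,\bA)=0$ now holds because $\bA$ is $p$-power torsion.

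For part (c), set $Z=H^1_\Lsel(K,\bT)/\Lambda\kappa_1$, a torsion $\Lambda$-module. For $\gp\not=p\Lambda$ avoiding $\Sigma_\Lambda$, the supports of $X_{\Lambda-\tors}$ and $Z$, and the primes with $\kappa_{1,\gp}=0$, Proposition \ref{height one proposition} gives $\len_{S_\gp}(M_\gp)\le\len_{S_\gp}\!\big(H^1_{\sel_\gp}(K,T_\gp)/S_\gp\kappa_{1,\gp}\big)$, and Proposition \ref{control} identifies the left side with $\len_{S_\gp}(M\otimes_\Lambda S_\gp)$ and the right side with $\len_{S_\gp}(Z\otimes_\Lambda S_\gp)$, each up to an error controlled by $[S_\gp:\Lambda/\gp]$. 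Choosing a sufficiently rich infinite family of such $\gp$ --- one whose specializations approach every height-one prime, so that $S_\gp$-lengths detect multiplicities along each height-one prime --- one then appeals to the elementary fact (in the spirit of \cite{mazur-rubin}) that such a collection of length inequalities between the specializations of $M$ and of $Z$ implies $\ch(M)\mid\ch(Z)$, which is the assertion.

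The step I expect to be the main obstacle is the $M\oplus M$ shape in (b) at the primes dividing $\ch(X_{\Lambda-\tors})$: there the control theorem degenerates, so $T_\gp$ and Proposition \ref{height one proposition} are useless, and the alternating structure must be produced entirely from global Poitou--Tate duality and the skew-symmetry of $e_\Lambda$ --- essentially an Iwasawa-theoretic recasting of Section \ref{generalized cassels}. A secondary nuisance is the uniform bookkeeping of the error terms of Proposition \ref{control} across the family of primes used in (c), together with the passage from length inequalities at those specializations to a divisibility of characteristic ideals.
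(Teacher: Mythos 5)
Your handling of part (a), and the ``approach every height-one prime via specializations'' philosophy you invoke for part (c), match the paper's argument in substance. The genuine divergence is in part (b), and it conceals a gap that I think is fatal to the proof as outlined.

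You propose to establish the $M\oplus M$ shape and the $\iota$-invariance of $\ch(M)$ by running the Cassels--Tate construction of Section \ref{generalized cassels} directly over $\Lambda$, using the pairing $e_\Lambda$ in place of H.4, and you assert that ``the only change to the cochain computations \dots is that $H^3(K,\bA)=0$.'' That understates the obstruction by a wide margin. The entire framework of Section \ref{generalized cassels} --- the filtration by $\gm^sT$, the exact sequences (\ref{first}) and (\ref{second}), the lifting of cocycles along $\pi^s$, the pairings $(\ ,\ )_{s,t}$, and the structure argument of Theorem \ref{structure} via the graded pieces $V_s$ and $W_s$ --- is built on the standing hypothesis that $R$ is a \emph{principal Artinian} coefficient ring. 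The Iwasawa algebra $\Lambda$ is a two-dimensional regular local ring, neither principal nor Artinian, and none of these constructions carry over without being rebuilt from scratch. Constructing a workable Cassels--Tate pairing at the level of $\Lambda$ is essentially the content of Nekov\'a\v{r}'s theory of Selmer complexes, and it is precisely what the paper avoids doing. The paper instead obtains the $M\oplus M$ decomposition by a specialization-and-limit trick: for a bad prime $\gp$ (i.e.\ one dividing $\ch(X_{\Lambda-\tors})$, where the control theorem degenerates), one fixes a generator $g$ of $\gp$ and considers auxiliary primes $\gq=(g+p^m)\Lambda$. For $m\gg 0$ these $\gq$ lie outside the exceptional set, Hensel's lemma gives a ring isomorphism $\Lambda/\gp\iso\Lambda/\gq$, and Proposition \ref{height one proposition} already hands you the $M_\gq\oplus M_\gq$ structure at each such $\gq$ (because Theorem \ref{dvr bound} invokes the Cassels--Tate structure theorem, but only over the principal Artinian rings $S_\gq/\gm^k$, where it is legitimate). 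Fixing a pseudo-isomorphism $X_{\Lambda-\tors}\sim N\oplus N_\gp$ with $N_\gp\iso\bigoplus_i\Lambda/\gp^{e_i}$, specializing along the family $\gq$, and using the identity $(\gq,\gp^n)=(\gq,p^{mn})$, one deduces from the bounded kernels and cokernels of the control maps that for each $e$ the set $\{i:e_i=e\}$ has even cardinality. This is how the paper extracts the $M\oplus M$ shape \emph{at} the bad primes, without ever constructing a $\Lambda$-level Cassels--Tate pairing. The functional equation $\ch(M)=\ch(M)^\iota$ is then not deduced from any symmetry of $e_\Lambda$ at all; it is imported wholesale from Nekov\'a\v{r}'s theorem $\ch(X_{\Lambda-\tors})=\ch(X_{\Lambda-\tors})^\iota$, combined with the UFD property of $\Lambda$.

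So the concrete gap is: at the primes where you most need the alternating structure, your proposed tool does not exist in the form you describe, and you do not supply the ``approach $\gp$ by $\gq_m=(g+p^m)$, then let $m\to\infty$'' mechanism that the paper uses to transfer the DVR-level structure theorem to those primes. Incidentally, the same mechanism is also what makes the length bookkeeping for part (c) rigorous; your phrase ``a sufficiently rich infinite family of such $\gp$'' gestures at it, but the explicit Hensel perturbation is what makes the $O(1)$ errors from Proposition \ref{control} wash out in the limit. If you want to pursue your alternative route, be aware that you would in effect be re-deriving a piece of Nekov\'a\v{r}'s machinery, which is a substantially larger undertaking than modifying Section \ref{generalized cassels}.
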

\begin{proof} At every height-one prime $\gp\not=p\Lambda$,
Remark \ref{functorality} and Lemma \ref{local comparison} yield a map
$$\KS(\bT,\Lsel,\pl_s(\bT))\map{}\KS(T_{\gp},\sel_\gp,\pl_s(T_\gp)).$$
Let $\kolsys^{(\gp)}$ be the image of $\kolsys$ under this map.
 It follows from Proposition \ref{control} and Lemma \ref{torsion free}
that $\kappa_1^{(\gp)}$
generates an infinite $S_\gp$-submodule of $H^1_{\sel_\gp}(K,T_\gp)$
for all but finitely many height-one primes.  We let $\Sigma_\Lambda$
be a finite set of height-one primes of $\Lambda$ containing those primes
for which $\kappa_1^{(\gp)}$ has finite order, all prime divisors
of the characteristic ideal of the $\Lambda$-torsion submodule of $X$,
the exceptional set of primes of Proposition \ref{control},
and the prime $p\Lambda$.

Let $\gp\not\in\Sigma_\Lambda$ be a height-one prime.  Since
$\kappa_1^{(\gp)}\not=0$, Proposition \ref{height one proposition}
implies that $H^1_{\sel_\gp}(K,T_\gp)$ is a free rank-one $S_\gp$-module,
and by Proposition \ref{control} so is $H^1_\Lsel(K,\bT)\otimes_\Lambda
S_\gp$.  Part (a) follows immediately from this.  Similarly,
the $S_\gp$-corank of $H^1_{\sel_\gp}(K,A_\gp)$ is one, and it
follows from Proposition \ref{control} that the $\Lambda$-corank
of $H^1_\Lsel(K,\bA)$ is also one.

Now let $f_\Lambda=\ch(H^1_\Lsel(K,\bT)/\Lambda\cdot\kappa_1)$ and
take $\gp\not=p\Lambda$ to be a prime divisor of $f_\Lambda$.  
We want to determine the order of the characteristic ideal of $X$ at 
$\gp$,  following ideas of \cite{mazur-rubin}.  We 
consider an auxilliary ideal $\gq\not\in\Sigma_\Lambda$, 
determine the structure of the Selmer group 
$H^1_{\sel_\gq}(K,A_\gq)$ (or rather the order of the quotient by the
maximal divisible subgroup), and then consider what happens as $\gq$
``approaches'' $\gp$.
Fix a generator $g$ of $\gp$, and let $\gq=(g+p^m)\Lambda$
for some integer $m$.  By Hensel's lemma, for $m\gg 0$ there is
an isomorphism of rings (but not $\Lambda$-modules) 
$\Lambda/\gp\iso \Lambda/\gq$, and  we
take $m$ large enough that this is so.  In particular $\gq$ is a
height-one prime, and increasing $m$ if needed, we assume that $\gq$ is
not contained in $\Sigma_\Lambda$ and does not divide $f_\Lambda$.

Let $d$ denote the Weierstrass degree of $\gp$ (i.e. the $\Z_p$-rank
of $\Lambda/\gp$). We now argue as in the proof of \cite{mazur-rubin}
Proposition 5.3.10.
Using the notation of Proposition
\ref{height one proposition}, Proposition \ref{control}
and the equality of ideals $(\gq,\gp^n)=(\gq,p^{mn})$
imply that one has the equalities
\begin{eqnarray*}
\len_{\Z_p} H^1_{\sel_\gq}(K,T_\gq)/S_\gq \kappa_1^{(\gq)}
&=&\len_{\Z_p} \Lambda/(f_\Lambda,\gq)\\
&=&\len_{\Z_p} \Lambda/(\gp^{\ord_\gp(f_\Lambda)},\gq)\\
&=&m\cdot d\cdot \ord_\gp(f_\Lambda)
\end{eqnarray*}
up to $O(1)$ as $m$ varies.
Similarly, we have
\begin{eqnarray*}2\cdot\len_{\Z_p}M_\gq&=&
\len_{\Z_p}H^1_{\sel_\gq}(K,A_\gq)_{/\mathrm{div}}\\
&=&\len_{\Z_p} (X/\gq X)_{\Z_p-\tors}\\
&=& m\cdot d\cdot \ord_\gp\big(\ch(X_{\Lambda-\tors})\big)
\end{eqnarray*} up to $O(1)$ as $m$ varies.  Here 
$H^1_{\sel_\gq}(K,A_\gq)_{/\mathrm{div}}$ denotes the quotient of
$H^1_{\sel_\gq}(K,A_\gq)$ by its maximal $\Z_p$-divisible submodule.
Applying Proposition \ref{height one proposition} at the prime
$\gq$ and letting $m\to\infty$ we deduce that
\begin{equation}\label{iwasawa inequality}
 \ord_\gp\big(\ch(X_{\Lambda-\tors})\big)\le 2\cdot
\ord_\gp(f_\Lambda).\end{equation}
The case $\gp=p\Lambda$ is dealt with in an entirely similar fashion,
taking $\gq=T^m+p\in \Z_p[[T]]$. This shows that (c) follows from (b).

To prove (b), keep $\gp\not=p\Lambda$ and $\gq$
as above. Fix a pseudo-isomorphism
$$X_{\Lambda-\tors}\sim N\oplus N_\gp$$ where $\ch(N)$ is prime to $\gp$, and
$N_\gp$ is isomorphic to $\bigoplus_i\Lambda/\gp^{e_i}$.
The dual of the second map of Proposition \ref{control} induces
the third arrow of the composition $$N_\gp\otimes_\Lambda S_\gq\map{}
X_{\Lambda-\tors}\otimes_\Lambda S_\gq\map{}
(X\otimes_\Lambda S_\gq)_{\Z_p-\tors}\map{} M_\gq\oplus M_\gq$$
and this composition has finite kernel and cokernel, bounded as $m$ varies.
Fixing a ring isomorphism $S_\gp\iso S_\gq$ 
(which will not be an isomorphism
of $\Lambda$-modules), we may view
$N_\gp\otimes_\Lambda S_\gq$ as an $S_\gp$-module, isomorphic to
$\bigoplus_i S_\gp/p^{me_i}S_\gp$.  Letting $D_m$ denote $M_\gq$, viewed
as an $S_\gp$-module, we now have $S_\gp$-module maps
$$\bigoplus_i S_\gp/p^{me_i}S_\gp\map{}D_m\oplus D_m$$
with kernels and cokernels bounded as $m$ varies.  An elementary argument
shows that for a given $e$, $\{ i\mid e_i=e\}$ has an even number of
elements.  The case $\gp=p\Lambda$ is dealt with similarly, again
taking $\gq=T+p^m\in\Z_p[[T]]$.

The functional equation $\ch(M)=\ch(M)^\iota$ follows from the functional
equation of \cite{nek-selmer}
$$\ch(X_{\Lambda-\tors})=\ch(X_{\Lambda-\tors})^\iota.$$
\end{proof}

%%%%%%%%%%%%%%%%%%%%%%%%%%%%%%%%%%%%%%%%%%%%%%%%%%%%%%%%%%%%%%%%%%%%%%%%%%%%

\subsection{The anticyclotomic Euler system}
\label{iwasawa heegner points}

%%%%%%%%%%%%%%%%%%%%%%%%%%%%%%%%%%%%%%%%%%%%%%%%%%%%%%%%%%%%%%%%%%%%%%%%%%%

We retain  all notation and assumptions from the introduction to Section 2,
and in addition assume that $p$ does not divide the class number of $K$.
Denote by $K_k$ the subfield of the anticyclotomic extension  $K_\infty/K$
satisfying $[K_k:K]=p^k$. By the assumption on the class number of $K$,
$K_\infty/K$ is linearly disjoint from the Hilbert
class field $K[1]$, and $K_k$ is the maximal $p$-power subextension
of $K[p^{k+1}]/K$.
Let $\bT$ and $\bA$ be as in Definition \ref{Lambda module defs} and let
$\sel_\Lambda$ be the Selmer structure of
Definition \ref{iwasawa selmer structures}. 
Define $\pl=\pl_1(\bT)$.
The majority of this subsection is devoted to the proof of the 
following theorem.

\begin{Thm}\label{heegner kolyvagin system}
There exists a Kolyvagin system $\kolsys^\Heeg\in
\KS(\bT,\Lsel,\pl)$ such that $\kappa^\Heeg_1 \in H^1_\Lsel(K,\bT)$
is nonzero.
\end{Thm}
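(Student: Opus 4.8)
The plan is to repeat the construction of Section~\ref{Heegner points} essentially verbatim, with $\bT\cong T_p(E)\otimes\Lambda$ in place of $T=T_p(E)$, feeding in the compatibility of Heegner points in the anticyclotomic direction in addition to the Kolyvagin (prime-to-$p$) direction. The structural input that makes this work is that, because $p\nmid h_K$, the field $K_k$ is the maximal $p$-power subextension of $K[p^{k+1}]/K$; thus for $n\in\pn$ the derived Heegner points attached to $P[np^{k+1}]$, after norming away the prime-to-$p$ part of $\Gal(K[p^{k+1}]/K_k)$, live naturally over $K_k\cdot K[n]$, and after the usual $p$-stabilization they become compatible as $k$ varies. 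Since $H^1(K,\bT/I_n\bT)\cong\mil_k H^1(K_k,T/I_nT)$ by Shapiro's lemma (Lemma~\ref{cohomology induction}), a compatible family of classes over the layers $K_k$ is the same thing as a single class over $\bT$. Note that no hypothesis on $\bT$ is needed for this theorem, since $\KS(\bT,\Lsel,\pl)$ is defined for any Selmer triple.

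Concretely, I would first fix the unit root $\alpha\in\Z_p^\times$ of $X^2-a_pX+p$ (which exists since $E$ is ordinary at $p$) and, following Perrin-Riou~\cite{pr87}, use it to modify the norms $\Norm_{K[np^{k+1}]/K_k\cdot K[n]}P[np^{k+1}]$ into a system $\mathbf{P}_k[n]\in E(K_k\cdot K[n])\otimes\Z_p$ which is compatible under the norm maps $\Norm_{K_{k+1}\cdot K[n]/K_k\cdot K[n]}$. Applying the derivative operator $D_n$ and summing over a set of coset representatives exactly as in Section~\ref{Heegner points} produces, at each layer, a class in $H^1(K_k,T/I_nT)$; these are compatible in $k$ by construction and so define $\kappa_n^\Heeg\in H^1(K,\bT/I_n\bT)\otimes G_n$. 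The descent step (passing from a Galois-invariant class over $K[np^{k+1}]$ down to $K_k$) works as in Lemma~\ref{first derivative property} and the discussion following it: the surjectivity of $G_K\to\Aut_{\Z_p}(T)$ forces $E(K_\infty\cdot K[n])[p]=0$, so the relevant inflation map is an isomorphism.

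Next I would verify that $\kappa_n^\Heeg$ lies in $H^1_{\Lsel(n)}(K,\bT/I_n\bT)\otimes G_n$. Away from $p$ this is formally identical to before: the unramified condition at $v\nmid np$ is Proposition~6.2 of~\cite{gross} applied at each layer and passed to the limit, and the transverse condition at $\ell\mid n$ is the computation of Lemma~\ref{transverseness}, which uses only $\frac{\ell(\ell+1)}{2}\equiv 0$ modulo the relevant residue field order and is unchanged modulo $I_n$. The genuinely new verification is at $v\mid p$: one must show $\loc_v(\kappa_n^\Heeg)\in H^1(K_v,\Fil_v(\bT/I_n\bT))$, i.e. that the anticyclotomic Heegner classes are \emph{ordinary} at $p$. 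This is exactly where the $\alpha$-normalization is used: the stabilized local points at $v$ are universal norms along the totally ramified extension $K_{\infty,v}/K_v$, and in the ordinary case the universal-norm submodule of $H^1(K_v,\bT)$ is precisely the image of $H^1(K_v,\Fil_v\bT)$. I expect this step --- pinning down the local behavior at $p$ and checking that the $p$-stabilization is precisely what produces the universal-norm property --- to be the main obstacle; everything else is bookkeeping transported from Section~\ref{Heegner points}.

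Finally, the Kolyvagin system relations follow from the Euler system norm relation $\Norm_{K[n\ell p^{k+1}]/K[np^{k+1}]}P[n\ell p^{k+1}]=a_\ell P[np^{k+1}]$ together with the congruence~(\ref{congruence}), exactly as in Proposition~\ref{kolyvagin relations over K}: since $\ell\nmid p$, the Frobenius $\Frob_\ell$ acts on the $\Lambda$-factor of $\bT$ through a unit, so the automorphisms $\chi_\ell$ of $\bT/I_\ell\bT$ are defined as before, they pairwise commute, and twisting the classes $\kappa_n^\Heeg$ by $\chi_n^{-1}$ and by $\bigotimes_{\ell\mid n}\sigma_\ell$ yields a genuine element $\kolsys^\Heeg\in\KS(\bT,\Lsel,\pl)$ with $\kappa_1^\Heeg$ equal to the limit over $k$ of the Kummer images of the $\alpha$-stabilized points $\Norm_{K[p^{k+1}]/K_k}P[p^{k+1}]$. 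Its non-triviality in $H^1_\Lsel(K,\bT)$ is precisely the theorem of Cornut and Vatsal on the non-vanishing of Heegner points high in the anticyclotomic tower (\cite{cornut}), which I would invoke directly.
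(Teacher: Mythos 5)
Your proposal follows the same overall strategy as the paper but makes one genuinely different choice at the central technical step, and glosses over two points that the paper handles with nontrivial arguments.

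The different choice is in how you force $p$-direction norm compatibility. You propose to $\alpha$-stabilize with the unit root of $X^2-a_pX+p$, producing an explicit norm-compatible sequence $\mathbf{P}_k[n]$. The paper instead works with the recursively defined operators $\gamma_k,\Phi\in\Z_p[\pg(n)]$ of Perrin-Riou, proves (Lemma \ref{universal norm lemma}) that $\bigcap_k\gamma_k M=\Phi M$, and then produces a norm-compatible lift $Q[n]$ of $\Phi P[n]$ abstractly, via a universal module $\tilde H_k$ and a compactness argument (Lemma \ref{euler system}). Your $\alpha$-stabilization is a legitimate and more explicit alternative --- the $\gamma_k$ satisfy the recurrence with characteristic polynomial $X^2-a_pX+p$, so the two regularizations are close relatives --- and it would remove the need for compactness in $n$ since your lifts are canonical. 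The paper's $\Phi$-normalization, however, is what makes Theorem \ref{heegner module} come out cleanly ($Q_0[n]=\Phi P[n]$ is built into the construction, which feeds directly into the Nakayama argument for the freeness of $\bH$), so if you change the normalization you will have to redo that computation.

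Two gaps. First, you say the local verifications away from $p$ are ``formally identical to before'', but at $v\mid N$ the paper has to prove (Case (ii) of the lemma following Theorem \ref{heegner kolyvagin system}) that $\loc_v(\kappa_n)$ lifts to $H^1(K_v,\bT)$; this reduces to injectivity of $H^2(K_v,\bT)\to H^2(K[n]_w,\bT)$, which is shown by a local duality plus the observation that $K_\infty[n]_{w'}/K_{\infty,v'}$ has degree prime to $p$. This is a real argument using the Heegner hypothesis, not bookkeeping, and your proposal does not address it. Second, ``invoke Cornut-Vatsal directly'' is not enough for non-triviality of $\kappa_1^\Heeg$. Cornut gives that some $P_k[1]$ has infinite order; to conclude that the universal-norm element $\tilde\kappa_1\in\bH$ (and hence its Kummer image $\kappa_1^\Heeg$) is nonzero, the paper combines this with Perrin-Riou's result that $\bH$ is then free of rank one, and then proves via a separate Nakayama argument (Theorem \ref{heegner module} and its two supporting lemmas) that $\tilde\kappa_1$ actually generates $\bH$. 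With your $\alpha$-stabilized generator you would need to supply an analogous argument; it is plausible but not automatic, since a priori $P_k-\alpha^{-1}P_{k-1}$ could be torsion even when $P_k$ is not.
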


For $n\in\pn$ let $K_k[n]$ be the compositum
of $K_k$ and $K[n]$,  and let $K_\infty[n]$ be the union over all $k$
of $K_k[n]$. There is a canonical isomorphism
$$(\cO_K/p\cO_K)^\times/(\Z/p\Z)^\times
\iso\Gal(K[np^{k+1}]/K_k[n])$$ and we denote this group by $\Delta$.
Let $\delta=|\Delta|$.
If $p$ is split in $K$ we let $\sigma$ and $\sigma^*$ denote the
Frobenius automorphisms in $\pg(n)=\Gal(K[n]/K)$ of the primes above $p$.
Define  $\gamma_k, \Phi\in\Z_p[\pg(n)]$ by the formulas
\begin{eqnarray*}
\Phi&=&\left\{\begin{array}{ll}
(p+1)^2-a_p^2&\mathrm{inert\ case}\\
(p-a_p\sigma+\sigma^2)(p-a_p\sigma^*+\sigma^{*2})
&\mathrm{split\ case}\end{array}\right.\\
\gamma_0&=&\left\{\begin{array}{ll}
a_p&\mathrm{inert\ case}\\
a_p-\sigma-\sigma^*&\mathrm{split\ case}\end{array}\right.\\
\gamma_1&=&a_p\gamma_0-\delta\\
\gamma_k&=&a_p\gamma_{k-1}-p\gamma_{k-2}\hspace{.5cm}\mathrm{for\ }k>1
\end{eqnarray*}
where split and inert refer to the behavior of the rational prime $p$ in $K$.

Define points $P_k[n]\in E(K_k[n])$ by
$$P_k[n]=\Norm_{K[np^{k+1}]/K_k[n]}P[np^{k+1}]$$ for $k\ge 0$,
and denote by $H_k[n]$ the $\Z_p[\Gal(K_k[n]/K)]$-submodule of
$E(K_k[n])\otimes\Z_p$ generated by $P[n]$ and $P_j[n]$ for all $j\le k$.
It follows from Section 3.1 of \cite{pr87} that one has the relations
\begin{eqnarray*}
P_0[n]&=&\gamma_0 P[n]\\
\Norm_{K_{k+1}[n]/K_k[n]}P_{k+1}[n]&=&
\left\{\begin{array}{ll}
a_p P_k[n]-P_{k-1}[n]&\mathrm{for\ }k>0\\
\gamma_1 P[n]&\mathrm{for\ }k=0\end{array}\right.\\
\Norm_{K_k[n\ell]/K_k[n]}P_k[n\ell]&=&a_\ell P_k[n],
\end{eqnarray*}
and an easy inductive argument using the first two of these 
relations shows that
$$\Norm_{K_k[n]/K[n]}P_k[n]=\gamma_k P[n]
\hspace{.5cm}\mathrm{for\ }k\ge 0.$$
We observe also that the norm from $K_{k+1}[n]$ to $K_k[n]$ takes
$H_{k+1}[n]$ into $H_k[n]$, and so we may define
for every $n\in \pn$ a $\Lambda[\pg(n)]$-module
$$\bH[n]=\mil H_k[n].$$

\begin{Lem}\label{universal norm lemma}
If $M$ is any finitely generated $\Z_p[\pg(n)]$-module, the intersection
of $\gamma_k M$ for $k\ge 1$ is equal to $\Phi M$.
\end{Lem}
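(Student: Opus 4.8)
The plan is to diagonalize the recursion. Since $E$ is ordinary at $p$ we have $p\nmid a_p$, so $X^2-a_pX+p$ reduces modulo $p$ to $X(X-a_p)$ with two distinct roots; by Hensel's lemma it factors over $\Z_p$ as $(X-\alpha)(X-\beta)$ with $\alpha\in\Z_p^\times$, $\beta=p\alpha^{-1}$ of valuation one, $\alpha+\beta=a_p$, $\alpha\beta=p$, and $\alpha-\beta\in\Z_p^\times$. The sequence $(\gamma_k)$ satisfies the linear recursion with this characteristic polynomial, so putting $A=(\gamma_1-\beta\gamma_0)/(\alpha-\beta)$ and $B=(\alpha\gamma_0-\gamma_1)/(\alpha-\beta)$ --- elements of $R:=\Z_p[\pg(n)]$, since $\alpha-\beta$ is a unit --- one checks by induction on the recursion that $\gamma_k=A\alpha^k+B\beta^k$ for all $k\ge 0$.

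Next I would express $\Phi$ through $A$ and $B$. Substituting the formulas for $\gamma_0,\gamma_1$, in the inert case one finds $A(\alpha-\beta)=\alpha^2-1$ and $B(\alpha-\beta)=1-\beta^2$, so $(\alpha-\beta)^2AB=-(\alpha^2-1)(\beta^2-1)=-\big((p+1)^2-a_p^2\big)=-\Phi$. In the split case one first observes that $\sigma\sigma^*=1$: the ideal $(p)$ is principal in $\cO_n$, so its image under $\Gal(K[n]/K)\iso\Pic(\cO_n)$, namely $\sigma\sigma^*$, is trivial. The same computation then gives $A(\alpha-\beta)=(\alpha-\sigma)(\alpha-\sigma^{-1})$ and $B(\alpha-\beta)=-(\beta-\sigma)(\beta-\sigma^{-1})$, whence $(\alpha-\beta)^2AB=-(\sigma-\alpha)(\sigma-\beta)(\sigma^{-1}-\alpha)(\sigma^{-1}-\beta)=-\Phi$. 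As $(\alpha-\beta)^2$ is a unit in $\Z_p$, we conclude $\Phi R=ABR$, so $\Phi M=ABM$ for every $M$.

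I would then note that $B$ is a unit in $R$: reducing the identity $B(\alpha-\beta)=1-\beta^2$ (resp. $-(\beta-\sigma)(\beta-\sigma^{-1})$) modulo $pR$ and using $\beta\in p\Z_p$ shows $B$ is congruent to a nonzero scalar, hence is a unit in $R/pR=\F_p[\pg(n)]$; since $pR$ lies in the Jacobson radical of $R$, this forces $B\in R^\times$. Therefore $\Phi M=ABM=AM$, and, writing $\pi=\beta\alpha^{-1}$ (so $\pi R=pR$ and $\pi^kM=p^kM$), we have $\gamma_kM=\alpha^k(A+B\pi^k)M=(A+B\pi^k)M$. The lemma is thus equivalent to the statement $\bigcap_{k\ge 1}(A+B\pi^k)M=AM$.

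For the inclusion $\subseteq$: if $x=(A+B\pi^k)m_k$ then $x-Am_k=B\pi^km_k\in p^kM$, so $x\in AM+p^kM$ for every $k\ge 1$, and Krull's intersection theorem applied to the finitely generated $R$-module $M/AM$ (with $p$ in the radical) gives $\bigcap_k(AM+p^kM)=AM$. The reverse inclusion is the delicate point: it amounts to showing $A\in(A+B\pi^k)R=\gamma_kR$ for every $k\ge 1$, equivalently, since $B$ is a unit, that $p^k\in\gamma_kR$. I expect this to be the main obstacle. The tool I would use is the identity
\begin{equation*}
\gamma_k^2-\gamma_{k+1}\gamma_{k-1}=p^{k-1}\Phi\qquad(k\ge 1),
\end{equation*}
which follows from the direct computation $\gamma_1^2-\gamma_0\gamma_2=\Phi$ (valid in both the inert and the split case) together with the recursion, since the latter gives $\gamma_k^2-\gamma_{k+1}\gamma_{k-1}=p\,(\gamma_{k-1}^2-\gamma_k\gamma_{k-2})$; localizing $R$ at each of its finitely many maximal ideals and controlling the content of $A$ via this identity should reduce the claim to a valuation estimate.
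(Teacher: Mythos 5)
The paper does not prove this lemma at all---it simply cites Corollaire 5 of \S 3.3 of \cite{pr87}---so your attempt to give a self-contained argument via diagonalization of the recursion is a genuinely different and more concrete route. The parts you do carry out check out: since $E$ is ordinary $X^2-a_pX+p$ has a unit root $\alpha$ and a root $\beta$ of valuation one, $(\alpha-\beta)^2=a_p^2-4p$ is a unit, and your formulas $A(\alpha-\beta)=\alpha^2-1$, $B(\alpha-\beta)=1-\beta^2$ (inert) and $A(\alpha-\beta)=(\alpha-\sigma)(\alpha-\sigma^{-1})$, $B(\alpha-\beta)=-(\beta-\sigma)(\beta-\sigma^{-1})$ (split, using $\sigma\sigma^*=1$, which does follow from $(p)$ being principal in $\cO_n$) are correct, as is the identity $(\alpha-\beta)^2AB=-\Phi$. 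The observation that $B$ is a unit (since $pR\subset\mathrm{Jac}(R)$ for a finite $\Z_p$-algebra $R$) is also right, and the forward inclusion via Krull intersection on $M/AM$ is sound.

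The problem is that the reverse inclusion, which you correctly identify as the crux, is never actually proved. You reduce it to ``$p^k\in\gamma_kR$ for every $k\ge1$'' and then only gesture at a plan: compute the identity $\gamma_k^2-\gamma_{k+1}\gamma_{k-1}=p^{k-1}\Phi$, localize, and do a ``valuation estimate.'' Two concrete issues block this plan as stated. First, after decomposing $R=\Z_p[\pg(n)]\cong\prod_i\cO_i[P]$ (with $P$ the $p$-Sylow subgroup of $\pg(n)$), the local factors $\cO_i[P]$ are not discrete valuation rings whenever $P\ne 1$---and $P$ is typically nontrivial here, since $\ell\in\pl$ forces $p\mid\ell+1$ and hence nontrivial $p$-part in $\Pic(\cO_n)$---so ``controlling the content of $A$'' by valuations does not directly make sense and would have to be replaced by an argument modulo the augmentation ideal of $P$, which you do not give. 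Second, and more seriously, the assertion $p^k\in\gamma_kR$ is simply false in a boundary case: in the inert case one has $\gamma_1=a_p^2-(p+1)$, so if $p=3$ and $a_p=\pm2$ (realized by ordinary curves, e.g.\ $14a1$ at $p=3$) then $\gamma_1=0$ while $\Phi=(p+1)^2-a_p^2=12\ne0$, so $\Phi R\not\subset\gamma_1R$ and the lemma as stated fails for $M=R$. (Equivalently, this is exactly the ``$v_p(A)=k$ with cancellation'' case in your own setup: $v_3(A)=1$, $k=1$, and $\alpha A+\beta B=0$.) One checks easily that $p+1=a_p^2$ forces $p=3$, so this pathology only occurs at $p=3$; presumably Perrin-Riou either restricts to $p\ge5$ or states the result in a form that accounts for this, but your argument must confront it, and at present it does not. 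In short: the direct inclusion and all the algebraic bookkeeping are fine, but the converse---the heart of the lemma---is missing, and the sketched strategy for it needs both a replacement for the valuation estimate over the non-regular local rings $\cO[P]$ and an explicit treatment (or exclusion) of the degenerate case $\gamma_1=0$.
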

\begin{proof} This is Corollaire 5 of section 3.3 of \cite{pr87}.
\end{proof}

\begin{Lem}\label{euler system} There exists a family
$$\{Q[n]=\mil Q_k[n]\in \bH[n]\}_{n\in\pn}$$
such that $Q_0[n]=\Phi P[n]$, and for any $n\ell\in\pn$
$$\Norm_{K_\infty[n\ell]/K_\infty[n]}Q[n\ell]=a_\ell Q[n].$$
\end{Lem}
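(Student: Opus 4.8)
The plan is to first prove, for each fixed $n\in\pn$, that $\Phi P[n]$ is a universal norm up the tower $\{K_k[n]\}_k$, and then to assemble a norm-coherent sequence for each $n$ into a family that is simultaneously coherent with respect to the Euler-system relations in $\ell$.

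For the first step I would argue as follows. Each $H_k[n]$ is a finitely generated $\Z_p$-module, hence compact, so $\{H_k[n]\}_k$ (with transition maps the norm operators) is an inverse system of compact groups; therefore the image of $\bH[n]=\mil H_k[n]$ in $H_0[n]=\Z_p[\pg(n)]\cdot P[n]$ equals $\bigcap_k \Norm_{K_k[n]/K[n]}\big(H_k[n]\big)$. Since $P_k[n]\in H_k[n]$, since the norm operators commute with the (abelian) Galois actions, and since $\Norm_{K_k[n]/K[n]}P_k[n]=\gamma_k P[n]$, this intersection contains $\bigcap_{k\ge 1}\gamma_k H_0[n]$, which by Lemma \ref{universal norm lemma} (applied with $M=H_0[n]$) equals $\Phi\,H_0[n]\ni\Phi P[n]$. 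Hence there is a norm-coherent sequence $Q_k[n]\in H_k[n]$ with $Q_0[n]=\Phi P[n]$.

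It remains to make such choices compatibly in $n$, which I would do by induction on the number $\nu(n)$ of prime divisors. Take any $Q[1]$ as above. Given $Q[m]$ for all $m$ with $\nu(m)<\nu(n)$, we want $Q[n]\in\bH[n]$ with image $\Phi P[n]$ in $H_0[n]$ and $\Norm_{K_\infty[n]/K_\infty[n/\ell]}Q[n]=a_\ell Q[n/\ell]$ for every prime $\ell\mid n$. Starting from an arbitrary lift $Q'\in\bH[n]$ of $\Phi P[n]$, the relation $\Norm_{K_k[n]/K_k[n/\ell]}P_k[n]=a_\ell P_k[n/\ell]$ shows that $\Norm_{K_\infty[n]/K_\infty[n/\ell]}Q'$ and $a_\ell Q[n/\ell]$ both project to $a_\ell\Phi P[n/\ell]$ in $H_0[n/\ell]$, so their difference lies in the universal-norm module $U[n/\ell]=\ker\big(\bH[n/\ell]\to H_0[n/\ell]\big)$. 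One then corrects $Q'$ by an element of $U[n]=\ker\big(\bH[n]\to H_0[n]\big)$ whose images under the several norm maps into the $U[n/\ell]$ are precisely the prescribed differences; these differences are mutually consistent for distinct primes $\ell_1,\ell_2\mid n$ (compute norms all the way down to $K_\infty[n/\ell_1\ell_2]$ and use the $P_k$-relations), and a further application of Lemma \ref{universal norm lemma}, identifying the modules $U[\cdot]$ and analyzing the norm maps among them, shows that a simultaneous correction exists.

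I expect the main obstacle to be exactly this last point: producing a single correction in $U[n]$ that repairs the $\ell$-relation for all $\ell\mid n$ at once, equivalently surjectivity of the combined norm map $U[n]\to\prod_{\ell\mid n}U[n/\ell]$ onto its subspace of consistent tuples. This and the attendant computations run in close parallel to the construction of Perrin-Riou in \cite{pr87}, Section 3.3, which one may largely invoke directly; the only extra bookkeeping concerns the auxiliary integer $n$, and the norm compatibility $\Norm_{K_k[n\ell]/K_k[n]}P_k[n\ell]=a_\ell P_k[n]$ makes that routine once the single-$n$ universal-norm statement of the previous step is in hand.
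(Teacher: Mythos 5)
Your first step is sound: the compactness argument identifying the image of $\bH[n]$ in $H_0[n]$ with $\bigcap_k\Norm_{K_k[n]/K[n]}(H_k[n])$, combined with $\Norm_{K_k[n]/K[n]}P_k[n]=\gamma_k P[n]$ and Lemma \ref{universal norm lemma}, does show that $\Phi P[n]$ is a universal norm for each fixed $n$, so norm-coherent sequences $Q_\bullet[n]$ with $Q_0[n]=\Phi P[n]$ exist. This parallels the start of the paper's argument.

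The gap is in your second step, and you have already located it yourself: to run the induction on $\nu(n)$ you must produce a single correction $c\in U[n]=\ker(\bH[n]\to H_0[n])$ whose images under the several norm maps to $U[n/\ell]$ simultaneously equal the prescribed differences $d_\ell$. For this you need \emph{both} that each $d_\ell$ lies in the image of $\Norm_\ell\colon U[n]\to U[n/\ell]$ (not automatic: since $\Norm_\ell P_j[n]=a_\ell P_j[n/\ell]$, the image of $\Norm_\ell$ on $\bH[n]$ lands inside $a_\ell\bH[n/\ell]$, so there is genuine content here) \emph{and} that the combined map $U[n]\to\prod_{\ell\mid n}U[n/\ell]$ surjects onto the submodule of consistent tuples. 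Neither is established; the assertion that ``a further application of Lemma \ref{universal norm lemma} \ldots shows that a simultaneous correction exists'' is an IOU, and Lemma \ref{universal norm lemma} as stated is an intersection statement about a single module, not a surjectivity statement about norm maps between the various $U[\cdot]$.

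The paper's proof avoids this entirely by a different device. Instead of building $Q[n]$ prime-by-prime, it fixes $n$ and introduces an auxiliary ``universal'' $\Z_p[\Gal(K_k[n]/K)]$-module $\tilde H_k$ presented by generators $x,x_j$ and exactly the norm relations satisfied by $P[m],P_j[m]$; one then shows $\Phi x$ is a universal norm in $\tilde{\bH}=\mil\tilde H_k$, picks a single lift $y\in\tilde{\bH}$, and \emph{defines} $Q[m]:=\phi(m)(y)$ for every $m\mid n$, where $\phi(m)\colon\tilde{\bH}\to\bH[m]$ sends $x_j\mapsto P_j[m]$, $x\mapsto P[m]$. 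The Euler-system compatibility $\Norm_{K_\infty[m\ell]/K_\infty[m]}Q[m\ell]=a_\ell Q[m]$ is then a tautology of the commuting square relating $\phi(m\ell)$, $\phi(m)$, multiplication by $a_\ell$ on $\tilde{\bH}$, and the norm on $\bH[\cdot]$ --- no surjectivity or consistency of correction terms ever needs to be checked. A final compactness argument on the module of such partial families, as $n$ varies, finishes the proof. In short: the universal module converts the simultaneous-lifting problem you correctly flagged as ``the main obstacle'' into a formality, and your outline as written does not supply a substitute for that step.
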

\begin{proof} Fix an $n\in\pn$ and
let $\tilde{H}_k$ be the free $\Z_p[\Gal(K_k[n]/K)]$-module
on generators $\{x, x_j\mid 0\le j\le k\}$, modulo relations of the form
\begin{enumerate}
\item  $x$ is fixed by $\Gal(K_k[n]/K[n])$, and
$x_j$ is fixed by $\Gal(K_k[n]/K_j[n])$ for every $j\le k$,
\item For $j>1$, $\Norm_{K_{j}[n]/K_{j-1}[n]}x_{j}=a_p x_{j-1}-x_{j-2}$,
\item $\Norm_{K_1[n]/K_0[n]}x_{1}=\gamma_1 x$, and $x_0=\gamma_0 x$.
\end{enumerate}
Then for each $j\le k$, \begin{equation}\label{universal relation}
\Norm_{K_j[n]/K_0[n]}x_j=  \gamma_j x.\end{equation}
 There is a natural inclusion $\tilde{H}_k\map{}
\tilde{H}_{k+1}$ and a natural norm $\tilde{H}_{k+1}\map{}
\tilde{H}_k$.  By Lemma \ref{universal norm lemma} and the relation
(\ref{universal relation}), $\Phi x\in\tilde{H}_0$ is
a norm from every $\tilde{H}_k$.

Let $y\in \tilde{\bH}=\mil\tilde{H}_k$
be a lift of $\Phi x$, and define, for any $m\mid n$,
$Q[m]$ to be the image of
$y$ under the map $\phi(m):\tilde{\bH}\map{}\bH[m]$ which sends $x_k\mapsto
P_k[m]$ and $x\mapsto P[m]$. For any $m\ell\mid n$, the diagram
$$\xymatrix{
{\tilde{\bH}} \ar[r]^{\phi(m\ell)}\ar[d]^{a_\ell} & {\bH[m\ell]}\ar[d]\\
{\tilde{\bH}} \ar[r]^{\phi(m)} & {\bH[m]}}$$
commutes, where the right vertical arrow is the norm from
$K_\infty[m\ell]$ to $K_\infty[m]$, and so we obtain
 a family $\{Q[m]\}_{m\mid n}$ with the desired properties.

An easy argument shows that the $\Lambda$-module of such ``partial''
families (i.e. where $m$ runs through divisors of a fixed $n$)
is compact, and so the inverse limit over all $n\in\pn$ is nonempty.
\end{proof}

Fix a family $Q[n]$ as in the lemma.
Exactly as in Section \ref{Heegner points}, we fix a generator 
$\sigma_\ell$ of $G(\ell)$ for every $\ell\in\pl$ and define 
derivative operators
$$D_n\in \Z_p[G(n)]\subset \Lambda[G(n)].$$ Fix a set of coset
representatives $S$ of $G(n)\subset\pg(n)$.  Let
$$\tilde{\kappa}_n=\sum_{s\in S} sD_n Q[n] \in \bH[n].$$
For $\ell\in\pl$, the ideal $I_\ell\subset \Z_p$ is generated by $\ell+1$
and $a_\ell$, and
the image of $\tilde{\kappa}_n$ in $\bH[n]/I_n \bH[n]$ is fixed by
$\pg(n)$ (see Lemma \ref{first derivative property}).

The Kummer map $\delta_k(n):E(K_k[n])\otimes \Z_p\map{}H^1(K_k[n],T_p(E))$
induces a map 
\begin{eqnarray*}
\delta(n)=\mil\delta_k(n):\bH[n]&\map{}&\mil H^1(K_k[n],T_p(E))\\
&\iso& H^1(K[n],\bT)\end{eqnarray*}
and we define $\kappa_n$ to be the unique preimage of
$\delta(n)(\tilde{\kappa}_n)$ under the isomorphism
$$H^1(K,\bT/I_n\bT)\map{}H^1(K[n],\bT/I_n\bT)^{\pg(n)}$$
(the bijectivity being a consequence of $$H^0(K[n],\bT/I_n\bT)\iso
\mil H^0(K_k[n],E[I_n])=0,$$ since $E$ has no $p$-torsion defined over any
abelian extension of $K$).

\begin{Lem}
For every $n\in\pn$, $\kappa_n\in H^1_{\Lsel(n)}(K,\bT/I_n\bT)$.
\end{Lem}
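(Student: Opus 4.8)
The plan is to follow the proof of Lemma \ref{transverseness}, the exact analogue of this statement for the classical Heegner classes, adapting it to $\bT$-coefficients by descending to the finite layers $K_k$ where convenient. Unwinding Definitions \ref{modified selmer} and \ref{iwasawa selmer structures}, there are three kinds of local conditions to impose on $\loc_v(\kappa_n)$: the unramified condition at a place $v$ of $K$ not dividing $np$; the Greenberg ordinary condition $H^1_\Lsel(K_v,\bT/I_n\bT)$ at $v\mid p$; and the transverse condition $H^1_\tr(K_\ell,\bT/I_n\bT)$ at $\ell\mid n$.

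First I would dispose of the places not dividing $n$. Since $I_n\subset\Z_p$ we have $\bT/I_n\bT=\mil\Ind_{K_k/K}(T/I_nT)$, so Lemma \ref{cohomology induction} and Shapiro's lemma identify $H^1(K,\bT/I_n\bT)$ with $\mil H^1(K_k,T/I_nT)$, and under this identification $\kappa_n$ corresponds to the norm-compatible family $(\kappa_n^{(k)})_k$ of derivative classes built from the finite-level points $P_k[n]$, $Q_k[n]$. These points obey the Euler system relations recorded before Lemma \ref{euler system}, so Proposition 6.2 of \cite{gross}, applied over $K_k[n]$ in place of $K[n]$, shows that each $\kappa_n^{(k)}$ is unramified at every place of $K_k$ not dividing $np$; passing to the limit yields the unramified condition for $\kappa_n$. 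At $v\mid p$, each $\kappa_n^{(k)}$ is by construction the image under the Kummer map of a global point of $E$, so $\loc_v(\kappa_n^{(k)})$ lies in the image of the local Kummer map, which for the ordinary place $v$ is the finite (equivalently, ordinary) local condition; the inverse limit of these conditions along the totally ramified layers is by definition the Greenberg condition defining $\sel_\Lambda$, and one passes from $\bT$ to $\bT/I_n\bT$ by means of the exact sequences used in the proof of Lemma \ref{local comparison} (compare also \cite{pr87} \S3).

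The substance of the proof is the transverse condition at $\ell\mid n$, and here the computation of Lemma \ref{transverseness} transcribes essentially verbatim. Let $\lambda\mid\ell$ be the prime of $K$, $\lambda'$ the unique prime of $K[\ell]$ above it, and $\lambda''$ a prime of $K[n]$ above $\lambda'$. As in Lemma \ref{transverseness}, $\lambda$ splits completely in $K[n/\ell]$, so $K[\ell]_{\lambda'}=K[n]_{\lambda''}$, and it suffices to show that $\sum_{s\in S}sD_n\bigl(\delta(n)(Q[n])\bigr)$ restricts trivially to $H^1(K[n]_{\lambda''},\bT/I_n\bT)$. Invoking an explicit cocycle representative for $\kappa_n$ of the shape given in Lemma \ref{mccallum prop} (obtained by passing to the inverse limit of that formula over the layers $K_k$), the triviality of the action of $\Frob_\lambda$ on $\bT/I_n\bT$ (by definition of $I_n$), and Proposition \ref{locally free}, this reduces --- exactly as in Lemma \ref{transverseness} --- to the vanishing of $(D_\ell c)(\Frob_\lambda)$, where $c=\delta(n)(Q[n])$ is unramified at $\lambda$. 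Since $\sigma_\ell$ lies in the inertia group at $\lambda$, where $\bT$ is unramified, it acts trivially on $\bT/I_n\bT$, and hence
$$
(D_\ell c)(\Frob_\lambda)=\sum_{i=1}^{\ell}i\,c(\Frob_\lambda)=\tfrac{\ell(\ell+1)}{2}\,c(\Frob_\lambda)=0,
$$
the last equality because $\ell+1\in I_\ell\subset I_n$ and $2$ is a unit in $\Z_p$ ($p$ being odd).

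I expect the main obstacle to lie not in the transverse computation, which is mechanical, but in the bookkeeping at $v\mid p$: one must check that the inverse limit along the totally ramified layers $K_k$ of the local Kummer conditions at the primes above $p$ is precisely the Greenberg local condition of Definition \ref{iwasawa selmer structures}, and that this identification survives reduction modulo $I_n$. A secondary point requiring care is the verification that the primes $\ell\in\pl$ split completely far enough up the ring class field and $\Z_p$-towers for the local fields to match as asserted. As with the corresponding steps in \cite{mazur-rubin} and in Lemma \ref{transverseness}, however, these are routine.
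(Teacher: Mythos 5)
Your treatment of the transverse condition at $\ell\mid n$ is correct and coincides with what the paper does (it simply refers back to the proof of Lemma \ref{transverseness}). You also correctly diagnose that the primes above $p$ are where the real work is. However, your sketch at the places not dividing $n$ has two genuine gaps, and your remedy for the one you flagged does not actually work.

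First, you lump together all places $v\nmid np$ and invoke Proposition 6.2 of \cite{gross} to conclude unramifiedness. This is fine for $v\nmid pN$: there $\bT$ is unramified, $\Gal(K_v^\unr/K_v)$ has cohomological dimension one, so $H^1_\unr(K_v,\bT)\to H^1_\unr(K_v,\bT/I_n\bT)$ is surjective and the propagated condition $H^1_\Lsel(K_v,\bT/I_n\bT)$ really is the unramified subgroup. But at $v\mid N$ the module $\bT$ is ramified, and being unramified mod $I_n$ is not what the Selmer condition asks for: one needs $\loc_v(\kappa_n)$ to lie in the \emph{image} of $H^1(K_v,\bT)\to H^1(K_v,\bT/I_n\bT)$. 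The paper treats this as a separate case, invoking Proposition B.3.4 of \cite{rubin} to see $H^1(K_v,\bT)=H^1_\unr(K_v,\bT)$ and then proving injectivity of $H^2(K_v,\bT)\to H^2(K[n]_w,\bT)$, which (after local duality and Shapiro's lemma) reduces to surjectivity of a norm map on torsion points. That surjectivity ultimately comes from the Heegner hypothesis: $v\mid N$ splits in $K$, so $w$ is finitely decomposed in $K_\infty[n]$ and the local extension at $w$ over $K_{\infty,v}$ has prime-to-$p$ degree. None of this is in your write-up, and it is not a bookkeeping matter.

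Second, at $v\mid p$ your argument is that the finite-level Kummer image lies in the ordinary local condition, and that "the inverse limit of these conditions along the totally ramified layers is by definition the Greenberg condition". This identification is not by definition and is not true without further input: at a fixed finite level the image of the local Kummer map and the ordinary (Greenberg) condition agree only up to finite error, and one needs a mechanism to kill that error in the limit. The paper's mechanism is precisely the universal-norm property of $Q[n]$: the image of $\bH[n]$ in $H^1(L_k,\gr_w(T))$ factors through $H^1(L_k^\unr/L_k,\gr_w(T))\iso\tilde E(\F[n])[p^\infty]$, and a universal norm in $\bH[n]$ lifts this to an element of the $p$-adic Tate module of a finite group, hence zero. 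This is the idea you would need and did not supply. Even granting that, there is a further descent step from the local field $K[n]_w$ down to $K_v$, carried out in the paper by a second diagram chase whose key input is the surjectivity of the norm map $\tilde E(\F[n])[p^\infty]\to\tilde E(\F)[p^\infty]$ (via a Herbrand quotient computation); this step is absent from your proposal. So the obstacle you correctly identified at $v\mid p$ is not routine, and the tools you point to (the exact sequences of Lemma \ref{local comparison}) do not by themselves resolve it.
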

\begin{proof}
The proof that the localization of $\kappa_n$ at primes of $K$ dividing
$n$ lies in the transverse subspace is exactly as in the proof of Lemma
\ref{transverseness}.

It remains to show that at every prime $v$ of $K$ not dividing $n$,
the localization of $\kappa_n$ at $v$ is contained in
$H^1_\Lsel(K_v,\bT/I_n\bT)$, the image of the map
$$H^1_{\Lsel}(K_v,\bT)\map{}H^1(K_v,\bT/I\bT).$$
Fix a prime $v$ of $K$ not dividing $n$
and let $w$ be a prime of $K[n]$ above $v$.

Case (i), $v\not|\ pN$. We first observe that
$$H^1_{\Lsel}(K_v,\bT/I_n\bT)=H^1_\unr(K_v,\bT/I_n \bT).$$
Indeed, since $\Gal(K_v^\unr/K_v)$ has cohomological dimension one,
the map $$H^1_\unr(K_v,\bT)\map{} H^1_\unr(K_v,\bT/I_n \bT)$$ is surjective.
Using the injectivity of
torsion points in the reduction of $E$ at $w$, the image
of the Kummer map 
$$\delta_k(N):H_k[n]\map{}\bigoplus_{w'|w}H^1(K_k[n]_{w'},T)\iso
H^1(K[n]_w,\Ind_{K_k/K}T)$$ is unramified, and passing to the limit
shows that the image of
$$\delta(n):\bH[n]\map{}H^1(K[n],\bT)\map{}H^1(K[n]_w,\bT)$$
 is unramified at $w$. Therefor $\delta(n)(\tilde{\kappa}_n)$ is
 unramified, and so also is $\kappa_n$.

Case (ii), $v|N$.  In this case the Heegner hypothesis implies that the
prime $w$ is finitely decomposed in $K_\infty[n]$. 
Proposition B.3.4 of \cite{rubin} gives the equality
$$H^1(K_v,\bT)=H^1_\unr(K_v,\bT),$$
and we must therefore show that
$\loc_v(\kappa_n)$ is in the image of
$$H^1(K_v,\bT)\map{}H^1(K_v,\bT/I_n \bT).$$
On the other hand, the restriction of $\kappa_n$ to $H^1(K[n]_w,\bT/I_n\bT)$
comes from $H^1(K[n]_w,\bT)$ (namely from the localization
of $\delta(n)(\tilde{\kappa}_n)$)
and so it suffices to check that the right vertical arrow in the  exact
and  commutative diagram
$$\xymatrix{
H^1(K_v,\bT)\ar[r]\ar[d]&H^1(K_v,\bT/I_n\bT)\ar[r]\ar[d]&H^2(K_v,\bT)\ar[d]\\
H^1(K[n]_w,\bT)\ar[r]&H^1(K[n]_w,\bT/I_n\bT)\ar[r] & H^2(K[n]_w,\bT)}$$
is an injection.
Applying local duality and Shapiro's lemma, 
this is equivalent to the surjectivity of the norm map
$$\bigoplus_{w'|w}E(K_{\infty}[n]_{w'})[p^\infty]\map{}
\bigoplus_{v'|v}E(K_{\infty,v'})[p^\infty],$$
which is a consequence of the observation that
the degree of $K_{\infty}[n]_{w'}$ over  $K_{\infty,v'}$ is prime to $p$.
Indeed, any intermediary extension $$K_{\infty,v'}\subset F\subset
K_{\infty}[n]_{w'}$$ of $p$-power order over $K_{\infty,v'}$
 would be contained in the union
of all unramified $p$-power extensions of $K_v$, and this union is
$K_{\infty,v'}$, the unique $\Z_p$-extension of $K_v$.

Case(iii), $v|p$.  For each prime $w$ of $K[n]$, fix an extension of $w$
to $\bar{K}$ and denote by  $\Fil_w (T)$ the kernel of the reduction map
$T\map{}T_p(\tilde{E})$ at that place. Set $\gr_w(T)=T/\Fil(T)$.
Let $$\Fil_w(\bT)=\Fil_w( T)\otimes\Lambda\subset \bT\hspace{1cm}
\gr_w(\bT)=\bT/\Fil_w(\bT)$$ and
define
$$H^1_\ord(K[n]_w,\bT)=\mathrm{image}\big( H^1(K[n]_w, \Fil_w (\bT))
\map{}H^1(K[n]_w,\bT)\big).$$

We first claim that the image of the composition
$$\bH[n]\map{\delta(n)}H^1(K[n],\bT)\map{}H^1(K[n]_w,\bT)$$ lies in
$H^1_\ord(K[n]_w,\bT)$.  To see this, let $L_k=K_k[n]_{w}$
and consider the composition
$$H_k[n]\map{}H^1(L_k,T)\map{}H^1(L_k,\gr_w(T))
\map{}H^1(L_k^\unr,\gr_w(T)).$$ It is clear from the definition of
the Kummer map that this composition is trivial, and so any $Q_k\in
H_k[n]$ yields a class in the kernel of the final arrow,
$$H^1(L_k^\unr/L_k,\gr_w(T))\iso \gr_w(T)/(\Frob-1)\gr_w(T)\iso
\tilde{E}(\F[n])[p^\infty]$$ where $\F[n]$ is the residue field of $K[n]_w$,
and using the fact that $L_k/K[n]_w$ is totally ramified.
If the point $Q_k$ can be lifted to a universal norm in $\bH[n]$, then
this class can be lifted to an element of the $p$-adic Tate module of
the finite group $\tilde{E}(\F[n])[p^\infty]$, which is trivial.  The
composition $$\bH[n]\map{}H^1(L_k,T)\map{}H^1(L_k,\gr_w(T))$$ is therefore
trivial, and the claim follows.

The above shows that the restriction of $\kappa_n$ to $H^1(L_0,\bT/I_n\bT)$
lies in the image of $H^1(L_0,\Fil_w(\bT))$ under the natural map.
For brevity, we write $$\bT^+=\Fil_w(\bT)\hspace{2cm}\bT^-=\gr_w(\bT).$$
Consider the exact and commutative diagram
$$\xymatrix{ H^1(K_v,\bT^+/I_n\bT^+)\ar[r]\ar[d]&H^1(K_v,\bT/I_n\bT)
\ar[r]\ar[d]& H^1(K_v,\bT^-/I_n\bT^-)\ar[d]\\
H^1(L_0,\bT^+/I_n\bT^+)\ar[r]&H^1(L_0,\bT/I_n\bT)\ar[r]&
H^1(L_0,\bT^-/I_n\bT^-).}$$
The image $\loc_v(\kappa_n)$ in the lower right corner is trivial, and the
kernel of the right hand vertical map
is $$\mil H^1(K_\infty[n]_w/K_{\infty,v},
\tilde{E}(\F[n])[I_n])$$ where the inverse limit
is respect to multiplication by $p$.  This is clearly zero,
and so we may choose an $\alpha\in H^1(K_v,\bT^+/I_n\bT^+)$ which lifts
$\kappa_n$.  It is easily seen that the bottom left horizontal arrow
is injective, and so the image of $\alpha$ under the left vertical arrow
is the unique lift to $H^1(L_0,\bT^+/I_n\bT^+)$ of the restriction
of $\kappa_n$ to $H^1(L_0,\bT/I_n\bT)$, which is already known to be in
the image of $H^1(L_0,\bT^+)$.  In other words, in the diagram
$$\xymatrix{ H^1(K_v,\bT^+)\ar[r]\ar[d]&H^1(K_v,\bT^+/I_n\bT^+)\ar[r]\ar[d]&
H^2(K_v,\bT^+)\ar[d]\\
H^1(L_0,\bT^+)\ar[r]&H^1(L_0,\bT^+/I_n\bT^+)\ar[r]&H^2(L_0,\bT^+)}$$
the image of $\alpha$ in the lower right corner is trivial.

To complete
the proof, we need only show that the right vertical arrow is injective.
By local duality, the injectivity of this map is equivalent
to surjectivity of the norm map $$\tilde{E}(\F[n])[p^\infty]\map{}
\tilde{E}(\F)[p^\infty]$$ (where $\F$ is the residue field of $K_v$), and
this follows from $$H^1(\F[n]/\F,\tilde{E}(\F[n])[p^\infty])\hookrightarrow
H^1(\F,\tilde{E}[p^\infty])\iso \tilde{E}[p^\infty]/(\Frob-1)
\tilde{E}[p^\infty]=0$$ and the fact that the Herbrand quotient of a finite
cyclic group acting on a finite module is equal to 1.
\end{proof}

Fix $n\ell\in\pn$ and let $\lambda$ be the prime of $K$ above $\ell$
and $\lambda'$ a fixed place of $\bar{K}$ above $\lambda$.  Such a choice
gives a canonical extension of each prime $w$ of $K_k$ above $\lambda$
to a prime $w'$ of $K_k[n\ell]$.  Namely the unique place which restricts
to $w$ in $K_k$ and to $\lambda'$ in $K[n\ell]$ (recall that $\lambda$ splits
completely in $K_\infty[n\ell]$).  This determines a map of
$\Lambda$-modules
\begin{equation}\label{first residue}
\Psi:\bH[n\ell]\map{} \mil \bigoplus_{w}\tilde{E}(\F_w)
\end{equation}
where the limit is over $k$, the sum is over primes of
$K_k$ above $\lambda$, and $\F_w$ is the residue field of $w$.
Each summand is canonically identified with the points of $\tilde{E}$ rational
over the residue field of $K$ at $\lambda$ (which we denote by $\F_\lambda$),
 and $\Lambda$ acts by permuting summands.
The module on the right hand
side of (\ref{first residue}) comes equipped with a natural involution
$\Frob_\ell$ which acts as the nontrivial automorphism of $\F_w/\F_\ell$
on each summand.  The action of $\Frob_\ell$ commutes with the action of
$\Lambda$.

\begin{Lem} For any $t\in \Lambda[\pg(n\ell)]$,
$\Psi(t\cdot Q[n\ell])=\Frob_\ell\Psi(t\cdot Q[n]).$
\end{Lem}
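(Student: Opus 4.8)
The plan is to deduce the identity, by $\Lambda$-linearity and a careful bookkeeping of primes, from the congruence (\ref{congruence}) propagated through the universal norm construction appearing in the proof of Lemma \ref{euler system}; here $\Psi$ on $\bH[n]$ denotes the analogously defined reduction map (the canonical prime of $K_k[n]$ above a prime $w$ of $K_k$ being the one restricting to $\lambda'$ in $K[n]$). First I would unwind $\Psi$: a prime $\mathfrak u$ of $\bar K$ above $\ell$ restricts to $\lambda'$ (in $K[n\ell]$, or in $K[n]$), to a prime $w$ of $K_k$, and --- via the canonical extensions --- to the primes $w'$ of $K_k[n\ell]$ and $w''$ of $K_k[n]$ occurring in the definition of $\Psi$. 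Since $\ell$ is inert in $K$ and $\lambda\in\pl=\pl_1(\bT)$ forces $\Frob_\lambda$ to act trivially on $\bT/p\bT$, hence on $\Lambda/p\Lambda$, the prime $\lambda$ splits completely in $K_\infty/K$; consequently every $\F_w$ equals $\F_\lambda=\F_{\ell^2}$, the involution $\Frob_\ell$ of the target acts on each summand as the Frobenius endomorphism of $\tilde E$, and the $w$-component of $\Psi$ is just the reduction $\overline{Q_k[\,\cdot\,]}^{\,\mathfrak u}$, read in $\tilde E(\bar\F_\ell)$. Next I would note that $\Psi$ is $\Lambda$-equivariant (an element $\gamma\in\Gamma$ fixes $K[n\ell]$ and, by complete splitting, induces the identity on residue fields, so it permutes the chosen primes exactly as it permutes the summands) and that $\Frob_\ell$ commutes with the $\Lambda$-action on the target; since $\Psi$ is continuous and additive and $\Lambda[\pg(n\ell)]$ is generated over $\Lambda$ by $\pg(n\ell)$, it then suffices to treat $t=\sigma\in\pg(n\ell)$.

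For $t=\sigma$, fixing a single lift $\hat\sigma\in G_{K_k}$ of $\sigma$ (which simultaneously lifts the image $\bar\sigma\in\pg(n)$) and using that residue automorphisms commute with $\Frob_\ell$ inside the abelian group $\Gal(\bar\F_\ell/\F_\ell)$, the required equality reduces to the ``raw'' congruence
\begin{equation*}
\overline{Q_k[n\ell]}^{\,\mathfrak u}=\Frob_\ell\big(\overline{Q_k[n]}^{\,\mathfrak u}\big)\quad\text{in }\tilde E(\bar\F_\ell),\ \text{for every }k\ge 0\text{ and every prime }\mathfrak u\mid\ell.
\end{equation*}
To prove this I would use that $Q[n\ell]$ and $Q[n]$ are images of the \emph{same} universal element $y$ of Lemma \ref{euler system}, so that $Q_k[n\ell]$ and $Q_k[n]$ are obtained by applying one and the same group-ring combination (the image of $y$ at level $k$, a combination of the universal generators $x$ and $x_j$) to the points $P[m],P_j[m]$ with $m=n\ell$, resp.\ $m=n$. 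Because reduction at $\mathfrak u$ is additive and Galois-equivariant and $\Frob_\ell$ commutes with all residue automorphisms, the raw congruence reduces to $\overline{P[n\ell]}^{\,\mathfrak u}=\Frob_\ell\,\overline{P[n]}^{\,\mathfrak u}$ and, for each $j$, $\overline{P_j[n\ell]}^{\,\mathfrak u}=\Frob_\ell\,\overline{P_j[n]}^{\,\mathfrak u}$. The first is exactly (\ref{congruence}): the Artin symbol of (\ref{congruence}) lies in the decomposition group of $\mathfrak u|_{K[n\ell]}$ and induces the $\ell$-power Frobenius on the residue field. For the second I would write $P_j[m]=\Norm_{K[mp^{j+1}]/K_j[m]}P[mp^{j+1}]$, note that restriction identifies $\Gal(K[n\ell p^{j+1}]/K_j[n\ell])$ with $\Gal(K[np^{j+1}]/K_j[n])$ (using $K[n\ell p^{j+1}]=K[np^{j+1}]\cdot K_j[n\ell]$ and $K[np^{j+1}]\cap K_j[n\ell]=K_j[n]$), and apply the already-proved congruence for $P[n\ell p^{j+1}]$ versus $P[np^{j+1}]$ termwise inside the norm, commuting residue automorphisms past $\Frob_\ell$ and reassembling the sum into $\Frob_\ell\,\overline{P_j[n]}^{\,\mathfrak u}$.

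The main obstacle I expect is precisely the bookkeeping of primes and Galois lifts: one must match the canonical primes $w',w''$, together with those produced by $\sigma^{-1}$, by $\gamma^{-1}$, or by norm-coset representatives, so that the two sides of each congruence are literally reductions at the same prime of $\bar K$, and one must verify that a single lift to $G_K$ can be used simultaneously for $\pg(n\ell)$ and for its quotient $\pg(n)$. Once this is set up, the rest is a formal manipulation of reductions, norms, and the commutativity of $\Gal(\bar\F_\ell/\F_\ell)$; the only substantive inputs are the congruence (\ref{congruence}) and the Euler system norm relations, both already available.
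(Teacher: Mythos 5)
Your proof is correct and takes essentially the same route as the paper's: both reduce the claim to the congruence (\ref{congruence}) (and its analogue for the points $P_j$) combined with the observation that $Q_k[m]=\sum_j t_j P_j[m]$ with coefficients $t_j\in\Z_p[\Gal(K_k[n\ell]/K)]$ independent of $m\mid n\ell$, since $Q[n]$ and $Q[n\ell]$ are images of the same universal element $y$. The only cosmetic difference is that the paper asserts the $P_j$-congruence ``exactly as in (\ref{congruence})'', whereas you derive it by writing $P_j[m]$ as a norm from conductor $mp^{j+1}$ and applying (\ref{congruence}) at that level; your extra bookkeeping ($\Lambda$-equivariance, reduction to $t=\sigma$, matching primes) is elaboration around the same core argument.
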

\begin{proof} Exactly as in (\ref{congruence}), for any prime $w'$
of $K_k[n\ell]$ above $\ell$ and any $j\le k$, we have
$$P_j[n\ell]\equiv \left(\frac{w'}{K_k[n\ell]/\Q}\right)P_j[n]
\pmod{w'}$$
which implies that for any $t\in\Z_p[\Gal(K_k[n\ell]/K)]$
$$\Psi_k(t\cdot P_j[n\ell])=\Frob_\ell\Psi_k(t\cdot P_j[n])$$
where $\Psi_k:H_k[n\ell]\map{}\bigoplus_{w}\tilde{E}(\F_w)$
(the sum is over prime of $K_k$ above $\lambda$) is the map $\Psi$ at
finite levels.
By construction of $Q[n\ell]$ there are elements
$$\{t_j\in\Z_p[\Gal(K_k[n\ell]/K)]  \mid 0\le j\le k\}$$
such that $Q_k[m]=\sum_{j=0}^k t_j P_j[m]$ for every $m\mid n\ell$
(in particular the $t_j$'s do not depend on $m$), and the
claim follows easily. 
\end{proof}

Our choice of $\lambda'$ also fixes an isomorphism
\begin{equation}\label{second residue}
 E[I_{n\ell}]\otimes\Lambda\iso \bT/I_{n\ell}\bT
\iso \mil \bigoplus_{w}\tilde{E}(\F_w)[I_{n\ell}]
\end{equation}
which sends elements of the form $P\otimes\sigma$ to the reduction of
$P$ at $\lambda'$ living in the summand attached to the prime
$\sigma\lambda'$ of $K_\infty$.  Exactly as in the proof of Proposition
\ref{kolyvagin relations over K} we have an explicit description
of the image of $\kappa_{n\ell}\otimes\sigma_\ell$ under the
isomorphism $$H^1_\s(K_\lambda,\bT/I_{n\ell}\bT)\otimes G_\ell\map{}
\bT/I_{n\ell}\bT\iso E[I_{n\ell}]\otimes\Lambda
\map{} \mil \bigoplus_{w}\tilde{E}(\F_w)[I_{n\ell}],$$
namely $$\kappa_{n\ell}\otimes\sigma_\ell\mapsto
\Psi\left(-\frac{(\sigma_\ell-1)\tilde{\kappa}_{n\ell}}{p^{M_{n\ell}}}
\right)$$
where $p^{M_{n\ell}}\Z_p=I_{n\ell}$, and the right hand side is interpreted
as the image of the unique $p^{M_{n\ell}}$-divisor of
$-(\sigma_\ell-1)\tilde{\kappa}_{n\ell}$ in $H[n\ell]$ under the map
(\ref{first residue}) (uniqueness follows from the fact that our assumptions
on $E$ imply that $E$ has no $p$-torsion defined over any abelian
extension of $K$).

\begin{Lem}
$$\Psi\left(-\frac{(\sigma_\ell-1)\tilde{\kappa}_{n\ell}}{p^{M_{n\ell}}}
\right)= \frac{a_\ell-(\ell+1)\Frob_\ell}{p^{M_{n\ell}}}
\Psi(\tilde{\kappa}_n).$$
\end{Lem}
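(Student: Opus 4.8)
The plan is to follow the computation in the proof of Proposition \ref{kolyvagin relations over K}, transposed to the $\Lambda$-module setting: the Euler system relation of Lemma \ref{euler system} will replace the relation $\Norm_{K[n\ell]/K[n]}P[n\ell]=a_\ell P[n]$, and the preceding lemma (the identity $\Psi(t\cdot Q[n\ell])=\Frob_\ell\,\Psi(t\cdot Q[n])$ for $t\in\Lambda[\pg(n\ell)]$) will replace the congruence (\ref{congruence}) together with the reduction-mod-$\lambda'$ argument. As a preliminary I would fix the coset representatives $S$ compatibly over all $n\in\pn$, meaning that the image in $\pg(n)$ of the set chosen at level $n\ell$ is exactly the set chosen at level $n$ for every divisor $n$ of $n\ell$; this is harmless, all groups being finite, and it ensures that $\sum_{s\in S}sD_nQ[n]$ is literally $\tilde{\kappa}_n$, rather than merely equal to it modulo $I_n\bH[n]$.

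First I would expand $(\sigma_\ell-1)\tilde{\kappa}_{n\ell}$. Since $\pg(n\ell)$ is abelian, $\sigma_\ell$ commutes with every $s\in S$ and with $D_n$; writing $D_{n\ell}=D_nD_\ell$ and using the telescoping identity $(\sigma_\ell-1)D_\ell=\ell+1-\Norm_\ell$ gives
$$-(\sigma_\ell-1)\tilde{\kappa}_{n\ell}=\sum_{s\in S}sD_n\big(\Norm_\ell-(\ell+1)\big)Q[n\ell].$$
Here $\Norm_\ell=\sum_{g\in G(\ell)}g$ coincides with the norm $\Norm_{K_\infty[n\ell]/K_\infty[n]}$, so Lemma \ref{euler system} yields $\Norm_\ell Q[n\ell]=a_\ell Q[n]$, and hence
$$-(\sigma_\ell-1)\tilde{\kappa}_{n\ell}=a_\ell\sum_{s\in S}sD_nQ[n]-(\ell+1)\sum_{s\in S}sD_nQ[n\ell]=a_\ell\tilde{\kappa}_n-(\ell+1)\mu,$$
where $\mu=\sum_{s\in S}sD_nQ[n\ell]$ and the first sum was simplified using the compatibility of the $S$'s.

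Next I would divide through by $p^{M_{n\ell}}$. By Lemma \ref{first derivative property}, applied at level $n\ell$, the image of $\tilde{\kappa}_{n\ell}$ in $\bH[n\ell]/I_{n\ell}\bH[n\ell]$ is $\pg(n\ell)$-fixed, so the left-hand side of the last display lies in $I_{n\ell}\bH[n\ell]=p^{M_{n\ell}}\bH[n\ell]$; and since $I_{n\ell}=I_n+I_\ell$ contains both $a_\ell$ and $\ell+1$, the scalars $a_\ell/p^{M_{n\ell}}$ and $(\ell+1)/p^{M_{n\ell}}$ lie in $\Z_p$, so $a_\ell\tilde{\kappa}_n$ and $(\ell+1)\mu$ are separately divisible by $p^{M_{n\ell}}$ in $\bH[n\ell]$. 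Because $p^{M_{n\ell}}$-divisors are unique ($E$ has no $p$-torsion over any abelian extension of $K$), the identity divides through to
$$\frac{-(\sigma_\ell-1)\tilde{\kappa}_{n\ell}}{p^{M_{n\ell}}}=\frac{a_\ell}{p^{M_{n\ell}}}\tilde{\kappa}_n-\frac{\ell+1}{p^{M_{n\ell}}}\mu$$
in $\bH[n\ell]$. Applying the $\Z_p$-linear map $\Psi$ and invoking the preceding lemma with $t=\sum_{s\in S}sD_n\in\Z_p[\pg(n\ell)]\subset\Lambda[\pg(n\ell)]$, so that $\Psi(\mu)=\Psi\big(t\cdot Q[n\ell]\big)=\Frob_\ell\,\Psi\big(t\cdot Q[n]\big)=\Frob_\ell\,\Psi(\tilde{\kappa}_n)$, one obtains
$$\Psi\!\left(\frac{-(\sigma_\ell-1)\tilde{\kappa}_{n\ell}}{p^{M_{n\ell}}}\right)=\frac{a_\ell}{p^{M_{n\ell}}}\Psi(\tilde{\kappa}_n)-\frac{\ell+1}{p^{M_{n\ell}}}\Frob_\ell\,\Psi(\tilde{\kappa}_n)=\frac{a_\ell-(\ell+1)\Frob_\ell}{p^{M_{n\ell}}}\,\Psi(\tilde{\kappa}_n),$$
which is the assertion.

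The computation itself is mechanical; the step that needs genuine care is the passage to $p^{M_{n\ell}}$-divisors — one must use $I_{n\ell}=p^{M_{n\ell}}\Z_p\ni a_\ell,\ \ell+1$ in order to distribute a single $p^{M_{n\ell}}$-divisor across the sum $a_\ell\tilde{\kappa}_n-(\ell+1)\mu$ — together with the bookkeeping that matches the coset representatives at levels $n$ and $n\ell$ and the observation that $\Psi$, as a map into $\mil\bigoplus_w\tilde{E}(\F_w)$ (a target depending only on the choice of $\lambda'$, not on the conductor), applies compatibly to $\tilde{\kappa}_n$ and to elements of $\bH[n\ell]$, so that the preceding lemma can be invoked in exactly the form needed.
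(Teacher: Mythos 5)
Your proof is correct and follows essentially the same route as the paper's: expand $(\sigma_\ell-1)D_\ell=\ell+1-\Norm_\ell$, use the Euler system relation $\Norm_\ell Q[n\ell]=a_\ell Q[n]$, distribute the $p^{M_{n\ell}}$-divisor (justified by $a_\ell,\,\ell+1\in I_{n\ell}$), and then apply the preceding lemma with $t=\sum_{s\in S}sD_n$ to convert $\Psi(\mu)$ into $\Frob_\ell\Psi(\tilde{\kappa}_n)$. The only difference is that you make explicit the compatible choice of coset representatives and the uniqueness of $p^{M_{n\ell}}$-divisors, points the paper leaves implicit.
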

\begin{proof} In $\bH[n\ell]$ we have the equalities
\begin{eqnarray*}
-\frac{(\sigma_\ell-1)\tilde{\kappa}_{n\ell}}{p^{M_{n\ell}}}&=&
-\frac{\sum_{s\in S}(\ell+1-\Norm_\ell)s D_n Q[n\ell] }
{p^{M_{n\ell}}}\\
&=& \sum_{s\in S}sD_n\left(\frac{a_\ell}{p^{M_{n\ell}}} Q[n]
-\frac{\ell+1}{p^{M_{n\ell}}} Q[n\ell]\right)\\
&=& \frac{a_\ell}{p^{M_{n\ell}}} \tilde{\kappa}_n-
\frac{\ell+1}{p^{M_{n\ell}}}\sum_{s\in S}sD_n Q[n\ell].
\end{eqnarray*} Now apply the preceeding lemma.
\end{proof}

As in the proof of Lemma \ref{kolyvagin relations over K}, we define a map
$\chi_\ell$ as the composition
$$ \mil\bigoplus_w E(K_{k,w})
\map{} \mil \bigoplus_w\tilde{E}(\F_w)[p^\infty]
\map{ } \mil \bigoplus_w\tilde{E}(\F_w)[I_\ell]
\iso \bT/I_\ell\bT$$
where the second arrow is given by the action of
$\frac{a_\ell-(\ell+1)\Frob_\ell}{p^{M_{\ell}}}$.
This map factors through
$$\big( \mil\bigoplus_w E(K_{k,w})\big)\otimes_\Lambda \Lambda/I_\ell
\iso H^1_\f(K_\lambda,\bT/I_\ell \bT)\iso  \bT/I_\ell\bT, $$
where the first map is the Kummer map and the second is evaluation
of cocycles at Frobenius.
The resulting automorphism of $\bT/I_\ell\bT$ is again called $\chi_\ell$,
and satisfies
$$\chi_\ell(\kappa_n(\Frob_\lambda))=
\frac{a_\ell-(\ell+1)\Frob_\ell}{p^{M_{n\ell}}}\Psi(\tilde{\kappa}_n)=
\kappa_{n\ell}(\sigma_\ell).$$
The classes $\kappa_n$ may now be modified exactly as in
Theorem \ref{final ks} to produce a Kolyvagin system $\kappa^\Heeg\in
\KS(\bT,\Lsel, \pl)$ with $\kappa_1^\Heeg=\kappa_1$.

Now we turn our attention to the proof that $\kappa^\Heeg_1$ is nontrivial.
Let $$H_k\subset E(K_k)\otimes \Z_p$$ be the $\Lambda$-submodule generated by
$\Norm_{K[1]/K} P[1]$ and $\Norm_{K_k[1]/K_k}P_j[1]$ for $0\le j\le k$, 
and let $\bH=\mil H_k$.
Since $\kappa_1^\Heeg$ is the image of $\tilde{\kappa}_1$ under the injective
Kummer map $\bH\map{}H^1(K,\bT),$
to complete the proof of Theorem \ref{heegner kolyvagin system}
it suffices to prove the following

\begin{Thm}\label{heegner module}
The $\Lambda$-module $\bH$ is free of rank one, generated by
$\tilde{\kappa}_1$.
\end{Thm}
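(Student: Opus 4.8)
The plan is to prove, in order, that $\bH$ is $\Lambda$-torsion-free, that $\bH$ is cyclic over $\Lambda$ with generator $\tilde\kappa_1$, and finally that these two facts together with the non-triviality of $\tilde\kappa_1$ force $\bH$ to be free of rank one.

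First I would check torsion-freeness. Since $G_K$ surjects onto $\Aut_{\Z_p}(T)$ we have $E(K_\infty)[p]=0$, so each $E(K_k)\otimes\Z_p$ is $\Z_p$-free and the level-$k$ Kummer maps are injective; passing to the inverse limit and invoking Shapiro's lemma (Lemma \ref{cohomology induction}) these assemble into an injection $\bH=\mil H_k\hookrightarrow\mil H^1(K_k,T)\iso H^1(K,\bT)$, whose image lies in $H^1_\Lsel(K,\bT)$ because the Heegner classes satisfy the defining local conditions of $\sel_\Lambda$ (the local analysis already carried out above). As $H^1_\Lsel(K,\bT)$ is $\Lambda$-torsion-free by Lemma \ref{torsion free}, so is $\bH$; in particular $\bH\ne 0$ if and only if $\tilde\kappa_1\ne 0$ if and only if $\kappa^\Heeg_1\ne 0$.

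The heart of the argument is cyclicity. I would work first over $K_\infty[1]$: the tautological surjections $\tilde H_k\twoheadrightarrow H_k[1]$ (sending $x\mapsto P[1]$, $x_j\mapsto P_j[1]$) from the universal modules built in the proof of Lemma \ref{euler system} are compatible with the natural inclusions and norm maps, hence pass to a surjection of compact $\Lambda[\pg(1)]$-modules $\tilde\bH=\mil\tilde H_k\twoheadrightarrow\bH[1]=\mil H_k[1]$. The relations defining $\tilde H_k$ — the telescoping norm recursions, the bottom relations $x_0=\gamma_0 x$, $\Norm x_1=\gamma_1 x$, and their consequence $\Norm_{K_k[1]/K[1]}x_k=\gamma_k x$ — combined with Lemma \ref{universal norm lemma} ($\bigcap_k\gamma_k M=\Phi M$) show that in the inverse limit all the generators of $\tilde H_k$ are recovered from the single distinguished lift $y$ of $\Phi x$, so that $Q[1]=\phi(1)(y)$ generates $\bH[1]$ over $\Lambda[\pg(1)]$. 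Since $p\nmid[K[1]:K]$ the ring $\Z_p[\pg(1)]$ is finite étale over $\Z_p$, so the norm $\Norm_{K_\infty[1]/K_\infty}$ identifies $\bH$ with the image of $\bH[1]$, carrying the $\Lambda[\pg(1)]$-generator $Q[1]$ to $\tilde\kappa_1=\sum_{s\in S}sQ[1]$; hence $\bH=\Lambda\cdot\tilde\kappa_1$. Being cyclic over the domain $\Lambda$, $\bH\iso\Lambda/\mathrm{Ann}_\Lambda(\tilde\kappa_1)$, and a nonzero annihilator would make $\bH$ $\Lambda$-torsion, contradicting torsion-freeness unless $\bH=0$; the non-vanishing $\bH\ne0$ — equivalently the non-triviality of $\kappa^\Heeg_1$ — is exactly the assertion proven by Cornut and Vatsal (Heegner points $P_k[1]\in E(K_k)$ have infinite order for all $k\gg0$). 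Granting it, $\mathrm{Ann}_\Lambda(\tilde\kappa_1)=0$ and $\bH\iso\Lambda$ is free of rank one on $\tilde\kappa_1$.

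The step I expect to be the main obstacle is the cyclicity of $\bH[1]$: proving that the universal-norm family $Q[1]$ (equivalently the lift $y$ of $\Phi x$) generates all of $\bH[1]$ over $\Lambda[\pg(1)]$ and not merely a submodule of finite index. This is precisely where the internal structure of the $\tilde H_k$ and the identity $\bigcap_k\gamma_k M=\Phi M$ must be exploited, and where one checks that the factor $\Phi$ built into $Q_0[1]=\Phi P[1]$ is exactly the one forced by taking universal norms; it is also here that one verifies, as promised in the introduction, that $\bH$ coincides with the module of Heegner points considered by Perrin-Riou.
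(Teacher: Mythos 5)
Your high-level scaffolding is reasonable and parts of it differ from the paper in ways that could work: you establish $\Lambda$-torsion-freeness by injecting $\bH$ via the Kummer map into $H^1_\Lsel(K,\bT)$ and invoking Lemma \ref{torsion free}, whereas the paper gets freeness of rank one directly from Cornut's nonvanishing together with Proposition 10 of \cite{pr87} (section 3). Both then reduce to showing that $\tilde\kappa_1$ is a generator, i.e.\ to cyclicity. That is the entire content of the theorem, and it is precisely the step you have not proven.

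You write that the relations in the $\tilde H_k$ together with $\bigcap_k \gamma_k M=\Phi M$ ``show'' that $y$ generates $\tilde\bH$ (and hence $Q[1]$ generates $\bH[1]$) over $\Lambda[\pg(1)]$, and then say ``Granting it'' and close the argument. But nothing in the proposal actually establishes this, and indeed you flag this as ``the main obstacle'' at the end. That is the gap. The claim as stated (cyclicity of $\bH[1]$ over $\Lambda[\pg(1)]$, generated by $Q[1]$) is also stronger than, and differently structured from, what the paper proves: the paper first applies $\Norm_{\pg(1)}$ to pass to $\tilde\bH^\pg$ and $y^\pg$, reduces by Nakayama to showing $\tilde\bH^\pg=\Lambda y^\pg+(\gamma-1)\tilde\bH^\pg$, and then proves two delicate computational lemmas --- that the image of $\tilde\bH^\pg$ in $\tilde H_0^\pg$ is $\aug(\Phi)\Z_p\cdot x^\pg$ generated by the image of $y^\pg$, and that $\tilde\bH^\pg/(\gamma-1)\tilde\bH^\pg\to\aug(\Phi)\tilde H_0^\pg$ is an isomorphism. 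The second of these requires an explicit inverse-limit computation exploiting the recursion $\gamma_k=a_p\gamma_{k-1}-p\gamma_{k-2}$ and the eventual stabilization $\aug(\gamma_k)\Z_p=\aug(\Phi)\Z_p$; it is not a formal consequence of Lemma \ref{universal norm lemma}. Without an argument of this type, the proposal does not establish cyclicity, and hence does not prove the theorem.
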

\begin{proof} By the main result of \cite{cornut}, one of the points
$\Norm_{K_k[1]/K_k}P_k[1]$ has infinite order, and so Proposition
10 of section 3 of \cite{pr87} implies that $\bH$ is free of rank one.
We show that $\tilde{\kappa}_1$ is a generator.

Recall the construction of $\tilde{\kappa}_1$.  There is a canonical
decomposition $$\Gal(K_k[1]/K)\iso \Gamma_k\times \pg$$
where $\Gamma_k=\Gal(K_k/K)$ and $\pg=\pg(1)$ is the ideal class group
of $K$ (which has no $p$-torsion by assumption).  We let
$\Norm_{\pg}$ be the norm element in $\Z_p[\pg]\subset\Lambda[\pg]$.
Let $\tilde{H}_k$ be the $\Z_p[\Gamma_k\times
\pg]$-module defined in the proof of
Lemma \ref{euler system} (with $n=1$),
and let $\tilde{\bH}=\mil \tilde{H}_k$, the limit with respect to the
norm maps.  We may choose an element $y\in\tilde{\bH}$ which lifts
$\Phi x\in \tilde{H}_0$.  
Let $$x_j^\pg=\Norm_{\pg(1)}x_j\in \tilde{H}_k^\pg
\hspace{1cm}y^\pg=\Norm_{\pg(1)}y\in\tilde{\bH}^\pg$$
(including the case where $j$ is the empty subscript).

  We have the commutative diagram
in which all arrows are surjective and the vertical arrows are
$\Norm_{\pg(1)}$
$$\xymatrix{
{\tilde{\bH}}\ar[r]\ar[d]&{\bH[1]}\ar[d]\\
{\tilde{\bH}^\pg}\ar[r]&{\bH}.}$$
The top arrow takes $x_j$ to $P_j[1]$, and the bottom arrow
takes $x_j^\pg$ to $\Norm_{\pg(1)}P_j[1]$ and $y^\pg$ to $\tilde{\kappa}_1$.

Fix a topological generator $\gamma\in\Gamma$.
By Nakayama's lemma we will be done once we show that $$\tilde{\bH}^\pg=
\Lambda y^\pg+ (\gamma-1)\tilde{\bH}^\pg.$$
This is immediate from the following two lemmas.

\begin{Lem} Let $\aug:\Z_p[\pg(1)]\map{}\Z_p$ be the augmentation map.
The image of the natural map
$\tilde{\bH}^\pg\map{}\tilde{H}^\pg_0$ is a free rank-one $\Z_p$-module
generated by  $\aug(\Phi)x^\pg$, the image of $y^\pg$.
\end{Lem}
\begin{proof} 
The $\Z_p$-module $\tilde{H}^\pg_0$ is free of rank one, generated by
$x^\pg$, and one has the relations $$\Norm_{K_k/K}(x_k^\pg)
=\aug(\gamma_k)x^\pg.$$
Lemma \ref{universal norm lemma} implies that
$\cap_{k>0}\aug(\gamma_k)\Z_p= \aug(\Phi)\Z_p$,
and an elementary argument using the recursion relation defining
$\gamma_k$ shows that $\aug(\gamma_k)\Z_p= \aug(\Phi)\Z_p$
for $k\gg0$. The claim follows.
\end{proof}

\begin{Lem}
The map of the preceeding lemma induces an isomorphism
$$\tilde{\bH}^\pg/(\gamma-1)\tilde{\bH}^\pg\map{}\aug(\Phi)\tilde{H}_0^\pg.$$
\end{Lem}
\begin{proof}
We have seen that it is a surjection, so suppose $h=\mil h_k$
is in the kernel of $\tilde{\bH}^\pg\map{}\tilde{H}_0^\pg.$
The $\Lambda$-module $\tilde{H}_k^\pg$ is generated by
$x^\pg_k$ and $x^\pg_{k-1}$, and so $h_k$ may be written in the form
$$h_k=\alpha_k x^\pg_k+\beta_k x^\pg_{k-1}+(\gamma-1)z_k$$ for $\alpha_k$
and $\beta_k$ in $\Z_p$.  Taking the norm to $\tilde{H}_0^\pg$ and using the
fact that $x^\pg$ has infinite order yields
$$0= \alpha_k\aug(\gamma_k)+p\beta_k \aug(\gamma_{k-1})$$
and so $$\aug(\gamma_k)h_k\in \beta_k s_k+(\gamma-1)\tilde{H}_k^\pg$$
where $s_k=-p\cdot\aug(\gamma_{k-1})x^\pg_k+\aug(\gamma_k)x^\pg_{k-1}.$
The recursion relation for the $\gamma_j$'s and the norm relations
for the $x_j$'s imply that the norm from $\tilde{H}^\pg_{k+1}$ to
$\tilde{H}^\pg_k$ to takes $s_{k+1}$ to $p\cdot s_k$.
If we take $k$ large enough that $\aug(\gamma_\ell)=\aug(\Phi)$ for
all $\ell\ge k$, and take $\ell\gg k$
$$\aug(\gamma_k)h_k=\aug(\gamma_\ell)\Norm_{\ell/k}h_\ell
\in \beta_\ell p^{\ell-k}s_k+(\gamma-1)\tilde{H}^\pg_k.$$
Letting $\ell\to\infty$ shows that $h_k\in(\gamma-1)\tilde{H}_k^\pg$
for every $k$ and the claim follows.
\end{proof}

This completes the proof of Theorems \ref{heegner kolyvagin system}
and \ref{heegner module}.\end{proof}

\bibliographystyle{amsalpha}

\end{document}